

\documentclass[a4,12pt,twoside,openright]{article}
\usepackage[a4paper, top=3cm, bottom=3cm, left=2cm]{geometry}
\usepackage{bbm, amssymb, amsmath, amsfonts, fancyhdr, german}

\usepackage{graphicx,epsfig}
\usepackage{color}


\usepackage[english]{babel}
\selectlanguage{english}
\usepackage{array}
\usepackage{graphicx}
\textwidth16cm 

\setlength{\unitlength}{1cm} 

\usepackage{oldgerm} 
\usepackage[ansinew]{inputenc}				
\usepackage[T1]{fontenc}
\usepackage[labelfont=bf]{caption}

\pagestyle{headings}
\usepackage{fancyhdr}
\setlength{\headheight}{16pt}
\pagestyle{fancy}
\fancyhf{}
\fancyhead[LO,RE]{\nouppercase\rightmark}
\fancyhead[RO,LE]{\thepage}

\newcommand{\N}{\mathbbm N}		\newcommand{\Z}{\mathbbm Z}
\newcommand{\R}{\mathbbm R}		\newcommand{\C}{\mathbbm C}

\newcommand{\W}{\mathcal W}		
\newcommand{\Y}{\mathcal Y}
\newcommand{\V}{\mathcal V}
\newcommand{\NN}{\mathcal{N}}
\newcommand{\FT}{\textfrak F}		
\newcommand{\s}{\mathcal S}			
\newcommand{\landau}{\mathcal O}

\newcommand{\modd}{\!\!\!\mod}

\def \dist {\text{\rm dist\,}}
\def \supp {\text{\rm supp\,}}
\def \graph {\text{\rm graph\,}}

\newcommand{\e}{\varepsilon}
\def\trans{{\,}^t}

\def\pa{{\partial}}

\def\Re{\text{\rm Re\,}}



\renewcommand{\labelenumi}{(\roman{enumi})}

\relpenalty   = 9999 
\binoppenalty = 9999 
\clubpenalty = 10000
\widowpenalty = 10000 \displaywidowpenalty = 10000

\usepackage[thmmarks]{ntheorem}

\theoremstyle{plain}

\theoremheaderfont{\scshape}
\newtheorem{thmnr}{Theorem}[section]
\newtheorem{lemnr}[thmnr]{Lemma}
\newtheorem{kornr}[thmnr]{Corollary}

\newtheorem{remnr}[thmnr]{Remark}
\newtheorem{remnrs}[thmnr]{Remarks}

\newtheorem{conv}[thmnr]{Convention}

\theoremstyle{nonumberbreak} 



\theoremstyle{nonumberplain}

\theoremstyle{nonumberplain}
\theoremsymbol{$\Box$} 
\theorembodyfont{\upshape}
\theoremheaderfont{}
\newtheorem{proof}{\it Proof:}				

\numberwithin{equation}{section}				

\definecolor{green}{rgb}{0,0.6,0.2}


\parindent0mm

\setlength{\unitlength}{1cm} 


\begin{document}


\nocite{*}

\selectlanguage{english}

\thispagestyle{empty}

\section*{A FOURIER RESTRICTION THEOREM FOR A TWO-DIMENSIONAL SURFACE OF FINITE TYPE}

\vspace{0.7cm}

\begin{center}
\textbf{Stefan Buschenhenke, Detlef Müller and Ana Vargas}
\end{center}

\vspace{0.3cm}
\abstract{ The problem of $L^p(\R^3)\to L^2(S)$ Fourier restriction estimates for   smooth hypersurfaces $S$ of finite type in $\R^3$   is by now very well understood  for a large class of hypersurfaces,  including all analytic  ones. In this article, we  take up the study of  more general $L^p(\R^3)\to L^q(S)$ Fourier restriction estimates,  by studying a prototypical model  class  of two-dimensional surfaces   for which the Gaussian curvature degenerates in one-dimensional subsets.  We obtain sharp restriction theorems in the range given  by Tao in 2003  in his work on  paraboloids. For high order degeneracies this covers the full range, closing the restriction problem in  Lebesgue spaces for those surfaces.  A surprising new feature appears, in contrast with the non--vanishing curvature case: there is an extra necessary condition. Our approach  is based on an adaptation of the  bilinear method. A careful study of the dependence of the bilinear estimates on the curvature and size of the support is required.}

\vspace{1cm}

\renewcommand\contentsname{Contents}
\tableofcontents

\footnote{The authors  acknowledge the support by the Hausdorff Research Institute for Mathematics (HIM) in Bonn  and wish to express their gratitude to its staff for the wonderful hospitality . \\
The first author was partially supported by the ERC grant 277778.\\
The first two authors were partially supported by the DFG grant MU 761/ 11-1.\\
The third author was partially supported by MICINN/MINECO grants MTM2010-16518 and MTM2013--40945 (Spain).}

\newpage

\section{Introduction}

Let $S$ be a smooth hypersurface in $\R^n$ with surface measure $d\sigma_S.$ The  Fourier restriction problem for $S,$ proposed by  by E. M. Stein in the seventies, asks for the range  of exponents $p$ and $q$ for which the estimate
\begin{equation}
\bigg(\int_S|\widehat{f}|^p\,d\sigma_S\bigg)^{1/p}\le C\|f\|_{L^q(\R^n)}
\end{equation}
holds  true for every $f\in\mathcal S(\R^n),$ with a constant $C$ independent of $f.$ There had been a lot of activity on this problem in the seventies and early eighties.
The sharp range in dimension  $n=2$ for curves with non-vanishing curvature had been determined through work by  C. Fefferman, E. M. Stein and A. Zygmund \cite{F1}, \cite{Z}. In higher dimension, the sharp $L^q-L^2$ result  for hypersurfaces with non-vanishing Gaussian curvature was obtained by E. M. Stein and P. A. Tomas \cite{To}, \cite{St1} (see also Strichartz \cite{Str}). Some more general classes of surfaces were treated by A. Greenleaf \cite{Gr}.

The question about the general $L^q-L^p$ restriction estimates is nevertheless still wide open.  Fundamental progress has been made since the nineties, with contributions by many.  Major new ideas  were introduced in particular by J. Bourgain  (see for instance \cite{Bo1}, \cite{Bo2}) and T. Wolff (\cite{W1}),  which led to important further steps towards an understanding of the case of  non-vanishing Gaussian  curvature. These ideas and methods were further developed by A. Moyua, A. Vargas, L. Vega and T. Tao (\cite{MVV1}, \cite{MVV2} \cite{TVV}), who established  the so-called bilinear approach (which   had been anticipated in the work of C. Fefferman \cite{F1} and  had  implicitly been  present in the work of J. Bourgain \cite{Bo3}) for hypersurfaces with non-vanishing Gaussian curvature for which all principal curvatures  have the same sign. The  same method was applied to  the  light cone by Tao-Vargas (see \cite{TV1}, \cite{TV2}). The climax of the application of that 
bilinear  method to these types of  surfaces is due to T. Tao \cite{T1} (for principal  curvatures of the same sign), and T. Wolff \cite{W2}  and T. Tao \cite{T4} (for the light cone). In particular, in these last  two papers the sharp  linear restriction  estimates for the  light  cone in $\R^4$  were obtained.

 For the case of non-vanishing curvature but principal curvatures of different signs, analogous results in $\R^3$ were proved by S. Lee \cite{lee05} and A. Vargas \cite{v05}. Results for the light cone were previously obtained in $\R^3$ by B. Barcel\'o \cite{Ba1}, who also considered more general cones \cite{Ba2}. These results were improved to sharp theorems by S. Buschenhenke \cite{Bu}. The bilinear approach also produced results  for hypersurfaces with $k\le n-2$ non-vanishing  principal curvatures (\cite{LV}).

More recently,  J. Bourgain and L. Guth \cite{BoG} made further important  progress on the case of non-vanishing curvature by making use  also of  multilinear  restriction estimates due to  J. Bennett, A. Carbery and T. Tao \cite{BCT}.

On the other hand,  general finite type surfaces in $\R^3$ (without  assumptions on the curvature)  have been considered in work by I. Ikromov, M. Kempe and D. M\"uller  \cite{ikm}  and Ikromov and M\"uller \cite{IM-uniform}, \cite{IM},
 and the sharp range of Stein-Tomas type   $L^q-L^2$  restriction estimates has been  determined  for a large class  of smooth, finite-type hypersurfaces, including all analytic hypersurfaces.

 \medskip
 It is our aim in this work  to take up the latter branch of development by considering a certain model class  of hypersurfaces in dimension three with varying  curvature and study more general  $L^q-L^p$  restriction estimates. Our approach will again be based on the bilinear method.\footnote{The multilinear approach seems still not sufficiently developed  for this purpose, since estimates with sharp dependence on the transversality have not been proved.}
In our model class,  the degeneracy of the curvature will take  place along one-dimensional subvarieties. For analytic hypersurfaces whose Gaussian curvature does not  vanish identically, this kind of behavior is typical, even though in our model class the  zero varieties will still be linear (or the union of two linear subsets). Even though our model class would seem to be among the simplest possible surfaces of such behavior,  we will see that they require a very intricate study. We hope that this work will  give some insight  also for  future  research on  more  general types of hypersurfaces.

Independently of our work, a result for rotationally invariant surfaces with degeneracy of the curvature at a single point has been  obtained recently  by B. Stovall \cite{Sto}.

\subsection{Outline of the problem. The adjoint setting}

We start with a description of the surfaces that we want to study. We will consider surfaces that are graps of smooth functions defined on $Q=]0,1[\times]0,1[, $
$$\Gamma=\graph(\phi)=\{(\xi,\phi(\xi)):\xi\in Q\}. $$
The surface $\Gamma$ is equipped with the surface measure, $\sigma_\Gamma.$ It will be more convenient to use duality and work in the adjoint setting. The adjoint restriction operator is given by
\begin{align}\label{defop}
	\mathcal R^\ast f(x)=\widehat{f\,d\sigma_\Gamma}(x)= \int_\Gamma f(\xi)e^{-ix\cdot\xi}\,d\sigma_\Gamma(\xi),
\end{align}
where $f\in L^q(\Gamma,\sigma_\Gamma).$ The restriction problem is therefore equivalent to the question of finding the appropriate range of exponents for which the estimate
$$
\|\mathcal R^*f\|_{L^p(\R^3)}\le C\|f\|_{L^q(\Gamma,d\sigma_\Gamma)},
$$
holds with a constant $C$ independent of the function $f\in L^q(\Gamma,d\sigma_s).$ We shall require the  following properties of the functions $\phi:$

Let $m_1,m_2\in \R$, $m_1,m_2\geq2$. We say that a function $\phi$ is of \emph{normalized  type $(m_1,m_2)$} if there exists $\psi_1,\psi_2\in C^\infty(]0,1[)$ and $a,b>0$ such that

\begin{align}\label{model}
\phi(\xi_1,\xi_2)= \psi_1(\xi_1)+ \psi_2(\xi_2)
\end{align}
on $]0,1[\times ]0,1[,$ where the derivatives of the $\psi_i$ satisfy
\begin{align}\label{psicond1}
\psi_i''(\xi_i)&\sim \xi_i^{m_i-2},\\
|\psi_i^{(k)}(\xi_i)|&\lesssim \xi_i^{m_i-k} \quad \mbox{for }\ k\ge 3.\label{psicond2}
\end{align}
The constants hidden in these estimates are assumed to be admissible in the sense that they only depend on $m_1,m_2$ and the order of the derivative, but not explicitly on the $\psi_i$'s.

One would of course expect that small perturbations of such functions $\phi,$ depending on both $\xi_1$ and
$\xi_2,$ should lead to hypersurfaces  sharing the same restriction estimates as our model class  above.  However, such perturbation terms are not covered by our proof. It seems that the treatment of these more general situations would require even more intricate arguments, which will have to take  the underlying geometry of the surface into account. We plan to study these questions in the future.

\medskip

The prototypical example of  of a normalized   function of type $(m_1,m_2)$ is of course  $\phi(\xi)=\xi_1^{m_1}+\xi_2^{m_2}.$   For $m_1$ and $m_2$ integer, others arise simply as follows:
\begin{remnrs}
\begin{enumerate}
	\item Let $\e>0$ and $\varphi\in C^\infty(]-\e,\e[)$ be of \emph{finite type $m\in \N$} in $0$, i.e.,
	$\varphi(0)=\varphi'(0)=\cdots=\varphi^{(m-1)}(0)=0\neq\varphi^{(m)}(0)$. Assume $\varphi^{(m)}(0)>0$. Then there exist $\e'\in(0,\e)$	such that
	\begin{align*}
		\varphi^{(k)}(t)\sim  t^{m-k}
	\end{align*}
	for all $0\leq k\leq m$, $|t|<\e'$.
	\item Further let $\phi(\xi)=\psi_1(\xi_1)+\psi_2(\xi_2)$, $|\xi|\leq \e$, where $\psi_i\in C^\infty(]-\e,\e[)$ is of finite type $m_i$ in $0$ with $\varphi_i^{(m_i)}(0)>0$. Then there exist an $\bar\e>0$ such that $y\mapsto\phi(\bar\e y)$ is of normalised finite type $(m_1,m_2)$.
\end{enumerate}
\end{remnrs}

\begin{proof}

(i)  Since $\varphi$ has a zero of order $m$ at the origin, we find some $\e_0>0$, a smooth function
$\chi_0:]-\e_0,\e_0[\to ]0,\infty[$ and a sign $\sigma=\pm1$ such that
\begin{align*}
	\varphi(t)=\sigma t^m \chi_0(t)
\end{align*}
 for all $|t|<\e_0$. Thus
	$\varphi'(t)=\sigma t^{m-1}[m\chi_0(t)+t\chi_0'(t)]$. Since $\chi_0'$ is bounded on any compact neighborhood of the origin, and $\chi_0$ is bounded from below by a positive constant if the neighborhood is small enough, we have $t\chi_0'(t) \ll m\chi_0(t)$ for $t$ small enough, and thus we find $0<\e_1<\e_0$ such that
\begin{align*}
	\varphi'(t)=\sigma t^{m-1} \chi_1(t),
\end{align*}
	where $\chi_1(t)=m\chi_0(t)+t\chi_0'(t)>0$ for all $|t|\leq\e_1$. Iterating the procedure gives $\e_k>0$,
	$\chi_k:]-\e_k,\e_k[\to]0,\infty[$ such that
\begin{align*}
	\varphi^{(k)}(t)=\sigma t^{m-k} \chi_k(t)
\end{align*}
for every $|t|<\e_k$. Finally observe that $0<\varphi^{(m)}(0)=\sigma \chi_m(0)$, i.e., $\sigma=1$.
\medskip

\noindent	(ii) Chose $\bar\e>0$ such that for both $i=1,2$, $0\leq k\leq m_i$ and all $0\leq t\leq\bar\e$
	\begin{align*}
		\varphi^{(k)}_i(t) \sim  t^{(m_i-k)}.
	\end{align*}
	Then for all $0\leq s\leq 1$
	\begin{align*}
		\frac{d^k}{d s^k} \varphi_i(\bar\e s)
		 \sim  \bar\e^k (\bar\e s)^{(m_i-k)} = \bar\e^{m_i} s^{(m_i-k)}.
	\end{align*}

~\end{proof}

In order to  formulate our main theorem, adapting Varchenko's notion of height to our setting,  we introduce the height $h$ of the surface by
$$\frac{1}{h}=\frac{1}{m_1}+\frac{1}{m_2}.$$
 Let us also put put $\bar m=m_1\vee m_2=\max\{m_1,m_2\}$ and $m=m_1\wedge m_2=\min\{m_1,m_2\}$.


\begin{thmnr}\label{mainthm}
Let $p>\max\{\frac{10}{3},h+1\}$, $\frac{1}{s'}\geq\frac{h+1}{p}$ and $\frac{1}{s}+\frac{2\bar m+1}{p}<\frac{\bar m+2}{2}$.
Then $\mathcal R^*$ is bounded from $L^{s,t}(\Gamma,d\sigma_\Gamma)$ to $L^{p,t}(\R^3)$ for every $1\leq t\leq\infty$.

If in addition $s\leq p$ or $\frac{1}{s'}>\frac{h+1}{p},$ then $\mathcal R^*$ is even bounded from $L^{s}(\Gamma,d\sigma_\Gamma)$ to $L^{p}(\R^3)$.
\end{thmnr}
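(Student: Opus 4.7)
The plan is to implement the bilinear method of Tao--Vargas--Vega, adapted to the degenerate setting via an anisotropic dyadic decomposition of the surface. First, I would decompose $\Gamma$ into dyadic pieces $\Gamma_{j,k}$ corresponding to $\xi_1\sim 2^{-j}$, $\xi_2\sim 2^{-k}$, $j,k\ge 0$. On each such piece, anisotropic rescaling $\xi_1\mapsto 2^{-j}\xi_1$, $\xi_2\mapsto 2^{-k}\xi_2$ together with a corresponding scaling in the third (vertical) coordinate turns $\phi$ into a perturbation of a smooth surface whose principal curvatures are of size $1$, uniformly in $(j,k)$; the conditions \eqref{psicond1}--\eqref{psicond2} are precisely what makes this normalization work. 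Thus each $\Gamma_{j,k}$ becomes, up to a trackable change of variables, a paraboloid-like piece to which Tao's $L^p$ bilinear restriction theorem for the elliptic paraboloid \cite{T1} (valid for $p>10/3$) can be applied.

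Second, for each pair $(\Gamma_{j,k},\Gamma_{j',k'})$ that are transversal in a suitable quantitative sense, I would establish a bilinear estimate
$$\|\mathcal R^\ast f_{j,k}\cdot \mathcal R^\ast f_{j',k'}\|_{L^{p/2}(\R^3)}\le C_{j,k,j',k'}\,\|f_{j,k}\|_{L^2}\|f_{j',k'}\|_{L^2},$$
with $C_{j,k,j',k'}$ exhibiting explicit dependence on the curvature scales $2^{-j(m_1-2)}, 2^{-k(m_2-2)}$, on the separation (transversality) between the two pieces, and on the size $2^{-j-k}$ of the supports. This dependence is extracted by running Tao's proof on the rescaled pieces and then undoing the scaling; the exponents of $2^j$ and $2^k$ that appear govern the subsequent summability.

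Third, I would convert these bilinear estimates into the desired linear estimate by the usual Whitney decomposition of $\Gamma\times\Gamma$ away from the diagonal into dyadic transversal pairs, combined with the Tao--Vargas--Vega interpolation/extrapolation machinery that naturally yields Lorentz space bounds $L^{s,t}\to L^{p,t}$. The two scaling conditions in the hypothesis enter precisely here: $\tfrac{1}{s'}\ge\tfrac{h+1}{p}$ is the standard parabolic scaling/Knapp condition at a generic tangent plane and matches the exponent $h+1$ produced by summing the dyadic Jacobians, while the second condition $\tfrac{1}{s}+\tfrac{(2\bar m+1)}{p}<\tfrac{\bar m+2}{2}$ is what guarantees convergence of the sum over the dyadic index corresponding to the coordinate of larger type $\bar m$, where the curvature degenerates most severely; it is also the extra necessary Knapp-type condition coming from a rectangle placed adjacent to the boundary curve on which the curvature vanishes. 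The strengthening to $L^s\to L^p$ under $s\le p$ or strict inequality is then obtained by standard real interpolation with a trivial $L^1\to L^\infty$ bound on one side.

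The main obstacle, as emphasized in the abstract, will be step two: proving the bilinear estimate with \emph{sharp} dependence on curvature, transversality and support size simultaneously. A direct application of the Tao bilinear theorem yields constants that depend only on transversality after rescaling, and one must re-do the wave-packet decomposition and the induction-on-scales argument in a quantitative way so that after undoing the anisotropic scaling the resulting exponents of $2^j$ and $2^k$ are exactly those needed for the Whitney sum to converge down to the line $\tfrac{1}{s'}=\tfrac{h+1}{p}$ (where only the Lorentz version survives) and up to the line defined by the second condition. Executing this bookkeeping, and handling the regions where one of $j,k$ is zero (so one coordinate is non-degenerate and the rescaling is one-dimensional), is where the bulk of the technical work will sit.
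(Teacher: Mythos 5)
Your outline captures the right high-level strategy (bilinear method via wave packets, dyadic and Whitney decompositions, quantitative tracking of the bilinear constant, real interpolation for the Lorentz bounds), and you correctly identify the main obstacle as the need for sharp dependence of the bilinear constant on curvature, transversality, and support size. Nevertheless there is a genuine gap in the central step. You propose to normalize each dyadic piece $\Gamma_{j,k}$ by the anisotropic rescaling $\xi_i\mapsto 2^{-j_i}\xi_i$ so that after rescaling its principal curvatures become comparable to $1$, and then to ``run Tao's proof on the rescaled pieces and undo the scaling.'' That normalization works for a single piece but not for a transversal pair: the off-diagonal pairs $(\Gamma_{j,k},\Gamma_{j',k'})$ with $(j,k)\neq(j',k')$, and likewise the sub-pairs that arise after refining a Whitney rectangle that touches a coordinate axis, carry different curvature scales on their two factors, and there is no single affine change of variables that maps both of them simultaneously to surfaces of unit curvature while preserving their transversality. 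This is precisely the ``loss of self-similarity'' emphasized in the introduction of the paper as the new phenomenon. What is actually required, and what the paper proves, is a bilinear estimate formulated from the start for a pair $(S_1,S_2)$ whose two factors carry independent curvature and size parameters $\kappa_i^{(j)}, d_i^{(j)}$, with a constant tracked explicitly in those parameters (Theorem \ref{Eathm}); the rescaling that is subsequently applied in Section \ref{Scaling} is chosen to normalize the products $\bar\kappa_i\bar d_i$ to $1$ rather than the curvatures themselves, exactly because a curvature normalization is impossible for a heterogeneous pair.

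Two further ingredients are missing from the plan. First, the order of decompositions matters: the paper decomposes $Q\times Q$ by Whitney into congruent pairs and only afterwards refines dyadically an axis-touching rectangle, which produces the crucial structural fact that, for each coordinate direction, only one of the two factors in a pair ever needs refinement. This is what tames the dyadic double sum in Section \ref{dyadicsum}; a dyadic-first, Whitney-second ordering would not automatically deliver it. Second, the wave-packet and induction-on-scales argument only yields a bilinear estimate localized to a cuboid of side $\sim R^2$; passing from there to a genuine global $L^{p/2}(\R^3)$ bilinear estimate requires a quantitative globalization/$\e$-removal step (Section 4 of the paper), whose constant must again be tracked through the curvature and size parameters before the Whitney summation can be carried out.
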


\begin{remnrs}\label{remsonnec}
\begin{enumerate}
\item  Notice that the ``critical line''  ${1}/{s'}={(h+1)}/{p}$ and the line ${1}/{s}+{(2\bar m+1)}/{p}={(\bar m+2)}/{2}$ in the $(1/s,1/p)$-plane  intersect at the point  $(1/s_0,1/p_0)$ given by
\begin{align}\label{p0}
\frac 1{s_0}=\frac{3\bar m +m-m\bar m}{4\bar m+2m},\quad \frac 1{p_0}=\frac{\bar m +m}{4\bar m+2m}.
\end{align}
This shows in particular that the point  $(1/s_0,1/p_0)$ lies strictly above (if $m>2$) or on the bi-sectrix $1/s=1/p$  (if $m=2$).
\item The condition $1/{s'}\geq{(h+1)}/{p}$ in the theorem is  necessary and in fact dictated by homogeneity (Knapp box examples).
\item By (i),  the condition
 \begin{align}\label{secondnec}
	\frac{\bar m+2}{2}>\frac{2\bar m+1}{p}+\frac{1}{s}
\end{align}
only plays a role  above the bi-sectrix.  It is  necessary too when $p<s,$  hence, in view of (i), if $m>2,$ and if $m=2,$  it is necessary  with the possible exception of the case where $s_0=p_0=4(\bar m+1)/(\bar m+2),$ for which we do not have an argument. Our proof in Section \ref{necessarycond}  will reflect the fact that for $m_j>2,$ the behavior of the operator must be worse than for the case $m_j=2.$
\item From the first condition in the theorem, we see that $p\ge h+1$ is also necessary. Moreover, we shall  show in Section \ref{necessarycond} that   strong type estimates  are not possible unless $s\leq p$ or ${1}/{s'}>{(h+1)}/{p}.$ The condition $p>{10}/3$ is due to the use of the bilinear method, as this exponent gives the sharp bilinear result for the paraboloid, and it is surely not sharp. Nevertheless, when $h>7/3,$ we obtain the sharp result.

\end{enumerate}

\end{remnrs}

\medskip

A new phenomenon appears in these surfaces. In the case of non-vanishing Gaussian curvature, it is  conjectured that the sharp range is  given by the homogeneity condition  $1/{s'}\geq{(h+1)}/{p}$ (with $h=2/(n-1),$ hence  $h+1=(n+1)/(n-1)$),  and a second condition, $p>{2n}/{(n-1)},$ due to the decay  rate    of the Fourier transform of the surface measure.  A similar result is conjectured for the light cone (cf. Figure \ref{gauss}).
\begin{figure}\begin{center}
  \includegraphics{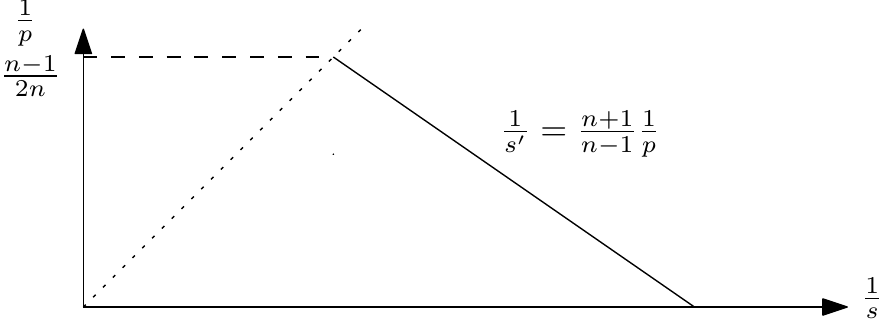}
\caption{\small Conjectured range of $p$ and $q$ for nonvanishing Gaussian curvature}
\label{gauss}
\end{center}\end{figure}
In contrast to this, we show in our theorem that for the class of surfaces $\Gamma$ under consideration  a third condition appears, namely  \eqref{secondnec}.

\medskip
Let us  briefly discuss the different situations that may arise in Theorem \ref{mainthm}, depending on the choice of  $m_1$ and $m_2:$
\medskip

First observe that $1/p_0$ in \eqref{p0} is above the critical threshold $1/{p_c}={3}/{10}$, if $\bar m \leq 2m$. In this case, the new condition ${1}/{s}+{(2\bar m+1)}/{p}={(\bar m+2)}/{2}$ will not show up in our theorem. So for $\bar m \leq 2m$, we are in the situation of either Figure \ref{sitII} (if $h\leq{7}/{3}$, i.e., $h+1\leq{10}/{3}$) or of Figure \ref{sitI} (if $h>{7}/{3}$). Notice that in the last case our theorem is sharp.\\
\begin{figure}[ht]\begin{center}
  \includegraphics{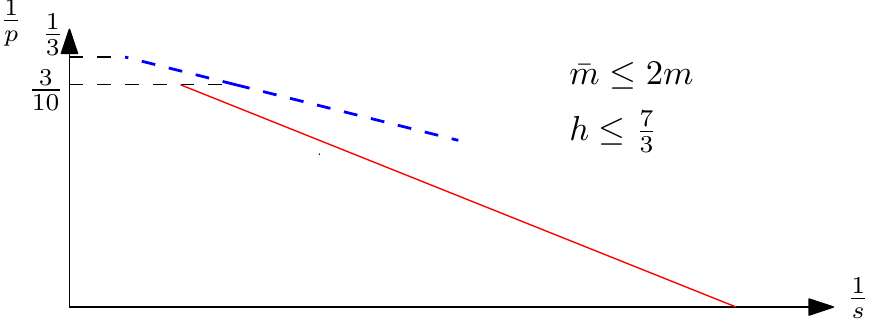}
\caption{\small Range of $p$ and $s$ in Theorem \ref{mainthm}}
\label{sitII}
\end{center}\end{figure}
\begin{figure}[ht]\begin{center}
  \includegraphics{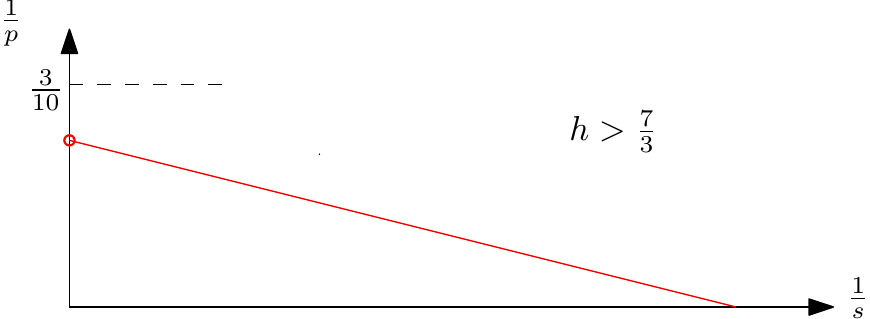}
\caption{\small Range of $p$ and $s$ in Theorem \ref{mainthm}}
\label{sitI}
\end{center}\end{figure}
It might also be interesting to compare $p_0$ not only with the condition $p>{10}/{3}$, which is due to the bilinear method, but with the conjectured range $p>3$. We always have $p_0\geq 3$, while we have $p_0=3$ only if $m_1=m_2$, i.e.,  a reasonable conjecture is that the new condition  \eqref{secondnec}  should always appear for inhomogeneous surfaces with $m_1\neq m_2$.
\medskip
In the case $\bar m>2m$, our new condition might be visible. Observe next that the line ${1}/{s}+{(2\bar m+1)}/{p}={(\bar m+2)}/{2}$ intersects the ${1}/{p}$-axis where $p=p_1={(4\bar m+2)}/{(\bar m+2)}$. Thus there are two subcases:

\medskip
 For $\bar m <7$ we have $p_1<{10}/{3}$, corresponding to Figure \ref{sitIII}, and our new condition appears.

\begin{figure}[ht]\begin{center}
  \includegraphics{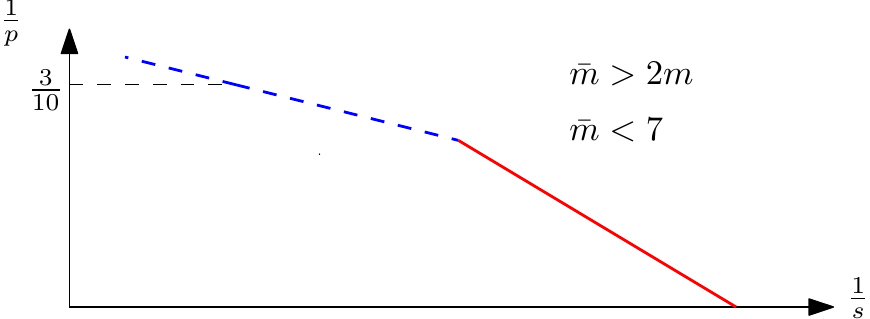}
\caption{\small Range of $p$ and $s$ in Theorem \ref{mainthm}}
\label{sitIII}
\end{center}\end{figure}

\medskip
For $\bar m \geq 7$ we may either have $p_0\geq p_1\geq h+1$ (which is equivalent to $\bar m m\geq 3\bar m+m$) and thus Figure \ref{sitIV} applies, or $p_0< p_1< h+1$ (which is equivalent to $\bar m m< 3\bar m+m$), and  we are in the situation of Figure \ref{sitI}; here again the new condition becomes relevant.
Observe that in the two last  mentioned cases, i.e.,  for $\bar m \geq 7$, our theorem is always sharp.


\begin{figure}[ht]\begin{center}
  \includegraphics{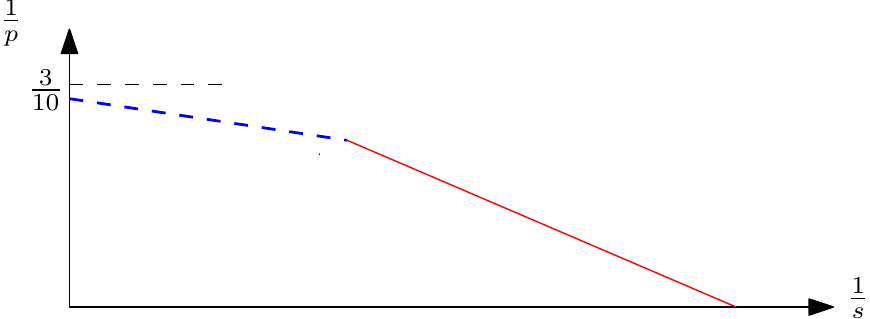}
\caption{\small Range of $p$ and $s$ in Theorem \ref{mainthm}}
\label{sitIV}
\end{center}\end{figure}
\medskip

Further observe that the appearance  of a third condition, besides the classical ones, is natural: Fix $m_1=2$ and let $\bar m=m_2 \to\infty.$ Then the contact order in the second coordinate direction degenerates. Hence, we would expect to find the same $p$-range as for a two dimensional cylinder, which  agrees with the  range for a parabola in the plane, namely $p>4$ (see \cite{F1}, \cite{Z}). Since $h\to2$ as $\bar m=m_2 \to\infty,$ the condition $p>\max\{{10}/{3},h+1\}$ becomes $p>{10}/{3}$ in the limit, which would lead to a larger range than expected. However, the new  extra condition $1/s+{(2\bar m+1)}/{p}<{(\bar m+2)}/{2}$ becomes $p>4$ for $\bar m\to\infty,$ as is to be expected.

\medskip

The restriction problem for the graph of functions $\phi(x)=\xi_1^{m_1}+\xi_2^{m_2}$ (and related surfaces) had  previously already been studied by E. Ferreyra and M. Urciuolo \cite{FU}, however by simpler methods, which led to weaker results than ours.  In their approach, they  made use of the  invariance of this surface under suitable non-isotropic dilations  as well as of the  one-dimensional results for curves.  This allowed them to obtain some results for
$p>4,$  in the  region below the homogeneity line, i.e., for ${1}/{s'}>(h+1)/{p}.$ Our results  are stronger in two ways:   they  include the critical  line and, more importantly, when $h<3,$ we  obtain a larger range for $p.$

As for the points on the critical  line  in the range $p>4,$ let us indicate that these points can in fact  also  be obtained by means of a simple summation argument   involving Lorentz spaces and real interpolation. This can be achieved by means of  summation trick   going back to ideas by Bourgain \cite{Bo85} (see for instance \cite{TVV}, \cite{lee03}). Details are given in Section \ref{summationtrick} of this article.

\subsection{Passage from surface to Lebesgue measure}

We shall always consider  hypersurfaces $S=\{(\eta,\phi(\eta)):\eta\in U\}$ which are  the graphs of  functions  $\phi$ that  are smooth on an open bounded subset $U\subset \R^d$ and continuous on the closure of $U.$  The adjoint of the Fourier restriction  operator associated to $S$ is  then given by
$$
\mathcal R^\ast f(x,t)=\widehat{f\,d\sigma_S}(x,t)= \int_S f(\xi)e^{-i(x,t)\cdot\xi}\,d\sigma_S(\xi), \qquad (x,t)\in\R^d\times \R=\R^{d+1},
$$
where $d\sigma_S=(1+|\nabla\phi(\eta)|^2)^{1/2} \, d\eta$ denotes the Riemannian surface measure of $S.$ Here, $f:S\to \C$ is a function on $S,$ but we shall often identify it with the corresponding function $\tilde f:U\to\C,$ given by $\tilde f(\eta)=f(\eta,\phi(\eta)).$
Correspondingly,  we define
$$
R^*_{\R^d}g(x,t):= \widehat{g d\nu}(x,t):=\int_{U} g(\eta) e^{-i(x\eta+t\phi(\eta))} \, d\eta, \qquad  (x,t)\in\R^d\times \R=\R^{d+1},
$$
for every function $g\in L^1(U)$ on $U.$ We shall occasionally address  $d\nu=d\nu_S$  as the ``Lebesgue measure'' on $S,$  in contrast with  the surface measure $d\sigma=d\sigma_S.$ Moreover, to emphasize which surface $S$  is meant, we shall occasionally also write $R^*_{\R^d}=R^*_{S,\R^d}.$  Observe that if  there is a constant $A$ such that
\begin{align}\label{nablabound}
|\nabla \phi(\eta)|\le A, \qquad \eta\in U,
\end{align}
 (this applies for instance to our class of hypersurfaces $\Gamma,$ since we assume that $m_i\ge 2$), then the Lebesgue measure
 $d\nu$    and the surface measure $d\sigma$  are comparable, up to  some positive multiplicative constants depending only on $A.$ Moreover,  since
\begin{align}
R^\ast f =R^*_{\R^d} \big(\tilde f (1+|\nabla\phi(\eta)|^2)^{1/2}\big),
\end{align}
the $L^q$-norms $\|\tilde f\|_{L^q(d\eta)}$ and  $\|f\|_{L^q(d\sigma_S)}=\|\tilde f(1+|\nabla\phi(\eta)|^2)^{1/2q}\|_{L^q(d\eta)}$  of $\tilde f$ and of $f$ are comparable too.
Throughout  the article, we shall therefore apply the following
\begin{conv}\label{conv}
{\rm
Whenever $|\nabla \phi|\lesssim 1,$ then, with some slight abuse of notation, we shall denote the function $f$ on $S$ and the corresponding function $\tilde f$ on $U$ by the same symbol $f,$  and write $R_{\R^d}^* f$ in place of $R_{\R^d}^* \tilde f.$
 }
\end{conv}

Applying this in particular to the hypersurfaces $\Gamma,$  we see that  Theorem \ref{mainthm} is equivalent to the following
\begin{thmnr}\label{mainthm2}
Let $p>\max\{\frac{10}{3},h+1\}$, $\frac{1}{s'}\geq\frac{h+1}{p}$ and $\frac{1}{s}+\frac{2\bar m+1}{p}<\frac{\bar m+2}{2}$.
Then $R_{\R^2}^*$ is bounded from $L^{s,t}(\Gamma,d\sigma_\Gamma)$ to $L^{p,t}(\R^3)$ for every $1\leq t\leq\infty$.

If in addition $s\leq p$ or $\frac{1}{s'}>\frac{h+1}{p},$ then $R_{\R^2}^*$ is even bounded from $L^{s}(\Gamma,d\sigma_\Gamma)$ to $L^{p}(\R^3)$.
\end{thmnr}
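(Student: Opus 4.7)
The plan is to prove Theorem \ref{mainthm2} via the bilinear method, reducing to Tao's sharp bilinear restriction theorem for elliptic surfaces after an anisotropic rescaling adapted to the finite-type behavior of $\phi$, and then summing the resulting estimates over a dyadic decomposition of $\Gamma$.

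First I would perform a dyadic decomposition $f = \sum_{k_1, k_2 \geq 0} f_{k_1, k_2}$, where $f_{k_1, k_2}$ is supported on the piece $\Gamma_{k_1,k_2}$ where $\xi_i \sim 2^{-k_i}$. On each piece, conditions \eqref{psicond1}--\eqref{psicond2} give $\psi_i''(\xi_i) \sim 2^{-k_i(m_i-2)}$, so after the anisotropic rescaling $\xi_i = 2^{-k_i} \eta_i$ together with a matching rescaling of the dual $x$-variables, $\Gamma_{k_1, k_2}$ becomes (up to a bounded error) a surface of normalized type $(m_1, m_2)$ supported on a fixed unit square, with principal curvatures of unit size and definite signature. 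The Jacobian cost on the input side is $2^{-(k_1+k_2)}$, while the rescaling of the $L^p(\R^3)$-norm on the output side produces a factor of order $2^{(k_1 + k_2 + k_1 m_1 + k_2 m_2)/p}$.

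Next, on each rescaled piece I would apply Tao's sharp bilinear restriction estimate \cite{T1} for elliptic paraboloids, which is valid for $p > 10/3$; this is the origin of the first condition in the theorem. The passage from bilinear to linear estimates is carried out by the standard Whitney decomposition of $\Gamma_{k_1, k_2} \times \Gamma_{k_1, k_2}$ into transversal bilinear pairs, together with an almost orthogonality argument in the spirit of Tao--Vargas--Vega \cite{TVV}. This yields a strong-type linear estimate on each $\Gamma_{k_1, k_2}$ with explicit dependence on $(k_1, k_2)$, which after unwinding the rescaling encodes the scaling cost above.

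Finally, I would sum the individual estimates over $(k_1, k_2)$ and use the Convention~\ref{conv} to pass between $d\sigma_\Gamma$ and the Lebesgue measure. The homogeneity condition $1/s' \geq (h+1)/p$ is precisely what is needed to control the scaling in the diagonal direction $k_1 m_1 = k_2 m_2$, while the new condition $1/s + (2\bar m + 1)/p < (\bar m + 2)/2$ reflects the imbalance between the two directions when $m_1 \neq m_2$: summing the scaling factor in the direction of the larger exponent $\bar m$ is the binding constraint, and the numerator $2\bar m + 1$ emerges precisely from this anisotropic summation. To obtain the Lorentz-space formulation $L^{s,t} \to L^{p,t}$ and reach the endpoints on the critical lines, I would combine strong-type bounds strictly inside the open region with the Bourgain-type summation trick in Lorentz spaces from Section~\ref{summationtrick}. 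I expect the main obstacle to be the careful tracking of bilinear constants under this anisotropic rescaling combined with the Whitney decomposition: the transversality parameter entering the bilinear bound has to be compared with the dyadic separation of caps in both coordinate directions simultaneously, and the sharp exponent $(\bar m + 2)/2$ in the new necessary condition should emerge precisely from this delicate interplay of scales.
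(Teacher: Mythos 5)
Your plan to reduce to a sharp bilinear estimate and then sum is in the right spirit, but there is a concrete gap at the rescaling step that the paper devotes most of its length to resolving, and which your outline glosses over.

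The dyadic piece $\Gamma_{k_1,k_2}$ (where $\xi_i\sim 2^{-k_i}$) cannot in general be rescaled to a surface with principal curvatures of unit size by the anisotropic change of variables $\xi_i=2^{-k_i}\eta_i$ together with a single vertical dilation. If you normalize the first curvature, say by putting $\phi^s(\eta)=2^{k_1m_1}\phi(2^{-k_1}\eta_1,2^{-k_2}\eta_2)$, then $\partial_1^2\phi^s\sim 1$ but $\partial_2^2\phi^s\sim 2^{k_1m_1-k_2m_2}$, which only equals $1$ on the diagonal $k_1m_1=k_2m_2$. Off the diagonal the rescaled surface is a paraboloid-like surface with two principal curvatures differing by an arbitrarily large factor — precisely the ``long-stretched paraboloid piece'' $P_T$ discussed in Section~1.3, where the paper explicitly notes that \emph{no} sharp bilinear restriction estimate with the correct dependence on the anisotropy parameter $T$ is known. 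Invoking Tao's bilinear theorem as a black box therefore does not apply; after your rescaling you do not land on an elliptic surface of unit curvature, and you cannot simply absorb the discrepancy into the ``bounded error.'' (Your description of the rescaled object as simultaneously ``a surface of normalized type $(m_1,m_2)$'' and one ``with principal curvatures of unit size'' is also internally inconsistent: a normalized type $(m_1,m_2)$ surface has curvature degenerating as $\xi_i\to 0$.)

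What the paper actually does is rather different. Instead of normalizing each dyadic piece to unit curvature and citing Tao, it proves a quantitative bilinear estimate from scratch (Theorem~\ref{Eathm} and its globalization) via induction on scales, keeping explicit track of the dependence of the constant on the two curvature sizes $\kappa^{(1)},\kappa^{(2)}$ and the domain scales $d_i^{(j)}$, including a logarithmic loss in a quotient $C_0(S)$ that measures the anisotropy of the configuration. The Whitney decomposition is performed once on the full square $Q\times Q$ (Section~\ref{dyadicsum}), and each Whitney rectangle that touches an axis is further decomposed dyadically; only for such pairs of (generally asymmetric) pieces is the scaling of Lemma~\ref{scaling} applied, and the resulting summation in Section~6 is carried out with the fully quantitative bilinear input of Theorem~\ref{nextBLthm}. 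Your heuristic about the two summation directions (the ``homogeneous'' direction yielding $1/s'\ge(h+1)/p$ and the ``anisotropic'' direction yielding $1/s+(2\bar m+1)/p<(\bar m+2)/2$) is correct in spirit, but without a bilinear estimate whose constant has the right anisotropy dependence the sums simply do not close. This quantitative bilinear theorem — not the summation scheme — is the heart of the matter, and it is absent from your proposal.

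Additionally, the paper does \emph{not} establish the Lorentz-space endpoint by Bourgain's summation trick; that trick is used in the Appendix only to illustrate that the Ferreyra–Urciuolo range $p>4$ can be pushed to the critical line by simpler means. The main theorem's Lorentz-space conclusion comes from proving restricted weak-type estimates $\|\widehat{\chi_\Omega d\nu}\|_{2p}\lesssim|\Omega|^{1/s}$ in Theorem~\ref{withouttriangle} and interpolating, together with the comparison argument in Corollary~\ref{finish} that replaces $m$ by a fictitious exponent $m^\sharp$ to enlarge the range.
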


\subsection{Necessary conditions}\label{necessarycond}

The condition $p>h+1$ is in some sense the weakest one.  Indeed, the second condition already implies $p\geq h+1,$ and even  $p>h+1$ when $s<\infty.$ Thus the condition $p>h+1$ only plays a role when the critical  line
${1}/{s'}= {(h+1)}/{p}$ intersects the axis ${1}/{s}=0$ at a point where $p> p_c=10/3$ (cf. Figure \ref{sitI}).

However, the condition $p>h+1$ is necessary as well (although some kind of weak type estimate might hold true at the endpoint).
This can be shown by analyzing the oscillatory integral defined by $R^*_{\R^d} 1$ (compare {\cite{So87} for similar arguments). For the sake of simplicity, we shall do this only for the model case $\phi(\xi)=\xi_1^{m_1}+\xi_2^{m_2}$ (the more general case can be treated by
similar, but technically more involved arguments).

Indeed,  we have the following
 well-known lower bounds on  related oscillatory integrals (for the convenience of the reader, we include a proof):
\begin{lemnr}\label{oscint} Assume that $m\geq2.$		
\begin{enumerate}
 \item If  $1\ll\mu\ll\lambda\ll\mu^m$, then $\left| \int_0^\delta e^{i(\mu\xi-\lambda(\xi^m+\landau(\xi^{m+1}))}{d}\xi \right|
								\ge C_\delta \mu^{-\frac{m-2}{2m-2}}\lambda^{-\frac{1}{2m-2}}$,
							provided $\delta>0$ is sufficiently small.

\item If  $1\ll\mu^m\ll\lambda,$   $0\le \alpha <1$ and  $0\le \beta <1$ , then
	$$\left| \int_0^1
		 e^{i(\mu\xi-\lambda\xi^m)}\xi^{-\alpha}|\log(\xi/2)|^{-\beta} d\xi \right| \gtrsim \lambda^\frac{\alpha-1}{m} (\log\lambda)^{-\beta}.$$
\end{enumerate}
\end{lemnr}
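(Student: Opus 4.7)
I would handle the two parts separately; they correspond to qualitatively different regimes of the phase $\Phi(\xi)=\mu\xi-\lambda(\xi^m+O(\xi^{m+1}))$. In (i) there is a genuine non-degenerate critical point $\xi_0$ inside $(0,\delta)$ dictating the size of the integral via stationary phase, whereas in (ii) the would-be critical point lies below the coherence scale $\lambda^{-1/m}$, and the main contribution instead comes from the neighborhood $\xi\in[0,O(\lambda^{-1/m})]$ where the phase is still of order one.

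For (i), solving $\Phi'(\xi_0)=0$ gives $\xi_0\sim(\mu/(m\lambda))^{1/(m-1)}$, which lies well inside $(0,\delta)$ since $\mu/\lambda\to 0$. A direct computation gives $|\Phi''(\xi_0)|\sim\lambda^{1/(m-1)}\mu^{(m-2)/(m-1)}$, while the scale-invariant cubic remainder $|\Phi'''(\xi_0)|\,|\Phi''(\xi_0)|^{-3/2}\sim(\lambda/\mu^m)^{1/(2(m-1))}$ tends to $0$ under the assumption $\lambda\ll\mu^m$. Hence the method of stationary phase with explicit remainder yields
$$\int_0^\delta e^{i\Phi(\xi)}\,d\xi=\sqrt{2\pi/|\Phi''(\xi_0)|}\,e^{i\Phi(\xi_0)+i\pi\,\mathrm{sgn}(\Phi''(\xi_0))/4}+O(|\Phi''(\xi_0)|^{-1}),$$
whose main term has modulus $\sim|\Phi''(\xi_0)|^{-1/2}=\mu^{-(m-2)/(2(m-1))}\lambda^{-1/(2(m-1))}$, matching the claim. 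A more elementary alternative is to insert a smooth cutoff $\chi$ supported in $|\xi-\xi_0|\le c|\Phi''(\xi_0)|^{-1/2}$: on this support $|\Phi(\xi)-\Phi(\xi_0)|\le c^2+o(1)$ by Taylor expansion, so $\mathrm{Re}(e^{-i\Phi(\xi_0)}\chi\,e^{i\Phi})$ is uniformly positive, while the complementary region is dominated by integration by parts using $|\Phi'(\xi)|\gtrsim|\Phi''(\xi_0)|\,|\xi-\xi_0|$.

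For (ii) I would rescale $\xi=\lambda^{-1/m}\eta$, so the integral becomes $\lambda^{(\alpha-1)/m}\int_0^{\lambda^{1/m}}e^{i(\epsilon\eta-\eta^m)}\,\eta^{-\alpha}\bigl(\log(2\lambda^{1/m}/\eta)\bigr)^{-\beta}\,d\eta$, with $\epsilon:=\mu\lambda^{-1/m}\to 0$. On a bounded piece $\eta\in[0,M]$ the log factor is bounded by $((1/m)\log\lambda)^{-\beta}$ and pointwise equals it modulo $O(1/\log\lambda)$, so by dominated convergence the $\eta$-integral over $[0,M]$ converges (after dividing out $((1/m)\log\lambda)^{-\beta}$) to $\int_0^M e^{-i\eta^m}\eta^{-\alpha}\,d\eta$. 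The substitution $t=\eta^m$ together with Euler's contour-rotation formula evaluates
$$\int_0^\infty e^{-i\eta^m}\eta^{-\alpha}\,d\eta=\frac{1}{m}\,\Gamma\!\bigl(\tfrac{1-\alpha}{m}\bigr)e^{-i\pi(1-\alpha)/(2m)}\ne 0,$$
so for $M$ and $\lambda$ large the bounded contribution has modulus at least a non-zero constant times $(\log\lambda)^{-\beta}$. The tail $\eta\in[M,\lambda^{1/m}]$ is controlled by one integration by parts, exploiting $|\partial_\eta(\epsilon\eta-\eta^m)|\sim\eta^{m-1}$ (for $M$ large, since $\epsilon\to 0$); the main technical hurdle is precisely to rule out cancellation between this tail and the Gamma-type main term. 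The polynomial gain $\eta^{-(m-1)}$ from IBP is essential here: a cruder dyadic van der Corput estimate, summed over scales, would only match the main term in order. With the polynomial decay in hand, $M$ can be fixed independently of $\lambda$ so that the tail is a small fraction of $|c_{m,\alpha}|(\log\lambda)^{-\beta}$, yielding the claim.
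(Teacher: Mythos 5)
Your argument is correct, and both parts follow the paper's essential strategy, though packaged differently. In part (i) the paper first rescales by $\xi\mapsto(\mu/\lambda)^{1/(m-1)}\xi$, which moves the critical point to a fixed $O(1)$ location in $[0,1]$ and isolates the large oscillatory parameter $(\mu^m/\lambda)^{1/(m-1)}$; you instead stay in the original variables and read off $|\Phi''(\xi_0)|$ and the scale-invariant cubic correction directly. The resulting exponents agree. In part (ii) both proofs apply the same rescaling $\xi=\lambda^{-1/m}\eta$; the paper then simply asserts that the rescaled integral tends to $\int_0^\infty e^{-i\eta^m}\eta^{-\alpha}d\eta\neq 0$ "which is easily seen." You actually supply that reasoning: the explicit Gamma-function evaluation $\frac1m\Gamma(\frac{1-\alpha}{m})e^{-i\pi(1-\alpha)/(2m)}$ showing the limit is nonzero, and the integration-by-parts estimate for the tail $\eta\in[M,\lambda^{1/m}]$ that makes it a small fraction of the main term once $M$ is fixed. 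That tail estimate is not trivial (one must track the log weight, which degrades from $(\log\lambda)^{-\beta}$ near $\eta=M$ to $O(1)$ near $\eta=\lambda^{1/m}$), and filling it in is a genuine improvement in rigor over the paper's sketch.

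One small caveat on your "more elementary alternative" for (i). With a cutoff $\chi$ at scale $c|\Phi''(\xi_0)|^{-1/2}$ and small $c$ (so that $\mathrm{Re}(e^{-i\Phi(\xi_0)}\chi e^{i\Phi})$ is positive), the main term is only $\sim c\,|\Phi''(\xi_0)|^{-1/2}$, while a single integration by parts on the complement (using $|\Phi'(\xi)|\gtrsim|\Phi''(\xi_0)||\xi-\xi_0|$ and $|\chi'|\lesssim(c|\Phi''|^{-1/2})^{-1}$) gives a bound of order $c^{-1}|\Phi''(\xi_0)|^{-1/2}$. These constants fight: making $c$ small strengthens positivity but weakens the main term and worsens the complement; making $c$ large helps the complement but destroys the positivity argument, and iterating IBP does not improve matters because $\chi''\sim(c|\Phi''|^{-1/2})^{-2}$. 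The clean repair is to take $c$ a large fixed constant and replace "positivity of the real part" by a genuine Fresnel computation of $\int e^{\frac{i}{2}\Phi''(\xi_0)(\xi-\xi_0)^2}\chi\,d\xi$ (justified since the cubic correction $|\Phi'''|\,|\Phi''|^{-3/2}\to 0$, as you computed) — which is of course just the stationary phase expansion in disguise. Your primary route via the standard stationary phase formula with error $O(|\Phi''|^{-1})$ avoids this entirely and is the one to keep.
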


\begin{proof} (i)  Apply the transformation $\xi\mapsto\left({\mu}/{\lambda}\right)^\frac{1}{m-1}\xi$ to obtain
\begin{align*}
	 \int_0^\delta e^{i(\mu\xi-\lambda\xi^m+\landau(\xi^{m+1}))}{d}\xi
	&=
						\int_0^{\delta\left(\frac{\lambda}{\mu}\right)^\frac{1}{m-1}}
	\left(\frac{\mu}{\lambda}\right)^\frac{1}{m-1} e^{i\left(\frac{\mu^m}{\lambda}\right)^\frac{1}{m-1}\big(\xi-\xi^m+\landau((\frac \mu \lambda)^{\frac 1{m-1}}\xi^{m+1})\big)}{d}\xi \\
		&=\int_0^1  +\int_1^{\delta\left(\frac{\lambda}{\mu}\right)^\frac{1}{m-1}},
\end{align*}
where $\left({\mu}/{\lambda}\right)^\frac{1}{m-1}\ll 1$ and  $\left({\mu^m}/{\lambda}\right)^\frac{1}{m-1}\gg1$.
The phase function $\phi(\xi)=\xi-\xi^m+\landau((\frac \mu \lambda)^{\frac 1{m-1}}\xi^{m+1})$ has a unique critical point  at $\xi_0=\xi_0(\mu/\lambda)$ in $[0,1]$ lying very close to $m^{-1/(m-1)}$ , so we may  apply the method of stationary phase to the first integral and find that
$$
\left| \int_0^1  \right|
	\gtrsim  \left(\frac{\mu}{\lambda}\right)^\frac{1}{m-1}
					 \left(\frac{\mu^m}{\lambda}\right)^{-\frac{1}{2}\frac{1}{m-1}}.
$$
Moreover, integration by parts in  the second  integral leads to
\begin{align*}
	\left|\int_1^{\delta\left(\frac{\lambda}{\mu}\right)^\frac{1}{m-1}} \right|
	\lesssim  C_1\left(\frac{\mu}{\lambda}\right)^\frac{1}{m-1}\left(\frac{\mu^m}{\lambda}\right)^{-\frac{1}{m-1}},
\end{align*}
provided $\delta$ is sufficiently small. These estimates imply that
$$
 \left|	 \int_0^1  \left(\frac{\mu}{\lambda}\right)^\frac{1}{m-1}e^{i(\mu\xi-\lambda\xi^m+\landau(\xi^{m+1}))}{d}\xi \right|	\gtrsim \left(\frac{\mu}{\lambda}\right)^\frac{1}{m-1}	  \left(\frac{\mu^m}{\lambda}\right)^{-\frac{1}{2}\frac{1}{m-1}}
	= \mu^{-\frac{m-2}{2m-2}}\lambda^{-\frac{1}{2m-2}}.					 				
$$
\medskip
\noindent (ii)
Apply the change of variables $\xi\mapsto\lambda^{-\frac{1}{m}}\xi$ to obtain
\begin{align*}
	&\left| \int_0^1 e^{i(\mu\xi-\lambda\xi^m)}\xi^{-\alpha}|\log(\xi/2)|^{-\beta}{d}\xi \right|\\
		=& \lambda^{\frac{\alpha-1}{m}}|\log(\lambda^{-\frac 1m})|^{-\beta} \left| \int_0^{\lambda^\frac{1}{m}}
			 e^{i(\mu\lambda^{-\frac{1}{m}}\xi-\xi^m)} \xi^{-\alpha} \Big(1+\frac{|\log(\xi/2)|}{\frac 1m \log\lambda}\Big)^{-\beta}\,  {d}\xi \right|	\\
	\gtrsim& \lambda^{\frac{\alpha-1}{m}}(\log \lambda)^{-\beta}.							
\end{align*}
Notice here that $\lambda^{1/m}\gg 1$ and $\mu\lambda^{-\frac{1}{m}}\ll 1,$ and that, as $\lambda\to\infty,$  the last oscillatory integral tends to $\int_0^\infty e^{-i\xi^m} \xi^{-\alpha} {d}\xi\ne 0$ (which is easily seen).
\end{proof}

Part (ii) of the lemma implies that
\begin{align*}
	\left| \int_0^1 e^{-i(x_1\xi-x_3\xi^{m_1})}d\xi\right| \gtrsim x_3^{-\frac{1}{m_1}}
\end{align*}
for $1\leq x_3<\infty$, $1\ll x_1^{m_1}\ll x_3,$
 and since  $R^*_{\R^d} 1=\widehat {d\nu},$  we find that
\begin{align*}
	\|R^*_{\R^d} 1\|_p^p
	\geq&\int_1^\infty \int_{1\ll x_2\ll x_3^\frac{1}{m_2}} \int_{1\ll x_1\ll x_3^\frac{1}{m_1}}
									|R^*_{\R^d}1 (x_1,x_2,-x_3)|^p d x_1d x_2d x_3	\\
	\gtrsim&	\int_1^\infty \int_{1\ll x_2\ll x_3^\frac{1}{m_2}}d x_2
		\int_{1\ll x_1\ll x_3^\frac{1}{m_1}}d x_1 x_3^{-p\left(\frac{1}{m_1}+\frac{1}{m_2}\right)} d x_3\\ 		 \gtrsim& \int_1^\infty x_3^{(1-p)\left(\frac{1}{m_1}+\frac{1}{m_2}\right)} d x_3	\\
		=& \int_1^\infty x_3^{-\frac{p-1}{h}} d x_3.
\end{align*}
If the adjoint Fourier restriction operator is bounded, the integral has to be finite, thus necessarily
${(p-1)}/{h}>1$, i.e., $p>h+1.$

\medskip

Next, to see that the condition \eqref{secondnec}, i.e.,
$	{(\bar m+2)}/{2}>{(2\bar m+1)}/{p}+{1}/{s},
$
is necessary in Theorem $\ref{mainthm}$, we consider the subsurface
$$\Gamma_0=\{(\xi,\phi(\xi)):\xi\in[0,1]\times[1/2,1/2+\delta]\},
$$
where $\delta>0$ is assumed to be sufficiently small. On this subsurface, the principal curvature in  the $\xi_2$-direction is bounded from below. This means that, after applying  a suitable affine transformation of coordinates, the restriction problem for the surface $\Gamma_0$ is equivalent to the  one for  the surface
 $$\Gamma_{m_1,2}=\{( \xi_1,\xi_2,\xi_1^{m_1}+c(\xi_2^2+\landau(\xi_2^3))):(\xi_1,\xi_2)\in[0,1]\times[0,\delta]\},
 $$
 where $c>0.$
 \medskip

As stated  in Remark \ref{remsonnec}, the condition \eqref{secondnec} only plays a role above the bi-sectrix $1/s=1/p$.  So, assume that $p<s$ (as explained, this excludes only  the case where $m=2$ and $s=p=p_0$). Then we may choose
$\beta<1$ such that $\beta s>1>\beta p.$
Assume that $\mathcal R^\ast_{\Gamma}$ is bounded from $L^{s}(\Gamma)$ to $L^p(\R^3)$, i.e.,  $\mathcal R^\ast_{\Gamma_{m_1,2}}$ is bounded from $L^{s}(\Gamma_{m_1,2})$ to $L^p(\R^3)$. Passing again from the surface measure $d\sigma$ to the ``Lebesgue measure'' $d\nu$ on $\Gamma_{m_1,2},$  define $f(\xi_1,\xi_2)=\xi_1^{-{1}/{s}}\log(\xi_1/2)^{-\beta}\in L^{s}(\Gamma_{m_1,2}, d\nu ).$ Then
\begin{align*}
	|\widehat{f{d}\nu}(x_1,x_2,-t)|
	=& \left|\int_{[0,1]\times[0,\delta]} e^{i(x_1\xi_1+x_2\xi_2-t(\xi_1^{m_1}+c(\xi_2^2+\landau(\xi_2^3)))} \xi_1^{-\frac{1}{s}}\log(\xi_1/2)^{-\beta} {d}(\xi_1,\xi_2)\right|	\\
	=& \left|\int_0^1 e^{i(x_1\xi_1-t \xi_1^{m_1})} \xi_1^{-\frac{1}{s}}\log(\xi_1/2)^{-\beta} {d}\xi_1\right|	
			\  \left|\int_0^\delta e^{i(x_2\xi_2-tc(\xi_2^2+\landau(\xi_2^3))} {d}\xi_2\right|.			
\end{align*}
We estimate the first integral by means of  Lemma \ref{oscint} (ii),  and  for the second one we use  Lemma \ref{oscint} (i) (with $m=2$), which leads to
\begin{align*}
	\infty 	>  \|f\|_{s}^p \gtrsim \|\widehat{f{d}\nu}\|_p^p	
		\gtrsim& \int_N^\infty 	\int_{t^{1/2}}^t \int_1^{t^{\frac 1{m_1}}} t^{-\frac{p}{s'm_1}}  t^{-\frac{p}{2}} (\log t)^{-\beta p}\, dx_1 dx_2 dt \\		\approx& \int_N^\infty   t^{1-\frac{p}{2}}  t^{\frac{1}{m_1}-\frac{p}{s'm_1}}(\log t)^{-\beta p} \, dt,									
\end{align*}
provided $N$ is chosen sufficiently large.
This implies that  necessarily
\begin{equation*}
	 1-\frac{p}{2}+\frac{1}{m_1}-\frac{p}{s'm_1}<-1,
\end{equation*}
which is equivalent to
\begin{equation*}
	\frac{m_1+2}{2}>\frac{2m_1+1}{p}+\frac{1}{s}.
\end{equation*}
Interchanging the roles of $\xi_1$ and $\xi_2$, we obtain the same inequality for $m_2$ and hence for $\bar m=m_1\vee m_2$, and we arrive at  \eqref{secondnec}.


\medskip

Let us finally prove  that on the critical line $1/s'=(h+1)/p$ one cannot have strong-type estimates above the bi-sectrix $1/s=1/p$, i.e., for $s>p$. In this regime, we find  some $1>r>0$ such that $1/s<r<1/p$. Let
\begin{align*}
	f(\xi) = \xi_2^{-m_2/sh}| \log(\xi_2/2)|^{-r} \chi_{\{\xi_1^{m_1}\leq \xi_2^{m_2}\}}(\xi).
\end{align*}
It is easy to check that $f\in L^s(\Gamma)$ since $1<rs$.
Now assume that $1\ll x_j^{m_j}\ll t$ for $j=1,2,$  more precisely choose $N\gg1$ and assume that  $N^2\leq Nx_j^{m_j}\leq t$ for $j=1,2$. Then
\begin{align*}
	(R^*_{\R^d}f)(x_1,x_2,-t) &=
	\int_0^1  e^{-i(x_2\xi_2-t\xi_2^{m_2})} \xi_2^{-m_2/sh} |\log(\xi_2/2)|^{-r} \\
	&\qquad\times \int_0^{\xi_2^{m_2/m_1}}	 e^{-i(x_1\xi_1-t\xi_1^{m_1})}d\xi_1 d\xi_2.
\end{align*}
Since $x_1^{m_1}\ll t$ is equivalent to $(\xi_2^{m_2/m_1}x_1)^{m_1}\ll t\xi_2^{m_2}$,  Lemma \ref{oscint} (ii) gives
\begin{align*}
	\left|\int_0^{\xi_2^{m_2/m_1}}	 e^{-i(x_1\xi_1-t\xi_1^{m_1})}d\xi_1 \right|
	= \xi_2^\frac{m_2}{m_1} \left|\int_0^{1}	e^{-i(\xi_2^{m_2/m_1}x_1\eta-t\xi_2^{m_2}\eta^{m_1})}d\eta \right|
	\gtrsim t^{-\frac{1}{m_1}}.
\end{align*}
Applying Lemma \ref{oscint} once more, we obtain
\begin{align*}
	\left|(R^*_{\R^d}f)(x_1,x_2,-t)\right|
	\gtrsim t^{-\frac{1}{m_1}} t^{\frac{1}{sh}-\frac{1}{m_2}} \log^{-r}(t/2)
	= t^{-\frac{1}{p}\left(1+\frac{1}{h}\right)} \log^{-r}(t/2),
\end{align*}
where we made use of $1/s'=(h+1)/p$. Thus we get
\begin{align*}
	\|R^*_{\R^d}f\|_p^p \gtrsim&
	\int_{N^2}^\infty 	\int_{N^\frac{1}{m_2}}^{(t/N)^\frac{1}{m_2}} \int_{N^\frac{1}{m_1}}^{(t/N)^\frac{1}{m_1}} t^{-1-\frac{1}{h}} \log^{-rp}(t/2) \, dx_1 dx_2 dt	\\
	\approx& \int_{N^2}^\infty 	t^{-1} \log^{-rp}(t/2) \,  dt =\infty,
\end{align*}
since $rp<1$.

\bigskip

Let us finish this subsection by adding a few  more observations and remarks.

(a) First, observe that $\Gamma_0$ is a subset of
$$
	\Gamma_1=\{(\xi,\phi(\xi))\in \Gamma:|\xi|\sim 1\}.
$$

(b)  One can use the  dilations $(\xi_1,\xi_2)\mapsto (r^{1/m_1}\xi_1,r^{1/m_2}\xi_2), \, r>0,$ in order to decompose $Q=[0,1]\times [0,1]$ into ``dyadic annuli'' which, after rescaling, reduces the  restriction problem in many situations  to  the one  for  $\Gamma_1$ (this kind of approach has been used intensively  in \cite{ikm}, \cite{IM-uniform},  as well as in   \cite{FU}).

Indeed, on the one hand, any restriction estimate on $\Gamma$ clearly implies the same  estimate also for the subsurface
$\Gamma_1.$  On the other hand, the estimates for the  dyadic pieces sum  up below the sharp critical  line (this has been the approach in  \cite{FU}), i.e.,  when  $1/{s'}>{(h+1)}/{p}.$ Moreover, in many situations one may apply Bourgain's summation trick  in a similar way as described in Section \ref{summationtrick} in order to establish weak-type estimates also when $(1/s,1/p)$ lies on the critical line, i.e., when $1/{s'}={(h+1)}/{p}.$ However, we shall not pursue this approach here, since it would not give too much of a simplification for us and since our approach (outlined in the next subsection)  seems to lead to even  somewhat sharper  result. Moreover, it seems useful and more systematic to understand   bilinear restriction estimates for quite general pairs of pieces of our surfaces $\Gamma,$ and not only the ones which would arise from $\Gamma_1.$

 \medskip
(c) On $\Gamma_1$, one of the  two principal curvatures may vanish, but not both. Notice also that by  dividing  $\Gamma_1$ into  a finite number  of pieces lying in sufficiently small angular sectors  and applying a suitable affine transformation to each of them, we may reduce to surfaces of the form
$$
\Gamma_{m,2}=\{( \xi_1,\xi_2,\psi_m(\xi_1)+\xi_2^2+\landau(\xi_2^3)):\xi_1,\xi_2\in[0,1]\},
$$
 where $\psi_m(\xi_1)\sim \xi_1^m$  as before, with $m=m_1$ or $m=m_2$  (compare also our previous discussion of necessary conditions).  Applying then a further  dyadic decomposition in $\xi_1,$  we  see that we may essentially  reduce to subsurfaces on which $\xi_1\sim\e,$ with $\e>0$  a small dyadic number.
 Note that on these  we have non-vanishing Gaussian curvature,  but the lower bounds of the curvature  depend on $\e>0$. A rescaling then leads to surfaces of the form
 $$
 P_T=\{(\xi_1,\xi_2,\xi_1^2+\xi_2^2+\landau(\xi_1^3+T^{-1}\xi_2^3)): \xi_1\in[0,1],\xi_2\in[0,T]\},
 $$
 with  $T=\e^{-\frac{m}{2}}\gg 1.$
 A prototype of such a situation would be the part of the standard  paraboloid  lying above a very long-stretched  rectangle.
  Although Fourier restriction estimates for the paraboloid  have been studied extensively, the authors are not aware of any results that would give  the right control on the dependence on the parameter $T\gg 1$. Indeed, one can prove  that  the following  lower bound for the adjoint restriction operator $R_T^*=R^*_{P_T,\R^2}$
associated to Lebesgue measure on  $P_T$  holds true for all $s$ and $p$ for which $(1/s,1/p)$ lies  within  the shaded region in Figure \ref{conjecture}\begin{align}\label{pconjecture}
   \|R^*_{T}\|_{L^s(P_T)\to L^p(\R^3)} \gtrsim T^{\left(\frac{1}{p}-\frac{1}{s}\right)_+},
   \end{align}
and a reasonable conjecture is that also the reverse inequality essentially holds true, maybe up to an extra factor $T^\delta$, i.e., that
\begin{align}\label{pconjecture2}
   \|R^*_{T}\|_{L^s(P_T)\to L^p(\R^3)} \le C_\delta T^{\delta+\left(\frac{1}{p}-\frac{1}{s}\right)_+},
\end{align}
for every $\delta>0.$

\begin{figure}[ht]
\centering
  \includegraphics[width=0.5\textwidth,height=0.18\textwidth]{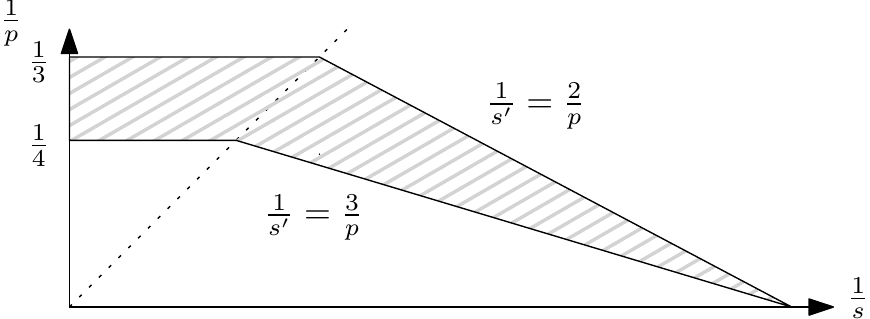}
\caption{Region  on which \eqref{pconjecture} is valid}
\label{conjecture}       
\end{figure}



We give some hints why \eqref{pconjecture} holds true and why the inverse inequality (with $\delta$-loss) seems a  reasonable conjecture.
Let $d\nu_T$ denote the ``Lebesgue measure'' on  $P_T$. Then by Lemma \ref{oscint}
\begin{align*}
	|\widehat{d\nu_T}(x_1,x_2,t)|\gtrsim& t^{-\frac{1}{2}}
	\int_0^T e^{i(x_2\xi_2+t[\xi_2^2+\landau(T^{-1}\xi_2^3)])} d\xi_2 \\
	=& T t^{-\frac{1}{2}}
	\int_0^1 e^{i(x_2T\eta+tT^2[\eta^2+\landau(\eta^3)])} d\eta
	\gtrsim t^{-1},
\end{align*}
provided $x_1\ll t$ and $x_2\ll Tt$ (we may arrange matters in the preceding reductions so that the error term  $\landau(\eta^3)$ is small compared to $\eta^2$).
Hence, since we assume that $p>3,$
$$
   \|\widehat{d\nu_T}\|_{L^p(\R^3)} \gtrsim T^\frac{1}{p}.
$$
Obviously $\|1\|_{L^s(P_T, d\nu_T)}=T^\frac{1}{s}$, so we see that
\begin{align*}
	\|R^*_{T}\|_{L^s(P_T)\to L^p(\R^3)} \gtrsim T^{\left(\frac{1}{p}-\frac{1}{s}\right)}.
\end{align*}
Restricting $P_T$ to  the region where $\xi_2\leq1$, we see that   also
$	\|R^*_{T}\|_{L^s(P_T)\to L^p(\R^3)} \gtrsim 1,$
and combining these two  lower bounds gives \eqref{pconjecture}.
\medskip

On the other hand,  from Remark 4, (2.4) in \cite{FU} we easily obtain by an obvious rescaling-argument  that for $ 1/{s'}=3/p$ and  $p>4$ (hence $1/p<1/s$), we have
$$ \|R^*_{T}\|_{L^s(P_T,d\nu_T)\to L^p(\R^3)}\le C,$$
uniformly in $T.$
It is conjectured that for the  entire  paraboloid  $\mathcal P=\{(\xi_1,\xi_2,\xi_1^2+\xi_2^2): (\xi_1,\xi_2)\in\R^2\},$ the adjoint restriction operator $R^*_{\mathcal P,\R^2}$ is bounded for $1/{s'}=2/p$ and $p>3$ (hence $1/p<1/s$). It would be reasonable to expect the same kind of behaviour for suitable perturbations of the paraboloid, and subsets of those, such as $P_T$ (maybe with an extra factor $T^\delta,$ for any $\delta>0$). By complex interpolation, the previous estimate in combination with the latter  conjectural estimate   would  lead to
$$ \|R^*_T\|_{L^s(P_T, d\nu_T)\to L^p(\R^3)}\le C_\delta T^\delta,$$
for every $\delta>0,$ provided that  $1/p<1/s$ and $2/p<1/{s'}<3/p.$ In combination with a trivial application of
H\" older's inequality this leads to the conjecture \eqref{pconjecture2}
$$
   \|R^*_{T}\|_{L^s(P_T)\to L^p(\R^3)} \le C_\delta T^{\delta+\left(\frac{1}{p}-\frac{1}{s}\right)_+},
$$
for every $\delta>0,$ provided $(1/s,1/p)$ lies  within  the shaded region in Figure \ref{conjecture}.

\subsection{The strategy of the approach}
We will study certain bilinear operators. For a suitable pair of subsurfaces $S_1,S_2\subset S$ (we will be more specific on this point later), we seek to establish  bilinear estimates
\begin{align*}
	\|R_{\R^2}^* f_1 R_{\R^2}^*f_2\|_{L^p(\R^3)} \leq
	C_p C(S_1,S_2) \, \|f_1\|_{L^2(S_1)} \|f_2\|_{L^2(S_2)},
\end{align*}
for functions $f_1,\, f_2$ supported in $S_1$ and $S_2,$ respectively.

For hypersurfaces with nonvanishing Gaussian curvature and principal  curvatures of the same sign, the sharp estimates of this type, under the appropriate transversality assumption,  appeared in \cite{T2} (after previous partial results in \cite{TVV}, \cite{TV1}). For the light cone in any dimension, the analogous results were established in \cite{W2}, \cite{T4} (improving on earlier  results  in \cite{Bo3} and \cite{TV1}). For the case of principal curvatures of different sign, or with a smaller number of non-vanishing principal curvatures, sharp bilinear  results are also known \cite{lee05}, \cite{v05}, \cite{LV}.
\medskip

What is crucial for  us is to know how the constant $C(S_1,S_2)$  explicitly depends on the pair of surfaces  $S_1$ and $S_2,$ in order to be   able to sum all the bilinear estimates  that we obtain for pairs of pieces of our given surface, to pass to a linear estimate. Classically, this is done by proving a bilinear estimate for one "generic" class  of subsurfaces. For instance,  if $S$ is the paraboloid,  then other pairs of subsurfaces can be reduced to it  by  means suitable affine transformations and homogeneous rescalings. However,  general surfaces do not come with such kind of  self-similarity under these  transformations, and it  is one of the new features of this article that we will establish  very precise bilinear estimates.
\medskip

The bounds on the constant $C(S_1,S_2)$  that we establish will  depend on the size of the domains and local principal curvatures of the subsurfaces, and we shall have to keep track of these  during the whole proof. In this sense, many of the lemmas are generalized, quantitative versions of well known results from classical bilinear theory.
\medskip

The pairs of subsurfaces that we would like to discuss are pieces of the surface sitting over two dyadic rectangles and  satisfying certain separation or "transversality" assumptions. However, such a rectangle might touch one of the axis, where some principle curvature is vanishing. In this case we will decompose dyadically a second time.
But even on these smaller sets, we do not have the correct "transversality" conditions; we first have to find a proper rescaling such that the scaled subsurfaces allow to run the bilinear machinery.
\medskip

The following section will begin with the bilinear argument to provide us with a very general bilinear result for  sufficiently "good" pairs of surfaces. In the subsequent section, we construct a suitable scaling in order  to apply this general result to our situation. After rescaling and several additional arguments, we pass to a global bilinear estimate and finally proceed to the linear estimate.

\medskip

A few more remarks on the notion will be useful:
as mentioned before, it is very important to know precisely how the constants depend on the specific choice of subsurfaces. Moreover, there will appear other constants,  depending possibly  on $m_1,m_2,p,q$, or other quantities, but not explicitly on the choice of subsurfaces. We will not keep track of such  types of constants, since it would even set a false focus and distract the reader. Instead we will simply use the symbol $\lesssim$ for an inequality involving one of these constants of minor importance. To be more precise on this, later we introduce a family of pairs of subsurfaces $\s_0$. Then for quantities $A,B:\s_0\to\R$ the inequality $A\lesssim B$ means there exists a constant $C>0$ such that
$A(S_1,S_2)\leq C B(S_1,S_2)$ uniformly for all $(S_1,S_2)\in\s_0$.
\medskip

Moreover, we will also use the notation $A\sim  B$ if $A\lesssim B$ and $B\lesssim A$. We will even use this notation for vectors, meaning their entries are comparable in each coordinate.
Similarly, we write $A\ll B$ if there exists a constant $c>0$ such that
$A(S_1,S_2)\leq c B(S_1,S_2)$ for all $(S_1,S_2)\in\s_0$ and $c$ is "small enough" for our purposes. This  notion of  being "sufficiently small" will in general  depend on the situation and further constants, but the choice will  be uniform in the sense that it will  work for all pairs of subsurfaces in the class $\s_0$.

The inner product of two vectors $x,y$  will usually be denoted by $xy$ or $x\cdot y,$ occasionally also by $\langle x,y\rangle.$ 

\section{General bilinear theory}

\subsection{Wave packet decomposition}
We begin with what is basically a well known result, although we need a more quantitative version (cf. \cite{T3}, \cite{lee05}).

\begin{lemnr}\label{wave packets}
Let $U\subset\R^d$ be an open and bounded subset, and let $\phi\in C^\infty(U,\R).$
We assume that there exist  constants $\kappa>0$ and  $D\leq {1}/{\kappa}$ such that $\|\partial^\alpha\phi\|_\infty \le A_\alpha\kappa D^{2-|\alpha|}$ for all $\alpha\in\N^d$ with $|\alpha|\geq2$.
Then for every $R\geq1$ there exists a wave packet decomposition adapted to $\phi$ with tubes of radius
$R/D=R'$ and length $R^2/(D^2\kappa)=R'^2/\kappa$, where we have put  $R=R'D.$

More precisely,  consider the   index sets  $\Y=R'\Z^d$ and  $\V=R'^{-1}\Z^d \cap U$,  and  define for $w=(y,v)\in \Y\times \V= \W$ the tube
\begin{align}\label{wavepack}
	T_w= \Big\{(x,t)\in\R^d\times\R: |t|\leq \frac{R'^2}{\kappa},~|x-y+t\nabla\phi(v)|\leq R'\Big\}.
\end{align}
Then, given any function $f\in L^2(U),$  there exist functions (wave packets) $\{p_w\}_{w\in \W}$ and coefficients $c_w\in\C$ such that  $R^*_{\R^d}f$ can be decomposed as
$$
R^*_{\R^d}f(x,t)=\sum\limits_{w\in\W} c_w p_w(x,t)
$$
for every $t\in\R$ with  $|t|\leq{R'^2}/{\kappa},$ in such a way that  the following hold true:
\begin{enumerate}
\renewcommand{\labelenumi}{(P\arabic{enumi})} 
	\item $p_w=R^*_{\R^d}(\FT_{\R^d}^{-1} (p_w(\cdot,0))).$
	\item $\supp \FT_{\R^{d+1}} p_w \subset B((v,\phi(v)),2/R').$
	\item $p_w$ is essentially supported in $T_w,$  i.e.,
					$|p_w(x,t)|\leq C_N R'^{-\frac{d}{2}} \left(1+\frac{|x-y+t\nabla\phi(v)|}{R'}\right)^{-N}$
					for every $N\in\N.$
				In particular,  $\|p_w(\cdot,t)\|_2 \lesssim 1$.
	\item For all $W\subset \W$, we have
					$ \|\sum\limits_{w\in W} p_w(\cdot,t)\|_2\lesssim |W|^\frac{1}{2}$.
	\item $\|c\|_{\ell^2} \lesssim \|f\|_{L^2}$.
\end{enumerate}
\renewcommand{\labelenumi}{(\roman{enumi})} 
Moreover, the constants arising explicitly (such as the $C_N$) or implicitly in these estimates  can be chosen to depend only on the constants  $A_\alpha$ but no further on the function $\phi,$  and also not on the other quantities $R,D$ and $\kappa$  (such constants will be called admissible).
\end{lemnr}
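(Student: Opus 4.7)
The plan is to carry out a standard Córdoba–Fefferman wave packet decomposition, adapted so that the scales match the curvature bounds on $\phi$. The choice $R' = R/D$ in physical space and $1/R'$ in frequency is dictated by Taylor's theorem: for $\xi \in B(v, 1/R')$ and $|\alpha|\ge 2$, the $|\alpha|$-th order term in the Taylor expansion of $\phi$ around $v$ is bounded by $A_{|\alpha|}\kappa D^{2-|\alpha|} R'^{-|\alpha|} = A_{|\alpha|} \kappa D^2/R^{|\alpha|} \lesssim \kappa/R'^2$ (using $R \geq 1$). Multiplied by $|t|\leq R'^2/\kappa$, the nonlinear part of the phase $x\cdot\xi + t\phi(\xi)$ contributes at most an admissible constant, so on these scales the phase is essentially linear---which is exactly what produces the tube geometry of radius $R'$ and length $R'^2/\kappa$.

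Concretely, I would fix a smooth partition of unity $\{\psi_v\}_{v \in \V}$ subordinate to the cover $\{B(v, 2/R')\}_{v\in \V}$, and for each $v$ expand $f \psi_v$ as a Fourier series on a cube of side $2\pi/R'$ containing $\supp \psi_v$, obtaining $f(\xi) \psi_v(\xi) = \sum_{y \in \Y} c_{y,v}\, R'^{d/2}\, e^{-iy\cdot\xi}\, \chi_v(\xi)$, where $\chi_v$ is a slight enlargement of $\psi_v$ with $\chi_v \psi_v = \psi_v$. Setting $g_w(\xi) := R'^{d/2}\, e^{-iy\cdot \xi}\, \chi_v(\xi)$ and $p_w := R^*_{\R^d}g_w$, property (P1) holds by construction, (P2) follows from $\supp g_w \subset B(v, 2/R')$ lifted to the graph of $\phi$, and (P5) is Bessel's inequality for the orthonormal system of exponentials on this cube.

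The main technical step is (P3). Writing $\omega := x - y + t\nabla\phi(v)$ and Taylor expanding $\phi(\xi) = \phi(v) + \nabla\phi(v)\cdot(\xi-v) + E_v(\xi)$, the derivative bounds on $\phi$ give, for all multi-indices $\alpha$ of bounded order,
\begin{align*}
 \bigl|\partial^\alpha_\xi E_v(\xi)\bigr| \lesssim \kappa\, R'^{|\alpha|-2}, \qquad \xi \in B(v, 2/R'),
\end{align*}
with admissible constants. For $|t|\leq R'^2/\kappa$ this gives $|\partial^\alpha_\xi (tE_v)| \lesssim R'^{|\alpha|}$, so $e^{itE_v}\chi_v$ is a smooth bump adapted to $B(v, 2/R')$ in the usual symbolic sense. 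Repeated integration by parts in $\xi$ against the linear phase $\omega\cdot \xi$ then yields, for every $N$,
\begin{align*}
 |p_w(x,t)| \leq C_N R'^{-d/2}\, \min\!\bigl(1,\ (R'/|\omega|)^N\bigr)\ \lesssim\ C_N R'^{-d/2}\bigl(1 + |\omega|/R'\bigr)^{-N},
\end{align*}
which is exactly (P3); the $L^2$ bound in (P3) is immediate, and (P4) then follows by squaring and summing together with the near-orthogonality of the $g_w$'s at scale $1/R'$ in frequency (different $v$ give essentially disjoint supports, while for fixed $v$ the $\{e^{-iy\cdot\xi}\}$ form an orthogonal system).

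The main obstacle is the bookkeeping required to guarantee that every constant is admissible, i.e., depends only on the $A_\alpha$. Because the Hessian bound scales like $\kappa$, the higher derivatives like $\kappa D^{2-|\alpha|}$, and the wave-packet scales $R'$ and $R'^2/\kappa$ mix all three parameters $R$, $D$, $\kappa$, one has to check at every integration by parts that the factors of $R'$ produced by derivatives of $\chi_v$ and of $e^{itE_v}$ combine cleanly and that the hypothesis $R \geq 1$ (equivalently $DR' \ge 1$) is used exactly where it is needed. This is really the only non-mechanical part of the proof.
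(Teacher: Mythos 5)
Your proposal is correct, and it reaches the same conclusion as the paper by a genuinely different decomposition. The paper sets $F_{(y,v)}=\FT_{\R^d}^{-1}\bigl(\widehat{\psi_v f}\,\eta_y\bigr)=(\psi_v f)\ast\check\eta_y$, i.e.\ it first frequency-localizes via $\psi_v$ and then \emph{smoothly} localizes in physical space by convolving with a Schwartz bump $\check\eta_y$ whose Fourier transform is compactly supported; the wave packet is $q_w=R^*_{\R^d}F_w$ and the coefficient is taken to be $c_w=R'^{d/2}\,M(\widehat{\psi_v f})(y)$ with $M$ the Hardy--Littlewood maximal operator, so that (P5) ultimately rests on an $L^2$-boundedness/vector-valued maximal argument (the paper delegates this to \cite{lee05}). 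You instead expand $f\psi_v$ in an exact Fourier series on a cube of side $2\pi/R'$, obtaining wave packets $g_{(y,v)}=R'^{d/2}e^{\pm i y\cdot\xi}\chi_v(\xi)$ that are \emph{exactly} orthogonal for fixed $v$; this makes (P5) a one-line Bessel/Plancherel estimate together with the finite overlap of the $\chi_v$, and (P4) follows from the same almost-orthogonality. The price of the Fourier-series route is that the individual $g_w$ are not physically localized a priori (they are pure phases times a frequency cutoff); physical localization only appears a posteriori through the integration by parts in (P3) — but that step is identical in both proofs, and it is exactly where the hypotheses $\|\partial^\alpha\phi\|_\infty\le A_\alpha\kappa D^{2-|\alpha|}$, $|t|\le R'^2/\kappa$, and $R\ge1$ enter: you correctly observe that $|\partial^\alpha E_v|\lesssim\kappa R'^{|\alpha|-2}$ on $B(v,2/R')$ requires $D^{2-|\alpha|}\le R'^{|\alpha|-2}$, i.e.\ $R^{|\alpha|-2}\ge1$, which is precisely $R\ge1$. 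Both routes deliver admissible constants; yours is arguably more self-contained on (P5), while the paper's convolution picture makes (P2) and the ``essential support in $T_w$'' picture visible immediately from the definition of $F_w$.
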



\begin{figure}[ht]
\centering
  \includegraphics[width=0.5\textwidth,height=0.4\textwidth]{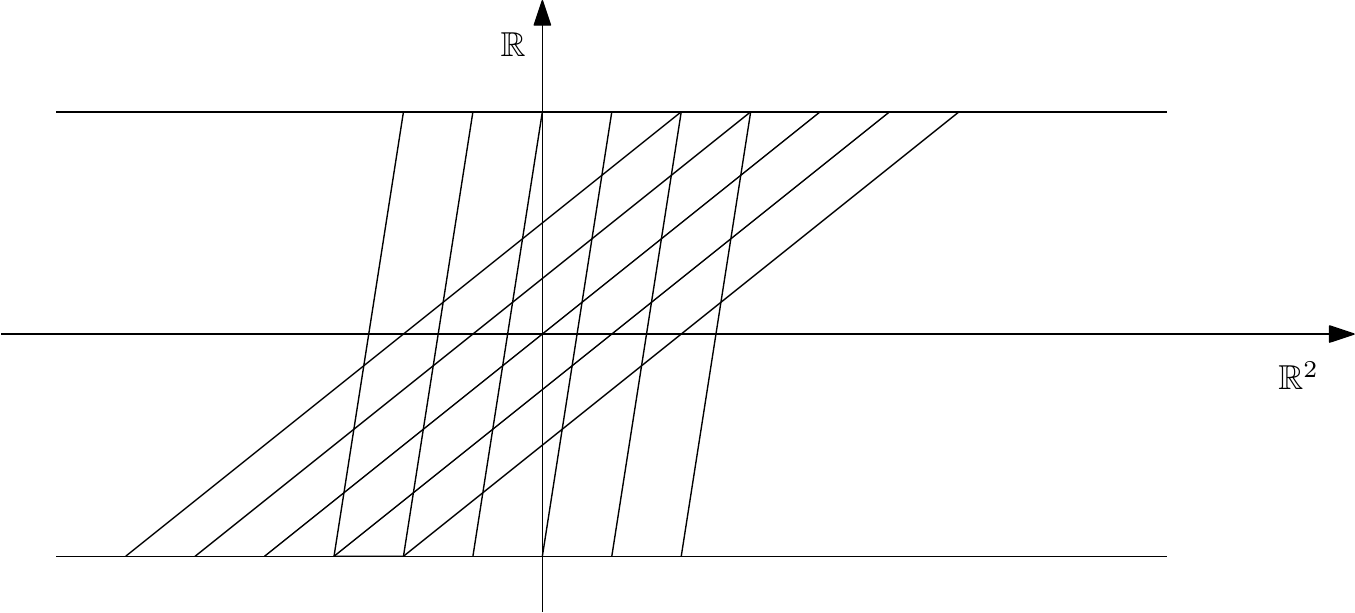}
\caption{The tubes $T_w$ fill a horizontal strip}
\label{horizonstrip}       
\end{figure}

\begin{remnrs} \label{reformulationrem}  (i) Notice that no bound is required on $\nabla\phi$ at this stage; however, such  bounds will become important later (for instance in  (iii)).

(ii) Denote by $N(v)$ the normal vector at $(v,\phi(v))$ to the graph of $\phi$ which is given by  $N(v)=(-\nabla\phi(v),1)$. Since $R'^2/\kappa\ge R',$ we may  thus re-write
$$T_w=(y,0)+\Big\{tN(v):|t|\leq\frac{R'^2}{\kappa}\Big\}+\landau(R').
$$
Moreover,
$$|x-y+t\nabla\phi(v)|=|(x,t)-(y,0)-tN(v)|\geq \dist((x,t),T_w).$$ It is then easily seen that  (P3) can be re-written as
$$
|p_w(z)|\leq C_N R'^{-\frac{d}{2}} \left(1+\frac{\dist(z,T_w)}{R'}\right)^{-N}
$$
for all $z\in\R^{d+1}$ with $|\langle z, e_{d+1}\rangle|\leq R'^2/\kappa,$ where $e_{d+1}$ denotes  the last vector of the canonical basis of $\R^{d+1}.$  This justifies the statement that ``$p_w$ is essentially supported in $T_w$.''

(iii) Notice further that we can re-parametrize the wave packets by lifting $\V$ to $\tilde\V=\{(v,\phi(v)):v\in\V\}\subset S$. If we now assume that $\|\nabla\phi\|\lesssim 1$, then we have $|(v,\phi(v))-(v',\phi(v'))|\sim  |v-v'|$, and  thus $\tilde\V$ becomes an $R'^{-1}$-net in $S$.
Finally, we shall identify a parameter $y\in\R^d$ with the point $(y,0)$ in the hyperplane $\R^d\times\{0\}$.
\end{remnrs}

\begin{proof} We will basically follow the proof by Lee \cite{lee05}; the only new feature consists in elaborating the precise  role of the constant $\kappa$.\\
 Let $\psi,\hat\eta\in C_0^\infty(B(0,1))$ be chosen in a  such a way that for
$\eta_y(x)=\eta(\frac{x-y}{R'})$, $\psi_v(\xi)=\psi(R'(\xi-v))$
we have $\sum\limits_{v\in\V}\psi_v=1$ on $U$ and $\sum\limits_{y\in\Y} \eta_y=1$.
We also chose a slightly bigger function  $\tilde\psi\in C_0^\infty(B(0,3))$ such that $\tilde\psi=1$ on $B(0,2)\supset\supp\psi+\supp\hat\eta,$ and put $\tilde\psi_v(\xi)=\tilde\psi(R'(\xi-v))$.
Then the functions
$$
F_{(y,v)}=\FT_{\R^d}^{-1}(\widehat{\psi_vf}\eta_y)=(\psi_v f)\ast \check\eta_y, \quad y\in\Y, v\in\V,
$$
 are  essentially well-localized in both position and momentum/frequency  space. Define
$q_w=R_{\R^d}^*(F_w)$, $w=(y,v)\in\W$; up to a certain factor $c_w,$ which will be determined later, these are already the announced wave packets, i.e., $q_w=c_w p_w$.\\
Since $f=\sum\limits_{w\in W} F_w $, we then have the decomposition  $R_{\R^d}^*f=\sum\limits_{w\in W} q_w.$
Let us concentrate on property (P3) - the other properties are then rather easy to establish.    Since $\supp F_{(y,v)}\subset B(v,2/R')$, we have for every $w=(y,v)\in\W$
\begin{align*}
	q_w(x,t) =& \int e^{-i(x\xi+t\phi(\xi))} F_w(\xi) \,d\xi
	= \int e^{-i(x\xi+t\phi(\xi))} F_w(\xi)\tilde\psi_v(\xi)\,d\xi \\
	=&(2\pi)^{-d} \iint e^{-i\big((x-z)\xi+t\phi(\xi)\big)}\tilde\psi_v(\xi)\, d\xi
						\, \widehat {F_w}(z) \,dz	\\
	=& (2\pi)^{-d} R'^{-d} \iint e^{-i\big((x-z)(\frac{\xi}{R'}+v)+t\phi(\frac{\xi}{R'}+v)\big)}
	\tilde\psi(\xi) \,d\xi	\,  \widehat {F_w}(z)\, dz	\\
	=& (2\pi)^{-d} R'^{-d}\int K(x-z,t) \widehat {F_w}(z)\, dz,					
\end{align*}
with the kernel
\begin{align} \nonumber
	K(x,t)=\int e^{i\big(x(\frac{\xi}{R'}+v)+t\phi(\frac{\xi}{R'}+v)\big)}\tilde\psi(\xi)\, d\xi.
\end{align}
We claim that
\begin{align}\label{oszint2}
	|K(x,t)| \lesssim \left(1+\frac{|x+t\nabla\phi(v)|}{R'}\right)^{-N}
\end{align}
for every $N\in\N$. To this end, we shall estimate the oscillatory integral
\begin{align}\nonumber
	K_\lambda = \int e^{i\lambda\Phi(\xi)}\tilde\psi(\xi)\,d\xi,
\end{align}
with phase
\begin{align}\nonumber
	\Phi(\xi) = \frac{x(\frac{\xi}{R'}+v)+t\phi(\frac{\xi}{R'}+v)}{1+R'^{-1}|x+t\nabla\phi(v)|},
\end{align}
where we have put $\lambda=1+R'^{-1}|x+t\nabla\phi(v)|$.
In order to prove \eqref{oszint2}, we may assume that  $|x+t\nabla\phi(v)|\gg R'$. Then integrations by parts will lead to  $|K_\lambda|\lesssim \lambda^{-N}$ for all $N\in\N,$ hence  to \eqref{oszint2}, provided we can show that
\begin{align}
	|\nabla\Phi(\xi)|\sim & 1 ,\qquad\text{for all } \xi, \label{gradphi} \\
		\text{and}\quad \|\partial^\alpha\Phi\|_\infty \lesssim& 1 \quad\text{for all }\alpha\geq 2,
		\label{alphaphi}
\end{align}
and that the constants in these estimates are admissible. But,
\begin{align*}
	\frac{|t||\nabla\phi(\frac{\xi}{R'}+v)-\nabla\phi(v)|}{|x+t\nabla\phi(v)|}	
	\ll& \frac{|t||\nabla\phi(\frac{\xi}{R'}+v)-\nabla\phi(v)|}{R'} 	
	\lesssim \frac{|t|}{R'^2} \|\phi''\|_\infty \\
	\leq& \frac{1}{\kappa} \|\phi''\|_\infty \leq 1,
\end{align*}
for every $\xi\in\supp\tilde\psi$, hence
\begin{align}
	|t||\nabla\phi(\frac{\xi}{R'}+v)-\nabla\phi(v)| \ll |x+t\nabla\phi(v)|.\nonumber
\end{align}
Thus
\begin{align*}
	|\nabla\Phi(\xi)| = \frac{|x+t\nabla\phi(\frac{\xi}{R'}+v)|}{R'+|x+t\nabla\phi(v)|}	
	=& \frac{|x+t\nabla\phi(v)-t[\nabla\phi(v)-\nabla\phi(\frac{\xi}{R'}+v)]|}{R'+|x+t\nabla\phi(v)|} \\
	\sim & \frac{|x+t\nabla\phi(v)|}{R'+|x+t\nabla\phi(v)|} \sim  1,
\end{align*}
which verifies  \eqref{gradphi}.
And,
for $|\alpha|\geq2$ we have
\begin{align*}
	|\partial^\alpha\Phi(\xi)| \leq& |t R'^{-|\alpha|}(\partial^\alpha\phi)(\frac{\xi}{R'}+v)|	
\overset{}{\lesssim} \frac{R'^2}{\kappa} R'^{-|\alpha|} \kappa D^{2-|\alpha|}		
	\leq (D R')^{2-|\alpha|} = R^{2-|\alpha|} \leq 1,
\end{align*}
which gives \eqref{alphaphi}. It is easily checked that the constants in these  estimates can be chosen to be admissible.
Following the proof in \cite{lee05}, we conclude that
\begin{align*}
	|q_w(x,t)| \lesssim&  R'^{-d}\int\left| K(x-z-y,t)	\widehat {F_w}(z+y) \right|d z		\\
	=& R'^{-d}\int\left|  K(x-z-y,t)\eta\left(\frac{z}{R'}\right)\widehat{\psi_v f}(z+y) \right|d z	\\
	\lesssim& \left(1+\frac{|x-y+t\nabla\phi(v)|}{R'}\right)^{-N} \mathop{M} (\widehat{\psi_v f})(y),
\end{align*}
where $M$ denotes the Hardy-Littlewood maximal operator. Thus, by  choosing  $c_w=c_{(y,v)}={R'}^\frac{d}{2} M(\widehat{\psi_vf})(y),$ we obtain (P3).

Properties (P1) and (P2) follow from the definition of the wave packets.
From (P2) and (P3) we can deduce (P4). For (P5), we refer to \cite{lee05}.
\end{proof}

In view of  our previous remarks, it is easy to re-state Lemma \ref{wave packets} in a more coordinate-free way. For any given hyperplane $H=n^\perp\subset\R^{d+1}$, with $n$  a unit vector (so that $\R^{d+1}=H+\R n$), define the partial Fourier (co)-transform
$$
\FT_H^{-1} f(\xi+tn)= \int_H f(x+tn)e^{ix\cdot\xi} \,dx, \quad \xi\in H, t\in \R.
$$
Moreover, if $U\subset H$ is open and bounded, and if $\phi_H\in C^\infty(U,\R)$ is given,  then  consider the smooth hypersurface  $S=\{\eta+\phi_H(\eta) n: \eta\in U\}\subset \R^{d+1},$ and define the corresponding Fourier extension operator
$$
R^*_{H}f(x+tn)= \int_{U} f(\eta) e^{-i(x\eta+t\phi_H(\eta))} \, d\eta= \int_{U} f(\eta) e^{-i<x+tn,\eta+\phi_H(\eta)n>} \, d\eta,
$$
for $(x,t)\in H\times \R, f\in L^2(U).$ Notice that $R^*_{\R^d}$ corresponds to the special case $H=\R^d\times \{0\},$ and  thus by means of a suitable rotation, mapping $e_{d+1}$ to $n,$ we immediately obtain the  following


\begin{kornr}[Wave packet decomposition]\label{wave packets2}
 Let $U\subset H$ be an open and bounded subset, and let $\phi_H\in C^\infty(U,\R).$  We assume  that there are constants $\kappa>0$ and $D\leq{1}/{\kappa}$ such that $\|\phi_H^{(l)}\|_\infty \le A_l\kappa D^{2-l}$ for every $l\in\N$ with $l\ge 2,$ where $\phi_H^{(l)}$ denotes the total derivative of $\phi_H$ of order $l,$ and in addition that $\|\phi'\|_\infty\le A.$
Then for every $R\geq1$ there exists a wave packet decomposition adapted to $S$ and the decomposition  of $\R^{d+1}$ into $\R^{d+1}=H+\R n,$  with tubes of radius
$R/D=R'$ and length $R^2/(D^2\kappa)=R'^2/\kappa,$ where  $R=R'D.$
\smallskip

More precisely, there exists an $R'$-lattice $\Y$ in $H$ and  an $R'^{-1}$-net $\V$ in $S$ such that the following hold true:
 if we denote by $ \W$ the index set  $\W=\Y\times \V$ and associate to $w=(y,v)\in \Y\times \V= \W$ the tube-like set
\begin{align}\label{wavepack2}
	T_w=y+\Big\{tN(v):|t|\leq\frac{R'^2}{\kappa}\Big\}+B(0,R'),
\end{align}
then for  every given  function $f\in L^2(U)$  there exist functions (wave packets) $\{p_w\}_{w\in \W}$ and coefficients $c_w\in\C$ such that  for every $x=x'+tn\in \R^{d+1}$  with $|t|\leq\frac{R'^2}{\kappa}$ and  $x'\in H,$  we may decompose $R_H^*f(x)$ as
$$
R_H^*f(x)=\sum\limits_{w\in\W} c_w p_w(x),
$$
 in such a way that  the following hold true:

\begin{enumerate}
\renewcommand{\labelenumi}{(P\arabic{enumi})} 
	\item $p_w= R_H^*(\FT_H^{-1} (p_w|_H))$,
	\item $\supp\FT_{\R^{d+1}} p_w\subset B(v,R'^{-1})$ and $\supp\FT_H (p_w(\cdot+tn))\subset B(v',\landau(R'^{-1}))$,
					where $v'$ denotes the orthogonal projection of $v\in S$ to $H$.
	\item $p_w$ is essentially supported in $T_w$, i.e.,
					$|p_w(x)|\leq C_N R'^{-1} \left(1+\frac{\dist(x,T_w)}{R'}\right)^{-N}$.
	\item For all $W\subset \W$, we have
					$ \|\sum\limits_{w\in W} p_w(\cdot+tn)\|_{L^2(H)}\lesssim |W|^\frac{1}{2}$.
		\item $\|c\|_{\ell^2} \lesssim \|f\|_{L^2}$.
\end{enumerate}

Moreover, the constants arising  in these estimates  can be chosen to depend only on the constants  $A_l$ and $A,$ but no further on the function $\phi_H,$ and also not on the other quantities $R,D$ and $\kappa$ (such constants will be called admissible).
\end{kornr}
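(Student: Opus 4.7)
My strategy is to reduce the corollary to Lemma \ref{wave packets} by a rigid motion of $\R^{d+1}$ that identifies the hyperplane $H$ with the standard hyperplane $\R^d\times\{0\}$, and then to push all objects (sets, operators, wave packets, tubes) through this rotation. The three operators defined on $H$ — the partial Fourier co-transform $\FT_H^{-1}$, the Fourier extension operator $R_H^*$, and the full Fourier transform $\FT_{\R^{d+1}}$ — are all manifestly coordinate-free, so they intertwine with any orthogonal transformation.

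Concretely, I would pick any $\rho\in O(d+1)$ such that $\rho(\R^d\times\{0\})=H$ and $\rho(e_{d+1})=n$, set $\tilde U:=\rho^{-1}(U)\subset\R^d\times\{0\}$, $\tilde\phi:=\phi_H\circ\rho$, and $\tilde f:=f\circ\rho$. Because $\rho$ is an isometry, the chain rule gives $\|\tilde\phi^{(l)}\|_\infty=\|\phi_H^{(l)}\|_\infty\leq A_l\kappa D^{2-l}$ and $\|\nabla\tilde\phi\|_\infty=\|\phi_H'\|_\infty\leq A$, so Lemma \ref{wave packets} applies to $\tilde\phi$ with the same constants $A_l,A$, yielding a lattice $\tilde\Y=R'\Z^d$, a net $\tilde\V=R'^{-1}\Z^d\cap\tilde U$, wave packets $\{\tilde p_{\tilde w}\}$ and coefficients $\{\tilde c_{\tilde w}\}$ such that $R_{\R^d}^*\tilde f=\sum\tilde c_{\tilde w}\tilde p_{\tilde w}$.

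I would then define the claimed objects by $\Y:=\rho(\tilde\Y)\subset H$, which is an $R'$-lattice in $H$; $v:=\rho(\tilde v,\tilde\phi(\tilde v))\in S$ for each $\tilde v\in\tilde\V$, giving an $R'^{-1}$-net $\V$ in $S$ (here the bound $\|\nabla\phi_H\|_\infty\lesssim 1$ is used, as in Remark \ref{reformulationrem}(iii)); $p_w:=\tilde p_{\tilde w}\circ\rho^{-1}$; and $c_w:=\tilde c_{\tilde w}$. The key naturality identity
\begin{equation*}
\langle x,\eta+\phi_H(\eta)n\rangle=\langle\rho^{-1}x,\tilde\eta+\tilde\phi(\tilde\eta)e_{d+1}\rangle,\qquad \tilde\eta:=\rho^{-1}\eta,
\end{equation*}
together with $|\det\rho|=1$, yields $R_H^*f=(R_{\R^d}^*\tilde f)\circ\rho^{-1}$, and hence the decomposition $R_H^*f=\sum c_w p_w$. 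The analogous intertwining for $\FT_H^{-1}$ and $\FT_{\R^{d+1}}$ gives (P1) and (P2) at once; properties (P4) and (P5) follow because $\rho$ preserves $L^2$-norms. Admissibility of all constants is preserved because $\rho$ introduces no new parameters.

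The only geometric point worth verifying is (P3), together with the identification of the tubes \eqref{wavepack2} with the image under $\rho$ of the tubes \eqref{wavepack}. For this I would invoke Remark \ref{reformulationrem}(ii) to rewrite the tube in the standard setting as $\tilde T_{\tilde w}=(\tilde y,0)+\{t\tilde N(\tilde v):|t|\leq R'^2/\kappa\}+B(0,R')$ with $\tilde N(\tilde v)=(-\nabla\tilde\phi(\tilde v),1)$. Since $\rho$ maps the graph of $\tilde\phi$ to $S$ and preserves angles, it sends $\tilde N(\tilde v)$ to the normal $N(v)$ to $S$ at $v$; applying $\rho$ therefore produces precisely the tube $T_w$ of \eqref{wavepack2}, and the pointwise decay of $p_w$ follows from that of $\tilde p_{\tilde w}$ because $\dist(\rho z,T_w)=\dist(z,\tilde T_{\tilde w})$. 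I do not foresee any serious obstacle: the proof is essentially a bookkeeping exercise once one has set up the rotation, and the whole content is the coordinate-invariance of the objects involved.
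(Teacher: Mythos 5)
Your rotation argument is correct and is exactly the paper's intended proof: the paper simply says "by means of a suitable rotation, mapping $e_{d+1}$ to $n$, we immediately obtain" the Corollary from Lemma \ref{wave packets}, and you have spelled out the necessary bookkeeping (the pullback of $\phi_H$, the naturality identity for the phase, the invariance of the derivative bounds and of (P1)--(P5), and the re-parametrization of $\V$ as an $R'^{-1}$-net in $S$ via Remark \ref{reformulationrem}(iii) using the bound on $\phi_H'$). No gaps.
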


Notice that, unlike as in Lemma \ref{wave packets}, we may here  choose an $R'^{-1}$-net  in $S$ in place of an $R'$ -lattice in $H$ for the parameter set $\V,$ because of our assumed   bound on $\phi_H'.$

It will become important that under suitable additional assumptions on the position of a given hyperplane $H$, we may re-parametrize a given smooth hypersurface  $S=\{(\xi,\phi(\xi)):\xi \in U\}$  (where $U$ is an open subset of $\R^d$) also in the form
$$
S=\{\eta+\phi_H(\eta) n: \eta\in U_H\},
$$
where $U_H$ is an open subset of $H$ and  $\phi_H\in C^\infty(U_H,\R).$

\begin{lemnr}[Re-parametrization]\label{reparametrize}
Let $H_1=n_1^\perp$ and $H_2=n_2^\perp$ be two hyperplanes in $\R^{d+1},$ where $n_1$ and $n_2$ are given unit vectors. Let $K=H_1\cap H_2,$ and choose  unit vectors $h_1,h_2$ orthogonal to $K$ such that
$
H_1=K+\R h_1, \quad H_2=K+\R h_2.
$
Let $U_1\subset H_1$ be an open bounded subset such that for every $x'\in K,$ the section $U_1^{x'}=\{u\in\R: x'+uh_1\subset U_1\}$ is an (open) interval, and let $\phi_1\in C^\infty(U_1,\R)$ satisfying the assumptions of Corollary \ref{wave packets2}. Setting $B=\kappa D^2$ and $r=D^{-1},$  an equivalent way to state this is that
 there are constants $B,r>0$ such that $Br\leq 1$, $\|\phi_1'\|_\infty\le A$ and $\|\phi_1^{(l)}\|_\infty \le A_l B r^{l}$ for every $l\in\N$ with $l\ge2.$ Denote by $S$ the hypersurface
$$
S=\{\eta+\phi_1(\eta) n_1: \eta\in U_1\}\subset \R^{d+1},
$$
and again by $v\mapsto N(v)$ the corresponding unit normal field on $S.$

Assume  furthermore that the vector  $n_2$ is  transversal to $S,$ i.e.,   $|\langle n_2, N(v)\rangle|\ge a>0$ for all $v\in S.$
Then there exist an open  bounded subset  $U_2\subset H_2$ such that for every $x'\in K,$ the section $U_2^{x'}=\{s\in\R: x'+sh_2\in U_2\}$ is an  interval,   and a function $\phi_2\in C^\infty(U_2,\R)$ so that we may re-write
\begin{align}
S=\{\xi+\phi_2(\xi )n_2: \xi\in U_2\}. \label{Srepar}
\end{align}
Moreover, the derivatives of $\phi_2$ satisfy estimates of the same form as those of  $\phi_1,$ up to multiplicative constants which are admissible, i.e.,  which depend only on the constants $A_l,A$ and $a.$

Finally,  given any $f_1\in L^2(U_1),$ there exists a unique function $f_2\in L^2(U_2)$ such that
\begin{align}
R^*_{H_1}f_1=R^*_{H_2} f_2,\label{Rrepar}
\end{align}
and $\|f_1\|_2\sim \|f_2\|_2,$  where the constants in these estimates are admissible.

\end{lemnr}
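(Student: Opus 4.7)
The plan is to realize the re-parametrization as the projection of $S$ onto $H_2$ along the direction $n_2$, using transversality to invoke the implicit function theorem with quantitative control on derivatives.

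First, I would set up coordinates. Since $\{n_1, h_1\}$ and $\{n_2, h_2\}$ are two orthonormal bases of the two-dimensional space $K^\perp$, we may write $h_2 = \cos\theta\, h_1 + \sin\theta\, n_1$ and $n_2 = -\sin\theta\, h_1 + \cos\theta\, n_1$ for some angle $\theta$. Writing $\eta = y + u\, h_1$ with $y \in K$ and $u \in U_1^y$, the parametrization becomes
\[
\Phi_1(y,u) = y + \bigl(u\cos\theta + \phi_1(y,u)\sin\theta\bigr)\, h_2 + \bigl(-u\sin\theta + \phi_1(y,u)\cos\theta\bigr)\, n_2.
\]
This suggests the change of variables $(y,u)\mapsto (y,\sigma(y,u))$ with $\sigma(y,u) := u\cos\theta + \phi_1(y,u)\sin\theta$, after which $\phi_2$ is forced by $\phi_2(y,\sigma(y,u)) = -u\sin\theta + \phi_1(y,u)\cos\theta$. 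A short calculation yields
\[
\langle N(\Phi_1(y,u)), n_2\rangle \;=\; \frac{\cos\theta + \partial_u\phi_1(y,u)\sin\theta}{\sqrt{1+|\nabla\phi_1(y,u)|^2}} \;=\; \frac{\partial_u\sigma(y,u)}{\sqrt{1+|\nabla\phi_1|^2}},
\]
so transversality combined with $|\nabla\phi_1|\leq A$ gives $a \leq |\partial_u\sigma| \leq 1+A$. For each fixed $y\in K$ the map $u\mapsto\sigma(y,u)$ is then a diffeomorphism from the interval $U_1^y$ onto an interval $U_2^y$, and $U_2 := \{(y,\sigma): y\in K,\ \sigma\in U_2^y\}$ is the required open subset of $H_2$ with interval sections.

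For the derivative bounds on $\phi_2$, I would argue by induction using the implicit relation $\sigma = u\cos\theta + \phi_1(y,u)\sin\theta$. Differentiating $l$ times and applying the Faà di Bruno formula to the composition, any mixed derivative $\partial^\alpha u(y,\sigma)$ with $|\alpha|=l$ is a rational function whose denominator is a power of $\partial_u\sigma$ (bounded below by $a$) and whose numerator is a polynomial in derivatives $\partial^{\beta_i}\phi_1$ satisfying $\sum_i |\beta_i| \leq l + m - 1$, where $m$ is the number of factors. Using $|\partial^\beta\phi_1|\lesssim B r^{|\beta|}$ for $|\beta|\geq 2$ (with first-order derivatives absorbed into $A$), each such monomial is bounded by $B^m r^{l+m-1} = (Br)^{m-1}\cdot B r^l \lesssim B r^l$, thanks precisely to the hypothesis $Br\leq 1$. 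This yields $|\partial^\alpha u|\lesssim B r^l$ for $l=|\alpha|\geq 2$ and the uniform bound $|\nabla u|\lesssim 1$. Substituting into the formula $\phi_2 = -u\sin\theta + \phi_1(y,u)\cos\theta$ and applying the same chain-rule bookkeeping to $\phi_1(y,u(y,\sigma))$ gives $|\partial^\alpha\phi_2|\lesssim B r^{|\alpha|}$ for $|\alpha|\geq 2$ and $|\nabla\phi_2|\lesssim 1$, with constants depending only on $A,a,A_l$.

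For the $L^2$-correspondence, set $J(\eta) := |\det d\xi/d\eta| = |\partial_u\sigma(y,u)| \in [a, 1+A]$ and define $f_2(\xi) := f_1(\eta(\xi))/J(\eta(\xi))$. A direct change of variables in the integral defining $R^*_{H_2} f_2$, combined with $\Phi_1(\eta) = \xi(\eta) + \phi_2(\xi(\eta)) n_2$, produces \eqref{Rrepar}; the same change of variables applied to the $L^2$-norm gives
\[
\|f_2\|_{L^2(U_2)}^2 \;=\; \int_{U_1} |f_1(\eta)|^2 / J(\eta)\, d\eta \;\sim\; \|f_1\|_{L^2(U_1)}^2,
\]
since $J\sim 1$. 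Uniqueness of $f_2$ then follows from the injectivity of the Fourier extension operator on $L^2(U_2)$. I expect the main obstacle to lie in the derivative estimates: carefully tracking the combinatorics of the Faà di Bruno expansions and confirming that all products of the form $(Br)^{m-1}$ are absorbed by the assumption $Br\leq 1$, so that the scaling relation $|\phi^{(l)}|\lesssim B r^l$ is transferred intact from $\phi_1$ to $\phi_2$.
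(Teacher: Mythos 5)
Your proposal follows essentially the same route as the paper: the change of variables $u\mapsto\sigma(y,u)$ along $h_2$ is exactly the paper's map $G(x',u)=(x',g(x',u))$ (written in explicit $\theta$-coordinates), the $L^2$-correspondence via the Jacobian matches \eqref{f1-f2}, and the inductive Faà di Bruno argument with the crucial $(Br)^{m-1}\le 1$ absorption is precisely the strategy carried out in the paper's Appendix (Subsection~\ref{faa}). The one imprecision is the stated constraint $\sum_i|\beta_i|\le l+m-1$, which is not the exact Faà di Bruno bookkeeping (the paper instead exploits $\sum_{j,\gamma}k^j_\gamma\gamma=\alpha$ together with $\sum_\gamma k^j_\gamma=\beta_j$ to produce the $(Br)$-power), but the mechanism you identify is correct and would close the gap once the combinatorics is written out carefully.
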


 \begin{proof} Assume that \eqref{Srepar} holds true. Then, given any point $\eta+\phi_H(\eta)n_1\in S,$ with $\eta=x'+uh_1\in U_1, x'\in K, $ we find  some $\xi=x'+sh_2\in U_2$ such that
\begin{align}\label{su2}
 x'+uh_1+\phi_1(x'+uh_1)n_1=x'+sh_2+\phi_2(x'+sh_2)n_2,
 \end{align}
 which shows that necessarily
\begin{align}\label{su}
 s=\langle uh_1+x'+\phi_1(x'+uh_1)n_1,h_2\rangle.
 \end{align}
 Let us  therefore define the mapping $G:U_1\to H_2$  by
 $$
 G( x'+uh_1)=x'+\langle uh_1+x'+\phi_1(x'+uh_1)n_1,h_2\rangle h_2.
 $$
 Moreover, fixing an orthonormal basis $E_1,\dots ,E_{d-1}$ of $K$ and extending this by  the vector $h_1$ respectively $h_2$ in order to obtain bases of $H_1$ and $H_2$ and working in the corresponding coordinates, we may  assume without loss of generality that $U_1$ is an open subset  of $\R^{d-1}\times \R,$  since $\dim K=d-1,$  and that
 $G$  is a  mapping   $G: U_1\to \R^{d-1}\times \R,$ given by
 $$
 G( x',u)=(x',g(x',u)),
 $$
 where
 $$
 g(x',u)=\langle x'+ uh_1+\phi_1(x',u)n_1,h_2\rangle.
 $$
 To show that $G$ is a diffeomorphism onto its image $U_2=G(U_1),$ observe that
 $$
 \pa_u G( x',u)=(0,\pa_u g( x',u))=(0,\langle h_1+ \pa_u\phi_1(x',u)n_1,h_2\rangle).
 $$
 On the other hand, the vector
 $$
 N_0= -\pa_u \phi_1(x',u ) h_1-\sum_{j=1}^k \pa_{x_j} \phi(x',u) E_j +n_1
 $$
 is normal to $S$ at the point $x'+uh_1+\phi_1(x'+uh_1)n_1$  (here $x'=\sum_{j=1}^{d-1} x_j E_j$), and  $|N_0|\sim 1.$
 Thus, our  transversality assumption implies that
\begin{align}\label{transcon}
 | \langle-\pa_u \phi_1(x',u ) h_1+n_1, n_2\rangle|\gtrsim a >0.
 \end{align}
 But, $\{h_j,n_j\}$ forms an orthonormal basis of $K^\perp$ for $j=1,2,$ and thus, rotating all these vectors by an angle of $\pi/2,$  we see that \eqref{transcon} is equivalent to  $|\langle\pa_u \phi_1(x',u ) n_1+h_1,h_2\rangle|\gtrsim a >0,$ so that
 $$
 | \pa_u g( x',u)|\gtrsim a>0.
 $$
Given the special form of $G,$ this also implies that
$$
|\det G'(x',u)|=| \pa_u g( x',u)|\gtrsim a>0.
$$
Consequently,  for $x'$ fixed, the mapping $u\mapsto g(x'.u)$ is a diffeomorphism from the interval $U_1^{x'}$ onto an open interval $U_2^{x'},$ and thus $G$ is bijective  onto its image $U_2,$ in fact even a  diffeomorphism, and $U_2$ fibers into the intervals $U_2^{x'}.$ Indeed,  the inverse mapping  $F=G^{-1}:U_2\to U_1$  of $G$ is also of the form
$$
F(x',s)=(x',f(x',s)),
$$
where
\begin{align}\label{gf}
g(x',f(x',s)=s.
\end{align}
In combination with \eqref{su2} this shows that \eqref{Srepar} holds indeed true,   with
\begin{align}\label{phi2}
\phi_2(x',s)=f(x',s)\langle h_1, n_2\rangle+ \phi_1(F(x',s)) \langle n_1,n_2\rangle.
\end{align}
 Moreover,
if $f_1\in L^2(U_1),$ then, by \eqref{su2} and a change of coordinates,
\begin{align*}
&R^*_{H_1}f_1(y)= \iint_{U_1} f_1(x',u) e^{-i\langle y,x'+uh_1+\phi_1(x',u)n_1\rangle} \, dx' du\\
&=\iint_{U_2} f_1(F(x',s))|\det F'(x',s)|\,  e^{-i\langle y,x'+sh_2+\phi_2(x',s)n_2\rangle} \, dx' ds,
\end{align*}
so that \eqref{Rrepar} holds true, with
\begin{align}\label{f1-f2}
f_2(x'+sh_2)=f_1\big(x'+f(x',s)h_1\big)\, |\det F'(x',s)|.
\end{align}
Our estimates for derivatives of $F$ show that $|\det F'(x',s)|\sim 1,$  with admissible constants,  so that in particular $\|f_1\|_2\sim \|f_2\|_2.$

\medskip
What remains is the control of the derivatives of $\phi_2.$  This  somewhat technical part of the proof will be based on Fa\`a di Bruno's theorem and is deferred to the   Appendix (see Subsection \ref{faa}).
\end{proof}

\medskip
We shall from now on restrict ourselves to dimension $d=2.$ The following lemma will deal with the separation of tubes along certain types of curves, for a  special class of 2-hypersurfaces. It will later be applied to intersection curves of two hypersurfaces.


\begin{lemnr}[Tube-separation along intersection curve]\label{separationlemma}
Let $\Y,\V,\W,R,T_w$ be as in Corollary \ref{wave packets2}. Moreover assume that  $\phi\in C^\infty(U,\R)$, $U\subset\R^2,$ such that $\partial_i^2\phi(x)\sim \kappa_i$ for all $x\in U,i=1,2,$ and $\partial_1\partial_2\phi=0$. Define $\kappa=\kappa_1\vee\kappa_2$.
Let $\gamma=(\gamma_1,\gamma_2)$ be a curve in $U$ with such that  $|\dot{\gamma}_i|\sim  1$ for $i=1,2.$
Then for all pairs of points $v_1,v_2\in {\rm im}(\gamma)+\landau(R'^{-1})$ such that  $v_1-v_2={j}/{R'}$, where  $j\in \Z^2$ and $|j|\gg 1,$ the following separation condition holds true (again with constants in these estimates which are admissible in the obvious sense):
\begin{align*}
	|\nabla\phi(v_1)-\nabla\phi(v_2)| \sim  |j| \frac{R'}{R'^2/\kappa}.
\end{align*}
\end{lemnr}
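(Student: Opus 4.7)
The plan is to exploit the separability of $\phi$ forced by $\partial_1\partial_2\phi=0$ and couple it with the fact that the curve $\gamma$ moves with comparable speed in both coordinate directions. Since $\partial_1\partial_2\phi=0$ on $U$, we may write $\phi(x_1,x_2)=\phi_1(x_1)+\phi_2(x_2)$, with $\phi_i''(x_i)\sim\kappa_i$. Consequently
$$\nabla\phi(v_1)-\nabla\phi(v_2)=\bigl(\phi_1'(v_1^1)-\phi_1'(v_2^1),\;\phi_2'(v_1^2)-\phi_2'(v_2^2)\bigr),$$
and the mean value theorem together with $\phi_i''\sim\kappa_i$ gives $|\phi_i'(v_1^i)-\phi_i'(v_2^i)|\sim\kappa_i\,|v_1^i-v_2^i|$ for $i=1,2$. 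So the task reduces to producing a two-sided bound $|v_1^i-v_2^i|\sim|j|/R'$ for each coordinate.

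This is where the curve hypothesis enters, and is the only step that needs any care. Writing $v_k=\gamma(t_k)+e_k$ with $|e_k|\lesssim R'^{-1}$, the assumption $|j|\gg 1$ makes the perturbation $|e_1-e_2|\lesssim R'^{-1}$ negligible compared with $|v_1-v_2|=|j|/R'$, so $|\gamma(t_1)-\gamma(t_2)|\sim|j|/R'$. Applying the mean value theorem component-wise gives $\gamma_i(t_1)-\gamma_i(t_2)=(t_1-t_2)\dot\gamma_i(\tau_i)$, and combining with $|\dot\gamma_i|\sim 1$ for both $i$ yields first $|t_1-t_2|\sim|j|/R'$ (from the full norm identity) and then $|\gamma_i(t_1)-\gamma_i(t_2)|\sim|j|/R'$ for each $i$ separately. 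Adding back the $O(R'^{-1})$ error $e_1-e_2$ is again harmless because $|j|\gg 1$, and we conclude $|v_1^i-v_2^i|\sim|j|/R'$ for $i=1,2$.

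Feeding this back into the mean value estimate gives $|\phi_i'(v_1^i)-\phi_i'(v_2^i)|\sim\kappa_i|j|/R'$, and taking Euclidean norms,
$$|\nabla\phi(v_1)-\nabla\phi(v_2)|^2\sim \frac{|j|^2}{R'^2}(\kappa_1^2+\kappa_2^2)\sim \kappa^2\frac{|j|^2}{R'^2},$$
since $\kappa=\kappa_1\vee\kappa_2$ dominates the sum up to a factor of $2$. Rewriting $\kappa/R'$ as $R'/(R'^2/\kappa)$ yields the claimed estimate. All implicit constants depend only on the constants in $\phi_i''\sim\kappa_i$, in $|\dot\gamma_i|\sim 1$, and on the meaning of $|j|\gg 1$, and are therefore admissible. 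The main obstacle, modest as it is, is the coordinate-wise lower bound $|v_1^i-v_2^i|\gtrsim|j|/R'$: it fails for curves aligned with a coordinate axis, and it is precisely the hypothesis $|\dot\gamma_i|\sim 1$ for both $i$ that rules this out.
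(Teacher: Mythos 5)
Your proof is correct and follows essentially the same route as the paper: both use the curve hypothesis $|\dot\gamma_i|\sim 1$ to convert $|v_1-v_2|=|j|/R'$ into $|t_1-t_2|\sim|j|/R'$ (absorbing the $O(R'^{-1})$ perturbation thanks to $|j|\gg 1$), and then invoke $\partial_i^2\phi\sim\kappa_i$ via the mean value theorem along each coordinate. The paper organizes this slightly differently — it estimates $|\nabla\phi(\gamma(t_1))-\nabla\phi(\gamma(t_2))|$ directly by differentiating $\partial_i\phi\circ\gamma$ and then transfers back to the $v_k$ at a cost of $\kappa\cdot O(R'^{-1})$, rather than explicitly splitting $\phi=\phi_1(x_1)+\phi_2(x_2)$ and working with $|v_1^i-v_2^i|$ — but the substance is the same.
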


\begin{proof}
Choose $t_1,t_2$ such that $v_i=\gamma(t_i)+\landau(R'^{-1})$. Then
\begin{align}\label{mws10jan14}
	|\nabla\phi(v_i)-\nabla\phi(\gamma(t_i))|
	\leq \|\phi''|_U\|_\infty |v_i-\gamma(t_i)|
	\lesssim \frac{\kappa}{R'}.
\end{align}
Therefore
\begin{align*}
	\frac{|j|}{R'} =& |v_1-v_2|
	=|\gamma(t_1)-\gamma(t_2)|+\landau(R'^{-1})
	\sim  |\dot{\gamma}_1||t_1-t_2|+|\dot{\gamma}_2||t_1-t_2|+\landau(R'^{-1})\\
	\sim & |t_1-t_2|+\landau(R'^{-1}),
\end{align*}
and since $|j|\gg1$, we see that $|t_1-t_2| \sim  {|j|}/{R'}.$
By our assumptions on $\phi$ and \eqref{mws10jan14}, we thus see that there exist $s_1$ and $s_2$ lying between $t_1$ and $t_2$ such that
\begin{align*}
	|\nabla\phi(v_1)-\nabla\phi(v_2)|  \geq &|\nabla\phi(\gamma(t_1))-\nabla\phi(\gamma(t_2))| - \kappa\landau(R'^{-1})\\
	\sim  &\big( |\partial_1^2\phi(\gamma(s_1))\dot{\gamma}(s_1)|
						+|\partial_2^2\phi(\gamma(s_2))\dot{\gamma}(s_2)|\big)\,|t_1-t_2|
						- \kappa \landau(R'^{-1})\\	
	\sim  & (\kappa_1 + \kappa_2)\frac{|j|}{R'} + \kappa \landau(R'^{-1})\\	
	\sim  & 	|j| \frac{\kappa}{R'},			
\end{align*}
where we used again that $|j|\gg1$.
\end{proof}

\subsection{A bilinear  estimate for normalized hypersurfaces }\label{generalbilinear}
In this section, we shall work under the following
\medskip

\noindent\textsc{General Assumptions:}
Let $\phi\in C^\infty(\R^2)$ such that $\partial_1\partial_2\phi\equiv0$, and let
$$S_j=\{(\eta,\phi(\eta):\eta\in U_j\}, \quad U_j=r^{(j)}+[0,d^{(j)}_1]\times[0,d^{(j)}_2],\qquad j=1,2,
$$
where $r^{(j)}\in\R^2$ and  $d^{(j)}_1,d^{(j)}_2>0.$ We assume that
the principal curvature of $S_j$ in the direction of  $\eta_1$  is comparable to  $\kappa^{(j)}_1>0,$ and in the direction of  $\eta_2$    to  $\kappa^{(j)}_2>0,$    up to some fixed  multiplicative constants.
 We then put for $j=1,2,$
  \begin{align}\nonumber
 &\kappa^{(j)}=\kappa^{(j)}_1\vee\kappa^{(j)}_2, \qquad \bar\kappa_i=\kappa^{(1)}_i\vee\kappa^{(2)}_i\\
&\bar\kappa=\bar\kappa_1\vee\bar\kappa_2=\kappa^{(1)}\vee\kappa^{(2)} \label{players}\\
&\bar d_i=d_i^{(1)}\vee d^{(2)}_i, \qquad D=\min\limits_{i,j} d_i^{(j)}.\nonumber
\end{align}
The vector field  $N=(-\nabla\phi,1)$ is normal to $S_1$ and $S_2,$ and thus  $N_0={N}/{|N|}$ is a unit normal field to these hypersurfaces.
We make the  following additional assumptions:
\begin{enumerate}
\item For all $i,j=1,2$ and all $\eta\in U_j$, we have
\begin{align}\label{13jan2}
|\partial_i\phi(\eta)-\partial_i\phi(r^{(j)})|
\lesssim\kappa_i^{(j)} d_i^{(j)} \quad
\mbox{and} \quad\bar\kappa_i\bar d_i \lesssim 1
\end{align}
(notice that the first inequality follows already  from our earlier  assumptions).
\item For all $\eta\in U_1\cup U_2$ and for all $\alpha\in\N^2,|\alpha|\geq2,$ we have
			  $|\partial^\alpha\phi(\eta)|\lesssim \bar\kappa D^{2-|\alpha|}$.
\item  For $i=1,2,$ i.e., with respect to  both variables, the  following separation condition holds true:
\begin{align}\label{13jan1}
|\partial_i\phi(\eta^1)-\partial_i\phi(\eta^2)|\sim 1\quad \mbox{for all } \eta^j\in U_j, \ j=1,2.
\end{align}

		
\end{enumerate}
The set of all pairs $(S_1, S_2)$ of hypersurfaces satisfying these  properties will be denoted by $\s_0$ (note that it  does depend on the constants hidden by the symbols $\lesssim$ and $\sim$).

\medskip
The main goal of this chapter will be to establish a local, bilinear  Fourier extension  estimate on suitable cuboids adapted to the wave packets.

\begin{thmnr}\label{Eathm}
Assume that $5/{3}\leq p\leq2.$ Let us choose $r\in\R^2$ such that $r=r^{(j)},$ if $\kappa^{(j)}=\kappa^{(1)}\wedge\kappa^{(2)}.$ Then for every $\alpha>0$ there exist constants $C_\alpha,\gamma_\alpha>0$ such that for every pair $S=(S_1,S_2)\in\s_0$, every parameter $R\geq1$ and all functions $f_j\in L^2(S_j)$, $j=1,2$ we have
\begin{align}\label{bilin}
\|R_{\R^2}^*f_1\,  R_{\R^2}^*f_2\|_{L^p(Q^0_{S_1,S_2} (R))} \leq C_\alpha
	R^\alpha  (\kappa^{(1)}\kappa^{(2)})^{\frac{1}{2}-\frac{1}{p}}D^{3-\frac{5}{p}} \log^{\gamma_\alpha}(C_0(S))\,
	\|f_1\|_2 \|f_2\|_2,			
\end{align}
where
\begin{align}\label{Qscaled}
	Q^0_{S_1,S_2}(R)=\left\{x\in\R^3:~ |x_i+\partial_i\phi(r)x_3|\leq \frac{R^2}{D^2\bar\kappa},i=1,2,~|x_3|\leq\frac{R^2}{D^2(\kappa^{(1)}\wedge\kappa^{(2)})} \right\},
\end{align}
with
\begin{align}\label{C0}
 C_0(S)=\frac{\bar d_1^2\bar d_2^2}{D^4} (D[\kappa^{(1)}\wedge\kappa^{(2)}])^{-\frac{1}{p}}
							(D\kappa^{(1)} D\kappa^{(2)})^{-\frac{1}{2}}.
\end{align}
\end{thmnr}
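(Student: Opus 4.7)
The plan is to adapt Tao's bilinear method \cite{T1} to the present setting, interpolating between two endpoints: the easy $L^2$ endpoint (where we use Plancherel plus the transversality hypothesis \eqref{13jan1}) and the hard bilinear endpoint at $p=5/3$ (requiring a wave packet decomposition and induction on scales). What distinguishes our situation from the classical paraboloid case is that the constants in every step must depend explicitly on the curvature and size parameters $\kappa_i^{(j)},\,d_i^{(j)},\,D$; this forces a quantitative re-reading of the whole argument.

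\textbf{The $L^2$ endpoint.} For $p=2$ we write
\[
(R^*_{\R^2}f_1)(R^*_{\R^2}f_2) = \widehat{(f_1\,d\nu_1)\ast(f_2\,d\nu_2)},
\]
so by Plancherel it suffices to bound the $L^2$-norm of the convolution. Pushing forward Lebesgue measure on $U_1\times U_2$ by the map $\Phi(\eta^1,\eta^2)=(\eta^1+\eta^2,\phi(\eta^1)+\phi(\eta^2))$, the Jacobian equals $|\nabla\phi(\eta^1)-\nabla\phi(\eta^2)|$, which by the separation condition \eqref{13jan1} is $\sim 1$. A change of variables then gives the $L^2$ estimate, and tracking the dimensions of $U_1,U_2$ together with the volume of $Q^0$ yields the desired factor $D^{3-5/p}$ at $p=2$ (i.e.\ $D^{1/2}$) and the factor $(\kappa^{(1)}\kappa^{(2)})^{1/2-1/p}=1$.

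\textbf{The bilinear endpoint $p=5/3$.} Apply Corollary \ref{wave packets2} to each $f_j$, producing tubes $T_{w_j}$ of radius $R'=R/D$ and length $R'^2/\kappa^{(j)}$. The cuboid $Q^0$ has been chosen precisely so that these tube families behave as they would for the paraboloid: its $x_3$-extent matches the longer tube length $R'^2/(\kappa^{(1)}\wedge\kappa^{(2)})$, while the transversality \eqref{13jan1} ensures that the directions $N(v_1)$ and $N(v_2)$ for $v_j\in S_j$ are well separated, so that an intersecting tube pair produces a parallelepiped of predictable volume. The expansion
\[
R^*_{\R^2}f_j=\sum_{w_j} c_{w_j}^{(j)}p_{w_j}^{(j)}
\]
reduces the bilinear norm to counting interactions of pairs $(T_{w_1},T_{w_2})$. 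I would then run a Tao-style induction on scales: assume the estimate at scale $R/2$, decompose $Q^0$ into sub-cuboids adapted to a slightly lower scale, and in each piece either few tubes participate (settled by the $L^2$ bound combined with the geometric tube-count furnished by Lemma \ref{separationlemma} applied to the intersection curve of $S_1$ and $S_2$) or else the contribution is concentrated enough to invoke the induction hypothesis after rescaling. The recursion closes with an $R^\alpha$ loss at $p=5/3$.

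\textbf{Main obstacle.} The most delicate point will be to show that the induction closes with \emph{exactly} the geometric factor $(\kappa^{(1)}\kappa^{(2)})^{1/2-1/p}D^{3-5/p}$, and that the remaining overhead from dyadic pigeonholing (over angles, over tube densities, and over the scales of the coefficients $c_w$) is absorbed by the logarithmic term $\log^{\gamma_\alpha}(C_0(S))$. This is where the ``quantitative'' flavor of the whole section really enters: each appeal to an orthogonality estimate, to spatial localization on $Q^0$, to tube-sorting, or to the rescaling implicit in Lemma \ref{reparametrize}, must be audited so that only admissible constants and the listed parameters show up. Once this bookkeeping is in place, interpolation between $p=5/3$ and $p=2$ produces the full range $5/3\le p\le 2$ claimed in \eqref{bilin}.
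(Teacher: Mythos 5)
Your overall plan is structurally sound, and it does diverge from the paper's organization in an interesting way.  You propose to establish the two endpoints $p=2$ and $p=5/3$ separately and then interpolate, whereas the paper runs the entire wave-packet argument directly for each fixed $p\in[5/3,2]$, with the $L^1$--$L^2$ interpolation occurring only at the level of wave-packet products (Lemma \ref{L1L2} and Corollary \ref{inductionstart}).  Both organizations are viable because the claimed constant $(\kappa^{(1)}\kappa^{(2)})^{\frac12-\frac1p}D^{3-\frac5p}$ is multiplicative in the interpolation parameter, so bilinear Riesz--Thorin would indeed reconstruct the intermediate exponents from the endpoints.  Your $L^2$ endpoint via Plancherel plus the coarea formula is a genuine alternative to the paper's wave-packet $L^2$ bound; but note that transversality alone (the Jacobian $|\nabla\phi(\eta^1)-\nabla\phi(\eta^2)|\sim 1$) only yields the constant $\lesssim 1$, not the required $D^{1/2}$.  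To recover $D^{1/2}$ you must also bound the length of the fibers of the map $(\eta^1,\eta^2)\mapsto(\eta^1+\eta^2,\phi(\eta^1)+\phi(\eta^2))$, and that bound is non-trivial: it uses that the intersection curve of the two translated caps is ``close to diagonal'' (parts (iv)--(v) of Lemma \ref{propertylemma}, exploiting $\partial_1\partial_2\phi\equiv 0$) and so has length $\lesssim D$.  This is exactly what Lemma \ref{Vbound} establishes, phrased in wave-packet language, so you would need to reprove it in your setting rather than treat it as free.

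The genuine gap is in the $p=5/3$ endpoint, which is where the entire content of the theorem resides and which your proposal treats only at the level of a plan.  What you describe as ``run a Tao-style induction on scales \dots in each piece either few tubes participate \dots or else invoke the induction hypothesis after rescaling'' is the outline, not the argument, and the difficulties you correctly flag --- tracking all constants through the recursion so that only $(\kappa^{(1)}\kappa^{(2)})^{\frac12-\frac1p}D^{3-\frac5p}$ and $\log^{\gamma_\alpha}(C_0(S))$ survive --- are not resolved.  Concretely, the machinery that produces these constants consists of: the reparametrization of $S_2$ over the hyperplane $H_2=e_{i_0}^\perp$ with a specific choice of $i_0$ (Lemma \ref{reparametrize} together with the inequalities around \eqref{projectlength}), so that both tube families fit simultaneously inside $Q_{S_1,S_2}(R)$; the pigeonholing over the dyadic densities $\lambda_j,\mu_j$ with the explicit bound $|W_j(q)|\leq R^{c'}C_0$ that makes $\log^{\gamma_\alpha}(C_0(S))$ appear (Lemma \ref{E(a)reduction}); the decomposition into sub-cuboids $b$, the relation $b\sim w_j$, and the geometric/combinatorial estimate of Lemma \ref{geometric}; the choice of smooth cutoffs $\chi_q$ with compactly supported Fourier transform, which is essential for controlling the Fourier support of the modified packets in Lemma \ref{estofII}; and finally the tube-separation estimate Lemma \ref{separationlemma} along the intersection curve.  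None of this is contained in, or clearly implied by, your proposal, so as written it would not compile into a proof.  Your strategy is a legitimate reorganization, but the heavy lifting still lies ahead.
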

Notice that $C_0(S)\gtrsim 1.$
\begin{remnr} If  $\kappa^{(1)}=\kappa^{(2)}=\bar\kappa$, then $r$ is not well defined. But in this case the two sets
	$Q^0_{S_1,S_2}(R;j)=\{x\in\R^3:~ |x_i+\partial_i\phi(r^{(j)})x_3|\leq \frac{R^2}{D^2\bar\kappa},i=1,2,~|x_3|\leq\frac{R^2}{D^2\bar\kappa} \}$, $j=1,2$, essentially  coincide.
	Indeed,  since
$|\nabla\phi(r^{(1)})-\nabla\phi(r^{(2)})|\sim  1$ (due to the transversality assumption (iii)),
an easy geometric consideration 
shows that
\begin{equation*}
	a Q^0_{S_1,S_2}(R;1)\subset Q^0_{S_1,S_2}(R;2) \subset b Q^0_{S_1,S_2}(R;1),
\end{equation*}
for some constants $a,b$ which do not depend on $R$ and  the class $\s_0$ from which $S=(S_1,S_2)$ is taken.
\end{remnr}

By applying a suitable affine transformation whose linear part fixes the points of $\R^2\times \{0\},$  if necessary,\color{black} we may assume without loss of generality  that $r=0$ and $\nabla\phi(r)=0$. Notice that conditions (i)-(iii) and the conclusion of the theorem are invariant under such affine transformations.

In fact, we we shall  then prove estimate \eqref{bilin} in the  theorem on the even larger cuboid
\begin{align}\label{largecuboid}
	Q_{S_1,S_2}(R)=\left\{x\in\R^3:~ |x_{i_0}|\leq \frac{R^2}{D^2\bar\kappa},~\|x\|_\infty\leq\frac{R^2}{D^2(\kappa^{(1)}\wedge\kappa^{(2)})} \right\},
\end{align}
for an appropriate choice of the coordinate direction $x_{i_0}, i_0\in\{1,2\},$ in which the cuboid has  smaller side length. Later we shall need to combine different cuboids which may possibly have their smaller side lengths in different directions. Then  it will become necessary to restrict to their intersection, which leads to \eqref{Qscaled}.

Indeed,  we shall see that there will be two directions in which the  side length of the cuboids are   dictated  by the length of the wave packets, and one  remaining third direction for which we shall have more freedom in choosing the side length.

Observe also that  $\bar\kappa_i\bar d_i \lesssim 1$, and thus we may even assume without loss of generality that
\begin{align}\label{13feb1748}
	\bar\kappa_i\bar d_i\ll 1\quad\text{for all }i=1,2,
\end{align}
simply by decomposing  $S_1$ and $S_2$ into a finite number of subsets for which the side lengths of corresponding rectangles $U_j$  are sufficiently small fractions of the given $d_i^{(j)}.$

For $\eta^j\in U_j$ define
\begin{align*}
\phi_1(\eta)=\phi(\eta-\eta^2)+\phi(\eta^2),\qquad  \eta\in\eta^2+U_1\\
\phi_2(\eta)=\phi(\eta-\eta^1)+\phi(\eta^1),\qquad \eta\in\eta^1+U_2.
\end{align*}
 The set  $((\eta^2,\phi(\eta^2))+S_1)\cap((\eta^1,\phi(\eta^1))+S_2)=\mathop{\rm graph}(\phi_1)\cap\mathop{\rm graph}(\phi_2)$
will be called an {\it intersection curve} of $S_1$ and $S_2$. It agrees with  the graph of $\phi_1$ (or $\phi_2$) restricted to the set where $\psi=\phi_1-\phi_2=0$.
On this set, the normal field $N_j(\eta)=(-\nabla\phi_j(\eta),1)$ forms the conical set
$$\Gamma_j=\{sN_j(\eta)|s\in\R,\psi(\eta)=0\}.$$
In the sequel, we shall use the abbreviation $j+1\modd 2=2,$ if $j=1,$ and $j+1\modd 2=1,$ if $j=2.$


\begin{lemnr}\label{propertylemma}
Let $(S_1,S_2)\in\s_0$. Assume that $\nabla\phi(r)=0$ for some $r\in S_1\cup S_2$ and $\bar\kappa_i\bar d_i\ll 1$. Then the following hold true:
\begin{enumerate}
		\item[(a)] $D\kappa_i^{(j)} \ll 1$ for all $i,j=1,2$.
		\item[(b)] $|\nabla\phi(\eta)|\lesssim 1$ for all $(\eta,\phi(\eta))\in S_1\cup S_2$.
		\item[(c)] The unit normal fields on $S_1$ and $S_2$ are transversal, i.e.,
					\begin{align}\label{transversality1}
						|N_0(\eta^1)-N_0(\eta^2)|\sim  1\quad\text{for all }(\eta^j,\phi(\eta^j))\in S_j.
					\end{align}
		\item[(d)] $N_j$ and $\Gamma_{j+1\modd 2}$ are transversal for $j=1,2$ and for
				any choice of intersection curve  of $S_1$ and $S_2$.
		\item[(e)] If $\gamma$ is a parametrization by arclength $t$ of the projection of an intersection curve of $S_1$ and $S_2$ to the first two coordinates $\eta\in\R^2,$ then $|\dot{\gamma}_1|\sim 1\sim |\dot{\gamma}_2|$.	
\end{enumerate}
\end{lemnr}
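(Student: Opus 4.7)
My plan is to dispatch parts (a), (b), (e), and (c) as direct unpackings of the hypotheses, and then concentrate the main effort on the quantitative transversality (d). For (a), I would combine $D = \min_{i,j} d_i^{(j)} \leq \bar d_i$ with $\kappa_i^{(j)} \leq \bar\kappa_i$ and the hypothesis $\bar d_i \bar\kappa_i \ll 1$ to obtain $D\kappa_i^{(j)} \leq \bar d_i \bar\kappa_i \ll 1.$ For (b), I would pick $j_0$ with $r = r^{(j_0)}.$ On $U_{j_0},$ hypothesis (i) together with $\nabla\phi(r) = 0$ gives $|\partial_i\phi(\eta)| \lesssim \bar\kappa_i\bar d_i \ll 1.$ On the other rectangle $U_{j_1},$ the two-sided estimate (iii), applied with the specific choice $\eta^{j_0} = r^{(j_0)} \in U_{j_0},$ forces $|\partial_i\phi(\eta)| \sim 1,$ which in particular is $\lesssim 1.$

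For (e), I would write $\psi := \phi_1 - \phi_2$ and observe that on the intersection curve $\partial_i\psi(\eta) = \partial_i\phi(\eta - \eta^2) - \partial_i\phi(\eta - \eta^1) \sim 1$ by (iii); since $\dot\gamma$ is the unit vector orthogonal to $\nabla\psi,$ its two components $\pm\partial_2\psi/|\nabla\psi|$ and $\mp\partial_1\psi/|\nabla\psi|$ both have magnitude $\sim 1.$ For (c), (b) gives $|N(\eta^j)| = \sqrt{1 + |\nabla\phi(\eta^j)|^2} \sim 1,$ while (iii) gives $|N(\eta^1) - N(\eta^2)| = |\nabla\phi(\eta^1) - \nabla\phi(\eta^2)| \sim 1;$ an elementary renormalization argument (valid since all moduli are $\sim 1$) then yields $|N_0(\eta^1) - N_0(\eta^2)| \sim 1.$

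The substantive work lies in (d), which I treat only for $j = 1,$ the case $j = 2$ being symmetric. Fix an intersection curve and a point $\eta$ on it. At a smooth point $sN_2(\tilde\eta) \in \Gamma_2$ (with $s \neq 0$ and $\tilde\eta$ on the projected curve), the tangent plane is spanned by $N_2(\tilde\eta)$ and $\dot N_2(\tilde\eta) = (-\nabla^2\phi_2(\tilde\eta)\dot\gamma,0),$ so the angle between $N_1(\eta)$ and this tangent plane is measured by the triple product $N_1 \cdot (N_2 \times \dot N_2).$ Exploiting the diagonal Hessian (from $\partial_1\partial_2\phi \equiv 0$), a direct cross-product computation yields
\[
N_1(\eta) \cdot \bigl(N_2(\tilde\eta) \times \dot N_2(\tilde\eta)\bigr) = A\,\partial_2^2\phi_2(\tilde\eta)\,\dot\gamma_2 + B\,\partial_1^2\phi_2(\tilde\eta)\,\dot\gamma_1,
\]
with $A := \partial_1\phi_2(\tilde\eta) - \partial_1\phi_1(\eta)$ and $B := \partial_2\phi_1(\eta) - \partial_2\phi_2(\tilde\eta).$ The orthogonality $\dot\gamma \perp \nabla\psi(\tilde\eta)$ expresses $\dot\gamma_2$ as a multiple of $\dot\gamma_1$ through the analogous quantities $A', B'$ obtained by evaluating both factors at $\tilde\eta$ alone; substituting transforms the triple product into
\[
\frac{\dot\gamma_1}{B'}\bigl[AA'\,\partial_2^2\phi_2(\tilde\eta) + BB'\,\partial_1^2\phi_2(\tilde\eta)\bigr].
\]
Since $\partial_i\phi(U_1)$ and $\partial_i\phi(U_2)$ are disjoint intervals (by (iii) combined with the small intrinsic variation $\bar\kappa_i\bar d_i \ll 1$), the factors $A, A'$ share sign, as do $B, B';$ together with $\partial_i^2\phi > 0$ this makes both bracket terms positive, and by (iii) and (e) their sum is $\sim \kappa_1^{(2)} + \kappa_2^{(2)} \sim \kappa^{(2)}.$ Comparing with the trivial upper bound $|N_2 \times \dot N_2| \leq |N_2||\dot N_2| \lesssim \kappa^{(2)},$ the sine of the angle between $N_1$ and the tangent plane is bounded below by a constant, i.e., quantitative transversality holds.

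The principal obstacle is precisely this cancellation-free factorisation in (d): a priori the triple product is the sum of two reals that could cancel catastrophically, and only the combined use of the separable structure $\partial_1\partial_2\phi \equiv 0,$ the orthogonality $\dot\gamma \perp \nabla\psi,$ the sign rigidity of $\partial_i\phi$ across $U_1$ and $U_2,$ and the definite sign of the second derivatives produces the manifestly positive form above, of the expected order $\kappa^{(2)}.$
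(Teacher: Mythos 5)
Your proposal is correct, and for parts (a), (b), (c), (e) it unpacks the hypotheses in the same way the paper does. The substantive part (d) also proceeds by the same $3\times3$ determinant (triple product) computation using the diagonal Hessian and the orthogonality $\dot\gamma\perp\nabla\psi,$ but you keep two evaluation points $\eta$ (for $N_1$) and $\tilde\eta$ (for $N_2,\dot N_2$), whereas the paper first invokes the small variation $|N_1(\eta)-N_1(\eta')|\lesssim\kappa_1^{(1)}d_1^{(1)}+\kappa_2^{(1)}d_2^{(1)}\ll 1$ to reduce to $\tilde\eta=\eta,$ after which the bracket collapses to the manifest sum of squares $\partial_1^2\phi_2\dot\gamma_1^2+\partial_2^2\phi_2\dot\gamma_2^2$ and no sign discussion is required. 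Your sign-rigidity argument (that $\partial_i\phi(U_1)$ and $\partial_i\phi(U_2)$ are disjoint intervals, forcing $AA'>0$ and $BB'>0$) is a correct substitute for that reduction and handles the general pair $(\eta,\tilde\eta)$ directly; the trade-off is a few extra geometric observations in place of the paper's one-line reduction to the single-point case.
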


\begin{proof} We shall denote by $\eta=\tilde x\in\R^2$ the projection of a point in $x\in \R^3$ to its first two coordinates.
(a) is clear since $D=\min\limits_{i,j=1,2} d_i^{(j)}$. To prove (b), notice that for any $x,x'\in S_1\cup S_2$ we have $|\nabla\phi(\tilde x)-\nabla\phi(\tilde x')|\lesssim 1$: if $x$ and $x'$ belong to different hypersurface  $S_j$, we apply condition (iii) from the definition of $\s_0$, and if $x$ and $x'$ are in the same hypersurface  $S_j$, we use condition (a). Thus we have $|\nabla\phi(\tilde x)|=|\nabla\phi(\tilde x)-\nabla\phi(r)|\lesssim1$ for all $x\in S_1\cup S_2$.\\
This gives $|N(\tilde x)|=\sqrt{1+|\nabla\phi(\tilde x)|^2} \sim  1$ for all $x\in S_1\cup S_2,$ which already implies
 the transversality of the normal fields: 
$$
|N_0(\eta^1)-N_0(\eta^2)|\sim  |N(\eta^1)-N(\eta^2)|
	=|\nabla\phi(\eta^1)-\nabla\phi(\eta^2)|\sim  1
$$
for all $(\eta^j,\phi(\eta^j))\in S_j, \ j=1,2.$

We shall prove (e) first, since (e) will be needed for the proof of (d). It suffices to prove  that $|\partial_i\psi(\eta)|\sim 1$ for all $\eta$ such that $\eta-\eta^j\in U_{j+1\modd 2}$, $\eta^j\in U_j,$ since the tangent to the curve $\gamma$ at any point $\gamma(t)$ is  orthogonal to $\nabla\psi(\gamma(t)).$
But, in view of \eqref{13jan1},
$$
	|\partial_i\psi(\eta)| = |\partial_i\phi(\eta-\eta^2)-\partial_i\phi(\eta-\eta^1)|
	{\sim } 1.	
$$
For the  claim (d), since the claim is symmetric in $j\in\{1,2\},$ it suffices to show that $N_1$ and $\Gamma_2$ are transversal. Since we have
\begin{align*}
	|N_1(\eta)-N_1(\eta')|=|\nabla\phi_1(\eta)-\nabla\phi_1(\eta')|
	\lesssim \kappa^{(1)}_1 d^{(1)}_1 + \kappa^{(1)}_2 d^{(1)}_2 \ll 1
\end{align*}
for all $\eta,\eta'\in U_1+\eta^2$, whereas $|N_1(\eta)|\sim  1$ for all $\eta\in U_1+\eta^2$,
it is even enough to show that
$N_1(\eta)$ and the tangent space $T_{N_2(\eta)}\Gamma_2$ of $\Gamma_2$ at the point $N_2(\eta)$ are transversal.
Since  $\gamma$ is a parametrization by arclength of the zero set of $\psi$, the tangent space of $\Gamma_2$ at the point $N_2(\eta)$ for $\eta=\gamma(t)$ is spanned by $N_2(\eta)$ and $(-D^2\phi_2(\eta)\;\dot{\gamma}(t),0)$,  where $D^2\phi_2$ denotes the Hessian matrix of $\phi_2.$  But, recalling that we assume that
$\partial_1\partial_2\phi\equiv 0,$ we see that the vectors
 $N_2(\eta)$ and $\frac{1}{\kappa^{(2)}}(-D^2\phi_2(\eta)\;\dot{\gamma}(t),0)$  form an ``almost''  orthonormal frame for the  tangent space $T_{N_2(\eta)}\Gamma_2,$ and thus the  transversality can be checked by estimating the volume $V$  of the parallelepiped spanned  by $N_1(\eta)$ and these two vectors, which is given by
 \begin{align*}
	V&= \left|\begin{array}{ccc} -\partial_1\phi_1(\eta) & -\partial_2\phi_1(\eta) & 1	\\
									-\partial_1\phi_2(\eta) & -\partial_2\phi_2(\eta) & 1	\\
									\frac{1}{\kappa^{(2)}}\partial_1^2\phi_2(\eta)\dot{\gamma}_1(t) &
									 \frac{1}{\kappa^{(2)}}\partial_1^2\phi_2(\eta)\dot{\gamma}_2(t) & 0 \end{array}\right|	\\
	&= \frac{1}{\kappa^{(2)}} \big|-\partial_1^2\phi_2(\eta)\dot{\gamma}_1(t)\partial_2\psi(\eta)
														 +\partial_2^2\phi_2(\eta)\dot{\gamma}_2(t)\partial_1\psi(\eta)\big|.							
\end{align*}
Since $\psi\circ\gamma=0$ by definition, we have
$ \partial_1\psi(\eta)\dot{\gamma}_1(t) + \partial_2\psi(\eta)\dot{\gamma}_2(t),$ hence
\begin{align*}
\partial_2\psi(\eta)=& -\partial_1\psi(\eta)\frac{\dot{\gamma}_1(t)}{\dot{\gamma}_2(t)}.
\end{align*}
Thus
\begin{align*}
		V=& \frac{|\partial_1\psi(\eta)|}{\kappa^{(2)}|\dot{\gamma}_2(t)|}
			(\partial_1^2\phi_2(\eta)\dot{\gamma}_1^2(t)	+\partial_2^2\phi_2(\eta)\dot{\gamma}_2^2(t))	 \\
	\sim & |\partial_1\phi(\eta-\eta^2)-\partial_1\phi(\eta-\eta^1)|
									\frac{\kappa^{(2)}_1 + \kappa^{(2)}_2}{\kappa^{(2)}}		
	\sim  1,
\end{align*}
finishing the proof.
\end{proof}

%
%



We now come to the introduction of the wave packets that we shall  use in the proof of Theorem \ref{Eathm}. Let us  assume without loss of generality  that
\begin{align}\label{2feb1351}
	\kappa^{(1)}\leq \kappa^{(2)},
\end{align}
  i.e., $r=r^{(1)}$ and $\nabla\phi(r^{(1)})=0$.
\medskip


Next, since  $S_1$ is horizontal at $(r^{(1)}, \phi(r^{(1)})),$ we may use  the wave packet decomposition from Corollary \ref{wave packets2}, with normal $n_1$ and hyperplane $H_1$ given by
\begin{align*}
	n_1=(0,0,1)\qquad\text{and}\qquad H_1=\R^2\times\{0\},
\end{align*}
 in order to decompose
   \begin{align}\label{wp1}
  R^*_{\R^2}f_1=R_{H_1}^*f_1=\sum_{w_1\in\W_1}c_{w_1} p_{w_1}, \quad w_1\in\W_1,
  \end{align}
 into wave packets  $p_{w_1}, w_1\in \W_1,$ of length ${R'^2}/\kappa^{(1)},$ directly by means of Lemma \ref{wave packets}. By $T_{w_1},w_1\in \W_1,$ we denote the associated set of tubes.  Recall that this decomposition is valid on the set
   $P_1=\R^2\times[-\frac{R'^2}{\kappa^{(1)}},\frac{R'^2}{\kappa^{(1)}}].$

   \medskip
   Let us next turn to  $S_2$ and $R^*_{\R^2} f_2.$
If we would keep the same coordinate system for $S_2$, we would have to truncate even further in $x_3$-direction, since $\frac{R'^2}{\kappa^{(2)}}\leq\frac{R'^2}{\kappa^{(1)}}$.
However, by  {\eqref{13jan1} we have for $\eta\in U_2$ and both $i=1$ and $i=2$ that
$$|\langle e_i, N(\eta) \rangle|=|\partial_i\phi(\eta)|
=|\partial_i\phi(\eta)-\partial\phi(r^{(1)})| \sim  1.$$
This means  that we may  apply  Lemma \ref{reparametrize} to $S_2$  in order to re-parametrize $S_2$ by an open subset (denoted again by $U_2$) of the hyperplane $H_2=n_2^\perp$ given by
\begin{align*}
	n_2=e_{i_0}
	\qquad\text{and}\qquad
	H_2=\{n_2\}^\bot = \{e_{i_0}\}^\bot.
	\end{align*}
We may thus  replace the function $f_2$ by a function (also denoted by $f_2$)  on $U_2$ of comparable $L^2$-norm, and replace $R^*_{\R^2} f_2$ by $R^*_{H_2} f_2$ in the subsequent arguments.
\medskip

Next, applying Corollary \ref{wave packets2}, now with $H=H_2,$  then for $i_0=1,$ as well as for $i_0=2,$  we may  decompose
 \begin{align}\label{wp2}
  R_{H_2}^*f_2=\sum_{w_2\in\W_2}c_{w_2} p_{w_2}, \quad w_2\in\W_2,
  \end{align}
 on the set
$$P_2=\left\{x\in\R^3: |\langle x,n_2\rangle |\leq\frac{R'^2}{\kappa^{(2)}} \right\}
= \left\{x\in\R^3: |x_{i_0}|\leq\frac{R'^2}{\kappa^{(2)}} \right\}$$ by means of wave packets of of length ${R'^2}/\kappa^{(2)}.$ The associates set of tubes will be denoted by  $T_{w_2},w_2\in \W_2.$

In order to  decide how to chose $i_0$, we observe that for $\eta\in U_1,$  our definitions \eqref{players} in combination with the estimates \eqref{13jan2} and \eqref{13feb1748} show that
\begin{align*}
	|\partial_i\phi(\eta)-\partial_i\phi(r^{(1)})|
\lesssim \kappa_i^{(1)} d_i^{(1)}
	\leq \frac{\kappa_i^{(1)}}{\bar\kappa_i} \bar\kappa_i\bar d_i
	\ll \frac{\kappa_i^{(1)}}{\bar\kappa_i}.
\end{align*}
Notice that  the wave packets associated  to $S_1$ are roughly pointing in the direction of  $N(r^{(1)})=(0,0,1).$  More precisely, if  we project a  wave packet pointing in direction of  $N(\eta)$, $\eta\in U_1,$ to the coordinate $x_i, i=1,2,$ then by the previous estimates we see that  we obtain an interval of length comparable to
\begin{align}\label{projectlength}
	|\langle e_i,\frac{R'^2}{\kappa^{(1)}} N(\eta)\rangle|			
	= \frac{R'^2}{\kappa^{(1)}} |\partial_i\phi(\eta)|	
	=& \frac{R'^2}{\kappa^{(1)}} |\partial_i\phi(\eta)-\partial_i\phi(r^{(1)})|
	\ll   \frac{R'^2}{\bar\kappa}\frac{\bar\kappa}{\kappa^{(1)}}\frac{\kappa_i^{(1)}}{\bar\kappa_i}.	
\end{align}
Let us therefore choose  $i_0$ so that
\begin{align*}
	\frac{\kappa^{(1)}_{i_0}}{\bar\kappa_{i_0}}
	= \frac{\kappa_1^{(1)}}{\bar\kappa_1}\wedge\frac{\kappa_2^{(1)}}{\bar\kappa_2}.
\end{align*}
Then
$$
\bar\kappa \frac{\kappa_{i_0}^{(1)}}{\bar\kappa_{i_0}}=(\bar\kappa_1\vee\bar\kappa_2)
				\left(\frac{\kappa_1^{(1)}}{\bar\kappa_1}\wedge\frac{\kappa_2^{(1)}}{\bar\kappa_2}\right)	
				 \le\kappa_1^{(1)}\vee\kappa_2^{(1)} = \kappa^{(1)},
$$
and thus by \eqref{projectlength} and \eqref{2feb1351}
\begin{align*}
	|\langle e_{i_0},\frac{R'^2}{\kappa^{(1)}} N(\eta)\rangle|
	\ll \frac{R'^2}{\bar\kappa}= \frac{R'^2}{\kappa^{(2)}}.
\end{align*}
This means that  the geometry fits well: the wave packets associated to  $S_1$ do not turn too much into the direction of $x_{i_0}:$  projected to this coordinate, their length is smaller  than the length of the wave packets associated to $S_2,$ which are essentially pointing in the direction of the $i_0$-th coordinate axis (cf. Figure \ref{qofr1}).

\begin{figure}
\begin{center}  \includegraphics{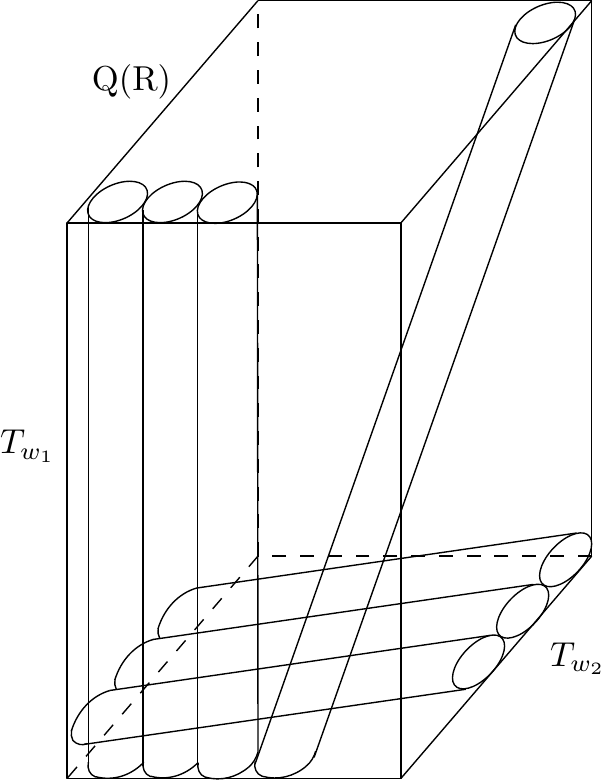} \end{center}
\caption{The wave packets filling the cuboid $Q_{S_1,S_2}(R)$}
\label{qofr1}
\end{figure}

\medskip
However, for the remaining coordinate direction $x_i$, $i\in\{1,2\}\setminus\{i_0\}$, we cannot guarantee such a behaviour.
But notice that by \eqref{2feb1351}
\begin{align*}
	&P_1\cap P_2=\bigg(\R^2\times\left[-\frac{R'^2}{\kappa^{(1)}},\frac{R'^2}{\kappa^{(1)}}\right]\bigg)
	\cap \left\{x\in\R^3: |x_{i_0}|\leq\frac{R'^2}{\kappa^{(2)}} \right\} \\
	&= \left\{(x_1,x_2)\in\R^2: |x_{i_0}|\leq\frac{R'^2}{\kappa^{(2)}} \right\}
										 \times\left[-\frac{R'^2}{\kappa^{(1)}},\frac{R'^2}{\kappa^{(1)}}\right]	\\
	&\supset  \left\{x\in\R^3:|x_{i_0}|\leq \frac{R'^2}{\bar\kappa},\
																		\|x\|_\infty \leq \frac{R'^2}{\kappa^{(1)}\wedge\kappa^{(2)}} \right\}		\\
	&=  Q_{S_1,S_2}(R),
\end{align*}
i.e., on the cuboid $Q_{S_1,S_2}(R)$ we may apply our development into   wave packets for the wave packets associated to  the hypersurface $S_1,$ as well as those associated to  $S_2.$
\medskip

For every $\alpha>0$,  let us denote by $E(\alpha)$ the following statement:
\medskip

{\noindent $E(\alpha):$}
There exist constants $C_\alpha>0$ and $\gamma_\alpha>0$ such that for all pairs $ S=(S_1,S_2)\in\mathcal{S}_0,$ all  $R\geq 1$ and all $ f_j\in L^2(U_j), \,j=1,2$ (which we may also regard as functions on $S_j$) the following estimate holds true:
\begin{align}\label{Ealest}
	\|R^*_{H_1}f_1 R^*_{H_2}f_2\|_{L^p(Q_{S_1,S_2}(R))} &\leq C_\alpha R^\alpha \log^{\gamma_\alpha}(1\!\!+\!\!R)\\
&\times (\kappa^{(1)}\kappa^{(2)})^{\frac{1}{2}-\frac{1}{p}}D^{3-\frac{5}{p}} \log^{\gamma_\alpha}(C_0(S))  \|f_1\|_2 \|f_2\|_2	\nonumber		.
\end{align}
Here, $C_0(S)$ denotes the constant defined in Theorem \ref{Eathm}.
\medskip

Our goal will be to show that $E(\alpha)$ holds true for every $\alpha>0,$ which would prove Theorem \ref{Eathm}. To this end, we shall apply the method of  induction on scales.
\medskip

Observe that the intersection of two  of the transversal tubes $T_{w_1}, w_1\in \W_1,$ and $T_{w_2}, w_2\in \W_2,$ will always be contained in a cube of side length $\landau(R').$
Let us therefore decompose $\R^3$ by means of a grid of side length $R'$ into cubes $q$ of the same side length, and  let $\{q\}_{q\in\mathcal Q}$ be a family of such cubes covering $Q_{S_1,S_2}(R).$ By $c_q$ we shall denote the center of the cube $q.$  Choose  $\chi\in\s(\R^3)$ with $\supp\hat\chi\subset B(0,1)$
 and $\hat\chi(0)=1/(2\pi)^n$, and put  $\chi_q(x)=\chi\left(\frac{x-c_q}{R'}\right).$  Poisson's summation formula then implies that  $\sum\chi_q=1$ on $\R^3,$ so that in particular we may assume that  $\sum_{q\in\mathcal Q}\chi_{q}=1$ on $Q_{S_1,S_2}(R)$.

Notice  that our approach  slightly differs from the standard usage of induction on scales, where
 $\chi_q$ is chosen to be  the characteristic function of $q,$ and not a smoothened version of it.  The price we shall have to pay is that some arguments will become  a bit more technical,  but the compact  Fourier support of the functions $\chi_q$ will become crucial  later.

For a given index set $W_j\subset\W_j, j=1,2, $ of wave packets (compare \eqref{wp1}, \eqref{wp2}), we denote by \begin{align*}
	W_j(q)=\{w_j\in W_j: T_{w_j}\cap R^\delta q\neq\emptyset\}
\end{align*}
the collection of  all the tubes of type $j$ passing through (a slightly thickened)  cube $q.$ Here,
 $\delta>0$ is a small parameter which will be fixed later,  and $R^\delta q$ denotes  the dilate  of q by  the factor $R^\delta$ having the same center $c_q$ as $q$.

 Let us denote by $\mathcal N$ the set  $\mathcal N=\{2^n|n\in\N\}\cup\{0\}.$ In order to count the magnitude of the number of wave packets $W_j$ passing through a given cube $q,$ we introduce the sets
\begin{align*}
	Q^\mu=\{q:|W_j(q)|\sim\mu_j,j=1,2\},\quad \mu=(\mu_1,\mu_2)\in\mathcal N^2.
\end{align*}
Obviously the $Q^\mu$ form a partition of the family of all cubes $q\in\mathcal Q.$
For $w_j\in W_j$, we further introduce the set of all cubes in $Q^\mu$ close to  $T_{w_j}$:
\begin{align*}
	Q^\mu(w_j)=\{q\in Q^\mu: T_{w_j}\cap R^\delta q\neq\emptyset\}.
\end{align*}
Finally, we determine the number of such cubes by means of the sets
\begin{align*}
	W_j^{\lambda_j,\mu} =\{w_j\in W_j:|Q^\mu(w_j)|\sim\lambda_j\}, \quad\lambda_j,\mu_1,\mu_2\in \mathcal N.
\end{align*}
For every fixed $\mu$, the family $\{W_j^{\lambda_j,\mu}\}_{\lambda_j\in\mathcal N}$ forms a partition of $W_j$.

We are now in a position to reduce the statement $E(\alpha)$ to a formulation in terms of wave packets.

\subsection{Reduction to a wave packet formulation}

Following basically a standard pigeonholing argument in combination with (P5), the estimate in $E(\alpha)$ can easily be reduced to a bilinear estimate for sums of wave packets (modulo an increase of the exponent $\gamma_\alpha$ by 5). It is in this reduction that  some power of the logarithmic  factor $\log (C_0(S))$ will appear, and we shall have to be a bit more precise than usually in order to identify  $C_0(S)$ as the expression given by \eqref{C0}.
\begin{lemnr}\label{E(a)reduction}
Let $\alpha>0$. Assume there are  constants $C_\alpha,\gamma_\alpha>0$ such that for all $(S_1,S_2)\in\mathcal{S}_0$ (parametrized by the open subsets $U_j\subset H_j$) the following estimate is satisfied:

Given any any two   families of wave packets $\{p_{w_1}\}_{w_1\in\mathcal W_1}$ and  $\{p_{w_2}\}_{w_2\in\mathcal W_2}$ associated to $S_1$ respectively $S_2$  as in the wave packet decomposition Corollary \ref{wave packets2}, where the   $p_{w_j},j=1,2,$ satisfy uniformly the estimates in (P2) -- (P5), then for
 all $R\geq 1$, all  $\lambda_j,\mu_j\in\mathcal N$ and all  subsets $W_j\subset\W_j,j=1,2,$  we have (with admissible constants)
 \begin{align}\label{E(a)reduction_eq}
	&\|\prod_{j=1,2}\sum_{w_j\in W_j^{\lambda_j,\mu}} p_{w_j} \sum_{q\in Q^\mu} \chi_q\|_{L^p(Q_{S_1,S_2}(R))}	\nonumber\\
	\leq& C_\alpha R^\alpha \log^{\gamma_\alpha}(1+R)
			(\kappa^{(1)}\kappa^{(2)})^{\frac{1}{2}-\frac{1}{p}}D^{3-\frac{5}{p}} \log^{\gamma_\alpha}(C_0(S))
			|W_1|^\frac{1}{2} |W_2|^\frac{1}{2}.
\end{align}
Then $E(\alpha)$ holds true.
\end{lemnr}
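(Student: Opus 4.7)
The reduction proceeds by substituting the wave-packet decompositions into $R^*_{H_1}f_1 \, R^*_{H_2}f_2$, inserting the partition of unity $\sum_{q\in\mathcal{Q}}\chi_q \equiv 1$ on $Q_{S_1,S_2}(R)$, and pigeonholing in several dyadic scales so that on each block one can directly apply the hypothesis \eqref{E(a)reduction_eq}.

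First, I would bucket the coefficients by $\mathcal{W}_j^{k_j}=\{w_j\in\mathcal W_j:|c_{w_j}|\sim 2^{k_j}\}$. By (P5), $\sum_{w_j}|c_{w_j}|^2\lesssim \|f_j\|_2^2$, and wave packets with $|c_{w_j}|\leq R^{-N}\|f_j\|_2$ contribute a negligible error via the rapid decay (P3), so only $O(\log R+\log|\mathcal{W}_j|)$ scales in $k_j$ are relevant. Absorbing the phase $c_{w_j}/|c_{w_j}|$ into $p_{w_j}$ yields new wave packets still satisfying (P2)--(P5) uniformly, with common amplitude $\sim 2^{k_j}$. Then, with $W_j:=\mathcal{W}_j^{k_j}$ fixed, I would partition the cubes via $\mu\in\mathcal N^2$ and the wave packets via $\lambda_j\in\mathcal N$, exactly as in the lemma statement. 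Invoking the hypothesized estimate for each $(k_1,k_2,\mu,\lambda_1,\lambda_2)$, multiplying by the amplitudes $2^{k_1+k_2}$, and summing via Cauchy--Schwarz and $\sum_{k_j}2^{2k_j}|\mathcal{W}_j^{k_j}|\lesssim\|f_j\|_2^2$ yields a bound of the desired form, with five extra logarithmic factors coming from the pigeonholing.

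The principal obstacle is verifying that the ranges of $k_j,\mu_j,\lambda_j$ are each bounded in terms of $\log(C_0(S)\cdot R^{c})$, i.e.\ that the combinatorial quantities $|\mathcal{W}_j|,|\mathcal{Q}|,\max_q|W_j(q)|$ and $\max_{w_j}|Q^\mu(w_j)|$ are polynomial in $C_0(S)\cdot R^{c}$. This requires translating all geometric sizes back to the parameters $d_i^{(j)},D,\kappa^{(j)}$ and recognising the product as the expression \eqref{C0}. A secondary issue is that $\chi_q$ is a smooth bump rather than a characteristic function; the Schwartz decay (P3) of $p_{w_j}$ outside $T_{w_j}$, together with the mild thickening $R^\delta q$ built into the definition of $W_j(q)$, absorbs the resulting tail contributions at the expense of a harmless $R^{-N}$ loss, and justifies the introduction of the parameter $\delta$.
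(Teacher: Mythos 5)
Your proposal is correct and takes essentially the same route as the paper: pigeonhole the coefficients into dyadic levels, discard the small-coefficient tail by a trivial bound, bound the number of relevant dyadic scales for $(\mu,\lambda_j)$ using the combinatorial cardinalities of $\W_j$, $W_j(q)$ and $Q^\mu(w_j)$ in terms of $R$ and $C_0(S)$, apply the hypothesis to each normalized block, and sum via Cauchy--Schwarz and (P5). One small caveat: the threshold for the negligible tail of small coefficients must scale with $C_0(S)$ as well as $R$ (the paper chooses $k_0=\log_2 C_0+\log R^{6/p+2}$), and the smoothness of $\chi_q$ is not actually needed in this reduction step --- it only becomes relevant later, in the geometric argument --- but neither point affects the validity of your strategy.
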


\begin{proof}
In order to show $E(\alpha),$ we may  assume without loss of generality  that $\|f_j\|_2=1$, $j=1,2$. Let us abbreviate $C_0(S)=C_0.$

First observe that for fixed $q$ and $v_j$ the number of $y_j$ such that the tube $T_{(y_j,v_j)}$ passes through $R^\delta q$ is bounded by $R^{c\delta}$, whereas the total number of $v_j\in V_j$ is bounded by
\begin{align}\label{Vbound2}
	|V_j|\sim R'^2 |U_j| \leq R^2\frac{\bar d_1\bar d_2}{D^2}.
\end{align}
Thus we have
$$
	|W_j(q)| \leq R^{2+c\delta} \frac{\bar d_1\bar d_2}{D^2} 	
	{\leq} R^{2+c} \frac{\bar d_1^2\bar d_2^2}{D^4} (D[\kappa^{(1)}\wedge\kappa^{(2)}])^{-\frac{1}{p}}
	(D\kappa^{(1)} D\kappa^{(2)})^{-\frac{1}{2}}=	R^{c'} C_0,						
$$
where we have  used property  (i) of Lemma \ref{propertylemma}. Consequently  $Q^\mu=\emptyset,$ if $\mu_j\gg R^{c'} C_0$ for some $j.$
Similarly, the number of cubes $q$ of side length $R'$ such that $R^\delta q$ intersects with a tube $T_{w_j}$ of length $R'^2/{\kappa^{(j)}}$ is bounded by $R^{c\delta} R'/\kappa^{(j)}=R^{1+c\delta}/D\kappa^{(j)}$. Since $D\leq \bar d_1, \bar d_2$, this implies $$
	|Q^\mu(w_j)|\leq \frac{R^{1+c\delta}}{D\kappa^{(j)}}	
	\leq R^{c'} \frac{\bar d_1^4\bar d_2^4}{D^8} (D[\kappa^{(1)}\wedge\kappa^{(2)}])^{-\frac{2}{p}}
								(D\kappa^{(1)} D\kappa^{(2)})^{-1}	
	= R^{c'} C_0^2,							
$$
and thus $W_j^{\lambda_j,\mu}=\emptyset,$ if $\lambda_j\gg  R^{c'} C_0^2.$
For $C\ge 0$ let us put  $\mathcal N(C)=\{\nu\in\mathcal N|\nu\lesssim C\}.$ Since $C_0\gtrsim 1,$  we then  see that
$$\mathcal Q= \bigcup\limits_{\mu_1,\mu_2\in\mathcal N(R^{c'}C_0^2)}Q^\mu,
$$
and for every fixed $\mu,$
$$ W_j=\bigcup\limits_{\lambda_j\in\mathcal N(R^{c'}C_0^2)} W_j^{\lambda_j,\mu}.
$$
These decompositions in combination with our assumed estimate \eqref{E(a)reduction_eq} imply that
\begin{align*}
	&\big\|\prod_{j=1,2}\sum_{w_j\in W_j} p_{w_j}\big\|_{L^p(Q_{S_1,S_2}(R))}	\\
	\leq& \sum_{\lambda_1,\lambda_2,\mu_1,\mu_2\in\mathcal N(R^{c'}C_0^2) }
				\big\|\prod_{j=1,2}\sum_{w_j\in W_j^{\lambda_j,\mu}} p_{w_j} \sum_{q\in Q^\mu} \chi_q\big\|_{L^p(Q_{S_1,S_2}(R))}	\\
	\leq& C_\alpha R^\alpha \log^4(R^{c'}C_0^2) \log^{\gamma_\alpha}(1+R)
				(\kappa^{(1)}\kappa^{(2)})^{\frac{1}{2}-\frac{1}{p}}D^{3-\frac{5}{p}} \log^{\gamma_\alpha}(C_0)
				|W_1|^\frac{1}{2} |W_2|^\frac{1}{2}
\end{align*}
for every $W_j\subset\W_j,j=1,2,$  hence
\begin{align} \label{E(a)reduction_eq1}
	\|\prod_{j=1,2}\sum_{w_j\in W_j} p_{w_j} \|_{L^p(Q_{S_1,S_2}(R))}
	&\leq C_\alpha R^\alpha \log^{\gamma_\alpha+4}(1+R)  \\
			&\times(\kappa^{(1)}\kappa^{(2)})^{\frac{1}{2}-\frac{1}{p}}D^{3-\frac{5}{p}} \log^{\gamma_\alpha+4}(C_0)
			|W_1|^\frac{1}{2} |W_2|^\frac{1}{2}.\nonumber
\end{align}

Recall  next that $R^*f_j=\sum\limits_{w_j\in\W_j}c_{w_j}p_{w_j}.$
Introduce the subsets $W_j^k=\{w_j\in\W_j:|c_{w_j}|\sim2^{-k}\},$  which allow to partition  $\W_j=\bigcup\limits_{k\in\N} W_j^k.$
We fix some $k_0,$ whose precise  value will be determined later. Then
\begin{align*}
	&\| \sum_{k>k_0}\sum_{w_1\in W_1^k}\sum_{w_2\in\W_2} c_{w_1}p_{w_1} c_{w_2}p_{w_2} \|_{L^p(Q_{S_1,S_2}(R))}\\
	&\leq |Q_{S_1,S_2}(R)|^\frac{1}{p} \sum_{k>k_0}
					\|\sum_{w_1\in W_1^k}\sum_{w_2\in\W_2} c_{w_1}p_{w_1} c_{w_2}p_{w_2}\|_\infty.
\end{align*}
The wave packets $p_{w_j}$ are well separated with respect to the parameter $y_j,$ and  by (P4), their $L^\infty$- norm is   of order $\landau(R'^{-1}).$ Moreover,  by  \eqref{Vbound2}  the number of
$v_j$'s is bounded by $R^2{\bar d_1\bar d_2}/{D^2}.$ Furthermore,  $|c_{w_1}|\lesssim 2^{-k}$ for every $w_1\in W_1^k$, and  by (P6) we have
$|c_{w_2}|\leq\| \{c_{w_2}\}_{w_2\in\W_2}\|_{\ell^2} \lesssim \|f_2\|_2=1$. Combining all this information, we may estimate
\begin{align*}
	&\| \sum_{k>k_0}\sum_{w_1\in W_1^k}\sum_{w_2\in\W_2} c_{w_1}p_{w_1} c_{w_2}p_{w_2} \|_{L^p(Q_{S_1,S_2}(R))}	\\
	\lesssim& \left(\frac{R'^6}{[\kappa^{(1)}\wedge\kappa^{(2)}]\kappa^{(1)}\kappa^{(2)}}\right)^\frac{1}{p}
			\frac{\bar d_1^2\bar d_2^2}{D^4} R^4  R'^{-2}  \sum_{k>k_0} 2^{-k}		\\
	\sim& R^{\frac{6}{p}+2}  C_0\, D^{3-\frac{5}{p}}
				(\kappa^{(1)}\kappa^{(2)})^{\frac 12-\frac{1}{p}}2^{-k_0}.
\end{align*}
If we now choose
$k_0=\log_2 C_0 + \log R^{\frac{6}{p}+2},$
 then we obtain
\begin{align} \label{12sep1543}
	\| \sum_{k>k_0}\sum_{w_1\in W_1^k}\sum_{w_2\in\W_2} c_{w_1}p_{w_1} c_{w_2}p_{w_2} \|_{L^p(Q_{S_1,S_2}(R))}
	\lesssim
	 D^{3-\frac{5}{p}} (\kappa^{(1)}\kappa^{(2)})^{\frac{1}{2}-\frac{1}{p}}.
	 	 \end{align}
In a  similar way we  also get
\begin{align}\label{12sep1544}
	\| \sum_{k_1\leq k_0}\sum_{w_1\in W_1^{k_1}}
	\sum_{k_2>k_0}\sum_{w_2\in W_2^{k_2}} c_{w_1}p_{w_1} c_{w_2}p_{w_2} \|_{L^p(Q_{S_1,S_2}(R))}
	 \lesssim D^{3-\frac{5}{p}} (\kappa^{(1)}\kappa^{(2)})^{\frac{1}{2}-\frac{1}{p}}.
\end{align}

The remaining terms can simply be estimated by
\begin{align*}
	&\| \sum_{k_1,k_2=1}^{k_0}\sum_{w_1\in W_1^{k_1}}\sum_{w_2\in\W_2^{k_2} }
														c_{w_1}p_{w_1} c_{w_2}p_{w_2} \|_{L^p(Q_{S_1,S_2}(R))}		\\
	\lesssim& \sum_{k_1,k_2=1}^{k_0} 2^{-k_1-k_2}
	\|\sum_{w_1\in W_1^{k_1}}\sum_{w_2\in\W_2^{k_2}} c_{w_1}2^{k_1}p_{w_1} c_{w_2}2^{k_2}p_{w_2} \|_{L^p(Q_{S_1,S_2}(R))}.
\end{align*}
Since $|c_{w_j}2^{k_j}|\sim1$ for $w_j\in W_j^{k_j}$, it is appropriate to apply \eqref{E(a)reduction_eq1} to the modified wave packets $\tilde p_{w_j} = c_{w_j}2^{k_j}p_{w_j}:$
\begin{align*}
	&\| \sum_{k_1,k_2=1}^{k_0}\sum_{w_1\in W_1^{k_1}}\sum_{w_2\in\W_2^{k_2} }
														c_{w_1}p_{w_1} c_{w_2}p_{w_2} \|_{L^p(Q_{S_1,S_2}(R))}	\\
	\leq& C_\alpha R^\alpha \log^{\gamma_\alpha+4}(1+R)
			(\kappa^{(1)}\kappa^{(2)})^{\frac{1}{2}-\frac{1}{p}}D^{3-\frac{5}{p}} \log^{\gamma_\alpha+4}(C_0)
			\sum_{k_1,k_2=1}^{k_0} 2^{-k_1-k_2}  |W_1^{k_1}|^\frac{1}{2} |W_2^{k_2}|^\frac{1}{2}.
\end{align*}
But observe that by (P5)
\begin{align*}
	\sum_{k_1,k_2=1}^{k_0} 2^{-k_1-k_2}  |W_1^{k_1}|^\frac{1}{2} |W_2^{k_2}|^\frac{1}{2}
	\leq& k_0 \left(\sum_{k_1=1}^{k_0}|W_1^{k_1}|2^{-2k_1}
								  \sum_{k_2=1}^{k_0}|W_2^{k_2}|2^{-2k_2} \right)^\frac{1}{2}	\\
	\lesssim& k_0 \left(\sum_{k_1=1}^{k_0}\sum_{w_1\in W_1^{k_1}}|c_{w_1}|^2
								 			\sum_{k_2=1}^{k_0}\sum_{w_2\in W_2^{k_2}}|c_{w_2}|^2 \right)^\frac{1}{2}	\\
	\lesssim& k_0 \|f_1\|_2 \|f_2\|_2 = k_0,								 			
\end{align*}
and thus
\begin{align}\label{12sep1545}
	&\| \sum_{k_1,k_2=1}^{k_0}\sum_{w_1\in W_1^{k_1}}\sum_{w_2\in W_1^{k_2}} c_{w_1}p_{w_1} c_{w_2}p_{w_2} \|_{L^p(Q_{S_1,S_2}(R))}		\nonumber\\
	&\hskip0.5cm\lesssim C_\alpha R^\alpha \log^{\gamma_\alpha+5}(1+R)
			(\kappa^{(1)}\kappa^{(2)})^{\frac{1}{2}-\frac{1}{p}}D^{3-\frac{5}{p}} \log^{\gamma_\alpha+5}(C_0).
\end{align}
Combining \eqref{12sep1543} -- \eqref{12sep1545}, we find that
\begin{align*}
	\|R^*_{H_1}f_1 R^*_{H_2}f_2\|_{L^p(Q_{S_1,S_2}(R))}
	=& \| \prod_{j=1,2} \sum_{w_j\in\W_j}c_{w_j}p_{w_j}\|_{L^p(Q_{S_1,S_2}(R))}		\\
	\lesssim& C_\alpha R^\alpha \log^{\gamma_\alpha+5}(1+R)
			(\kappa^{(1)}\kappa^{(2)})^{\frac{1}{2}-\frac{1}{p}}D^{3-\frac{5}{p}} \log^{\gamma_\alpha+5}(C_0),
\end{align*}
which verifies $E(\alpha).$
\end{proof}

\subsection{Bilinear estimates for sums of wave packets}

Let $v_j\in\V_j$, $j=1,2$, and define the ($\landau(1/R')$ thickened) ``intersection'' of  the transversal hypersurfaces $S_1$ and $S_2$ by
\begin{align*}
	\Pi_{v_1,v_2}= (v_1+S_2)\cap(v_2+S_1) +\landau(R'^{-1}).
\end{align*}
For any subset $W_j\subset\W_j$, let
\begin{align*}
	W_j^{\Pi_{v_1,v_2}}=\{w_j'\in W_j: v'_j+v_{j+1}\in\Pi_{v_1,v_2}\}
\end{align*}
(where $j+1$ is to  be interpreted mod 2 as before, i.e., we shall use the short hand notation $j+1=j+1\mod 2$ in the sequel whenever $j+1$ appears as an index), and  denote by
\begin{align*}
	V_j=\{v_j'\in\V_j:(y'_j,v'_j)\in W_j\ \mbox{ for some } y'_j\in \mathcal Y\}
\end{align*}
 the $\V$- projection of $W_j$. Further let
\begin{align*}
	 V_j^{\Pi_{v_1,v_2}}=&\{v_j'\in\V_j: (y'_j,v'_j)\in W_j^{\Pi_{v_1,v_2}} \mbox{  for some }y_j'\in\Y_j\}	 \nonumber\\
	=&\{v_j'\in\V_j:\mbox{  there is some } y_j'\in\Y_j\mbox{ s.t. } (y'_j,v'_j)\in W_j\text{ and }v_j'+v_{j+1}\in\Pi_{v_1,v_2} \}.
\end{align*}



\begin{lemnr}\label{L1L2}
Let $W_j\subset\W_j$, $j=1,2$. Then
\begin{align}
	\big\| \sum_{w_1\in W_1}\sum_{w_2\in W_2}p_{w_1}p_{w_2}\big\|_{L^1(Q_{S_1,S_2}(R))}
	\leq& \frac{R'^2}{\sqrt{\kappa^{(1)}\kappa^{(2)}}} |W_1|^\frac{1}{2}|W_2|^\frac{1}{2}	
	\label{L1} \\
	\big\| \sum_{w_1\in W_1}\sum_{w_2\in W_2}p_{w_1}p_{w_2}\big\|_{L^2(Q_{S_1,S_2}(R))}
	\lesssim& R'^{-\frac{1}{2}} \min\limits_j\sup\limits_{v_1,v_2}
				 |V_j^{\Pi_{v_1,v_2}}|^\frac{1}{2} |W_1|^\frac{1}{2}|W_2|^\frac{1}{2}.	\label{L2b}
\end{align}
\end{lemnr}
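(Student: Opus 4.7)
Apply Cauchy--Schwarz to obtain $\|f_1 f_2\|_{L^1(Q)}\le\|f_1\|_{L^2(Q)}\|f_2\|_{L^2(Q)}$, with $f_j=\sum_{w_j\in W_j}p_{w_j}$ and $Q=Q_{S_1,S_2}(R)$. Slicing $Q$ by affine hyperplanes perpendicular to $n_j$, property (P4) of Corollary~\ref{wave packets2} gives $\|f_j(\cdot+tn_j)\|_{L^2(H_j)}\lesssim|W_j|^{1/2}$ on every slice. The extent of $Q$ in the $n_1=e_3$-direction is $R'^2/(\kappa^{(1)}\wedge\kappa^{(2)})=R'^2/\kappa^{(1)}$, and in the $n_2=e_{i_0}$-direction it is $R'^2/\bar\kappa=R'^2/\kappa^{(2)}$ (using the standing assumption $\kappa^{(1)}\le\kappa^{(2)}$). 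Integrating each slice bound yields $\|f_j\|_{L^2(Q)}^2\lesssim (R'^2/\kappa^{(j)})|W_j|$, and the product of the two factors is precisely (L1).

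\textbf{Plan for (L2b), Fourier grouping.} Split each $f_j=\sum_{v_j\in V_j}g_{v_j}$ with $g_{v_j}=\sum_{y_j:(y_j,v_j)\in W_j}p_{(y_j,v_j)}$. By (P2), $\widehat{g_{v_j}}$ is supported in $B(v_j,O(R'^{-1}))\subset\R^3$, hence $\widehat{g_{v_1}h_{v_2}}$ lies in $B(v_1+v_2,O(R'^{-1}))$. Partition pairs into cells $T_z=\{(v_1,v_2)\in V_1\times V_2:|v_1+v_2-z|\lesssim R'^{-1}\}$ as $z$ runs over an $R'^{-1}$-grid of $\R^3$. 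Plancherel together with the essentially disjoint Fourier supports across distinct cells, combined with Cauchy--Schwarz inside each cell, then yield
\[
\|f_1 f_2\|_{L^2(Q)}^2\lesssim\sup_z|T_z|\,\cdot\sum_{(v_1,v_2)\in V_1\times V_2}\|g_{v_1}h_{v_2}\|_{L^2}^2.
\]

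\textbf{Single-pair estimate and counting.} The transversality of the normals (Lemma~\ref{propertylemma}(c)) confines $T_{w_1}\cap T_{w_2}$ to a parallelepiped of volume $\sim R'^3$; together with the pointwise bound $|p_{w_j}|\lesssim R'^{-1}$ from (P3), this produces $\|p_{w_1}p_{w_2}\|_{L^2}^2\lesssim R'^{-1}$. For fixed $(v_1,v_2)$, varying $(y_1,y_2)$ on the $R'$-lattices translates these intersection parallelepipeds by $\gtrsim R'$, so the products $p_{(y_1,v_1)}p_{(y_2,v_2)}$ have essentially pairwise disjoint spatial supports; this gives $\|g_{v_1}h_{v_2}\|_{L^2}^2\lesssim|\{y_1:(y_1,v_1)\in W_1\}|\,|\{y_2:(y_2,v_2)\in W_2\}|\cdot R'^{-1}$, and summing telescopes to $\sum_{(v_1,v_2)}\|g_{v_1}h_{v_2}\|_{L^2}^2\lesssim|W_1||W_2|R'^{-1}$. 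Meanwhile $|T_z|$ counts net points $v_1\in V_1$ with $z-v_1\in S_2+O(R'^{-1})$, i.e.\ lying within $O(R'^{-1})$ of the smooth $1$-dimensional curve $S_1\cap(z-S_2)$---precisely the object controlled by the Tube-Separation Lemma~\ref{separationlemma}. Picking any reference pair $(v_1^*,v_2^*)\in T_z$, this yields $|T_z|\lesssim|V_1^{\Pi_{v_1^*,v_2^*}}|\le\sup_{v_1,v_2}|V_1^{\Pi_{v_1,v_2}}|$, and symmetrically in $V_2^\Pi$. Combining these pieces and taking the square root delivers (L2b).

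\textbf{Main obstacle.} The delicate point is the comparison of curves: $S_1\cap(z-S_2)$, which naturally parametrizes $T_z$, is not identical to $S_1\cap((v_1^*-v_2^*)+S_2)$, which defines $\Pi_{v_1^*,v_2^*}$---the two differ by a reflection in the relative variable $u=v_1'-v_1^*$. Nevertheless, both are smooth $1$-dimensional intersection curves through the same base point, whose leading-order tangent direction is orthogonal to $\nabla\phi(\eta^1)-\nabla\phi(\eta^2)$; Lemma~\ref{separationlemma} therefore applies uniformly to both and produces the same upper bound on the number of $R'^{-1}$-net points along either. This geometric compatibility, together with the careful Plancherel bookkeeping justified by the exactly compact Fourier supports provided by (P2), is the principal technical content of the argument.
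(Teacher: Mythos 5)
Your treatment of \eqref{L1} is correct and essentially identical to the paper's: Cauchy--Schwarz on the cuboid $Q_{S_1,S_2}(R)$, slicing perpendicular to $n_j$, applying (P4) on each slice and integrating over the extent of $Q$ in the $n_j$-direction (which is $\lesssim R'^2/\kappa^{(j)}$ for $j=1,2$).

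For \eqref{L2b}, however, there is a genuine gap, and you have in fact put your finger on exactly where it lies. Grouping $f_1 f_2=\sum_{v_1,v_2}g_{v_1}h_{v_2}$ by the Fourier location $v_1+v_2$ puts pairs with $v_1+v_2=v_1'+v_2'+\landau(R'^{-1})$ into the same cell $T_z$, which is equivalent to $v_1'-v_1^*=-(v_2'-v_2^*)$ for a reference pair $(v_1^*,v_2^*)\in T_z$. This confines $v_1'$ to a neighborhood of $S_1\cap(z-S_2)$, an intersection of $S_1$ with a \emph{reflected} translate of $S_2$. By contrast, the quantity $|V_1^{\Pi_{v_1^*,v_2^*}}|$ counts points of $V_1$ near $S_1\cap\big((v_1^*-v_2^*)+S_2\big)$, an intersection with an \emph{unreflected} translate. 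These are different smooth curves through $v_1^*$, and having ``the same upper bound $\lesssim R$'' from Lemma~\ref{separationlemma} is not the same as the pointwise comparison $|T_z|\lesssim|V_1^{\Pi_{v_1^*,v_2^*}}|$ that you actually invoke. If $V_1$ happens to concentrate along $S_1\cap(z-S_2)$ but avoids $S_1\cap((v_1^*-v_2^*)+S_2)$ away from $v_1^*$, then $|T_z|$ can be much larger than $|V_1^{\Pi_{v_1^*,v_2^*}}|$. Your argument therefore proves the weaker combined statement $\|f_1f_2\|_{L^2}^2\lesssim R'^{-1}|W_1||W_2|\cdot R$ (i.e.\ \eqref{L2b} merged with Lemma~\ref{Vbound}), but \emph{not} \eqref{L2b} itself. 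The stronger form with $\sup|V_j^\Pi|$ is genuinely needed later, in Lemma~\ref{estofII}, where Lemma~\ref{geometric} yields a bound on $|[W_1^{\lambda_1,\mu,\not\sim b}(q)]^\Pi|$ that is far smaller than $R$.

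The fix is small but essential: since $\|f_1f_2\|_{L^2}=\|f_1\bar f_2\|_{L^2}$, apply your Plancherel argument to $f_1\bar f_2$ instead. Then $\widehat{g_{v_1}\bar h_{v_2}}$ is supported near $v_1-v_2$, the cells become $T'_z=\{(v_1,v_2):|v_1-v_2-z|\lesssim R'^{-1}\}$, and any two pairs in $T'_z$ satisfy $v_1'-v_1^*=v_2'-v_2^*+\landau(R'^{-1})$, whence $v_1'+v_2^*\in\Pi_{v_1^*,v_2^*}$ and $|T'_z|\lesssim\min_j\sup_{v_1,v_2}|V_j^{\Pi_{v_1,v_2}}|$ as required. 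This is precisely the relation exploited in the paper's proof, which is organized differently: rather than a cell decomposition with Plancherel, the paper expands $\|f_1f_2\|_{L^2}^2$ into inner products $\langle p_{w_1}\sum_{y_2'}p_{w_2'},\,p_{w_2}\sum_{y_1'}p_{w_1'}\rangle$, reads off the constraint $v_1'+v_2=v_2'+v_1+\landau(R'^{-1})$ directly from the Fourier supports, and then estimates each surviving term by $\int|p_{w_1}p_{w_2}|\cdot\|\sum p_{w_2'}\|_\infty\|\sum p_{w_1'}\|_\infty\lesssim R'\cdot R'^{-1}\cdot R'^{-1}$. Both organizations lead to the same bound once the correct (difference) Fourier constraint is used.
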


\begin{proof}
We shall closely  follow the arguments in  \cite{LV}, in particular the proof of Lemma 2.2, with only slight modifications.

The first estimate is easy. Using Hölder's inequality, we see that
\begin{align*}
	\big\|\sum_{w_1\in W_1}\sum_{w_2\in W_2}p_{w_1}p_{w_2}\big\|_{L^1(Q_{S_1,S_2}(R))}
	&\leq \prod_{j=1,2} \|\sum_{w_j\in W_j} p_{w_j}\|_{L^2(Q_{S_1,S_2}(R))}	\\
	&\hskip-1.5cm\leq \prod_{j=1,2} \left(\int_{-R'^2/\kappa^{(j)}}^{R'^2/\kappa^{(j)}}
											\|\sum_{w_j\in W_j} p_{w_j}(\cdot+tn_j)\|_{L^2(H_j)}^2d t \right)^\frac{1}{2}	\\
	&\hskip-1.5cm \lesssim \prod_{j=1,2} \frac{R'}{\sqrt{\kappa^{(j)}} }
											|W_j|^\frac{1}{2},								\end{align*}
where we have used (P4) in the last estimate.
The second one is more involved.
We write
\begin{align*}
	&\big\| \sum_{w_1\in W_1}\sum_{w_2\in W_2}p_{w_1}p_{w_2}\big\|_{L^2(Q_{S_1,S_2}(R))}^2\\
	&= \sum_{w_1\in W_1}\sum_{w_2\in W_2}\sum_{v'_1\in V_1}\sum_{v'_2\in V_2}
		\Big\langle p_{w_1}\sum_{y'_2\in Y_2(v'_2)}p_{w'_2}\,,\, p_{w_2}\sum_{y'_1\in Y_1(v'_1)}p_{w'_1} \Big\rangle,
\end{align*}
where  $Y_j(v'_j)=\{y\in\Y_j|(y,v'_j)\in W_j\}$ (recall that  $V_j$ is $\V$- projection of $W_j$). Since for $j=1,2$ the Fourier transform of $ \sum\limits_{y'_{j+1}\in Y_{j+1}(v'_{j+1})}p_{w'_{j+1}}p_{w_j}$ is supported in a ball of radius $\landau(R'^{-1})$ centered at $v'_{j+1}+v_j$, we may assume that the intersection of these two balls  is non-empty,  and thus
$$
	v'_1+v_2=v'_2+v_1+\landau(R'^{-1}).
$$
Especially
$$
	v'_{j+1}+v_j \in \Pi_{v_1,v_2}
$$
and
$$
	v'_j\in V_j^{\Pi_{v_1,v_2}},\quad j=1,2.
$$
This implies that
\begin{align*}
	&\big\| \sum_{w_1\in W_1}\sum_{w_2\in W_2}p_{w_1}p_{w_2}\big\|_{L^2(Q_{S_1,S_2}(R))}^2	\\
	&\leq \sum_{w_1\in W_1}\sum_{w_2\in W_2}	
				\sum_{v'_1\in V_1^{\Pi_{v_1,v_2}}} \sum_{\underset{v'_2=v'_1+v_2-v_1+\landau(R'^{-1})}{v'_2}}
		\int_{\R^3} |p_{w_1}p_{w_2}|\, dx\\
				&\hskip8cm\times\Big\|\sum_{y'_2\in Y(v'_2)}p_{w'_2}\Big\|_\infty
				 \Big\|\sum_{y'_1\in Y(v'_1)}p_{w'_1}\Big\|_\infty.
\end{align*}
Observe that there are at most $\landau(1)$ possible choices for $v'_2$ such that $$v'_2=v'_1+v_2-v_1+\landau(R'^{-1}).$$ Since the wave packets $p_{w_j}$ are essentially supported in the tubes $T_{w_j}$, which are well separated with respect to the parameter $y,$  the sum in $y'_j$ can be replaced by the  supremum, up to  some multiplicative constant. Since $T_{w_1}$ and $T_{w_2}$ satisfy the transversality condition \eqref{transversality1}, $p_{w_1}p_{w_2}$ decays rapidly away from the intersection $T_{w_1}\cap T_{w_2}$, i.e.,
\begin{align*}
	\int_{\R^3} |p_{w_1}p_{w_2}| d x
	\lesssim \int_{\R^3} R'^{-2} \left(1+\frac{|x|}{R'}\right)^{-N}  d x
	= R' \int_{\R^3} (1+|x|)^{-N}  d x \sim  R'.
\end{align*}
We  thus obtain
\begin{align}\label{L2detail}
	\| \sum_{\overset{w_1\in W_1}{w_2\in W_2}}p_{w_1}p_{w_2}\|_{L^2(Q_{S_1,S_2}(R))}^2
	\lesssim& R' |W_1|\ |W_2|\ \sup_{v_1,v_2}|V_1^{\Pi_{v_1,v_2}}|
		\prod_{j=1,2}\sup\limits_{w'_j\in W_j} \|p_{w'_j}\|_\infty	\\
	\lesssim& R'^{-1} |W_1|\ |W_2|\ \sup_{v_1,v_2}|V_1^{\Pi_{v_1,v_2}}|	\nonumber.
\end{align}
Repeating the same computation with the roles of $v'_1$ and $v'_2$ interchanged gives \eqref{L2b}.
\end{proof}
~\\

\subsection{Basis of the induction on scales argument}
In order to start our induction on scales, we need to establish a base case estimate which will respect the form of our estimate  \eqref{E(a)reduction_eq}. This will require a somewhat  more sophisticated approach than what is done  usually, based on the following
\begin{lemnr}\label{Vbound}
Let $V_j\subset\V_j$. Then $\min\limits_j\sup\limits_{v_1\in\V_1,v_2\in\V_2} |V_j^{\Pi_{v_1,v_2}}|\lesssim R$.
\end{lemnr}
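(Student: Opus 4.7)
The plan is to show that $V_j^{\Pi_{v_1,v_2}}$ consists of points lying within $R'^{-1}$ of a single short curve, and then to invoke a standard net-counting argument. First I would unravel the definition: for $j=1$, the condition $v_1' + v_2 \in \Pi_{v_1,v_2}$ is equivalent to
$$v_1' \in S_1 \cap \bigl((v_1-v_2) + S_2\bigr) + \landau(R'^{-1}),$$
since the other required containment, $v_1' \in S_1 + \landau(R'^{-1})$, is automatic. After a translation by $v_2$, this set is precisely an intersection curve of $S_1$ and $S_2$ in the sense of the definition preceding Lemma \ref{propertylemma}; the analogous reduction handles $j=2$ symmetrically.

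Next I would bound the length of this curve. By Lemma \ref{propertylemma}(e), the $\R^2$-projection $\gamma$ of the intersection curve, parametrized by arclength, satisfies $|\dot\gamma_1| \sim 1 \sim |\dot\gamma_2|$. Because $\gamma$ lies in (a translate of) the rectangle $U_j$ of side lengths $d_1^{(j)}\times d_2^{(j)}$ and moves with speed of order one in each coordinate direction, it must leave $U_j$ after arclength at most of order $\min_i d_i^{(j)}$. A small subtlety to handle here is that a priori the level set could have several components, but the separable structure $\partial_1\partial_2\phi \equiv 0$ together with the curvature bound $\partial_i^2\phi \sim \kappa_i^{(j)}>0$ rewrites the defining equation as $F(\eta_1) + G(\eta_2) = \textrm{const}$ with $F,G$ strictly monotone; this forces the intersection curve to be a single monotone graph and settles the issue.

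From there the count is routine. By Lemma \ref{propertylemma}(b), $|\nabla\phi|\lesssim 1$ on $U_1 \cup U_2$, so the projection $S_j \to U_j$ is bi-Lipschitz, and the projected net $\pi(\V_j)$ remains $R'^{-1}$-separated inside $U_j$. The number of $R'^{-1}$-separated points contained in an $R'^{-1}$-neighborhood of a curve of length $L$ is $\lesssim L R'$ by the usual tube-area comparison, hence
$$|V_j^{\Pi_{v_1,v_2}}| \lesssim \min_i d_i^{(j)} \cdot R'.$$
Taking the minimum over $j$ collapses both minima into $D = \min_{i,j} d_i^{(j)}$, and since $R' D = R$ by the setup of the wave packet decomposition, this gives the desired bound $\min_j \sup_{v_1,v_2}|V_j^{\Pi_{v_1,v_2}}| \lesssim R$.

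The main obstacle is the length estimate for the intersection curve, since without the tangent information of Lemma \ref{propertylemma}(e) there would be no reason the projection could not be nearly tangent to a coordinate axis and wind back and forth through $U_j$, inflating the length up to the full diagonal $\max_i d_i^{(j)}$. Once that geometric input is in place, the remaining net-counting is entirely standard.
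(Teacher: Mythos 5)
Your proposal is correct and follows essentially the same route as the paper's own proof: identify $V_j^{\Pi_{v_1,v_2}}$ as an $R'^{-1}$-net near the projection of an intersection curve, invoke Lemma \ref{propertylemma}(e) to get $|\dot\gamma_i|\sim 1$, bound the arclength by the side of the rectangle, and then count net points along a curve. The only stylistic difference is that the paper uses the containment in both rectangles $U_1+x_2$ and $U_2+x_1$ simultaneously to obtain the arclength bound $\lesssim D=\min_{i,j}d_i^{(j)}$ for each $j$ directly (so that $\min_j$ in the statement is in fact redundant), whereas you bound the length by $\min_i d_i^{(j)}$ using only $U_j$ and recover $D$ via the final $\min_j$; both yield $DR'=R$. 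Your extra observation that the separable form $\psi(\eta)=F(\eta_1)+G(\eta_2)$ with $F',G'$ of a fixed sign forces the level set to be a single monotone arc is a welcome clarification that the paper leaves implicit.
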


\begin{proof}
Define the graph mapping $\Phi:U_1\cup U_2\to S_1\cup S_2,\Phi(x)=(x,\phi(x))$.
If $v'_j=\Phi(x'_j)\in V_j^{\Pi_{v_1,v_2}}$, then
$v'_j+v_{j+1}\in \Pi_{v_1,v_2},$
and for $x_{j+1}=\Phi^{-1}(v_{j+1})$ we have
$x'_j+x_{j+1}\in \gamma(I)+\landau(R'^{-1})$, where $\gamma:I\to [U_1+x_2]\cap[U_2+x_1]\subset\R^2$ is a parametrization by
arclength of the projection to the  $(x_1,x_2)$-space of the intersection curve $\Pi_{v_1,v_2}.$ Recall from Lemma \ref{propertylemma} (v) that our assumptions imply that then  $\gamma$ will be  close to a diagonal, i.e.,  $|\dot{\gamma}_i|\sim  1$, $i=1,2$.

For all $t,t'\in I$, we have $\gamma(t),\gamma(t')\in[U_1+x_2]\cap[U_2+x_1]$, hence
\begin{align*}
	\min\limits_j d_i^{(j)}
	\geq |\gamma_i(t)-\gamma_i(t')|
	\geq \min\limits_{t''\in I}|\dot{\gamma_i}(t'')||t-t'|\sim  |t-t'|.
\end{align*}
This implies
$|I|=\sup\limits_{t,t'\in I} |t-t'| \lesssim \min\limits_{i,j} d_i^{(j)}=D,$  hence $L(\gamma) \lesssim D,$
and thus 
\begin{align*}
	|V_j^{\Pi_{v_1,v_2}}|&\sim
		|\Phi^{-1}(V_j^{\Pi_{v_1,v_2}})|	\\
	&\leq |\{x'_j\in\Phi^{-1}(\V_j): x'_j\in \gamma(I)-x_{j+1}+\landau(R'^{-1}) \}|	\\
	&\lesssim  L(\gamma)/(R'^{-1})	\\
	&\lesssim D R' =R,
\end{align*}
since $\Phi^{-1}(\V_j)$ is an $R'^{-1}$-grid in $U_j$.
\end{proof}


\begin{kornr}\label{inductionstart}
E(1) holds true, provided ${4}/{3}\leq p\leq 2.$
\end{kornr}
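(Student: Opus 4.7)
The plan is to use Lemma \ref{E(a)reduction} to reduce the proof of $E(1)$ to the wave-packet estimate \eqref{E(a)reduction_eq} with $\alpha=1$ and $\gamma_1=0$, and then to obtain this by interpolating the $L^1$ and $L^2$ bounds from Lemma \ref{L1L2}. Since $\{\chi_q\}$ is a smoothened partition of unity, the factor $\sum_{q\in Q^\mu}\chi_q$ is uniformly bounded (up to rapidly decaying Schwartz tails that cause no damage), so it suffices to estimate the product $\prod_{j=1,2}\sum_{w_j\in W_j^{\lambda_j,\mu}} p_{w_j}$ on $Q_{S_1,S_2}(R)$.

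To this product I would apply the log-convexity inequality
\[
\|f\|_p \le \|f\|_1^{2/p-1}\,\|f\|_2^{2-2/p}, \qquad 1\le p\le 2.
\]
Lemma \ref{L1L2}, estimate \eqref{L1}, supplies the $L^1$-bound $R'^2(\kappa^{(1)}\kappa^{(2)})^{-1/2}|W_1|^{1/2}|W_2|^{1/2}$. For the $L^2$-bound, estimate \eqref{L2b} combined with Lemma \ref{Vbound} (which gives $\min_j\sup_{v_1,v_2}|V_j^{\Pi_{v_1,v_2}}|\lesssim R$) yields
\[
\Big\|\sum p_{w_1}p_{w_2}\Big\|_{L^2(Q_{S_1,S_2}(R))} \lesssim R'^{-1/2}R^{1/2}|W_1|^{1/2}|W_2|^{1/2} = D^{1/2}|W_1|^{1/2}|W_2|^{1/2},
\]
since $R=R'D$.

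Setting $\theta=2/p-1\in[0,\tfrac12]$ and interpolating, the prefactor becomes
\[
R'^{2\theta}\,(\kappa^{(1)}\kappa^{(2)})^{-\theta/2}\,D^{(1-\theta)/2}.
\]
Substituting $R'=R/D$ and collecting exponents via $2\theta=4/p-2$, $-\theta/2=1/2-1/p$, $(1-\theta)/2=1-1/p$, this simplifies to
\[
R^{4/p-2}\,(\kappa^{(1)}\kappa^{(2)})^{1/2-1/p}\,D^{3-5/p}\,|W_1|^{1/2}|W_2|^{1/2}.
\]
Since the hypothesis $4/3\le p\le 2$ forces $0\le 4/p-2\le 1$, one has $R^{4/p-2}\le R$, and this is exactly \eqref{E(a)reduction_eq} with $\alpha=1$ (and no log factors, so $\gamma_1=0$). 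Lemma \ref{E(a)reduction} then delivers $E(1)$.

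There is no real obstacle here, as the substantive work has already been done in Lemmas \ref{L1L2} and \ref{Vbound}; the only point deserving care is that the condition $p\ge 4/3$ is sharp for this argument, since at $p=4/3$ the $R$-exponent $4/p-2$ equals exactly $1$. Thus the interpolation pins down both the lower endpoint $p=4/3$ and the shape of the target bound in \eqref{Ealest}.
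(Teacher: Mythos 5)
Your argument is correct and follows the paper's proof essentially verbatim: reduce to the wave-packet inequality via Lemma \ref{E(a)reduction}, get the $L^2$ bound from \eqref{L2b} together with Lemma \ref{Vbound}, get the $L^1$ bound from \eqref{L1}, interpolate, and use $R=R'D$ to convert the exponents; the only cosmetic difference is that you write the $L^2$ prefactor as $D^{1/2}$ rather than $R'^{-1/2}R^{1/2}$ (they are equal), and you make explicit the routine observation that $\sum_{q\in Q^\mu}\chi_q$ is uniformly bounded.
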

\begin{proof}
Due to Lemma \ref{E(a)reduction}, it is enough to show the corresponding estimate for wave packets
\eqref{E(a)reduction_eq} with $\alpha=1$.
But, estimating $|V_j^{\Pi_{v_1,v_2}}|$ on the right-hand side of  \eqref{L2b} in  Lemma \ref{L1L2} by means of  Lemma  \ref{Vbound}, we obtain
\begin{align*}
	\big\|\prod_{j=1,2}\sum_{w_j\in W_j^{\lambda_j,\mu}} p_{w_j} \sum_{q\in Q^\mu} \chi_q\big\|_{L^2(Q_{S_1,S_2}(R))}
	\lesssim& R'^{-\frac{1}{2}} R^\frac{1}{2} |W_1|^\frac{1}{2}|W_1|^\frac{1}{2}.
\end{align*}
Interpolating this with the corresponding $L^1$-estimate that we obtain from \eqref{L1}, we arrive at
\begin{align*}
	\big\|\prod_{j=1,2}\sum_{w_j\in W_j^{\lambda_j,\mu}} p_{w_j} \sum_{q\in Q^\mu} \chi_q\big\|_{L^p(Q_{S_1,S_2}(R))}
	\lesssim& (\kappa^{(1)}\kappa^{(2)})^{\frac{1}{2}-\frac{1}{p}} R'^{\frac{5}{p}-3}
				 R^{1-\frac{1}{p}} |W_1|^\frac{1}{2}|W_1|^\frac{1}{2}	\\
	\leq&(\kappa^{(1)}\kappa^{(2)})^{\frac{1}{2}-\frac{1}{p}}D^{3-\frac{5}{p}}R
	 |W_1|^\frac{1}{2}|W_1|^\frac{1}{2},
\end{align*}
provided $\frac{4}{3}\leq p\leq 2.$
\end{proof}

\subsection{Further decompositions}

In a next step,  by some slight modification  of  the usual approach, we  introduce a further decomposition of the cuboid $Q_{S_1,S_2}(R)$ defined in \eqref{largecuboid} into smaller cuboids $b$ whose dimensions are those of  $Q_{S_1,S_2}(R)$ shrunk by a factor $R^{-2\delta},$  i.e., all of the $b$'s will be   translates of $Q_{S_1,S_2}(R^{1-\delta}).$  Here, $\delta>0$ is a  sufficiently small parameter to be chosen later. Since
$$\frac{R'^2}{\bar\kappa}R^{-2\delta} = \frac{R^{1-2\delta}R'}{D\bar\kappa} \geq R^{1-2\delta} R',
$$
the smallest side length of $b$ is still much larger than the side length $R^{\delta}R'$ of the thickened cubes $R^\delta q$ introduced at the end of Section \ref{generalbilinear}. Observe further that the number of  cuboids $b$ into which $Q_{S_1,S_2}(R)$  will be decomposed is of the order $R^{c\delta}.$ \footnote{Here and in the subsequent   considerations, $c$ will  denote some  constant which is independent of $R$ and $S_1,S_2,$ but whose precise value may vary  from line to line.}
\medskip

If $\mu\in \mathcal N^2$ is a fixed pair of dyadic numbers, and if  $w_j\in W_j,$  then we assign  to $w_j$ a cuboid  $b(w_j)$  in such a way that $b(w_j)$
contains a maximal number of $q$'s from $Q^\mu(w_j)$ among all the cuboids $b.$  We say that $b\sim w_j,$ if $b$ is contained in $10b(w_j)$ (the cuboid having the same center as  $b(w_j)$ but scaled by a factor $10$).
Notice that if  $b\not\sim w_j$, then this does not  necessarily mean that there are  only few  cubes
$q\in Q^\mu(w_j)$ contained in $b$ (since the cuboid  $b(w_j)$ may not be unique), but it does imply  that there are many cubes $q$ lying ``away'' from $b$. To be more  precise, if  $b\not\sim w_j,$ then
\begin{align}\label{bconcentration}
	|\{q\in Q^\mu(w_j)| q\cap 5b =\emptyset\}|
	\geq |\{q\in Q^\mu(w_j)| q\subset b(w_j)\}|
	\gtrsim R^{-c\delta} |Q^\mu(w_j)|,
\end{align}
since only $\landau(R^{2\delta})$ cuboids $b$ meet $T_{w_j}.$

For a fixed $b$, we can decompose any given set $W_j\subset \W_j$ into $W_j^{\not\sim b}=\{w_j\in W_j:b\not\sim w_j\}$ and $W_j^{\sim b}=\{w_j\in W_j:b\sim w_j\}$. Thus we have
\begin{align}\label{IandII}
	\big\|\prod_{j=1,2}\sum_{w_j\in W_j^{\lambda_j,\mu}} p_{w_j} \sum_{q\in Q^\mu} \chi_q\big\|_{L^p(Q_{S_1,S_2}(R))}
	\leq& \sum_b \big\|\prod_{j=1,2}\sum_{w_j\in W_j^{\lambda_j,\mu}} p_{w_j} \sum_{q\in Q^\mu} \chi_q\big\|_{L^p(b)}	\\
	=& I + II + III	\nonumber,																				
\end{align}
where
\begin{align*}
I =& \sum_b \big\|\prod_{j=1,2}\sum_{w_j\in W_j^{\lambda_j,\mu,\sim b}} p_{w_j} \sum_{q\in Q^\mu} \chi_q\big\|_{L^p(b)},	\\
II =& \sum_b\big \|\sum_{w_1\in W_1^{\lambda_1,\mu,\not\sim b}} p_{w_1}	
								\sum_{w_2\in W_2^{\lambda_2,\mu}} p_{w_2} \sum_{q\in Q^\mu}\chi_q\big\|_{L^p(b)}, 	\\
	III =& \sum_b\big \|\sum_{w_1\in W_1^{\lambda_1,\mu,\sim b}} p_{w_1}
						\sum_{w_2\in W_2^{\lambda_2,\mu,\not\sim b}} p_{w_2} \sum_{q\in Q^\mu}\chi_q\big\|_{L^p(b)}. 	
\end{align*}

As usual in the bilinear approach, Part I, which comprises the terms of highest density of wave packets over the cuboids $b,$  will  be handled by means of  an inductive argument. 
 The treatment of Part II  (and analogously of Part III) will be based on a combination of geometric and combinatorial arguments.   It is only here that  the very  choice of the  $b(w_j)$ will become crucial.


\begin{lemnr}\label{estofI}
Let $\alpha>0$, and assume  that $E(\alpha)$ holds true. Then
\begin{align*}
	I \leq C_\alpha R^{\alpha(1-\delta)} \log^{\gamma_\alpha}(1+R)
	(\kappa^1\kappa^2)^{\frac{1}{2}-\frac{1}{p}}D^{3-\frac{5}{p}} \log^{\gamma_\alpha}(C_0(S))
			|W_1|^\frac{1}{2} |W_2|^\frac{1}{2}.
\end{align*}
\end{lemnr}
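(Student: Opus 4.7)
The strategy is to apply the inductive hypothesis $E(\alpha)$ at the reduced scale $R^{1-\delta}$ on each smaller cuboid $b$, which by construction is a translate of $Q_{S_1,S_2}(R^{1-\delta})$.

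First, fix a cuboid $b$ and consider the function $F_j^b := \sum_{w_j \in W_j^{\lambda_j,\mu,\sim b}} p_{w_j}$. By property (P1), each $p_{w_j}$ equals $R_{H_j}^*(\mathcal F_{H_j}^{-1}(p_{w_j}|_{H_j}))$, so $F_j^b = R_{H_j}^* g_j^b$ for some $g_j^b \in L^2(U_j)$. By property (P4), $\|g_j^b\|_{L^2} \lesssim |W_j^{\lambda_j,\mu,\sim b}|^{1/2}$, since the coefficients of the wave packets being summed are all of order one (we have reduced to this case in the proof of Lemma \ref{E(a)reduction}).

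Next, apply the wave-packet form of the hypothesis $E(\alpha)$ (in the form \eqref{E(a)reduction_eq} of Lemma \ref{E(a)reduction}) at scale $R^{1-\delta}$ on the cuboid $b$, using the same pair $(S_1,S_2) \in \mathcal S_0$ and hence the same geometric constants $\kappa^{(j)}, D, C_0(S)$. This yields, on each $b$,
\begin{align*}
\Bigl\|\prod_{j=1,2} F_j^b \sum_{q\in Q^\mu} \chi_q\Bigr\|_{L^p(b)}
&\leq C_\alpha R^{\alpha(1-\delta)} \log^{\gamma_\alpha}(1+R)\\
&\quad \times(\kappa^{(1)}\kappa^{(2)})^{\frac{1}{2}-\frac{1}{p}} D^{3-\frac{5}{p}} \log^{\gamma_\alpha}(C_0(S))\,|W_1^{\lambda_1,\mu,\sim b}|^{\frac{1}{2}} |W_2^{\lambda_2,\mu,\sim b}|^{\frac{1}{2}}.
\end{align*}

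Finally, sum over the $\landau(R^{c\delta})$ cuboids $b$. The relation $b \sim w_j$ (i.e., $b \subset 10\, b(w_j)$) holds for only $\landau(1)$ cuboids $b$ for each fixed $w_j$, since the $b$'s tile $Q_{S_1,S_2}(R)$. Hence $\sum_b |W_j^{\lambda_j,\mu,\sim b}| \lesssim |W_j|$, and by Cauchy--Schwarz
\[
\sum_b |W_1^{\lambda_1,\mu,\sim b}|^{\frac{1}{2}} |W_2^{\lambda_2,\mu,\sim b}|^{\frac{1}{2}} \leq \Bigl(\sum_b |W_1^{\lambda_1,\mu,\sim b}|\Bigr)^{\frac{1}{2}} \Bigl(\sum_b |W_2^{\lambda_2,\mu,\sim b}|\Bigr)^{\frac{1}{2}} \lesssim |W_1|^{\frac{1}{2}} |W_2|^{\frac{1}{2}},
\]
yielding exactly the claimed bound.

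The main technical obstacle is the first step: verifying that one may legitimately treat $F_j^b$ as a scale-$R^{1-\delta}$ extension on $b$. The packets $p_{w_j}$ were built at scale $R$, with lengths $R'^2/\kappa^{(j)}$ stretching across many $b$'s, whereas the induction uses packets of length $(R^{1-\delta})'^2/\kappa^{(j)}$. The compatibility relies on the Fourier-support properties (P2) of the $p_{w_j}$ and, crucially, on the smoothness of the cutoffs $\chi_q$ (whose compact Fourier support was set up precisely for this reason in Section \ref{generalbilinear}), which ensures that the multiplication $F_j^b \cdot \chi_q$ preserves the relevant Fourier localization needed to invoke $E(\alpha)$ at the smaller scale.
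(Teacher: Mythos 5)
Your final bound and the summation-over-$b$ step are correct and match the paper's, but the middle step is misjustified. You propose to invoke the estimate \eqref{E(a)reduction_eq} at scale $R^{1-\delta}$, yet \eqref{E(a)reduction_eq} is precisely the statement that Lemmas \ref{estofI}--\ref{estofII} are in the process of establishing, not the inductive hypothesis; the hypothesis is only the \emph{linear} estimate $E(\alpha)$. The correct route is shorter than what you describe: observe that $\big|\sum_{q\in Q^\mu}\chi_q\big|\lesssim 1$ uniformly (the $\chi_q$ are translates of a fixed Schwartz bump on an $R'$-lattice), so the cutoff may be dropped at the price of an admissible constant; by (P1) the remaining product $\prod_j F_j^b$ equals $\prod_j R^*_{H_j}g_j^b$ with $g_j^b=\FT_{H_j}^{-1}(F_j^b|_{H_j})$; since each $b$ is a translate of $Q_{S_1,S_2}(R^{1-\delta})$ and a translation of $R^*_{H_j}g$ corresponds to a modulation of $g$ (preserving the $L^2$-norm), one applies the linear $E(\alpha)$ directly on $b$, and then (P4) gives $\|g_j^b\|_2\lesssim|W_j^{\lambda_j,\mu,\sim b}|^{1/2}$.

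This also dissolves the ``main technical obstacle'' you flag at the end. Because $E(\alpha)$ is a linear statement valid for \emph{arbitrary} $g\in L^2(U_j)$, there is no need to re-decompose $g_j^b$ into scale-$R^{1-\delta}$ wave packets, and the mismatch between the tube lengths at scale $R$ and scale $R^{1-\delta}$ is a non-issue. In particular, the compact Fourier support of the cutoffs $\chi_q$ plays no role in Lemma \ref{estofI}; it becomes essential only later, in the geometric part (Lemma \ref{estofII}, specifically in the proof of \eqref{notononeT}), where one must control the Fourier support of the modified packets $p_{w_j}\sum_q\chi_q$.
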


\begin{proof}
To shorten notation, write $C_1= C_\alpha(\kappa^1\kappa^2)^{\frac{1}{2}-\frac{1}{p}} D^{3-\frac{5}{p}} \log^{\gamma_\alpha}(C_0(S))$. Recall the reproducing  formula (P1) in Corollary \ref{wave packets2}: $p_{w_j}=R^*_{H_j}(\FT_{H_j}(p_{w_j}|_{H_j})).$
Since  every cuboid $b$ is  a translate of $Q_{S_1,S_2}(R^{1-\delta}),$  and since  a translation of $R^*_{H_j}g$  corresponds  to a modulation of the function  $g,$ we see that  $E(\alpha)$ implies
\begin{eqnarray*}
I &=& \sum_b \big\|\prod_{j=1,2}\sum_{w_j\in W_j^{\lambda_j,\mu,\sim b}} p_{w_j} \sum_{q\in Q^\mu}  \chi_q\big\|_{L^p(b)}	\\
&\leq& \sum_b \big\|\prod_{j=1,2} R^*_{H_j}(\sum_{w_j\in W_j^{\lambda_j,\mu,\sim b}}  \FT_{H_j}(p_{w_j}|_{H_j} )\big\|_{L^p(b)}	\\
&\leq& C_1 (R^{1-\delta})^\alpha	\log^{\gamma_\alpha}(1+R^{1-\delta})
   \sum_b\prod_{j=1,2}\|\sum_{w_j\in W_j^{\lambda_j,\mu,\sim b}} \FT_{H_j}(p_{w_j}|_{H_j})\|_{L^2(H_j)}\\
  &\leq& C_1 R^{\alpha(1-\delta)}	\log^{\gamma_\alpha}(1+R)\sum_b \prod_{j=1,2} |W_j^{\lambda_j,\mu,\sim b}|^\frac{1}{2}.
\end{eqnarray*}
In the last estimate, we have made use of property (P4).
Moreover, using H\"older's inequality, we obtain
\begin{eqnarray*}
	\sum_b \prod_{j=1,2} |W_j^{\lambda_j,\mu,\sim b}|^\frac{1}{2}
	\leq \prod_{j=1,2} \left( \sum_b  |W_j^{\lambda_j,\mu,\sim b}| \right)^\frac{1}{2},
\end{eqnarray*}
where, due to Fubini's theorem (for sums),
\begin{eqnarray*}
	\sum_b  |W_j^{\lambda_j,\mu,\sim b}|
	= \sum_b |\{w_j\in W_j^{\lambda_j,\mu}:w_j\sim b\}|	
	= \sum_{w_j\in W_j^{\lambda_j,\mu}} |\{b:\ b\sim w_j\}|	
	\lesssim |W_j|.
\end{eqnarray*}
In combination, these estimates yield
\begin{eqnarray*}
	I \leq C_1 R^{\alpha(1-\delta)}	\log^{\gamma_\alpha}(1+R)
					\prod_{j=1,2} |W_j|^\frac{1}{2}.
\end{eqnarray*}
\end{proof}

\subsection{The geometric argument}
We next turn to the estimation of $II$ and $III.$  A crucial tool will be
the following lemma, which  is a variation of Lemma 2.3 in \cite{LV}.

\begin{lemnr}\label{geometric}
Let $\lambda_j,\mu_j\in \mathcal N$, $W_j\subset\W_j$, $v_j\in\V_j$, $j=1,2$, and let $b$ and $q_0$ be cuboids from our collections such that $q_0\cap 2b\neq\emptyset$. If we define $W_j^{\lambda_j,\mu,\not\sim b}(q_0)=W_j^{\lambda_j,\mu,\not\sim b}\cap W_j(q_0)$, then
 \begin{itemize}
\item[(i)]  \quad $\lambda_1 \mu_2 \ \left|\left[W_1^{\lambda_1,\mu,\not\sim b}(q_0)\right]^{\Pi_{v_1,v_2}}\right|
						\lesssim R^{c\delta} |W_2|	$
\item[(ii)] \quad$\lambda_2 \mu_1 \ \left|\left[W_2^{\lambda_2,\mu,\not\sim b}(q_0)\right]^{\Pi_{v_1,v_2}}\right|
						\lesssim R^{c\delta} |W_1|.$
 \end{itemize}	
\end{lemnr}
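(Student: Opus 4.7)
The plan is to prove (i) by a double-counting argument; (ii) follows by interchanging the roles of $S_1$ and $S_2$. The key quantity is
\[
S := \sum_{w_1 \in [W_1^{\lambda_1,\mu,\not\sim b}(q_0)]^{\Pi_{v_1,v_2}}} \ \sum_{\substack{q \in Q^\mu(w_1) \\ q \cap 5b = \emptyset}} |W_2(q)|,
\]
which I will estimate from below and from above in two different ways.

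The lower bound is immediate. Every $q \in Q^\mu$ satisfies $|W_2(q)| \sim \mu_2$, and by the concentration estimate \eqref{bconcentration}, at least a fraction $\gtrsim R^{-c\delta}$ of the cubes in $Q^\mu(w_1)$ lies outside $5b$ whenever $w_1 \not\sim b$. Since $|Q^\mu(w_1)| \sim \lambda_1$ for $w_1 \in W_1^{\lambda_1,\mu}$, this yields
\[
S \gtrsim R^{-c\delta}\,\lambda_1 \mu_2\, \bigl|[W_1^{\lambda_1,\mu,\not\sim b}(q_0)]^{\Pi_{v_1,v_2}}\bigr|.
\]

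For the upper bound I interchange the order of summation and write $S = \sum_{w_2 \in W_2} N(w_2)$, where $N(w_2)$ counts admissible pairs $(w_1,q)$ compatible with a fixed $w_2$. The goal is the uniform estimate $N(w_2) \lesssim R^{c\delta}$; combined with the lower bound this produces (i) after absorbing $R^{c\delta}$ factors. To establish it, I fix $w_2 = (y_2',v_2') \in W_2$, parametrize $w_1 = (y_1',v_1')$, and decompose the counting into three pieces: (a) a bound on the number of admissible $v_1'$ on the intersection curve $V_1^{\Pi_{v_1,v_2}}$; (b) for each such $v_1'$, a bound on the number of $y_1' \in \Y_1$ with $T_{(y_1',v_1')} \cap R^\delta q_0 \neq \emptyset$; (c) for each $w_1$, a bound on the number of $q$ with $q \in Q^\mu(w_1)$ and $w_2 \in W_2(q)$. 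Step (b) is an elementary $R^{c\delta}$ count, since $y_1'$ ranges over an $R'$-lattice intersected with a region of diameter $\sim R^\delta R'$; step (c) is an $R^{c\delta}$ bound that follows from the transversality of the tubes $T_{w_1}$ and $T_{w_2}$ (Lemma \ref{propertylemma}(c)), ensuring that their $R^\delta$-thickenings meet in a single cell of diameter $\sim R^{c\delta}R'$.

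The main obstacle is therefore step (a). This is where Lemma \ref{propertylemma}(d) must be used: the normal field $N_1$ is transversal to the conical set $\Gamma_2$ along the intersection curve. Geometrically, as $v_1'$ runs over the 1-dimensional lattice $V_1^{\Pi_{v_1,v_2}}$, the rays emanating from $R^\delta q_0$ in the directions $N(v_1')$ sweep out a 2-dimensional surface in $\R^3$ which, by that transversality, meets the 1-dimensional tube $T_{w_2}$ in an essentially discrete set; after $R^\delta R'$-thickening one obtains $\lesssim R^{c\delta}$ admissible lattice points. Quantitatively, this is extracted from Lemma \ref{separationlemma}, which forces points $v_1'$ separated by $\gg R'^{-1}$ on the intersection curve to produce directions $N(v_1')$ that diverge at the correct rate $\sim \bar\kappa/R'$, so that only $\lesssim R^{c\delta}$ of them can simultaneously send their ray near $T_{w_2}$. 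Combining (a)--(c) yields $N(w_2) \lesssim R^{c\delta}$, hence $S \lesssim R^{c\delta}|W_2|$; comparison with the lower bound gives (i), and (ii) is identical after swapping $j = 1 \leftrightarrow j = 2$.
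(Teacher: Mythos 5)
Your proposal reaches the right conclusion using exactly the same ingredients as the paper's proof — the concentration estimate \eqref{bconcentration}, the separation Lemma \ref{separationlemma}, and the cone--tube transversality of Lemma \ref{propertylemma}(d) — but packages them differently. The paper introduces the truncated tube-union $\Gamma_1$ and the cube family $Q^\mu_{\Gamma_1}$ and carries out two decoupled double counts: first $\lambda_1\bigl|\bigl[W_1^{\lambda_1,\mu,\not\sim b}(q_0)\bigr]^{\Pi_{v_1,v_2}}\bigr|\lesssim R^{c\delta}|Q^\mu_{\Gamma_1}|$ (using \eqref{bconcentration}, Fubini, and the multiplicity estimate \eqref{16sep1744}), and then $\mu_2|Q^\mu_{\Gamma_1}|\lesssim R^{c\delta}|W_2|$ (using the transversality \eqref{2ndtransversality}). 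You instead fuse all three sums into the single quantity $S$ and compare upper and lower bounds. Your upper bound puts $w_2$ outermost, and in step (c) it additionally invokes the tube--tube transversality of Lemma \ref{propertylemma}(c); the paper avoids this, since it never needs to bound the number of cubes $q$ per fixed pair $(w_1,w_2)$. Both variants close with the same $R^{c\delta}$ loss (possibly with a slightly different exponent $c$, which is immaterial). This is a legitimate reorganization rather than a new idea, but it is a valid one.

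A small correction: in step (a) you describe the $2$-dimensional surface swept out by rays from $R^\delta q_0$ in the directions $N(v_1')$ (a translate of the cone $\Gamma_1$) meeting the tube $T_{w_2}$, whose direction is $N_2(v_2')$. The transversality you need is therefore ``$N_2$ transversal to $\Gamma_1$'', which is the $j=2$ case of Lemma \ref{propertylemma}(d), and not ``$N_1$ transversal to $\Gamma_2$'' as written. Since Lemma \ref{propertylemma}(d) provides both statements the argument is not broken, but the cited transversality should match the configuration you actually analyze. Also note that step (a), as you present it, implicitly relies on both that transversality (to bound the number of clusters of admissible $v_1'$) and Lemma \ref{separationlemma} (to bound the multiplicity within each cluster); you mention both, and making the two sub-steps explicit would tighten the exposition.
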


\begin{figure}
\begin{center}  \includegraphics{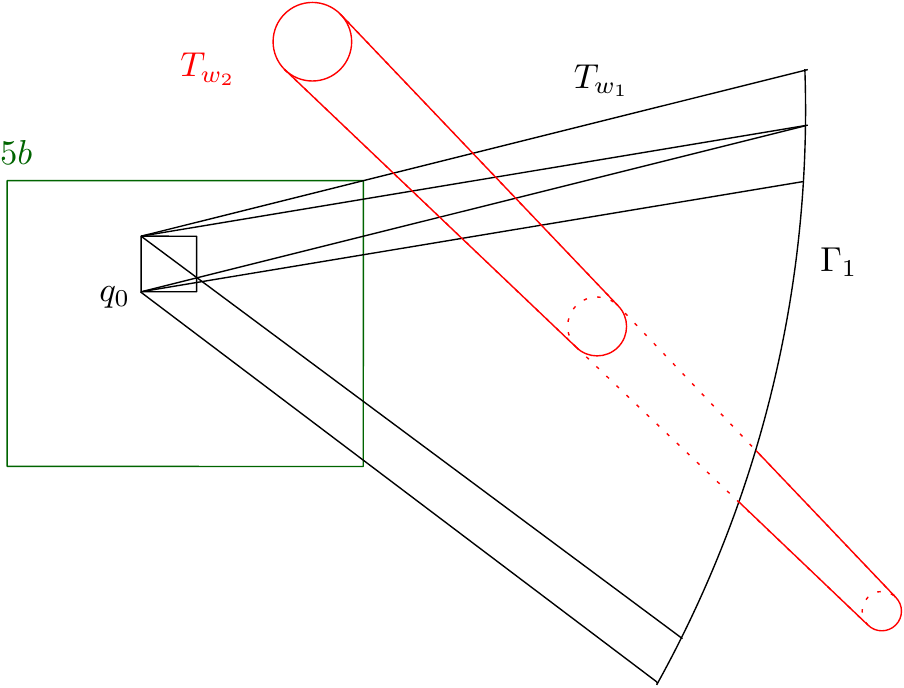} \end{center}
\caption{The geometry in Lemma \ref{geometric}}
\label{cone}       
\end{figure}

\begin{proof}
\newcommand{\ww}{\left[W_1^{\lambda_1,\mu,\not\sim b}(q_0)\right]^{\Pi_{v_1,v_2}}}
We only show (i), the proof of  (ii) being analogous. Set
$$
\Gamma_1 = \bigcup\Big\{T_{w_1}:w_1\in\ww\Big\}\setminus 5b,\quad Q^\mu_{\Gamma_1}=\{q\in Q^\mu| R^\delta q\cap \Gamma_1\neq\emptyset \}.
$$
 Since we have seen in Lemma \ref{propertylemma} (iv) that $T_{w_2}$ is transversal to $\Gamma_1,$  we have
\begin{align}\label{2ndtransversality}
|Q^\mu_{\Gamma_1}\cap Q^\mu(w_2)|\lesssim R^{c\delta}.
\end{align}
Due to the separation of the tube directions, the sets $T_{w_1}\setminus 5b$ do not overlap too much. To be more precise, we claim that for all cubes $q\in Q^\mu_{\Gamma_1}$
\begin{align}\label{16sep1744}
	\Big|\Big\{w_1\in\ww: R^\delta q\cap T_{w_1}\setminus5b\neq\emptyset\big\}\Big|\lesssim R^{c\delta}.
\end{align}
Indeed, let $w_1,w'_1\in\ww$ and $x\in R^\delta q\cap T_{w_1}\setminus5b$, $x'\in R^\delta q\cap T_{w'_1}\setminus5b$. The definition of $W_1(q_0)$ means that we find $x_0\in R^\delta q_0\cap T_{w_1}$ and $x'_0\in R^\delta q_0\cap T_{w'_1}$; then we may write
\begin{align}\label{16sep1730}
	x=x_0+|x-x_0|N(v_1) + \landau(R')\text{ and }x'=x'_0+|x'-x'_0|N_0(v'_1) + \landau(R').
\end{align}
Furthermore we have
\begin{align}\label{16sep1731}
	\big||x-x_0|-|x'-x'_0|\big|\leq|x-x'|+|x_0-x'_0| = \landau(R^{c\delta}R').
\end{align}
Since $T_{w_1}$ has length ${R'^2}/{\kappa^{(1)}}$, so that   the length of $b$ in the direction of $T_{w_1}$ is at least $R^{-2\delta}{R'^2}/{\kappa^{(1)}}$,   and since $x_0\in R^\delta q_0\subset 4b$ but $x\notin 5b$, we conclude that
\begin{align}\label{16sep1732}
	R^{-2\delta}\frac{R'^2}{\kappa^{(1)}} \leq |x-x_0|.
\end{align}
Applying Lemma \ref{separationlemma}, and making consecutively use of the estimates \eqref{16sep1732}, \eqref{16sep1731}, \eqref{16sep1730} and  again \eqref{16sep1731}, we obtain
\begin{eqnarray*}
	R'|v_1-v'_1|
	&\lesssim& \frac{R'}{\kappa^{(1)}}|N(v_1)-N(v'_1)| \\
	&\lesssim& R^{2\delta}R'^{-1} |x-x_0|\,|N_0(v_1)-N_0(v'_1)|	\\
	&\lesssim& R^{2\delta}R'^{-1} \big||x-x_0|N_0(v_1)-|x'-x'_0|N_0(v'_1)\big|+\landau(R^{c\delta})\\
	&\lesssim& R^{2\delta}R'^{-1}\big(|x-x'|+|x'_0-x_0|\big)+\landau(R^{c\delta})	\\
	&=&	\landau(R^{c\delta}).
\end{eqnarray*}
Recall also that the direction of a tube $T_{w_1}$ with $w_1=(y_1,v_1)$ depends only on $v_1,$  and thus the set of all these directions  corresponding to the set
$$\Big\{w_1\in\ww: R^\delta q\cap T_{w_1}\setminus5b\Big\}$$
has cardinality $\landau(R^{c\delta})$. But, for a fixed direction $v_1$, the number of  parameters $y_1$ such that the tube $T_{(y_1,v_1)}$ passes through $R^{c\delta}q_0$ is bounded by $\landau(R^{c\delta})$ anyway,  and thus \eqref{16sep1744} holds true.

Recall  next from \eqref{bconcentration} that  for $w_1\not\sim b$ we have
$
R^{-c\delta}|Q^\mu(w_1)|\lesssim  	|\{q\in Q^\mu(w_1): q\cap 5b =\emptyset\}|  .
$
Since  for $w_1\in W_1^{\lambda_1,\mu}$ we have  $|Q^\mu(w_1)|\sim\lambda_1,$ we may thus estimate
\begin{eqnarray*}
	& & R^{-c\delta}\lambda_1 \left|\ww\right| 		\\
	&\lesssim& R^{-c\delta} \sum_{w_1\in\ww} |Q^\mu(w_1)|		\\
&\lesssim& \sum_{w_1\in\ww} |\{q\in Q^\mu(w_1): q\cap 5b =\emptyset\}|	\\
	&\leq& \sum_{w_1\in\ww} |\{q\in Q^\mu: R^\delta q\cap T_{w_1}\neq\emptyset,\
														R^\delta q\cap 5b =\emptyset\}|	\\
	&\leq& \sum_{w_1\in\ww} |\{q\in Q^\mu: R^\delta q\cap (T_{w_1}\setminus 5b)\neq\emptyset\}|	\\
	&=& \sum_{q\in Q^\mu} \Big|\Big\{w_1\in\ww: R^\delta q\cap (T_{w_1}\setminus 5b)\neq\emptyset\Big\}\Big|			 \\
	&=&  R^{c\delta} |Q^\mu_{\Gamma_1}|,
\end{eqnarray*}
where we have used \eqref{16sep1744} in the last estimate.
But,  by \eqref{2ndtransversality}, we  also have
\begin{eqnarray*}
\mu_2	|Q^\mu_{\Gamma_1}|
	= \sum_{q\in Q^\mu_{\Gamma_1}} |W_2(q)|
\leq \sum_{w_2\in W_2} |Q^\mu_{\Gamma_1}\cap Q^\mu(w_2)|		
	\lesssim R^{c\delta} |W_2|,
\end{eqnarray*}
and combining this with the previous estimate we  arrive at  the desired estimate in (i).
\end{proof}


\begin{lemnr}\label{estofII}
Let $0<\delta<\frac{1}{4}$. Then
\begin{align}\nonumber
	II &= \sum_b \|\sum_{w_1\in W_1^{\lambda_1,\mu,\not\sim b}} p_{w_1}	
		\sum_{w_2\in W_2^{\lambda_2,\mu}} p_{w_2} \sum_{q\in Q^\mu}\chi_q\|_{L^p(b)}\\
	&\leq C_\alpha R^{c\delta} (\kappa^1\kappa^2)^{\frac{1}{2}-\frac{1}{p}}
	D^{3-\frac{5}{p}} |W_1|^\frac{1}{2} |W_2|^\frac{1}{2} \label{2feb1515}	
\end{align}
and
\begin{align}\nonumber
	III &= \sum_b \|\sum_{w_1\in W_1^{\lambda_1,\mu,\sim b}} p_{w_1}
	\sum_{w_2\in W_2^{\lambda_2,\mu,\not\sim b}} p_{w_2} \sum_{q\in Q^\mu}\chi_q\|_{L^p(b)}\\
	\leq& C_\alpha R^{c\delta} (\kappa^1\kappa^2)^{\frac{1}{2}-\frac{1}{p}}
				D^{3-\frac{5}{p}} |W_1|^\frac{1}{2} |W_2|^\frac{1}{2}. \label{2feb1515b}	
\end{align}
\end{lemnr}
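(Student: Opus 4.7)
We prove \eqref{2feb1515}; \eqref{2feb1515b} follows by a completely symmetric argument that interchanges the roles of the two hypersurfaces and uses Lemma \ref{geometric}(ii) in place of Lemma \ref{geometric}(i). Fix a cuboid $b$ and write $F = \sum_{w_1 \in W_1^{\lambda_1,\mu,\not\sim b}}\sum_{w_2 \in W_2^{\lambda_2,\mu}} p_{w_1} p_{w_2}\sum_{q \in Q^\mu}\chi_q$. The plan is to bound $\|F\|_{L^p(b)}$ by interpolating between $L^1(b)$- and $L^2(b)$-estimates and then to sum over the $\lesssim R^{c\delta}$ cuboids $b$ partitioning $Q_{S_1,S_2}(R)$. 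Since $\sum_q\chi_q$ is essentially a partition of unity bounded by a constant, applying the $L^1$-bound of Lemma \ref{L1L2} to the restricted families yields directly
\begin{equation*}
  \|F\|_{L^1(b)} \lesssim \frac{R'^{2}}{\sqrt{\kappa^{(1)}\kappa^{(2)}}}\,|W_1^{\lambda_1,\mu,\not\sim b}|^{1/2}|W_2^{\lambda_2,\mu}|^{1/2}.
\end{equation*}

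For the $L^2$-estimate we localize to each $q \in Q^\mu$. By the Schwartz decay of the wave packets (P3), only those with tube meeting $R^\delta q$ contribute significantly on $q$; applying the $L^2$-bound of Lemma \ref{L1L2} to the restricted families $W_1^{\lambda_1,\mu,\not\sim b}(q)$ and $W_2^{\lambda_2,\mu}(q)$ (taking the $\min$ at $j=1$), combined with the bound $|V_1^{\Pi_{v_1,v_2}}|\lesssim R^{c\delta}|W_2|/(\lambda_1\mu_2)$ from Lemma \ref{geometric}(i), yields
\begin{equation*}
  \|F\|_{L^2(q)}^{2} \lesssim R'^{-1}R^{c\delta}\,\frac{|W_2|}{\lambda_1\mu_2}\,|W_1^{\lambda_1,\mu,\not\sim b}(q)|\,|W_2^{\lambda_2,\mu}(q)|.
\end{equation*}
Summing in $q$, using the crude bound $|W_2^{\lambda_2,\mu}(q)|\le\mu_2$ together with the double-counting identity $\sum_{q\in Q^\mu}|W_1^{\lambda_1,\mu,\not\sim b}(q)|\le\lambda_1|W_1^{\lambda_1,\mu,\not\sim b}|$, the factors $\mu_2$ and $\lambda_1$ cancel cleanly and we obtain $\|F\|_{L^2(b)}\lesssim R'^{-1/2}R^{c\delta}|W_1^{\lambda_1,\mu,\not\sim b}|^{1/2}|W_2|^{1/2}$.

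Log-convexity $\|F\|_{L^p(b)}\le \|F\|_{L^1(b)}^{2/p-1}\|F\|_{L^2(b)}^{2-2/p}$ (valid for $1\le p\le 2$) now combines the two bounds. A routine computation of exponents produces the $R'$-exponent $5/p-3$, the $\kappa$-exponent $1/2-1/p$ and the factor $|W_1^{\lambda_1,\mu,\not\sim b}|^{1/2}|W_2|^{1/2}$ (after absorbing $|W_2^{\lambda_2,\mu}|\le |W_2|$). Summing over $b$ by Cauchy--Schwarz together with the crude $\sum_b|W_1^{\lambda_1,\mu,\not\sim b}| \le |\{b\}|\cdot|W_1|\lesssim R^{c\delta}|W_1|$ gives
\begin{equation*}
  II \lesssim R^{c\delta}(\kappa^{(1)}\kappa^{(2)})^{1/2-1/p} R'^{\,5/p-3}|W_1|^{1/2}|W_2|^{1/2}.
\end{equation*}
Since $R' = R/D$ and $R^{5/p-3}\le 1$ for $p\ge 5/3$, this extra $R$-factor is absorbed into $R^{c\delta}$, producing \eqref{2feb1515}.

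The main technical point is the $L^2$-step: one must correctly reduce to wave packets localized to each $q\in Q^\mu$ despite their being only Schwartz-tailed rather than compactly supported, and choose the double-counting so that the factor $|W_2|/(\lambda_1\mu_2)$ coming from the geometric Lemma \ref{geometric}(i) is precisely neutralized by $\sum_q|W_2^{\lambda_2,\mu}(q)|\,|W_1^{\lambda_1,\mu,\not\sim b}(q)|\lesssim \mu_2\cdot\lambda_1|W_1^{\lambda_1,\mu,\not\sim b}|$, leaving exactly $|W_1^{\lambda_1,\mu,\not\sim b}|^{1/2}|W_2|^{1/2}$.
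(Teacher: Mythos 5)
Your overall strategy and choice of key lemmas are correct and match the paper's: interpolate between the $L^1$-estimate \eqref{L1} and a refined $L^2$-estimate that combines the orthogonality bound \eqref{L2b} with Lemma~\ref{geometric}(i), using the double-counting identity $\sum_q|W_1^{\lambda_1,\mu,\not\sim b}(q)|\sim\lambda_1|W_1^{\lambda_1,\mu,\not\sim b}|$ together with $|W_2(q)|\sim\mu_2$ to cancel the factors $\lambda_1\mu_2$. The interpolation algebra and the crude summation over the $\lesssim R^{c\delta}$ cuboids $b$ are also right.

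There is, however, a genuine gap precisely at the step you flag as ``the main technical point'' but do not carry out. You assert that on each cube $q$ only wave packets whose tube meets $R^\delta q$ matter, citing the Schwartz decay (P3). This cannot be justified pointwise: the discarded terms $p_{w_1}p_{w_2}\chi_{q'}$ (with some tube missing $R^\delta q$, or with $q'$ far from $q$) are individually small on $q$, but may number $O(R^2)$, so one still needs an orthogonality argument to control their $L^2$-sum. The paper supplies three ingredients that your proposal omits. First, the cubes $q$ with $q\cap 2b=\emptyset$ are removed using the decay of $\chi_q$, cf. \eqref{sumoverq}. Second, for $q\in Q^\mu_b$ at which some $T_{w_j}$ misses $R^\delta q$, one verifies that the modified objects $p_{w_j}\sum_{q\in Q_0}\chi_q$ still satisfy wave-packet--type bounds, now with an extra gain $R^{-\delta N}$ (estimates \eqref{11sep59}--\eqref{11sep60}), so that the orthogonality argument underlying \eqref{L2b} can be repeated on them; crucially this uses that $\supp\widehat{\chi_q}\subset B(0,R'^{-1})$, so that multiplication by $\sum_q\chi_q$ preserves the Fourier localization $\supp\FT(p_{w_1}p_{w_2})\subset v_1+v_2+\landau(R'^{-1})$ on which \eqref{L2b} rests --- this is the reason the paper works with smooth $\chi_q$ rather than characteristic functions of $q$, a point it calls out explicitly. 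Third, the remaining diagonal contribution from $q\in Q^\mu_b(w_1,w_2)$ is reduced via Schur's test on the kernel $\chi_q\chi_{q'}$ to the localized sums $\sum_q\|f_q\|_{L^2(b)}^2$ that you implicitly invoke. Without these three steps the reduction of $\|F\|_{L^2(q)}$ to the localized families $W_j(q)$ is asserted but not established.
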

\begin{proof}
We will only  prove the first inequality; the proof of second one works in a similar way.
Since the  number of  $b$'s  over which we sum in  \eqref{2feb1515}  is of the order   $R^{c\delta}$, it is enough to show that for every  fixed $b$
\begin{align}
	\|\sum_{w_1\in W_1^{\lambda_1,\mu,\not\sim b}} p_{w_1}
						\sum_{w_2\in W_2^{\lambda_2,\mu}} p_{w_2} \sum_{q\in Q^\mu}\chi_q\|_{L^p(b)}\le C_\alpha R^{c\delta} (\kappa^1\kappa^2)^{\frac{1}{2}-\frac{1}{p}}
				D^{3-\frac{5}{p}} |W_1|^\frac{1}{2} |W_2|^\frac{1}{2}.
\end{align}
For $p=1$, we apply \eqref{L1} from Lemma \ref{L1L2}:
\begin{align*}
	&\|\sum_{w_1\in W_1^{\lambda_1,\mu,\not\sim b}} p_{w_1}
				\sum_{w_2\in W_2^{\lambda_2,\mu}} p_{w_2} \sum_{q\in Q^\mu}\chi_q\|_{L^1(b)}	\\
	\lesssim& \|\sum_{w_1\in W_1^{\lambda_1,\mu,\not\sim b}} p_{w_1}
						\sum_{w_2\in W_2^{\lambda_2,\mu}} p_{w_2}\|_{L^1(Q_{S_1,S_2}(R))}	\\
	\leq& \frac{R'^2}{\sqrt{\kappa^{(1)}\kappa^{(2)}}} |W_1|^\frac{1}{2}|W_1|^\frac{1}{2}.										
\end{align*}
For $p=2$, we claim that
\begin{align}\label{L2claim}
	\|\sum_{w_1\in W_1^{\lambda_1,\mu,\not\sim b}} p_{w_1}
						\sum_{w_2\in W_2^{\lambda_2,\mu}} p_{w_2} \sum_{q\in Q^\mu}\chi_q\|_{L^2(b)}^2
	\lesssim C_\alpha R^{c\delta} R'^{-1} |W_1||W_2|.						
\end{align}
The  desired inequality \eqref{2feb1515} will then follow by means of  interpolation with the previous  $L^1$-estimate - notice here that $R^{\frac{5}{p}-3} \leq 1$ since $\frac{5}{3}\leq p$.
\medskip

To prove \eqref{L2claim}, recall that the side lengths of $b$ are of the form $(R'^2/\kappa^{(j)})R^{-2\delta} =
R'/(D\kappa^{(j)})R^{1-2\delta}\geq R'R^{1-2\delta}$, $j\in\{1,2\}$. If $q\cap 2b=\emptyset$, then for $x\in b$ we have $|x-c_q|\geq \inf\limits_{y\notin 2b}|x-y|=\mathop{d}(x,(2b)^c)\geq R'R^{1-2\delta}$.
Therefore for every $x\in b$
\begin{align}\nonumber
	\big|\sum_{q\in Q^\mu,q\cap 2b=\emptyset} \chi_q(x)\big|
	\leq& C_N\underset{2^l\geq R^{1-2\delta}}{\sum_{l\in\N}}\ \underset{|x-c_q|\sim R'2^l}{\sum_{q\in Q^\mu}}
							\left(1+\frac{|x-c_q|}{R'}\right)^{-N-2}	\\
	\lesssim& C_N\underset{2^l\geq R^{1-2\delta}}{\sum_{l\in\N}} |\{q:|x-c_q|\sim R'2^l\}|\, 2^{-(N+2)l}
	 \nonumber \\
	\sim 	&C_N \underset{2^l\geq R^{1-2\delta}}{\sum_{l\in\N}} 2^{-Nl}	
	\sim  C_N R^{-(1-2\delta)N}			
	= C_{\delta,N'} R^{-N'}.	\label{sumoverq}
\end{align}
The last step requires that $\delta<1/2.$
Choosing $N$ sufficiently large, we see that by Lemma \ref{L1L2} and Lemma \ref{Vbound}
\begin{align*}
	&\Big\|\sum_{w_1\in W_1^{\lambda_1,\mu,\not\sim b}} p_{w_1}
						\sum_{w_2\in W_2^{\lambda_2,\mu}} p_{w_2}
						\sum_{q\in Q^\mu,q\cap 2b=\emptyset}\chi_q\Big\|_{L^2(b)}^2		\\	
	\lesssim& \Big\|\sum_{w_1\in W_1^{\lambda_1,\mu,\not\sim b}} p_{w_1}
						\sum_{w_2\in W_2^{\lambda_2,\mu}} p_{w_2} \Big\|_{L^2}^2		\,\cdot\,
						\Big\| \sum_{q\in Q^\mu,q\cap 2b=\emptyset}\chi_q \Big\|_{L^\infty(b)}^2	\\
	\lesssim& C_{\delta,N'} R'^{-1} |W_1|\,|W_2| \min\limits_j\sup\limits_{v_1,v_2}|V_j^{\Pi_{v_1,v_2}}| R^{-2N'} \\
	\lesssim& C_{\delta,N'} R'^{-1} |W_1|\,|W_2| R^{1-2N'}	\\
	\lesssim& C'_{\delta,N''} R'^{-1} |W_1|\,|W_2| R^{-N''}.						
\end{align*}
Thus it is enough to consider the sum over the set $Q^\mu_b=\{q\in Q^\mu: q\cap 2b\neq\emptyset\}$. For fixed $w_1,w_2,$ we split this set into the subsets
$ Q^\mu_b(w_1,w_2)=Q^\mu_b\cap Q^\mu(w_1)\cap Q^\mu(w_2),$ $ Q^\mu_b\cap Q^\mu(w_1)\setminus Q^\mu(w_2)$ and
$$ Q^\mu_b\setminus Q^\mu(w_1)
		=\big(Q^\mu_b\cap Q^\mu(w_2)\setminus Q^\mu(w_1) \big)
			\cup \big(Q^\mu_b\setminus(Q^\mu(w_2)\cap Q^\mu(w_1))\big).
$$
Except for the  first set, the contributions by the other subsets can be treated in the same way,  since they are all special cases of the following situation:
\medskip

Let $Q_0=Q_0(w_1,w_2)\subset Q^\mu_b$ such that there exists an $j\in \{1,2\}$ with $R^\delta q\cap T_{w_j}=\emptyset$ for all $q\in Q_0$. Then
\begin{align}\label{notononeT}
	\|\sum_{w_1\in W_1^{\lambda_1,\mu,\not\sim b}} p_{w_1}
						\sum_{w_2\in W_2^{\lambda_2,\mu}} p_{w_2} \sum_{q\in Q_0}\chi_q\|_{L^2(b)}^2
	\lesssim C_\alpha R^{c\delta} R'^{-1} |W_1||W_2|.					
\end{align}
Notice that the right-hand side is just what we need  for \eqref{L2claim}.

For the proof of \eqref{notononeT}, assume without loss of generality that $j=1$. Let $q\in Q_0.$ Then $T_{w_1}\cap R^\delta q=\emptyset,$ and for all $x\in(R^\delta/2) q$ we have
$(R^\delta/2) R' \leq \dist(x,(R^\delta q)^c) \leq \dist(x,T_{w_1})$.
Thus for every $x\in Q_{S_1,S_2}(R)$, we have $\dist(x,T_{w_1})\geq (R^\delta/2) R'$ or $x\notin (R^\delta/2) q$.
In the first case, we have
\begin{align}\label{11sep1057}
	|p_{w_1}(x)|\leq& C_N R'^{-1}\left(1+\frac{\dist(x,T_{w_1})}{R'}\right)^{-2N}	\\
	\leq&  C'_N R'^{-1} R^{-\delta N}  \left(1+\frac{\dist(x,T_{w_1})}{R'}\right)^{-N}.	\nonumber
\end{align}
One the other hand, in the second case, where  $x\notin (R^\delta/2) q,$ we have $(R^\delta/2) R'\leq|x-c_q|$. Using the rapid decay of the Schwartz function $\phi$ we then see that
\begin{align}\label{11sep1058}
	|\chi_q(x)|=|\chi\left(\frac{x-c_q}{R'}\right)|
	\leq C_N \left(\frac{|x-c_q|}{R'}\right)^{-N}
	\leq C'_N R^{-\delta N}.
\end{align}
Applying an argument similar to the one used  in \eqref{sumoverq}, we even obtain
\begin{align*}
	|\sum_{q\in Q_0}\chi_q(x)|\leq C''_N R^{-\delta N}
\end{align*}
for all $x\notin(R^\delta/2) q$.
To summarize, we obtain for  that for every $x\in Q_{S_1,S_2}(R)$
\begin{align}\label{11sep59}
	\big|p_{w_1}\sum_{q\in Q_0(w_1,w_2)}\chi_q\big|(x)\leq C(N,\delta) R'^{-1} R^{-\delta N} \left(1+\frac{\dist(x,T_{w_1})}{R'}\right)^{-N}.
\end{align}
This means  that the expression $p_{w_1}\sum_{q\in Q_0(w_1,w_2)}\chi_q$ can not only be estimated in the same way  as the original wave packet $p_{w_1},$ but we even obtain an improved estimate because of an additional factor $R^{-\delta N}$.
If we replace $p_{w_1}$ by $p_{w_2}$ on the left-hand side, we obtain in a similar way just the standard wave packet  estimate
\begin{align}\label{11sep60}
\big|p_{w_2}\sum_{q\in Q_0(w_1,w_2)}\chi_q\big|(x)\lesssim \|p_{w_2}\|_\infty \lesssim R'^{-1}\left(1+\frac{\dist(x,T_{w_2})}{R'}\right)^{-N},
\end{align}
without an additional factor.

We can now finish the proof of \eqref{notononeT}, basically by following  the ideas of  the proof of  the estimate \eqref{L2b} in Lemma \ref{L1L2}. The crucial argument was the fact that the Fourier transform of $p_{w'_{j+1}}p_{w_j}$ is supported in $v'_{j+1}+v_j+\landau(R'^{-1})$.
Since $\supp\widehat{\chi_q}=\supp\hat\chi(R'\cdot)\subset B(0,R'^{-1})$, the Fourier support of
$p_{w_1}p_{w_2}\sum\limits_{q\in Q_0(w_1,w_2)}\chi_q$ remains essentially the same.  It is at this  point that we need  that the functions $\chi_q$ have  compact Fourier support.
The modified wave packets $p_{w_i}\sum\limits_{q\in Q_0(w_1,w_2)}\chi_q$ are still well separated with respect to the  parameter $y_i,$ for fixed direction $v_i,$ thanks to \eqref{11sep59} and \eqref{11sep60}.
Thus the argument from Lemma \ref{L1L2} applies, and by the analogue of
\eqref{L2detail} we obtain
\begin{eqnarray*}
	&&\big\| \underset{w_2\in W_2^{\lambda_2,\mu}}
								{\underset{w_1\in W_1^{\lambda_1,\mu,\not\sim b} }{\sum}}p_{w_1}
								p_{w_2}\sum_{q\in Q_0(w_1,w_2)}\chi_q\big\|_{L^2(b)}^2
	\lesssim R' |W_1|\ |W_2|\ \min_j\sup_{v_1,v_2}|V_j^{\Pi_{v_1,v_2}}| 	\\
	&&\times  \sup\limits_{w_1\in W_1,w'_2\in W_2}
						\|p_{w'_2}\sum_{q\in Q_0(w_1,w'_2)}\chi_q\|_\infty
		\sup\limits_{w'_1\in W_1,w_2\in W_2}
						\|p_{w'_1}\sum_{q\in Q_0(w'_1,w_2)}\chi_q\|_\infty	\\
	&\lesssim&  C_{\delta,N'}
					 R'^{-1} |W_1|\ |W_2|\ \min_j\sup_{v_1,v_2}|V_j^{\Pi_{v_1,v_2}}|  R^{-N'}	\\
	&\lesssim& C_{\delta,N'} R'^{-1} R^{1-N'} |W_1|\ |W_2|.
\end{eqnarray*}
In the second inequality, we have made use of
\eqref{11sep59} and \eqref{11sep60}, and  the last one is based on   Lemma \ref{Vbound}.
This concludes the proof of \eqref{notononeT}.

\medskip
What remains to be controlled are the contributions
 by the cubes $q$  from $Q^\mu_b(w_1,w_2)$. Notice that the kernel $K(q,q')=\chi_q(x)\chi_{q'}(x)$ satisfies Schur's test condition
\begin{align*}
	\sup_{q}\sum_{q'} \chi_q(x)\chi_{q'}(x) \lesssim \sum_{q'}\chi_{q'}(x) \lesssim 1,
\end{align*}
with a constant not depending in $x.$
Let us put $f_q=\underset{w_2\in W_2^{\lambda_2,\mu}(q)}
	{\underset{w_1\in W_1^{\lambda_1,\mu,\not\sim b}(q) }{\sum}} p_{w_1}p_{w_2},$  and observe that for $w_1\in W_1^{\lambda_1,\mu,\not\sim b}$ and $w_2\in W_2^{\lambda_2,\mu},$ we have $q\in Q^\mu_b(w_1,w_2)$ if and only if $q\in Q^\mu_b$ and  $w_1\in W_1^{\lambda_1,\mu,\not\sim b}(q)$ and $w_2\in W_2^{\lambda_2,\mu}(q).$ Then we see that we may  estimate				
\begin{eqnarray*}
	&&\|\underset{w_2\in W_2^{\lambda_2,\mu}}
								{\underset{w_1\in W_1^{\lambda_1,\mu,\not\sim b} }{\sum}} p_{w_1}p_{w_2}
								\sum_{q\in Q^\mu_b(w_1,w_2)}\chi_q\|_{L^2(b)}^2		
	= \|\sum_{q\in Q^\mu_b}\chi_q f_q \|_{L^2(b)}^2			\\
	&=& \int_b \big|\sum_{q,q'\in Q^\mu_b}\chi_q\chi_{q'}f_q f_{q'}\big | d x	
	= \int_b \big|\sum_{q,q'\in Q^\mu_b} K(q,q')f_{q} f_{q'}\big| d x  	\\
	&\lesssim & \int_b \sum_{q\in Q^\mu_b} |f_q|^2 \, d x					
	= \sum_{q\in Q^\mu,q\cap2b\neq\emptyset} \|\underset{w_2\in W_2^{\lambda_2,\mu}(q)}
						{\underset{w_1\in W_1^{\lambda_1,\mu,\not\sim b}(q)}{\sum}} p_{w_1}p_{w_2} \|_{L^2(b)}^2.
\end{eqnarray*}
			Invoking also Lemma \ref{L1L2} and Lemma \ref{geometric} (i), we thus obtain
\begin{eqnarray*}		
&&\|\underset{w_2\in W_2^{\lambda_2,\mu}}
								{\underset{w_1\in W_1^{\lambda_1,\mu,\not\sim b} }{\sum}} p_{w_1}p_{w_2}
								\sum_{q\in Q^\mu_b(w_1,w_2)}\chi_q\|_{L^2(b)}^2\\		
	&\lesssim& \sum_{q\in Q^\mu,q\cap2b\neq\emptyset} R'^{-1}
				| W_1^{\lambda_1,\mu,\not\sim b}(q)|\  |W_2^{\lambda_2,\mu}(q)|
			\sup_{v_1,v_2}\big|[W_1^{\lambda_1,\mu,\not\sim b}(q)]^{\Pi_{v_1,v_2}}\big|		\\
	&\lesssim&  	R^{c\delta}	R'^{-1}   \sum_{q\in Q^\mu,q\cap2b\neq\emptyset}
								| W_1^{\lambda_1,\mu}(q)|\ |W_2(q)| \frac{|W_2|}{\lambda_1\mu_2}	\\
	&\lesssim&  R^{c\delta}R'^{-1} \sum_{w_1\in W_1^{\lambda_1,\mu}}
													|Q^\mu(w_1)| \frac{|W_2|}{\lambda_1}	\\
	&\lesssim&  R^{c\delta}R'^{-1} |W_1|\ |W_2|.
\end{eqnarray*}
This completes the proof of estimate \eqref{2feb1515}, hence of Lemma \ref{estofII}.
\end{proof}

\subsection{Induction on scales}
We can now easily complete the proof of Theorem \ref{Eathm} by  following standard arguments.
\begin{kornr}\label{inductionstep}
There exist  constants $c,\delta_0>0$ such that $c\delta_0>1$ and such that the following holds true:

Whenever $\alpha>0$ is such that $E(\alpha)$ holds true, then $E(\max\{\alpha(1-\delta),c\delta\})$ holds true for every $\delta$ such that  $0<\delta<\delta_0.$
\end{kornr}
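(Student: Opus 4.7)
The plan is to apply Lemma~\ref{E(a)reduction} in reverse: it suffices to verify the wave-packet estimate \eqref{E(a)reduction_eq} with the exponent $\alpha$ replaced by $\beta := \max\{\alpha(1-\delta), c\delta\}$. Fix $\delta \in (0,\delta_0)$ and start from the decomposition \eqref{IandII} of the $L^p(Q_{S_1,S_2}(R))$-norm into a sum over sub-cuboids $b$, each a translate of $Q_{S_1,S_2}(R^{1-\delta})$, yielding $I + II + III$. Since the number of such sub-cuboids is only $O(R^{c\delta})$, we may work term by term, which is precisely how Lemmas~\ref{estofI} and \ref{estofII} are phrased.

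For Part $I$, where each wave packet is "concentrated" over its selected cuboid $b(w_j)$, I would invoke Lemma~\ref{estofI}: applying the hypothesis $E(\alpha)$ on every $b$ (which, being a translate of $Q_{S_1,S_2}(R^{1-\delta})$, is of the same cuboid type at scale $R^{1-\delta}$) produces the factor $R^{\alpha(1-\delta)}\log^{\gamma_\alpha}(1+R)$. For Parts $II$ and $III$, where one of the wave packet families is spread out across many cuboids, I would invoke Lemma~\ref{estofII}, which delivers the factor $R^{c\delta}$ (the explicit power $c\delta$ coming from: the number of sub-cuboids $b$, the loss incurred in the geometric Lemma~\ref{geometric}, and the Schur-test handling of the cube densities in the $L^2$ bound). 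In both cases the remaining factors $(\kappa^{(1)}\kappa^{(2)})^{1/2-1/p}D^{3-5/p}\log^{\gamma_\alpha}(C_0(S))|W_1|^{1/2}|W_2|^{1/2}$ match exactly what \eqref{E(a)reduction_eq} requires.

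Adding the three contributions gives
\[
	\|\textstyle\prod_{j=1,2}\sum_{w_j\in W_j^{\lambda_j,\mu}} p_{w_j} \sum_{q\in Q^\mu} \chi_q\|_{L^p(Q_{S_1,S_2}(R))}
	\lesssim R^{\beta}\log^{\gamma_\alpha}(1+R)(\kappa^{(1)}\kappa^{(2)})^{1/2-1/p}D^{3-5/p}\log^{\gamma_\alpha}(C_0(S))|W_1|^{1/2}|W_2|^{1/2},
\]
so Lemma~\ref{E(a)reduction} yields $E(\beta)$. The absolute constant $c$ is determined by the exponents of $R^{c\delta}$ appearing in Lemma~\ref{estofII} (and in the count of sub-cuboids in Lemma~\ref{estofI}); the threshold $\delta_0$ is chosen small enough that the proofs of those lemmas (notably $\delta < 1/4$ in Lemma~\ref{estofII}) remain valid, while $c\delta_0 > 1$ simply records that the scale-loss constant $c$ from the geometric arguments is large.

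The corollary itself is a routine bookkeeping step — the genuine work has been carried out in Lemmas~\ref{estofI}, \ref{estofII} and the geometric Lemma~\ref{geometric} — but for completeness I would also note the intended use: iterating this corollary starting from the base case $E(1)$ of Corollary~\ref{inductionstart}, balancing $\alpha(1-\delta) = c\delta$ at each step gives a genuine multiplicative improvement of ratio $c/(c+\alpha) < 1$, so in finitely many steps one reaches $E(\alpha_0)$ for any prescribed $\alpha_0 > 0$, completing the proof of Theorem~\ref{Eathm}. The only conceptual subtlety is ensuring that the sub-cuboids $b$ really can be treated by the induction hypothesis on the smaller scale $R^{1-\delta}$; this is why $\chi_q$ was chosen to have compact Fourier support and why the exact form of $Q_{S_1,S_2}(\cdot)$ scales self-similarly — both preparations already done in the setup preceding Lemma~\ref{estofI}.
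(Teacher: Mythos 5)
Your proof is correct and follows essentially the same route as the paper: decompose $Q_{S_1,S_2}(R)$ into the sub-cuboids $b$ as in \eqref{IandII}, apply Lemma~\ref{estofI} to Part~$I$ (giving $R^{\alpha(1-\delta)}$) and Lemma~\ref{estofII} to Parts~$II$, $III$ (giving $R^{c\delta}$), then combine via Lemma~\ref{E(a)reduction}; the paper fixes $\delta_0=1/4$ and ensures $c\delta_0>1$ by enlarging $c$ if necessary, exactly as you note. The additional remarks you make about iterating to prove Corollary~\ref{offcubicBL} and about the role of compact Fourier support are accurate context but belong to the surrounding discussion rather than this corollary's proof.
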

\begin{proof}
Let us put $\delta_0=1/4.$ Then the previous Lemmas \ref{estofI} and \ref{estofII} imply that
\begin{align*}
	&\big\|\prod_{j=1,2}\sum_{w_j\in W_j^{\lambda_j,\mu}} p_{w_j} \sum_{q\in Q^\mu} \chi_q\big\|_{L^p(Q_{S_1,S_2}(R))}
	\leq I+II+III	\\		
	\lesssim& \big(C_\alpha R^{\alpha(1-\delta)}\log^{\gamma_\alpha}(1+R)+ C_\delta R^{c\delta}\big)
			(\kappa^1\kappa^2)^{\frac{1}{2}-\frac{1}{p}}D^{3-\frac{5}{p}} \log^{\gamma_\alpha}(C_0)
			|W_1|^\frac{1}{2} |W_2|^\frac{1}{2}	\\
	\lesssim& C_{\alpha,\delta} R^{\alpha(1-\delta)\vee c\delta} \log^{\gamma_\alpha}(1+R)
			(\kappa^1\kappa^2)^{\frac{1}{2}-\frac{1}{p}}D^{3-\frac{5}{p}} \log^{\gamma_\alpha}(C_0)
			|W_1|^\frac{1}{2} |W_2|^\frac{1}{2}
\end{align*}
whenever $\delta<\delta_0,$ where $C_0=C_0(S)\gtrsim 1$  is defined in \eqref{C0}. By Lemma \ref{E(a)reduction}, this estimate implies  $E(\alpha(1-\delta)\vee c\delta).$
Finally, by  simply increasing the constant $c,$ if necessary, we may also ensure  that $c\delta_0>1.$
 \end{proof}


\begin{kornr}\label{offcubicBL}
 $E(\alpha)$ holds true  for every $\alpha>0.$
 \end{kornr}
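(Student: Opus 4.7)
The plan is to iterate the induction step (Corollary \ref{inductionstep}) starting from the base case $E(1)$ (Corollary \ref{inductionstart}) until the exponent $\alpha$ is driven below any preassigned target $\alpha^*>0$.

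First I would fix an arbitrary $\alpha^*>0$ at which we wish to verify $E(\alpha^*)$. Using Corollary \ref{inductionstep}, let $c,\delta_0>0$ be the constants provided there, with $c\delta_0>1$. Choose $\delta\in(0,\delta_0)$ small enough that $c\delta<\alpha^*$ (for instance $\delta=\min\{\delta_0/2,\alpha^*/(2c)\}$). Observe that the hypothesis $p\geq 5/3$ in Theorem \ref{Eathm} is exactly what is needed to apply Lemma \ref{estofII}, while $p\leq 2$ ensures the base case via Corollary \ref{inductionstart}, so both inputs of the induction are available throughout the argument.

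Next I would iterate. Set $\alpha_0=1$, and for $n\geq 0$ define $\alpha_{n+1}=\max\{\alpha_n(1-\delta),\, c\delta\}$. By induction on $n$, Corollary \ref{inductionstep} gives $E(\alpha_n)$ for every $n\in\N$. Since $(1-\delta)^n\to 0$, there is an integer $N$ with $(1-\delta)^N\leq c\delta$, and for $n\geq N$ one has $\alpha_n=c\delta$. Thus $E(c\delta)$ holds true, and because $c\delta<\alpha^*$ while $R\geq 1$, the factor $R^{c\delta}\leq R^{\alpha^*}$ implies that the estimate \eqref{Ealest} with exponent $c\delta$ is stronger than the one with exponent $\alpha^*$. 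Consequently $E(\alpha^*)$ holds true with constants $C_{\alpha^*}=C_{c\delta}$ and $\gamma_{\alpha^*}=\gamma_{c\delta}$, which are finite since only finitely many induction steps were used.

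The main (very mild) subtlety is simply bookkeeping: the constants $C_{\alpha_n},\gamma_{\alpha_n}$ produced by Corollary \ref{inductionstep} depend on the previous stage and on $\delta$, so one should observe that after the finitely many steps $n=0,1,\dots,N$ needed to reach $\alpha_n=c\delta$, the resulting constants remain admissible in the sense prescribed in Section \ref{generalbilinear}. Since $\alpha^*>0$ was arbitrary, $E(\alpha)$ holds for every $\alpha>0$, which in view of Lemma \ref{E(a)reduction} (already used inside Corollary \ref{inductionstep}) is exactly the statement of Theorem \ref{Eathm}.
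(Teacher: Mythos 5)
Your proposal is correct and is essentially the paper's own argument: iterate Corollary \ref{inductionstep} starting from the base case $E(1)$ of Corollary \ref{inductionstart} until the exponent reaches the target. The only cosmetic difference is the choice of $\delta$: you keep $\delta$ fixed and iterate $\alpha_{n+1}=\max\{\alpha_n(1-\delta),c\delta\}$ until it stabilizes at $c\delta<\alpha^*$, whereas the paper picks $\delta_j=\alpha_j/(c+\alpha_j)$ at each stage so that $\alpha_j(1-\delta_j)=c\delta_j=\alpha_{j+1}=c\alpha_j/(c+\alpha_j)\to0$; both give $E(\beta)$ for $\beta$ as small as desired in finitely many steps, and then $E(\beta)\Rightarrow E(\alpha^*)$ for $\alpha^*>\beta$ since $R\geq1$. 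Your remark that the constants grow only over finitely many iterations is the right bookkeeping point, and it applies equally to both schemes.
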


 This completes also the proof of Theorem \ref{Eathm}.
\begin{proof}
Define inductively the sequence $\alpha_0=1$, $\alpha_{j+1}=c\alpha_j/(c+\alpha_j),$ which is decreasing and converges to $0.$ It therefore suffices to prove that  $E(\alpha_j)$  is valid for every  $j\in\N$.
But, by Corollary \ref{inductionstart},  $E(\alpha_0)=E(1)$ does hold  true.  Moreover,  Corollary \ref{inductionstep}  shows that $E(\alpha_j)$  implies  $E(\alpha_{j+1}),$  for if we choose  $\delta=\alpha_j/(c+\alpha_j),$ then $\delta<1/c<\delta_0$ and
  $\alpha_j(1-\delta)=c\delta=\alpha_j c/(c+\alpha_j)=\alpha_{j+1},$ and thus we may conclude  by induction.
\end{proof}


\section{Scaling}\label{Scaling}

For the proof of our main theorem, we shall have to perform a kind of Whitney type decomposition of $S\times S$ into  pairs of patches of hypersurfacec $(S_1,S_2)$ and prove very  precise  bilinear restriction estimates for those.  In  order to reduce these estimates to Theorem \ref {generalbilinear}, we shall need to  rescale  simultanously  the hypersurfaces $S_1,S_2$ for each such pair $(S_1,S_2)$ in a suitable way.
To this end, we shall denote here and in the sequel by $R^*_{S_1, S_2}$ the
 bilinear Fourier extension operator
$$
R^*_{S_1,S_2}(f_1,f_2)=R^*_{\R^2}f_1 \cdot R^*_{\R^2}f_2, \qquad f_1\in L^2(U_1), f_2\in L^2(U_2)
$$
associated to any pair of hypersurfaces $(S_1,S_2)$   given as the graphs $S_j=\{(\xi,\phi_j(\xi)):\xi\in U_j\}, j=1,2.$

The following trivial lemma comprises the effect of the type of rescaling that we shall  need.
\begin{lemnr}\label{scalinglemma}
	Let $S_j=\{(\xi,\phi(\xi):\xi\in U_j\}$, where again  $U_j\subset\R^d$ is open and bounded for  $j=1,2$. Let $A\in GL(d,\R)$, $a>0$,  put $\phi^s(\eta)=\frac{1}{a} \phi(A\eta),$ and let
$$
S_j^s=\{(\eta,\phi^s(\eta))|\eta\in U_j^s\}, \quad U_j^s=A^{-1}(U_j),\quad j=1,2.
$$
 For any   measurable subset  $Q^s\subset\R^{d+1}$, we set  $Q=\{x: (\trans Ax',ax_{d+1})\in Q^s \}$.
Assume that the following estimate holds true:
\begin{align*}
	\| R^*_{S^s_1,S^s_2}(g_1,g_2)  \|_{L^p(Q^s)} \leq C_s \|g_1\|_2 \|g_2\|_2
	\quad \text{   for all } g_j\in L^2(U^s_j).
\end{align*}
Then
\begin{align*}
	\| R^*_{S_1,S_2}(f_1,f_2) \|_{L^p(Q)}
	\leq C_s |\det A|^\frac{1}{p'} a^{-\frac{1}{p}} \|f_1\|_2 \|f_2\|_2
	\quad\text{ for all } f_j\in L^2(U_j).
\end{align*}
\end{lemnr}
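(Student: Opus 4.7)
The proof is essentially a direct change of variables computation, so the plan is just to organize this carefully. The core identity to establish is an intertwining relation between $R^*_{S_1,S_2}$ and $R^*_{S_1^s,S_2^s}$.

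First I would substitute $\xi = A\eta$ in each extension integral $R^*_{S_j} f_j(x) = \int_{U_j} f_j(\xi)\, e^{-i(x'\cdot \xi + x_{d+1}\phi(\xi))}\, d\xi$. The change of variables produces $|\det A|\, d\eta$ and rewrites the phase as $x' \cdot A\eta + x_{d+1}\phi(A\eta) = (\trans A x')\cdot \eta + a x_{d+1}\, \phi^s(\eta)$, by the very definition of $\phi^s$. Defining the pulled-back density $g_j(\eta) := f_j(A\eta)$ on $U_j^s = A^{-1}(U_j)$, this gives the intertwining identity
\begin{equation*}
 R^*_{S_j} f_j(x) = |\det A|\, R^*_{S_j^s} g_j(\trans A x', a x_{d+1}),
\end{equation*}
and hence for the bilinear operator
\begin{equation*}
 R^*_{S_1,S_2}(f_1,f_2)(x) = |\det A|^2\, R^*_{S_1^s,S_2^s}(g_1,g_2)(\trans A x', a x_{d+1}).
\end{equation*}

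Next I would compute the $L^p(Q)$ norm by the linear change of variables $y = (\trans A x', a x_{d+1})$, which by construction maps $Q$ bijectively onto $Q^s$ and has Jacobian $|\det A|\cdot a$. This gives
\begin{equation*}
 \|R^*_{S_1,S_2}(f_1,f_2)\|_{L^p(Q)}^p = \frac{|\det A|^{2p-1}}{a}\,\|R^*_{S_1^s,S_2^s}(g_1,g_2)\|_{L^p(Q^s)}^p.
\end{equation*}
Applying the assumed estimate on $Q^s$ then produces a factor $C_s^p \|g_1\|_2^p \|g_2\|_2^p$.

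Finally I would convert the $L^2$-norms of $g_j$ back to those of $f_j$: a second change of variables gives $\|g_j\|_2^2 = |\det A|^{-1}\|f_j\|_2^2$, so $\|g_1\|_2^p\|g_2\|_2^p = |\det A|^{-p}\|f_1\|_2^p\|f_2\|_2^p$. Combining all factors yields $\|R^*_{S_1,S_2}(f_1,f_2)\|_{L^p(Q)}^p \le C_s^p\, |\det A|^{p-1}\, a^{-1}\, \|f_1\|_2^p\|f_2\|_2^p$, and taking $p$-th roots produces exactly the claimed factor $|\det A|^{1/p'} a^{-1/p}$. There is no genuine obstacle here; the only thing to watch is keeping the distinction between the action of $A$ on the base variables (responsible for $\trans A$ and $|\det A|$) and the action of the dilation by $a$ on the last coordinate (responsible for the $a$ factor) straight so the exponents come out right.
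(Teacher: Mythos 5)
Your proof is correct and is exactly the direct change-of-variables computation the paper has in mind (the lemma is stated there as "trivial" without proof). The intertwining identity, the Jacobian bookkeeping, and the final exponents $|\det A|^{1/p'}a^{-1/p}$ all check out.
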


\medskip

We now return to our model hypersurface (compare \eqref{model}, \eqref{psicond1} and \eqref{psicond1}), which is the graph of
\begin{align*}
\phi(\xi_1,\xi_2)= \psi_1(\xi_1)+ \psi_2(\xi_2)
\end{align*}
on $]0,1[\times ]0,1[,$ where the derivatives of the $\psi_i$ satisfy
\begin{align*}
\psi_i''(\xi_i)&\sim \xi_i^{m_i-2},\\
|\psi_i^{(k)}(\xi_i)|&\lesssim \xi_i^{m_i-k} \quad \mbox{for }\ k\ge 3,
\end{align*}
and where $m_1,m_2\in \R$ are  such that $m_i\ge 2.$
\medskip

We shall apply the preceding lemma   to pairs $S_1=S$ and $S_2=\tilde S$ of patches of this hypersurface  on which the  following assumptions are  met:
\medskip

\noindent\textsc{GENERAL ASSUMPTIONS:}
Let $S=\{(\xi,\phi(\xi)):\xi\in U\}$ and $\tilde S=\{(\xi,\phi(\xi)):\xi\in \tilde U\},$  where $U=r+[0,d_1]\times[0,d_2]$  and  $\tilde U=\tilde r+[0,\tilde d_1]\times[0,\tilde d_2],$  with $r=(r_1,r_2)$ and $\tilde r=(\tilde r_1,\tilde r_2).$

We assume that  for $i=1,2$ we have  $r_i\geq d_i$ and $\tilde r_i\geq \tilde d_i,$  so  that the  principal curvature $\psi_i''$ of $S$ with respect to $\xi_i$  is  comparable to $\kappa_i=r_i^{m_i-2}$,  and that of $\tilde S$ is comparable to $\tilde \kappa_i=\tilde r_i^{m_i-2}.$ We put

 \begin{align}
 &\bar d_i=d_i\vee\tilde d_i,\nonumber\\
 &\kappa=\kappa_1\vee\kappa_2, \quad \tilde \kappa=\tilde\kappa_1\vee\tilde\kappa_2,\nonumber\\
&\bar\kappa_i=\kappa_i\vee\tilde\kappa_i, \quad \bar r_i=r_i\vee\tilde r_i\label{quantities} \\
&\bar\kappa=\kappa\vee\tilde\kappa=\bar\kappa_1\vee\bar\kappa_2,\nonumber\\
&\Delta r_i=r_i - \tilde r_i.\nonumber
\end{align}
In addition, {\it we assume that for every  direction $\xi_i, i=1,2, $  the rectangle $U$ respectively $\tilde U$ on which the  corresponding principal curvature is bigger (which means that its projection to the $\xi_i$-axis is the one  further to the right), has also bigger  length in this direction.} This is easily seen to be equivalent to
\begin{align}\label{maxproduct}
	(\kappa_i d_i)\vee(\tilde\kappa_i\tilde d_i)=\bar\kappa_i\bar d_i.
\end{align}
Last, but not least, we assume the rectangles $U$ and $\tilde U$ are separated with respect to both variables $\xi_i, i=1,2,$ in the following sense:
\begin{align}\label{separationcond}
	{\rm dist}_{\xi_i}(U,\tilde U)=\inf\{|\xi_i-\tilde \xi_i|:\xi\in U,\tilde \xi\in\tilde U\}
	\sim  |\Delta r_i| \sim  \bar d_i.
\end{align}

\bigskip

Given these assumptions, we shall introduce a rescaling as follows: we put
\begin{align}\label{defa}
	a_1=\bar\kappa_2 \bar d_2,\qquad a_2=\bar\kappa_1 \bar d_1,
\end{align}
and
\begin{align}\label{defphiscaled}
	\phi^s(\eta)=\frac{1}{a}\phi(A\eta)
	=\frac{1}{a_1a_2}\phi(a_1\eta_1,a_2\eta_2).
\end{align}
The quantities that arise after this scaling will be denoted by a superscript $s,$  i.e.,
 \begin{align*} &r_i^s=\frac{r_i}{a_i},\quad  d_i^s=\frac{d_i}{a_i},\quad  D^s=\min\{d_1^s,d_2^s,\tilde d_1^s,\tilde d_2^s\},\\
 &\kappa_i^s=\frac{1}{a_1a_2}a_i^2\kappa_i=\frac{a_i}{a_{i+1\, {\rm mod} 2}}\kappa_i,\\
 &U^s=r^s+[0,d^s_1]\times[0,d^s_2],
  \end{align*}
  with corresponding expressions for $\tilde r^s, \tilde d_i^s, \tilde \kappa^s_i$ and $\tilde U^s.$ For later use, recall also  the normal field  $N$ on $S\cup\tilde S$ defined by $N(\xi,\phi(\xi))=(-\nabla\phi(\xi),1)$ and the coresponding
  unit normal field $N_0={N}/{|N|}.$ After scaling, the corresponing normal fields on $S^s\cup \tilde S^s$ will be denoted by  $N^s$ and
$N_0^s.$  With our  choice of scaling, the following lemma holds true:

\begin{lemnr}[Scaling]\label{scaling}
	\begin{enumerate}
		\item For  $i=1,2$ and all $\eta\in U^s$ and $\tilde\eta\in\tilde U^s$ we have
						$|\partial_i\phi^s(\eta)-\partial_i\phi^s(r^s)|\lesssim \kappa_i^s d_i^s \lesssim 1$ and
						$|\partial_i\phi^s(\tilde\eta)-\partial_i\phi^s(\tilde r^s)|
											\lesssim \tilde \kappa_i^s \tilde d_i^s \lesssim 1$.
Moreover, $\bar \kappa_i^s\bar d_i^s=1.$
		\item  $|\partial^\alpha\phi^s(\eta)|\lesssim\kappa^s
							 |d_1^s\wedge d_2^s|^{2-|\alpha|}$ \quad for every  $|\alpha|\geq2$, $\eta\in U^s$;\\
				$|\partial^\alpha\phi^s(\tilde\eta)|\lesssim\tilde\kappa^s
		|\tilde d_1^s\wedge\tilde d_2^s|^{2-|\alpha|}$\quad for every $|\alpha|\geq2$, $\tilde\eta\in\tilde U^s$.	
		\item For $i=1,2,$ i.e., with respect to  both variables, the  separation condition
\begin{align*}
|\partial_i\phi^s(\eta)-\partial_i\phi^s(\tilde\eta)|\sim  1\quad
\mbox{  for all } \eta\in S,\tilde\eta\in\tilde S.
\end{align*}
holds true.
\end{enumerate}
\noindent In particular, the rescaled pair of hypersurfaces $(S^s,\tilde S^s)$ satisfies the general assumptions (i) --(iii)  made in Theorem \ref{Eathm}.

\end{lemnr}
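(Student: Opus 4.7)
The proof is a direct computation. Since $\phi$ is separable, so is $\phi^s$, and one has for $k\ge 1$
$$\partial_1^k\phi^s(\eta)=\tfrac{a_1^{k-1}}{a_2}\,\psi_1^{(k)}(a_1\eta_1),\qquad \partial_2^k\phi^s(\eta)=\tfrac{a_2^{k-1}}{a_1}\,\psi_2^{(k)}(a_2\eta_2),$$
while all mixed derivatives vanish. Because $r_1\ge d_1$, for $\eta\in U^s$ one has $a_1\eta_1\in U_1\subset[r_1,2r_1]$, so $\psi_1''(\xi_1)\sim\xi_1^{m_1-2}$ forces $\partial_1^2\phi^s\sim\tfrac{a_1}{a_2}\kappa_1=\kappa_1^s$ on $U^s$ (and analogously on $\tilde U^s$), thereby taking care of the required size of the principal curvatures.

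For (i), the mean value inequality combined with this second-derivative bound yields $|\partial_i\phi^s(\eta)-\partial_i\phi^s(r^s)|\lesssim\kappa_i^s d_i^s$; the identities $\bar\kappa_i^s\bar d_i^s=\bar\kappa_i\bar d_i/a_{3-i}$ and $\kappa_i^s d_i^s=\kappa_i d_i/a_{3-i}$, together with $a_1=\bar\kappa_2\bar d_2$, $a_2=\bar\kappa_1\bar d_1$ and \eqref{maxproduct}, then force $\bar\kappa_i^s\bar d_i^s=1$ and $\kappa_i^s d_i^s\le 1$. For (ii), only pure derivatives are nonzero; using $|\psi_i^{(k)}(\xi_i)|\lesssim\xi_i^{m_i-k}$ and factoring $\xi_i^{m_i-k}=\xi_i^{m_i-2}\xi_i^{2-k}$, the inequality $\xi_i\sim r_i\ge d_i$ produces $\xi_i^{2-k}\lesssim d_i^{2-k}$ for $k\ge 2$ and hence $|\partial_i^k\phi^s(\eta)|\lesssim\kappa_i^s(d_i^s)^{2-k}\le\kappa^s(d_1^s\wedge d_2^s)^{2-k}$.

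The genuinely new content is part (iii), where I would write
$$\partial_1\phi^s(\eta)-\partial_1\phi^s(\tilde\eta)=\frac{1}{a_2}\int_{a_1\tilde\eta_1}^{a_1\eta_1}\psi_1''(s)\,ds,$$
assume without loss of generality $r_1\le\tilde r_1$ (so $\bar r_1=\tilde r_1$), and observe that the range of integration has length $\sim|\Delta r_1|\sim\bar d_1$ and is contained in $[r_1,2\tilde r_1]$. I would then split into two cases. In the balanced regime $r_1\sim\tilde r_1$, the integrand $\psi_1''(s)\sim s^{m_1-2}\sim\bar\kappa_1$ is essentially constant, so the integral is $\sim\bar\kappa_1\bar d_1=a_2$. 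In the strongly asymmetric regime $r_1\ll\tilde r_1$, the separation hypothesis forces $\tilde r_1\sim\bar d_1$, and direct integration yields $\int\sim\tfrac{1}{m_1-1}(\tilde r_1^{m_1-1}-r_1^{m_1-1})\sim\tilde r_1^{m_1-1}\sim\tilde\kappa_1\tilde r_1\sim\bar\kappa_1\bar d_1=a_2$ (where $m_1-1>0$ is used to absorb the $r_1^{m_1-1}$ term). Dividing by $a_2$ in either regime gives $\sim 1$; the argument for $i=2$ is symmetric, and the concluding assertion that $(S^s,\tilde S^s)\in\s_0$ then follows by assembling (i)--(iii). The main obstacle is precisely this case analysis in the asymmetric regime, where one rectangle sits close to the degeneracy locus $\{\xi_1=0\}$: the identity $\tilde r_1^{m_1-1}\sim\bar\kappa_1\bar d_1$ under $\tilde r_1\sim\bar d_1$ is exactly what dictates the choice $a_2=\bar\kappa_1\bar d_1$ in the definition of the scaling, and is the reason the entire setup is tailored to the quantities in \eqref{quantities}.
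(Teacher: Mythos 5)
Your proof is correct, and parts (i) and (ii) follow essentially the same computation as the paper. In (iii) the underlying ingredients are also the same, namely $\psi_i''(t)\sim t^{m_i-2}$ combined with the separation condition $|\Delta r_i|\sim\bar d_i$; the difference is purely organizational. The paper fixes WLOG $r_i<\tilde r_i$ and then treats both configurations at once: the upper bound follows from $\psi_i''(t)\lesssim\tilde\kappa_i=\bar\kappa_i$ on the whole integration range, while the lower bound comes from the observation that $\psi_i''(t)\sim\bar\kappa_i$ on the fixed subinterval $[\tilde r_i-c\bar d_i/4,\tilde r_i]$, which lies inside the range and has length $\sim\bar d_i$. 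You instead split explicitly into the balanced regime $r_1\sim\tilde r_1$ (where the integrand is $\sim\bar\kappa_1$ throughout) and the near-axis regime $r_1\ll\tilde r_1$ (where you integrate the power law), which is a legitimate and perhaps more transparent way of saying the same thing. Two small points of care in your Case 2: since $\psi_1''$ is merely comparable to $s^{m_1-2}$ rather than equal, the identity with $\frac{1}{m_1-1}(\tilde r_1^{m_1-1}-r_1^{m_1-1})$ should be read as sandwiching $\int\psi_1''$ between the integrals of the dominating and dominated powers; and the absorption of $r_1^{m_1-1}$ uses both $m_1-1\ge 1$ and the fact that $\xi_1\le 2r_1$ because $d_1\le r_1$, so one should fix the case threshold (e.g.\ $r_1\le \tilde r_1/4$) small enough to make $\xi_1^{m_1-1}\le \tfrac12\tilde\xi_1^{m_1-1}$. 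Neither issue affects the correctness of the argument.
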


\begin{proof}

Observe first that
$$
\bar d_i^s=\frac{\bar d_i}{a_i}, \qquad\bar\kappa_i^s=\frac{1}{a_1a_2}a_i^2\bar\kappa_i,
$$
and thus, by  the definition of $a_i,$  we see that $\bar \kappa_i^s\bar d_i^s=1.$

Next,  in order to prove (i), observe that for $\eta\in U^s$
\begin{align*}
	|\partial_i\phi^s(\eta)-\partial_i\phi^s(r^s)|\leq \sup_{\eta'\in U} |\partial_i^2\phi^s(\eta')||\eta_i-r^s_i|
	\lesssim \kappa^s_i d^s_i,
\end{align*}
with
$\kappa_i^s d_i^s \le \bar \kappa_i^s\bar d_i^s=1.$

As for (ii), notice  that  also  $\partial_1\partial_2 \phi^s\equiv 0.$
In the unscaled situation, we have for
$k\ge 2$ and every  $\xi\in U$
 \begin{align*}
	|\partial_i^k \phi(\xi)| \lesssim  \xi_i^{m_i-k}
	\sim  \partial_i^2\phi(\xi) \xi_i^{2-k} \sim  \kappa_i \xi_i^{2-k}.
\end{align*}
 Thus, for $\eta\in U^s,$ we find that
\begin{align*}
	|\partial_i^k \phi^s(\eta)|
	&=\frac{1}{a_1a_2} a_i^k| \partial_i^k\phi(A\eta)	|
	\lesssim  \frac{1}{a_1a_2} a_i^k \kappa_i (a_i\eta_i)^{2-k}	\\
	&= \frac{a_i^2}{a_1a_2} \kappa_i \eta_i^{2-k}	
	= \kappa_i^s \eta_i^{2-k}.
\end{align*}
On the other hand, for  $\eta\in U^s$ we have
\begin{align*}
	\eta_i \geq r_i^s =\frac{r_i}{a_i} \geq \frac{d_i}{a_i}= d_i^s \geq d_1^s\wedge d_2^s,
\end{align*}
and thus we conclude that
\begin{align*}
	|\partial_i^k \phi^s(\eta)| \lesssim \kappa^s(d_1^s\wedge d_2^s)^{2-k},  \qquad k\ge 2.
\end{align*}
In the same way, we obtain the corresponding result for  $\eta\in\tilde U^s.$ These estimates imply (ii).

Finally, in order to prove (iii), let  $\xi=(\xi_1,\xi_2)\in U$ and $\tilde\xi=(\tilde\xi-1,\tilde\xi_2)\in\tilde U.$ Then, by \eqref{separationcond}, we see that $|\xi_i-\tilde \xi_i|\sim \bar d_i.$ Moreover, if for instance $r_i<\tilde r_i$  (the other case can be treated  analogously), then by \eqref{separationcond} we even have $r_i+d_i+c\bar d_i\le \tilde r_i,$ for some admissible constant $c>0$ such that $c<1.$ But then $\kappa_i\lesssim |\psi_i''(t)|\le\tilde\kappa_i$ for every $t$ in between $\xi_i$ and $\tilde \xi_i,$ and moreover $\psi_i''(t)\sim \tilde \kappa_i=\bar\kappa_1$ on the subinterval $[\tilde r_i- c\bar d_i/4, \tilde r_i],$ and thus
\begin{align*}
	|\partial_i\phi(\xi)-\partial_i\phi(\tilde\xi)|
	= \left| \int_{\xi_i}^{\tilde\xi_i} \psi_i''(t)d t \right|	
	\sim  \bar\kappa_i\bar d_i
	= a_{i+1 {\rm mod 2}},
\end{align*}
\color{black}
hence
\begin{align}\label{15jul}
	|\partial_i\phi^s(\eta)-\partial_i\phi^s(\tilde\eta)|
	= \frac{|\partial_i\phi(A\eta)-\partial_i\phi(A\tilde\eta)|}{a_{i+1{\rm mod2}}} \sim 1.
\end{align}
\end{proof}

In view of Lemma \ref{scaling}, we may now  apply Theorem \ref{Eathm} to the rescaled phase function $\phi^s.$ According to \eqref{Qscaled}, the scaled cuboids are  given by
$$Q^0_{S^s,\tilde S^s}(R)=\left\{x\in\R^3:~ |x_i+\partial_i\phi^s(r_0^s)x_3|\leq \frac{R^2}{({D^s})^2\bar\kappa^s},i=1,2,~|x_3|\leq\frac{R^2}{({D^s})^2(\kappa^s\wedge\tilde\kappa^s)} \right\},$$
with $r_0^s=r^s$ if $\kappa^s=\kappa^s\wedge\tilde\kappa^s,$  and  $r_0^s=\tilde r^s$ if $\tilde\kappa^s=\kappa^s\wedge\tilde\kappa^s$.
Thus, if $5/3\le p\le 2,$ then  for every $\alpha>0$ we obtain  the following   estimate, valid for every $R\geq1:$
\begin{align*}
 \| R^*_{S^s,\tilde S^s}\|_{L^2\times L^2\to L^p(Q^s_{S^s,\tilde S^s}(R))}
	\leq (\kappa^s\tilde\kappa^s)^{\frac{1}{2}-\frac{1}{p}}
				({D^s})^{3-\frac{5}{p}}
				\log^{\gamma_\alpha}(C_0^s)  C_\alpha R^\alpha,
\end{align*}
with (compare \eqref{C0})
$$
C_0^s=\frac{\bar {d^s_1}^2\bar {d^s_2}^2}{{(D^s)}^4} (D^s[\kappa^s\wedge\tilde\kappa^s])^{-\frac{1}{p}}
							(D^s\kappa^s D^s\tilde\kappa^s)^{-\frac{1}{2}}.
$$
Recall here that
$
R^*_{S,\tilde S}(f_1,f_2)=R^*_{\R^2}f_1 \cdot R^*_{\R^2}f_2, $ if $ f_1\in L^2(U), f_2\in L^2(\tilde U).
$
Scaling back by means of Lemma \ref{scalinglemma}, we obtain
\begin{align}\label{12aug1313}
 \| R^*_{S,\tilde S}\|_{L^2\times L^2\to L^p(Q_{S,\tilde S}(R))}
	\leq& (a_1a_2)^{1-\frac{2}{p}}(\kappa^s\tilde\kappa^s)^{\frac{1}{2}-\frac{1}{p}}
				({D^s})^{3-\frac{5}{p}} \log^{\gamma_\alpha}(C_0^s)  C_\alpha R^\alpha	\nonumber\\
	=& (a_1a_2\kappa^s\cdot a_1a_2\tilde\kappa^s)^{\frac{1}{2}-\frac{1}{p}}
				({D^s})^{3-\frac{5}{p}} \log^{\gamma_\alpha}(C_0^s)  C_\alpha R^\alpha,
\end{align}
where
\begin{align*}
	Q_{S,\tilde S}(R)=& \left\{x\in\R^3:~ |a_ix_i+\partial_i\phi^s(r_0^s)a_1a_2x_3|\leq
	 \frac{R^2}{({D^s})^2\bar\kappa^s},i=1,2,
	~|a_1a_2x_3|\leq\frac{R^2}{({D^s})^2\kappa^s\wedge\tilde\kappa^s} \right\}	\\
	=& \left\{x\in\R^3:~ |x_i+\partial_i\phi(r_0)x_3|\leq \frac{R^2}{a_i({D^s})^2\bar\kappa^s},i=1,2,
	~|x_3|\leq\frac{R^2}{a_1a_2({D^s})^2\kappa^s\wedge\tilde\kappa^s} \right\}.
\end{align*}
But, by \eqref{defa}, we have
\begin{align}\label{kappasquer}
	\bar\kappa^s = \bar\kappa_1^s\vee\bar\kappa_2^s
	= \frac{a_1}{a_2}\bar\kappa_1 \vee \frac{a_2}{a_1}\bar\kappa_2
	= \frac{a_1}{\bar d_1} \vee \frac{a_2}{\bar d_2}
	= \frac{\bar\kappa_2\bar d_2^2 \vee \bar\kappa_1\bar d_1^2}{\bar d_1\bar d_2}
\end{align}
and
\begin{align*}
	D^s=\min\{d_1^s,d_2^s,\tilde d_1^s,\tilde d_2^s\}
		\leq \min\left\{\frac{\bar d_1}{a_1},\frac{\bar d_2}{a_2}\right\}
		= (\bar\kappa^s)^{-1},
\end{align*}
hence
\begin{align*}
	a_i ({D^s})^2\bar\kappa^s
	\leq a_i D^s
	\leq \bar d_i,	\quad i=1,2,	
\end{align*}
and also
\begin{align*}
	a_1a_2({D^s})^2  (\kappa^s\wedge\tilde\kappa^s) \leq D^s a_1a_2
	 \leq a_2\bar d_1 \wedge a_1 \bar d_2
	 =  \bar\kappa_1\bar d_1^2 \wedge \bar\kappa_2\bar d_2^2.
\end{align*}
These estimates imply that
\begin{align}\label{cuboidrescaled}
	Q_{S,\tilde S}(R) \supset Q^1_{S,\tilde S}(R),
\end{align}
if we put
$$
Q^1_{S,\tilde S}(R)=
	\left\{x\in\R^3:~ |x_i+\partial_i\phi(r_0)x_3|\leq \frac{R^2}{\bar d_i},i=1,2,~|x_3|\leq\frac{R^2}
	{ \bar\kappa_1\bar d_1^2 \wedge \bar\kappa_2\bar d_2^2}\right\}.
$$
Moreover, by \eqref{defa} we have
$$
	d_i^s=\frac{d_i}{a_i} = \frac{\bar\kappa_i\bar d_i}{a_1a_2} d_i,
$$
and
$$
	\min\{d_i^s,\tilde d_i^s\}=\frac{\bar\kappa_i}{a_1a_2}\bar d_i\min\{d_i,\tilde d_i\}
	= \frac{\bar\kappa_i d_i\tilde d_i}{a_1a_2}.
$$
Furthermore,
\begin{align}\label{kappas}
	a_1a_2 \kappa^s \sim  a_1a_2\left(\frac{a_2}{a_1}\kappa_2 + \frac{a_1}{a_2}\kappa_1\right)
	= (\bar\kappa_1^2\bar d_1^2 \kappa_2 + \bar\kappa_2^2\bar d_2^2 \kappa_1)
	= \bar\kappa_1\bar\kappa_2
				\left(\bar\kappa_1\bar d_1^2 \frac{\kappa_2}{\bar\kappa_2}
				+ \bar\kappa_2\bar d_2^2 \frac{\kappa_1}{\bar\kappa_1}\right).
\end{align}
Thus the product of the  first two factors on the right-hand side of  \eqref{12aug1313} can be re-written
\begin{align*}
	&(a_1a_2\kappa^s\cdot a_1a_2\tilde\kappa^s)^{\frac{1}{2}-\frac{1}{p}}
				({D^s})^{3-\frac{5}{p}}	\\
	=& (a_1a_2)^{\frac{5}{p}-3}
	(\bar\kappa_1\bar\kappa_2)^{1-\frac{2}{p}}
			\left(\bar\kappa_1\bar d_1^2 \frac{\kappa_2}{\bar\kappa_2}
				+ \bar\kappa_2\bar d_2^2 \frac{\kappa_1}{\bar\kappa_1}\right)^{\frac{1}{2}-\frac{1}{p}}
			\left(\bar\kappa_1\bar d_1^2 \frac{\tilde\kappa_2}{\bar\kappa_2}
				+ \bar\kappa_2\bar d_2^2 \frac{\tilde\kappa_1}{\bar\kappa_1}\right)^{\frac{1}{2}-\frac{1}{p}}
		\min_i (\bar\kappa_i d_i \tilde d_i)^{3-\frac{5}{p}} \\	
	=& (\bar\kappa_1\bar d_1\bar\kappa_2\bar d_2)^{\frac{5}{p}-3}
	(\bar\kappa_1\bar\kappa_2)^{1-\frac{2}{p}}
			\left(\bar\kappa_1\bar d_1^2 \frac{\kappa_2}{\bar\kappa_2}
				+ \bar\kappa_2\bar d_2^2 \frac{\kappa_1}{\bar\kappa_1}\right)^{\frac{1}{2}-\frac{1}{p}}
			\left(\bar\kappa_1\bar d_1^2 \frac{\tilde\kappa_2}{\bar\kappa_2}
				+ \bar\kappa_2\bar d_2^2 \frac{\tilde\kappa_1}{\bar\kappa_1}\right)^{\frac{1}{2}-\frac{1}{p}}
				\min_i (\bar\kappa_i d_i \tilde d_i)^{3-\frac{5}{p}} \\
	=& (\bar\kappa_1\bar\kappa_2)^{\frac{3}{p}-2}(\bar d_1\bar d_2)^{\frac{5}{p}-3}	
			\left(\bar\kappa_1\bar d_1^2 \frac{\kappa_2}{\bar\kappa_2}
				+ \bar\kappa_2\bar d_2^2 \frac{\kappa_1}{\bar\kappa_1}\right)^{\frac{1}{2}-\frac{1}{p}}
			\left(\bar\kappa_1\bar d_1^2 \frac{\tilde\kappa_2}{\bar\kappa_2}
				+ \bar\kappa_2\bar d_2^2 \frac{\tilde\kappa_1}{\bar\kappa_1}\right)^{\frac{1}{2}-\frac{1}{p}}
				\min_i (\bar\kappa_i d_i \tilde d_i)^{3-\frac{5}{p}}.		
\end{align*}		

For $a,b\in(0,\infty)$ write $q(a,b)=\frac{a\vee b}{a\wedge b}=\frac{a}{b}\vee\frac{b}{a}\geq 1$.
A lower bound for $D^s$ is
\begin{align}\label{5feb1}
	D^s = \frac{d_1\wedge\tilde d_1}{a_1}\wedge\frac{d_2\wedge\tilde d_2}{a_2}
	\geq \left(\frac{d_1\wedge\tilde d_1}{\bar d_1}\wedge\frac{d_2\wedge\tilde d_2}{\bar d_2}\right)
				\left(\frac{\bar d_1}{a_1}\wedge\frac{\bar d_2}{a_2}\right)
	\geq \frac{1}{q(d_1,\tilde d_1)q(d_2,\tilde d_2)} \frac{1}{\bar\kappa^s},
\end{align}
where we have used \eqref{kappasquer} in the last inequality.
And, from formula \eqref{kappas} we can deduce
\begin{align}\label{5feb2}
	\kappa^s \gtrsim \frac{\bar\kappa_1\bar\kappa_2}{a_1a_2}
				\left(\bar\kappa_1\bar d_1^2 \vee \bar\kappa_2\bar d_2^2 \right)
				\frac{\kappa_1}{\bar\kappa_1}\frac{\kappa_2}{\bar\kappa_2}
	\geq  \frac{\bar\kappa_1\bar d_1^2 \vee \bar\kappa_2\bar d_2^2}{\bar d_1\bar d_2}
				\frac{1}{q(\kappa_1,\tilde\kappa_1)q(\kappa_2,\tilde\kappa_2)}
	= \frac{\bar\kappa^s}{q(\kappa_1,\tilde\kappa_1)q(\kappa_2,\tilde\kappa_2)},
\end{align}
where we have again applied  \eqref{kappasquer} in the last step.
Combining \eqref{5feb1} and \eqref{5feb2}, we obtain
\begin{align}\label{5feb3}
	(D^s \kappa^s)^{-1} \lesssim \prod_{i=1,2} q(\kappa_i,\tilde\kappa_i)q(d_i,\tilde d_i),
	\end{align}
and then by symmetry also
$$
	(D^s \tilde\kappa^s)^{-1} \lesssim \prod_{i=1,2} q(\kappa_i,\tilde\kappa_i)q(d_i,\tilde d_i).
$$
We may now estimate the constant  $C_0^s$ in the following way, using \eqref{5feb3} in the inequality,
 \eqref{5feb1} in the second one and \eqref{kappasquer} in the third one (being generous in the exponents, since $C_0^s$  appears only logarithmically):
\begin{eqnarray*}
	C_0^s
	&=&  \frac{\bar {d^s_1}^2\bar {d^s_2}^2}{{(D^s)}^4} (D^s[\kappa^s\wedge\tilde\kappa^s])^{-\frac{1}{p}}
							(D^s\kappa^s D^s\tilde\kappa^s)^{-\frac{1}{2}}	\\
	&\leq& \frac{\bar {d^s_1}^2\bar {d^s_2}^2}{{(D^s)}^4}
						\left[\prod_{i=1,2} q(\kappa_i,\tilde\kappa_i)q(d_i,\tilde d_i)\right]^{\frac{1}{p}+1}\\
	&\leq& \left[\prod_{i=1,2} q(\kappa_i,\tilde\kappa_i)q(d_i,\tilde d_i)\right]^{\frac{1}{p}+5}
				(\bar d^s_1\bar d^s_2)^2 (\bar{\kappa^s})^4  \\
	&\leq& \left[\prod_{i=1,2} q(\kappa_i,\tilde\kappa_i)q(d_i,\tilde d_i)\right]^{\frac{1}{p}+5}
				\left(\frac{\bar d_1\bar d_2}{a_1a_2} \right)^2
				\left(\frac{\bar\kappa_1\bar d_1^2\vee\bar\kappa_2\bar d_2^2}{\bar d_1\bar d_2}\right)^4	 \\
	&=& \left[\prod_{i=1,2} q(\kappa_i,\tilde\kappa_i)q(d_i,\tilde d_i)\right]^{\frac{1}{p}+5}
			\left(\frac{(\bar\kappa_1\bar d_1^2\vee\bar\kappa_2\bar d_2^2)^2}
			{\bar\kappa_1\bar d_1^2\bar\kappa_2\bar d_2^2} \right)^2  \\
	&=& \left[\prod_{i=1,2} q(\kappa_i,\tilde\kappa_i)q(d_i,\tilde d_i)\right]^{\frac{1}{p}+5}
	q(\bar\kappa_1\bar d_1^2,\bar\kappa_2\bar d_2^2)^2.
\end{eqnarray*}
\medskip
Combining all these estimates, we finally  arrive at the following
\begin{kornr}\label{scaledBL} Let ${5}/{3}\leq p\leq2$.
For  every $\alpha>0$ there exist $C_\alpha,\gamma_\alpha>0,$ such that for every pair of patches of hypersurfaces  $S$ and  $\tilde S$ as described in our general assumptions at the beginning of this chapter and every $R>0$, we have
\begin{align}\label{firstBL}
	\| R^*_{S,\tilde S}\|_{L^2(S)\times L^2(\tilde S)\to L^p(Q^1_{S,\tilde S}(R))}
	\leq& C_{\alpha} R^\alpha (\bar\kappa_1\bar\kappa_2)^{\frac{3}{p}-2}(\bar d_1\bar d_2)^{\frac{5}{p}-3} 	
			\min_i (\bar\kappa_i d_i \tilde d_i)^{3-\frac{5}{p}} \nonumber\\	
	&\hskip-1.5cm\times \left(\bar\kappa_1\bar d_1^2 \frac{\kappa_2}{\bar\kappa_2}
			\vee \bar\kappa_2\bar d_2^2 \frac{\kappa_1}{\bar\kappa_1}\right)^{\frac{1}{2}-\frac{1}{p}}
			\left(\bar\kappa_1\bar d_1^2 \frac{\tilde\kappa_2}{\bar\kappa_2}
			\vee \bar\kappa_2\bar d_2^2 \frac{\tilde\kappa_1}{\bar\kappa_1}\right)^{\frac{1}{2}-\frac{1}{p}}\\
	&\hskip-1.5cm\times\left[1+\log^{\gamma_\alpha}\Big(q(\bar\kappa_1\bar d_1^2,\bar\kappa_2\bar d_2^2)
			\prod_{i=1,2} q\big(d_i,\tilde d_i)q(\kappa_i,\tilde\kappa_i)\Big)\right],	\nonumber
\end{align}
where, in correspondence with our Convention 2.1, we have put
$
R^*_{S,\tilde S}(f_1,f_2)=R^*_{\R^2}f_1 \cdot R^*_{\R^2}f_2, \ f_1\in L^2(S), f_2\in L^2(\tilde S).
$
\end{kornr}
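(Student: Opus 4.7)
The plan is to combine three ingredients already in place: Lemma \ref{scaling}, Theorem \ref{Eathm}, and the scaling back procedure of Lemma \ref{scalinglemma}. More precisely, I would first apply Lemma \ref{scaling} to the rescaled phase $\phi^s(\eta)=\frac{1}{a_1a_2}\phi(a_1\eta_1,a_2\eta_2)$ with $a_1=\bar\kappa_2\bar d_2$ and $a_2=\bar\kappa_1\bar d_1$; parts (i)--(iii) of that lemma show that the pair $(S^s,\tilde S^s)$ belongs to the class $\mathcal S_0$ of Theorem \ref{Eathm}. Then Theorem \ref{Eathm} gives, for any $\alpha>0$, the bilinear bound
\[
\|R^*_{S^s,\tilde S^s}\|_{L^2\times L^2\to L^p(Q^0_{S^s,\tilde S^s}(R))}\leq C_\alpha R^\alpha (\kappa^s\tilde\kappa^s)^{\frac12-\frac1p}(D^s)^{3-\frac5p}\log^{\gamma_\alpha}(C_0^s).
\]

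Next, I would scale back using Lemma \ref{scalinglemma} with $A=\mathrm{diag}(a_1,a_2)$ and $a=a_1a_2$. The factor $|\det A|^{1/p'}a^{-1/p}=(a_1a_2)^{1-2/p}$ combines with $(\kappa^s\tilde\kappa^s)^{\frac12-\frac1p}$ to give $(a_1a_2\kappa^s\cdot a_1a_2\tilde\kappa^s)^{\frac12-\frac1p}$. The resulting cuboid $Q_{S,\tilde S}(R)$ obtained by pulling back $Q^0_{S^s,\tilde S^s}(R)$ contains the canonical cuboid $Q^1_{S,\tilde S}(R)$, which is verified by the bounds $a_i(D^s)^2\bar\kappa^s\leq\bar d_i$ and $a_1a_2(D^s)^2(\kappa^s\wedge\tilde\kappa^s)\leq\bar\kappa_1\bar d_1^2\wedge\bar\kappa_2\bar d_2^2$, both following from $D^s\leq(\bar\kappa^s)^{-1}$ and the formula $\bar\kappa^s=(\bar\kappa_1\bar d_1^2\vee\bar\kappa_2\bar d_2^2)/(\bar d_1\bar d_2)$ derived from the definitions.

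The bulk of the remaining work is algebraic bookkeeping to rewrite each factor on the right-hand side in the unscaled quantities $\bar\kappa_i,\bar d_i,\kappa_i,\tilde\kappa_i$. Using $a_1a_2\kappa^s\sim \bar\kappa_1\bar\kappa_2\bigl(\bar\kappa_1\bar d_1^2\tfrac{\kappa_2}{\bar\kappa_2}+\bar\kappa_2\bar d_2^2\tfrac{\kappa_1}{\bar\kappa_1}\bigr)$ and the analogous identity for $\tilde\kappa^s$, together with the lower bound $D^s\gtrsim \min_i(\bar\kappa_i d_i\tilde d_i)/(a_1a_2)$, one obtains precisely the prefactor appearing in \eqref{firstBL} after collecting the powers of $a_1a_2=\bar\kappa_1\bar d_1\bar\kappa_2\bar d_2$.

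The only mildly delicate step is estimating the logarithmic factor $\log C_0^s$. For this I would use $D^s\kappa^s\gtrsim \bar\kappa^s/[q(\kappa_1,\tilde\kappa_1)q(\kappa_2,\tilde\kappa_2)q(d_1,\tilde d_1)q(d_2,\tilde d_2)]$, where $q(a,b)=(a\vee b)/(a\wedge b)$, and symmetrically for $D^s\tilde\kappa^s$; inserting these into the defining formula for $C_0^s$ and absorbing the bounded factor $(\bar d^s_1\bar d^s_2)^2(\bar\kappa^s)^4$ via the identity $\bar\kappa^s\bar d^s_i\sim 1$ (which costs only the ratio $q(\bar\kappa_1\bar d_1^2,\bar\kappa_2\bar d_2^2)^2$) yields the asserted logarithmic expression. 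No step beyond bookkeeping is needed; the main obstacle is simply ensuring that every $q$-factor is accounted for, since a sign error in any exponent would propagate through the subsequent summation arguments and destroy sharpness.
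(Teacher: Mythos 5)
Your proposal follows the paper's own proof essentially verbatim: rescale via Lemma \ref{scaling} to land in $\mathcal S_0$, apply Theorem \ref{Eathm}, scale back with Lemma \ref{scalinglemma}, check the cuboid containment $Q_{S,\tilde S}(R)\supset Q^1_{S,\tilde S}(R)$, and then reorganize the constants and the logarithmic factor exactly as the paper does. One small imprecision: you call $D^s\gtrsim\min_i(\bar\kappa_i d_i\tilde d_i)/(a_1a_2)$ a \emph{lower} bound, but since the exponent $3-5/p\geq0$ this would give a lower bound on $(D^s)^{3-5/p}$, which is the wrong direction for an upper estimate of the operator norm --- fortunately the relation is in fact an exact equality $D^s=\min_i\bar\kappa_i d_i\tilde d_i/(a_1a_2)$ (since $\bar d_i(d_i\wedge\tilde d_i)=d_i\tilde d_i$), so this is a notational slip rather than a gap.
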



\section{Globalization and  $\e$-removal}

\subsection{General results}

The next task will be to extend our inequalities \eqref{firstBL} from the cuboids $Q^1_{S,\tilde S}(R)$ to the whole space,  and to get rid of the factor $R^\alpha$. There is a certain amount   of  ``globalization'' or ``$\e$-removal'' technique  available for this purpose, in particular  Lemma 2.4  by Tao and Vargas in \cite{TV1}, which in return    follows ideas  from   Bourgain's article  \cite{Bo2}. We  shall need  to adapt those techniques to  our  setting, in which it will be important to understand more precisely how the corresponding estimates   will depend  on the  parameters  $\kappa_j$ and $d_j,$ $j=1,2.$

\medskip
To this end, let us consider  two hypersurfaces $S_1$ and $S_2$ in $\R^{d+1},$ defined as graphs $S_j=\{(x,\phi_j(x)): \, x\in U_j\},$ and  assume that  there is a constant  $A$ such that
\begin{align}\label{derivatives}
|\nabla\phi_j(x)|\le A
\end{align}
for all $x\in U_j,$ $j=1,2.$ We will consider the measures $\nu_j$ defined on $S_j$ by
\begin{equation*}
\int_{S_j}g\,d\nu_j=\int_{U_j}f(x,\phi_j(x))\,dx.
\end{equation*}
Note that, under the assumption \eqref{derivatives}, these measures are equivalent to the surface measures on $S_1$ and $S_2.$ We write again
$$
R^*_{S_1,S_2}(f_1,f_2)=R^*_{\R^{d}}f_1\,R^*_{\R^d}f_2.
$$
Denote by $B(0,R)=\{x\in\R^{d+1}:|x|\leq R\}$  the ball of radius  $R$.
Our main result in this section is the following

\begin{lemnr}\label{generaleremoval}
	Let $C_1,C_2,\alpha,s>0,R_0\geq1$, $1\leq p_0<p\leq \infty$, and let $S_1,S_2$ be hypersurfaces with $\nu_1,\nu_2,$  respectively, satisfying \eqref{derivatives}, and  let $\mu$ be a positive   Borel measure on $\R^{d+1}.$
Assume that for all $R\geq R_0$ and all $f_j\in L^2(S_j,\nu_j)$, $j=1,2,$
\begin{enumerate}
	\item $\|R^*_{S_1,S_2}(f_1,f_2)\|_{L^{p_0}(B(0,R),\,\mu)}
				\leq C_1 R^\alpha \|f_1\|_{L^2(S_1,\nu_1)} \|f_2\|_{L^2(S_2,\nu_2)}$,
	\item $|\widehat{d\nu_j}(x)|\leq C_2 (1+|x|)^{-s}$ for all $x\in\R^{d+1},$
\end{enumerate}	
and that $\left(1+{2\alpha}/{s}\right)/p<{1}/{p_0}.$ Then
\begin{align}\label{23jul1}
	\|R^*_{S_1,S_2}(f_1,f_2)\|_{L^p(\R^{d+1},\,\mu)}
				\leq C'  \|f_1\|_{L^2(S_1,\nu_1)} \|f_2\|_{L^2(S_2,\nu_2)},
\end{align}
for all $f_j\in L^2(S_j,\nu_j),\ j=1,2$, where $C'$ only depends on $C_1,C_2,R_0,\alpha,s,p,p_0$.
\end{lemnr}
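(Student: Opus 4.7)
\medskip

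The plan is to adapt the $\e$-removal argument of Tao--Vargas (Lemma 2.4 in \cite{TV1}), which in turn goes back to Bourgain \cite{Bo2}, to the present bilinear situation. By duality combined with the trivial $L^\infty$ bound $|R^*_{\R^d}f_j(x)|\le |U_j|^{1/2}\|f_j\|_{L^2(\nu_j)}$, the target inequality \eqref{23jul1} reduces, after a standard real interpolation, to a restricted weak-type estimate of the form
\begin{align*}
\int_F \bigl|R^*_{S_1,S_2}(f_1,f_2)\bigr|\, d\mu
\;\lesssim\; \mu(F)^{1/p'}\,\|f_1\|_{L^2(S_1,\nu_1)}\|f_2\|_{L^2(S_2,\nu_2)},
\end{align*}
uniformly in sets $F$ of finite $\mu$-measure. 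Given such an $F$, I would cover it by a collection of balls of radius $R$, where $R\ge R_0$ is a large parameter to be tuned to $\mu(F)$ at the end. Following Bourgain's sparsification trick, one may then extract (at the cost of at most $\log$ losses) an $N$-sparse subfamily $\{B(x_k,R)\}_{k\in K}$, meaning $|x_k-x_{k'}|\ge N$ for $k\ne k'$, where $N\gg R$ is a second free parameter.

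On the sparse family, choose $\eta\in\s(\R^{d+1})$ with $\eta\ge\chi_{B(0,R)}$, $\supp\hat\eta\subset B(0,1/R)$, set $\eta_k(x)=\eta(x-x_k)$, and write
\begin{align*}
\int_F R^*_{S_1,S_2}(f_1,f_2)\, d\mu
\;=\; \sum_{k\in K}\int R^*_{\R^d}f_1\cdot R^*_{\R^d}f_2 \cdot g_k\,d\mu,
\end{align*}
where each $g_k$ is supported essentially in $B(x_k,R)$, $|g_k|\le 1$, and $\|\sum_k g_k\|_{p_0'}^{p_0'}\lesssim\mu(F)$. On the single ball, hypothesis (i) plus H\"older give $|\int\!R^*(f_1,f_2)g_k\,d\mu|\lesssim C_1 R^\alpha\|f_1\|_2\|f_2\|_2\|g_k\|_{p_0'}$. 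Summing naively loses $|K|$, so instead I would insert Bourgain's $TT^*$-style almost-orthogonality: expanding $|\sum_k g_k R^*(f_1,f_2)|^{p_0}$ with $p_0=2$ as a model (one bilinearises and unfolds $R^*R$ as convolution with $\widehat{d\nu_j}$), each cross term $k\ne k'$ picks up two factors of $\widehat{d\nu_j}(x_k-x_{k'})$, hence a bound $|\widehat{d\nu_1}(x_k-x_{k'})||\widehat{d\nu_2}(x_k-x_{k'})|\lesssim C_2^2 N^{-2s}$ by hypothesis (ii). This yields
\begin{align*}
\int_F R^*(f_1,f_2)\,d\mu
\;\lesssim\; \bigl(C_1 R^\alpha + C_2 N^{-s}\,|K|^{1/p_0}\,\text{(trivial factors)}\bigr)\,
\|f_1\|_2\|f_2\|_2\,\mu(F)^{1/p_0'}.
\end{align*}

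At this stage one has two free parameters $R$ and $N$. I would choose $R$ as a small positive power of $\mu(F)$ and $N$ as a somewhat larger power of $\mu(F)$ (or, equivalently, of $|K|$), so that the $R^\alpha$ loss in the local estimate is compensated by the $N^{-s}$ gain in the almost-orthogonality, and the improvement of exponents from $p_0'$ to $p'$ exactly absorbs the residual factor. A short computation shows that this optimisation is possible precisely under the hypothesis $(1+2\alpha/s)/p<1/p_0$: the factor $2$ in front of $\alpha/s$ is the bilinear improvement over the linear statement, arising from the fact that \emph{both} measures $\nu_1,\nu_2$ contribute a Fourier-decay factor in the cross terms. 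The main technical obstacle lies in setting up this bilinear $TT^*$ almost-orthogonality cleanly, since the operator is bilinear and the sparse decomposition is on the physical side where $R^*f_1\cdot R^*f_2$ lives, whereas the orthogonality lives on the Fourier side of the two surfaces; handling the lower-dimensional interpolation between $p_0$ and $\infty$, and tracking that the constant $C'$ depends only on the stated quantities, is where the bookkeeping becomes delicate but is still routine.
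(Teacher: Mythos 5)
Your proposal takes a genuinely different route from the paper, and there is a gap at precisely the point you flag. The paper's proof follows Lemma 2.4 of Tao--Vargas \cite{TV1} almost verbatim, with a single modification: the Stein--Tomas hypothesis used there is replaced by the trivial bound $\|R^*_{\R^d}f_j\|_\infty\le\|f_j\|_{L^1(\nu_j)}\le C_2^{1/2}\|f_j\|_{L^2(\nu_j)}^{1/2}$, which is just hypothesis (ii) at $x=0$. After the standard weak-type reduction, the paper \emph{freezes} $g_2$ and runs a $TT^*$ argument for the \emph{linear} operator $T_{E,g_2}g_1=\chi_E\,R^*g_1\cdot R^*g_2$. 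Plancherel and a single frequency truncation $d\nu_1=d\nu_1^R+d\nu_{1\,R}$ lead to the quadratic form $\langle\tilde Fd\mu*\widehat{d\nu_1},\tilde Fd\mu\rangle$; its high-frequency contribution is bounded by $R^{-s}\mu(E)^2$, and a single free parameter $R=\mu(E)^{2/(sp)}$ closes the argument. In this step only the decay of $\widehat{d\nu_1}$ is used, and the factor $2$ in the hypothesis $(1+2\alpha/s)/p<1/p_0$ comes from the quadratic nature of the pairing (the $\mu(E)^2$), exactly as in the linear $\e$-removal --- it does not reflect a bilinear improvement from both measures decaying.

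Your Bourgain-style sparsification would require a genuinely bilinear $TT^*$ almost-orthogonality, and the step as sketched does not produce what you claim. Expanding $\|\sum_k g_k R^*f_1\,R^*f_2\|_{L^2(\mu)}^2$ gives cross-terms of the form $\int g_k\overline{g_{k'}}\,|R^*f_1|^2|R^*f_2|^2\,d\mu$; since $g_k$ and $g_{k'}$ are essentially supported on $N$-separated balls, these cross-terms are already tiny for trivial localization reasons (rapid decay of $g_k\overline{g_{k'}}$), and there is no mechanism in this expansion that produces factors of $\widehat{d\nu_j}(x_k-x_{k'})$. To import the Fourier decay of the surface measures one must first \emph{linearise} the bilinear operator --- as the paper does by freezing $g_2$ --- and then the cross-terms contain only a single factor $\widehat{d\nu_1}$, not the product $\widehat{d\nu_1}\widehat{d\nu_2}$ you invoke. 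Consequently the claimed $N^{-2s}$ gain, the two-parameter $(R,N)$ optimisation, and your interpretation of the factor $2$ as ``the bilinear improvement'' do not go through as stated; supplying a correct bilinear almost-orthogonality mechanism is exactly the missing ingredient.
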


\begin{proof}
We shall  follow the proof of Lemma 2.4 in \cite{TV1} and only  briefly  sketch the main arguments, indicating  those  changes in the proof that will be needed in our setting. The main difference with \cite{TV1} is that instead of a Stein-Tomas type estimate, we will use the following trivial bound:
\begin{align}\label{trivial0}
\|R^*_{\R^d}f_j\|_{L^{\infty}(\R^{d+1},\,\mu)}\leq\|f_j\|_{L^1(\nu_j)}
				&\leq \|f_j\|_{L^2(S_j,\nu_j)}^{\frac 12}|\widehat{d\nu_j}(0)|^{\frac 12}
\leq C_2^{\frac 12 } \|f_j\|_{L^2(S_j,\nu_j)}^{\frac 12} ,
\end{align}
where we have used our hypothesis (ii).

By \eqref{trivial0} and interpolation,  it then  suffices to prove  a weak-type estimate of the form
\begin{align}\label{wtype}
 \mu( E_\lambda )
\lesssim \lambda^{-p},\qquad \lambda>0,
\end{align}
assuming that $\|f_j\|_{L^2(\nu_j)}=1,$ $j=1,2.$
Here, $E_\lambda= \{ \Re (R^*_{\R^d}f_1\,R^*_{\R^d}f_2)>\lambda \}.$  Given $\lambda>0,$ let us abbreviate $E=E_\lambda.$  We may also assume  $\mu(E)\gtrsim1.$  Tschebychev's inequality implies
$$ \lambda \mu(E)\lesssim \| \chi_{E} R^*_{\R^d}f_1\,R^*_{\R^d}f_2\|_{L^1(\mu)} ,$$
and thus it suffices to show that
\begin{align}\label{old-goal}
 \| \chi_{E} R^*_{\R^d}g_1\,R^*_{\R^d}g_2\|_{L^1(\mu)} \lesssim
\mu(E)^{1/{p'}} \|g_1\|_{L^2(\nu_1)} \|g_2\|_{L^2(\nu_1)}
\end{align}
for arbitrary $L^2$- functions $g_1$ and $g_2$ (which are completely independent of $f_1$ and $f_2$).

To this end, fix $g_2$ with $\|g_2\|_{L^2(\nu_2)} \sim 1,$ and define $T = T_{E,g_2}$ as the linear operator
$$ Tg_1 = \chi_{E} R^*_{\R^d}g_1\,R^*_{\R^d}g_2.
$$
Then, \eqref{old-goal} is equivalent to the inequality
$$ \| Tg_1\|_{L^1(\mu)} \lesssim \mu(E)^{1/{p'}} \|g_1\|_{L^2(\nu_1)}.$$
By duality, it suffices to show that
$$ \| T^* F \|_{L^2(d\nu_1)}
\lesssim \mu(E)^{1/{p'}} \|F\|_{L^\infty(\mu)},
$$
where $T^*$ is  (essentially) the adjoint operator
$$ T^* F = \mathcal F^{-1} ( \chi_E R^*_{\R^d}g_2 F \mu),$$
and $\mathcal F^{-1}$ is the inverse Fourier transform.  We may
assume that $\|F\|_{L^\infty}(\mu) \lesssim 1$.

By squaring this and applying Plancherel's theorem, we reduce
ourselves to showing that
\begin{align}\label{new-goal}
 |\langle \tilde F d\mu* \widehat{d\nu_1}, \tilde Fd\mu \rangle|
\lesssim \mu(E)^{2/{p'}},
\end{align}
where $\tilde F =  \chi_E (R^*_{\R^d}g_2) F.$   Note that
the hypotheses on $F$ and $g_2$ and inequality \eqref{trivial0} imply
\begin{align}\label{point-f}
\|\tilde
F\|_{L^1(\mu)}=\| \chi_E (R^*_{\R^d}g_2) F\|_{L^1(\mu)}\le\|\chi_E\|_{L^1(\mu)}
\|R^*_{\R^d}g_2\|_{L^\infty(\mu)}\|F\|_{L^\infty(\mu)}
\lesssim \mu(E).
\end{align}

From this point on, we follow the proof of \cite{TV1} with the obvious  changes.
Let $R > 1$ be a quantity to be chosen later.  Let $\phi$ be a bump function
which equals 1 on $|x| \lesssim 1$ and vanishes for $|x| \gg 1$, and
write
$d\nu_1 = d\nu_1^{R} + d\nu_{1\,R}$, where
\begin{align}\nonumber
\widehat{d\nu_{1\,R}}(x) = \phi\bigg(\frac{x}{R}\bigg) \widehat{d\nu_1}(x).
\end{align}
From hypothesis (ii) we have
$$ \| \widehat{d\nu_1^{R}} \|_\infty \lesssim R^{-s},$$
and so by \eqref{point-f} we have
$$  |\langle \tilde Fd\mu * \widehat{d\nu_1^R}, \tilde F d\mu\rangle|
\lesssim R^{-s} \mu(E)^2.$$
We now choose $R$ to be
\begin{align}\label{r-prop}
R = \mu(E)^{\frac{2}{sp}},
\end{align}
so that the contribution of $d\nu_1^{R}$ to \eqref{new-goal}
is acceptable.   Thus \eqref{new-goal} reduces to
\begin{align}\nonumber
 |\langle \tilde Fd\mu * \widehat{d\nu_{1\,R}}, \tilde Fd\mu \rangle|
\lesssim \mu(E)^{2/{p'}}.
\end{align}

Following the arguments in \cite{TV1} and skipping details, we may then reduce the problem to proving
$$ \| \chi_E \widehat{\tilde g_1} \widehat{\tilde g_2}\|_{L^1(\mu)}
\lesssim R^{-1/2} R^{-1/2}
\mu(E)^{1/{p'}} \|\tilde g_1\|_2 \|\tilde g_2\|_2,
$$
where for $i=1,2$, $\tilde g_i$ is an arbitrary function
on the $1/R$ neighbourhood of $S_{i,R}$. By H\"older's inequality it suffices to show
\begin{align}\label{b2-targ}
 \| \widehat{\tilde g_1} \widehat{\tilde g_2}\|_{L^{p_0}(\mu)}
\lesssim \mu(E)^{-1/{p_0'}} R^{-1/2} R^{-1/2}
\mu(E)^{1/{p'}} \|\tilde g_1\|_2 \|\tilde g_2\|_2.
\end{align}
Moreover, using the first hypothesis of the lemma,  we obtain
$$ \| \widehat{\tilde g_1} \widehat{\tilde g_2}\|_{L^{p_0}(\mu)}
\lesssim R^{\alpha-1}  \| \widehat{\tilde g_1} \|_2\| \widehat{\tilde g_2} \|_2.
$$
Comparing this with \eqref{b2-targ}, we see that we will be done if
$$ R^\alpha \lesssim \mu(E)^{-1/{p_0'}} \mu(E)^{1/{p'}} = \mu(E)^{{1}/{p_0} - {1}/{p}}.$$
But this follows from \eqref{r-prop} and the assumption
$\left(1+{2\alpha}/{s}\right)/p<{1}/{p_0}.$
\end{proof}


\subsection{Application to the setting of  Chapter \ref{Scaling}}
Let us now come back to the situation described by our GENERAL ASSUMPTIONS in Chapter \ref{scaling}, i.e.,
 we are interested in pairs of surfaces $S=\graph(\phi|_{U})$, $U=r+[0,d_1]\times[0,d_2],$ with principal curvatures on $S$  comparable to $\kappa_i=r_i^{m_i-2}$, $r_i\geq d_i,$ and $\tilde S=\graph(\phi|_{\tilde U}),$ with corresponding quantities $\tilde r_i, \tilde d_i,\tilde\kappa_i,\tilde\kappa.$

 Recall also the notation defined in \eqref{model}, \eqref{quantities}, and assume that the conditions \eqref{maxproduct} and \eqref{separationcond} are satisfied.

We consider the measure $\nu_S$  supported on $S$  given by
$$\int_{S} f\,d\nu_S:=\int_U f(x_1,x_2,\phi(x_1,x_2))\,dx_1dx_2,
$$
and define $\nu_{\tilde S}$ on $\tilde S$ analogously.

\subsubsection{Decay of the Fourier transform}


\begin{lemnr}\label{FTdecay} Let $s={1}/({m_1\vee m_2})$. For any $r^0\in U\cup \tilde U$ we then have the following uniform estimate for $x\in\R^3:$
\begin{align}\label{hatnuest}
	&|\widehat{d\nu_S}(x)|+|\widehat{d\nu_{\tilde S}}(x)| \\
	&\leq C_s \bar d_1 \bar d_2
	\Big(1+|(\bar d_1(x_1+\partial_1\phi(r^0)x_3|+|\bar d_2(x_2+\partial_2\phi(r^0)x_3)|+
				|(\bar\kappa_1\bar d_1^2 \vee \bar\kappa_2\bar d_2^2)x_3)|\Big)^{-s}. \nonumber
\end{align}
\end{lemnr}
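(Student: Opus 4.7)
Since $\phi(\xi_1,\xi_2) = \psi_1(\xi_1) + \psi_2(\xi_2)$ is separable, the Fourier transform factorises:
$$\widehat{d\nu_S}(x) \;=\; I_1(x_1,x_3)\,I_2(x_2,x_3), \qquad I_i(x_i,x_3) \;:=\; \int_{r_i}^{r_i+d_i} e^{-i(x_i\xi + x_3\psi_i(\xi))}\,d\xi,$$
and analogously for $\widehat{d\nu_{\tilde S}}$; by the triangle inequality it suffices to prove the bound separately for each. Setting $A_i := x_i+x_3\psi_i'(r_i^0)$, $X_i := 1+\bar d_i|A_i|+\bar\kappa_i\bar d_i^2|x_3|$, and $s:=1/\bar m$, the plan is to establish the one-dimensional decay
$$|I_i(x_i,x_3)| \;\le\; C\,\bar d_i\, X_i^{-1/m_i}. \qquad (\star)$$
Granting $(\star)$, since $1/m_i\ge s$ one has $|I_i|\le C\bar d_i\, X_i^{-s}$; multiplying the two one-dimensional bounds and using the elementary estimate
$$X_1 X_2 \;\ge\; 1 + \bar d_1|A_1| + \bar d_2|A_2| + (\bar\kappa_1\bar d_1^2 + \bar\kappa_2\bar d_2^2)|x_3|,$$
which dominates the expression in \eqref{hatnuest} (with $M=\bar\kappa_1\bar d_1^2\vee\bar\kappa_2\bar d_2^2$), yields the claim after raising to the power $-s$.

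The phase $\Phi_i(\xi):=x_i\xi+x_3\psi_i(\xi)$ has $\Phi_i''(\xi)=x_3\psi_i''(\xi)$ of constant sign on $[r_i,r_i+d_i]$, so $\Phi_i'$ is monotone there; moreover, $|\psi_i'(\xi)-\psi_i'(r_i^0)|\lesssim \bar\kappa_i\bar d_i$ uniformly for $\xi\in[r_i,r_i+d_i]$ and $r^0\in U\cup\tilde U$, since $\xi$ and $r_i^0$ lie in a common interval of length $O(\bar d_i)$ on which $|\psi_i''|\lesssim\bar\kappa_i$. Writing $\mu_i:=\bar d_i|A_i|$ and $\nu_i:=\bar\kappa_i\bar d_i^2|x_3|$, I split the proof of $(\star)$ into three cases. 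If $\mu_i+\nu_i\lesssim 1$, the trivial bound $|I_i|\le d_i\le\bar d_i$ suffices. If $\mu_i\gtrsim\nu_i$ and $\mu_i\gtrsim 1$, then $|\Phi_i'|\gtrsim |A_i|$ on the whole interval, and van der Corput's first-derivative lemma gives $|I_i|\lesssim 1/|A_i| = \bar d_i/\mu_i\le C\bar d_i\, X_i^{-1}\le C\bar d_i\, X_i^{-1/m_i}$. In the remaining case $\nu_i\gtrsim\mu_i$ and $\nu_i\gtrsim 1$, I combine the trivial bound $|I_i|\le d_i$ with the second-derivative van der Corput bound $|I_i|\lesssim(\kappa_i|x_3|)^{-1/2}$. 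A direct computation shows that the trivial bound yields $(\star)$ for $\nu_i\lesssim(\bar d_i/d_i)^{m_i}$, while the van der Corput bound yields $(\star)$ for $\nu_i\gtrsim(\bar\kappa_i/\kappa_i)^{m_i/(m_i-2)}$. These two ranges together cover all $\nu_i\ge 1$ precisely when the geometric inequality
$$\bar r_i\, d_i \;\lesssim\; r_i\,\bar d_i \qquad (\diamond)$$
holds (for $m_i=2$, $\bar\kappa_i=\kappa_i=1$, so $(\diamond)$ is automatic).

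To verify $(\diamond)$, assume without loss of generality $\bar r_i=\tilde r_i\ge r_i$; the separation hypothesis \eqref{separationcond} gives $\tilde r_i-r_i\sim\bar d_i$, so $\bar r_i\le r_i+C\bar d_i$. Combined with the standing assumption $d_i\le r_i$, we obtain $\bar r_i\, d_i\le r_i\, d_i + C\bar d_i\, d_i\le (1+C)\, r_i\,\bar d_i$. The main technical obstacle is concentrated in the third case of $(\star)$: because the target decay \eqref{hatnuest} is stated in terms of the larger quantities $\bar\kappa_i,\bar d_i$ rather than the surface-specific $\kappa_i,d_i$ natural to van der Corput, neither the trivial bound nor the second-derivative estimate alone is strong enough, and they must be glued together along the threshold $|x_3|\sim 1/(\kappa_i d_i^2)$. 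The geometric consequence $(\diamond)$ of the separation hypothesis, together with $r_i\ge d_i$, is exactly what makes the two regimes of validity overlap and cover the entire range of $|x_3|$ needed.
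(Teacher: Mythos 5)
Your proof is correct, and it rests on the same three pillars as the paper's: the product structure $\widehat{d\nu_S}=I_1 I_2$, a van der Corput estimate for each $I_i$, and the key ``distortion'' inequality $d_i/\bar d_i\lesssim r_i/\bar r_i$ (your $(\diamond)$, the paper's \eqref{distortion}), which is proved from the separation condition $|\Delta r_i|\sim\bar d_i$ together with $d_i\le r_i$. The organization is genuinely different, however. The paper first shifts the phase to $r_i$, rescales to unit scale, and obtains the sharp decay $I_i\lesssim d_i\bigl(1+|d_i A_i|+|\kappa_i d_i^2 x_3|\bigr)^{-1/2}$ in the \emph{native} scales $(\kappa_i,d_i)$ by one application of van der Corput of order $2$ (or integration by parts). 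It then weakens the exponent from $-1/2$ to $-1/m_i$ (which costs nothing since the base is $\ge 1$) and observes that the distortion bound makes $1+|d_i A_i|+|\kappa_i d_i^2x_3|\gtrsim (d_i/\bar d_i)^{m_i}\,(1+|\bar d_i A_i|+|\bar\kappa_i\bar d_i^2 x_3|)$; raising this to the power $-1/m_i$ exactly converts the prefactor $d_i$ into $\bar d_i$. Your proof instead targets the barred scales directly and works through three explicit regimes (trivial bound, first-derivative van der Corput when $\mu_i$ dominates, second-derivative van der Corput when $\nu_i$ dominates), identifying the precise thresholds $\nu_i\lesssim(\bar d_i/d_i)^{m_i}$ and $\nu_i\gtrsim(\bar\kappa_i/\kappa_i)^{m_i/(m_i-2)}$ at which each estimate is strong enough, and showing that $(\diamond)$ is exactly what makes these two ranges overlap. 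The paper's route is slicker and shorter; yours makes the role of the separation hypothesis more transparent, isolating it as the condition that closes the gap between the trivial and the curvature regimes. One small notational point: the paper does the phase-expansion around $r_i$ and only afterwards replaces $\psi_i'(r_i)$ by $\psi_i'(r^0_i)$ via $\bar d_i|\psi_i'(r_i)-\psi_i'(r_i^0)|\lesssim\bar\kappa_i\bar d_i^2$; you absorb this step by working with $A_i=x_i+x_3\psi_i'(r_i^0)$ from the start, using the same bound $|\psi_i'(\xi)-\psi_i'(r_i^0)|\lesssim\bar\kappa_i\bar d_i$ in Case~2 to control $|\Phi_i'-A_i|$, which is a legitimate simplification.
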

\begin{proof}
We only consider $\nu=\nu_S,$ since the proof for $\nu_{\tilde S}$ is analogous.
Recall that $\phi$ splits into $\phi(x)=\psi_1(x_1)+\psi_2(x_2),$ so that
\begin{align*}
	|\widehat{d\nu}(x)|
	=& \left|\int_{r_1}^{r_1+d_1} e^{-i(x_1\xi_1+x_3\psi_1(\xi_1))} d\xi_1
			\int_{r_2}^{r_2+d_2} e^{-i(x_2\xi_2+x_3\psi_2(\xi_2))} d\xi_2\right|.
\end{align*}
 Next, for  $i\in\{1,2\},$ we have
\begin{align*}
	I _i=& \left|\int_{r_i}^{r_i+d_i} e^{-i(x_i\xi_i+x_3\psi_i(\xi_i))} d\xi_i\right|	=
	 \left|\int_{0}^{d_i} e^{-i(x_i(r_i+y_i)+x_3\psi_i(r_i+y_i))} d y_i\right|	 \\
	=& \left|\int_{0}^{d_i} e^{-i\big((x_i+\psi_i'(r_i)x_3)y_i
							+x_3(\psi_i(r_i+y_i)-\psi_i(r_i)-\psi_i'(r_i)y_i)\big)}d y_i\right |\\
	=& \left|\int_{0}^{d_i} e^{-i((x_i+\psi_i'(r_i)x_3)y_i
											+x_3\Psi(y_i))}d y_i\right|	\\
	=& d_i\left|\int_{0}^{1} e^{-i((x_i+\psi_i'(r_i)x_3)d_iy_i
											+x_3\kappa_id_i^2\Psi_i(d_i y_i))}d y_i\right|
\end{align*}
where $\Psi_i(y_i)=(\psi_i(r_i+d_1y_i)-\psi_i(r_i)-\psi_i'(r_i)d_iy_i)/(\kappa_id_i^2)$, so  that in particular
\begin{align*}
\big|\frac{d}{d y_i} \Psi_i( y_i)\big|= \Big|\frac{ \psi_i'(r_i+d_iy_i)-\psi_i'(r_i)}{\kappa_id_i^2}d_i\big|  \lesssim \frac{\kappa_id_i}{\kappa_id_i^2}d_i\sim 1,\\
	\frac{d^2}{d y_i^2} \Psi_i(y_i)=\frac{ \psi_i''(r_i+d_iy_i)}{\kappa_id_i^2}d_i^2 \sim 1.
\end{align*}
Thus, by either applying van der Corput's lemma of order $2,$  or by integrating by parts (if $|(d_i(x_i+\psi_i'(r_i)x_3|\gg |\kappa_id_i^2 x_3|),$ we obtain that
\begin{align}\label{23sept1706}
  I_i\lesssim d_i (1+|(d_i(x_i+\psi_i'(r_i)x_3|+|\kappa_id_i^2 x_3)|)^{-\frac{1}{2}}.
\end{align}

We next claim that the  distortion  ${d_i}/{\bar d_i}$ in the sidelengths is bounded by the distortion in the size of the space variable $r_i$, i.e.,
\begin{align}\label{distortion}
	\frac{d_i}{\bar d_i} \lesssim \frac{r_i}{\bar r_i}.
\end{align}
If $r_i\sim \bar r_i$, the statement is obvious, so assume $r_i\ll \bar r_i$. Then  $\tilde r_i=\bar r_i,$ and furthermore
by our assumptions we have  $d_i\leq r_i$ and $\bar r_i\sim|r_i-\tilde r_i|\lesssim \bar d_i$ (compare the separation condition  \eqref{separationcond}). Thus \eqref{distortion} follows also in this case.
As $\kappa_i=r_i^{m_i-2}$, we conclude from \eqref{distortion} that
\begin{align}\label{distortion2}
	\frac{\kappa_i d_i^2}{\bar\kappa_i\bar d_i^2} \gtrsim \left(\frac{d_i}{\bar d_i}\right)^{m_i}.
\end{align}
In combination, the estimates \eqref{distortion} and \eqref{distortion2} imply that
\begin{align*}
	1+|(d_i(x_i+\psi_i'(r_i)x_3)|+|\kappa_id_i^2 x_3)|	
	\gtrsim \left(\frac{d_i}{\bar d_i}\right)^{m_i} \big(1+|(\bar d_i(x_i+\psi_i'(r_i)x_3)|+|\bar\kappa_i\bar d_i^2 x_3)|\big).
\end{align*}
Since we may replace the exponent $-1/2$ in the right-hand side of  \eqref{23sept1706} by $-1/m_i,$  we  now see that we may estimate
\begin{align}\label{Ii2}
	I_i \,\lesssim \bar d_i (1+|\bar d_i(x_i+\psi_i'(r_i)x_3|+|\bar\kappa_i\bar d_i^2 x_3)|)^{-\frac{1}{m_i}}.
\end{align}

Finally, in order  to pass  from the point $r$ to an arbitrary  point $r^0\in U\cup\tilde U$ in these estimates, observe that  by \eqref{separationcond} we  have
$|r_i-r_i^0|\leq |r_i-\tilde r_i|+\bar d_i {\sim} \bar d_i,$ and hence
$$
	 \bar d_i|\psi'_i(r_i)-\psi'_i(r^0_i)|	
	\leq   \bar\kappa_i|r_i-r_i^0|\bar d_i	
	\lesssim \bar\kappa_i\bar d_i^2	,					 						
$$
since $|\psi''_i|\lesssim  \bar\kappa_i$ on $[r_i,r_i+d_i]\cup [\tilde r_i,\tilde r_i+\tilde d_i].$
Therefore \eqref{Ii2} implies that also
\begin{align*}
	I_i\, \lesssim \bar d_i (1+|\bar d_i(x_i+\partial_i\phi(r^0)x_3|+|\bar\kappa_i\bar d_i^2 x_3)|)^{-\frac{1}{m_i}}.
\end{align*}

The estimate \eqref{hatnuest} is now immediate.
\end{proof}

\medskip

\subsubsection{Linear change of variables and verification of the assumptions of  Lemma \ref{generaleremoval}}

In view of Lemma \ref{FTdecay}, let us fix $r^0\in U\cup \tilde U,$ and define the linear transformation $T=T_{S,\tilde S}$ of $\R^3$ by
\begin{align*}
	T(x)=\big(\bar d_1(x_1+\partial_1\phi(r^0)x_3),\bar d_2(x_2+\partial_2\phi(r^0)x_3),(\bar\kappa_1\bar d_1^2 \vee \bar\kappa_2\bar d_2^2)x_3\big).
\end{align*}
Then estimate \eqref{hatnuest} reads
$$
|\widehat{d\nu_S}(x)|+|\widehat{d\nu_{\tilde S}}(x)|
	\leq C_s \bar d_1 \bar d_2
	\big(1+|T(x)|\big)^{-s}.
$$
Therefore, in order to apply Lemma \ref{generaleremoval}, we will consider the rescaled surfaces
\begin{align}\label{rescaleS}
S_1=(T^t)^{-1} S\quad  \mbox{and} \quad S_2=(T^t)^{-1}\tilde S.
\end{align}
Then we find  that
\begin{align*}
 S_1&=\{(T^t)^{-1}(x_1,x_2,\psi_1(x_1)+\psi_2(x_2)): (x_1,x_2)\in U\} =
    \Big\{\big(\frac{x_1}{\bar d_1},\frac{x_2}{\bar d_2},\frac1{\bar\kappa_1\bar d_1^2 \vee \bar\kappa_2\bar d_2^2}\\
   &\times(-\partial_1\phi(r_0)x_1-\partial_2\phi(r_0)x_2+\psi_1(x_1)+\psi_2(x_2))\big): (x_1,x_2)\in U\Big\} \\
   &=\{(y_1,y_2,\psi(y_1, y_2)): (y_1,y_2)\in U_1\},
\end{align*}
where $U_1=\{(y_1,y_2)=(\frac{x_1}{\bar d_1},\frac{x_2}{\bar d_2}): (x_1,x_2)\in U\}$  is a square of sidelength $\le 1 $ and
$$\psi(y_1,y_2)=\frac1{\bar\kappa_1\bar d_1^2 \vee \bar\kappa_2\bar d_2^2}\big(-\bar d_1\partial_1\phi(r_0)y_1-\bar d_2\partial_2\phi(r_0)y_2+\psi_1(\bar d_1 y_1)+\psi_2(\bar d_2 y_2)\big).
$$ We have a similar expression for $S_2.$

In $S_1$ we consider the measure  $d\nu_1$ defined by
$$
\int_{S_1} g\,d\nu_1=\frac1{\bar d_1\bar d_2}\int_S g((T^t)^{-1}x)\,d\nu_S(x).
$$
By our definition of d$\nu$ and $\psi,$ this may be re-written as
\begin{align*}
\int_{S_1} g\,d\nu_1&=\frac1{\bar d_1\bar d_2}\int_U g\big((T^t)^{-1}(x_1,x_2,\phi(x_1,x_2)\big)\,dx_1dx_2\\
&=\frac1{\bar d_1\bar d_2}\int_U g\big(\frac{x_1}{\bar d_1},\frac{x_2}{\bar d_2},\psi(\frac{x_1}{\bar d_1},\frac{x_2}{\bar d_2})\big)
\,dx_1dx_2\\
&=\int_{U_1} g(y_1,y_2,\psi(y_1,y_2))\,dy_1dy_2.
\end{align*}
Moreover, we have
\begin{align}\label{twooper}
\widehat{g d\nu_1}(\xi)=\frac1{\bar d_1\bar d_2}   \big( g\circ(T^t)^{-1}d\nu_S\big) \, \widehat{} \,\, (T^{-1}\xi),
\end{align}
and therefore
$$
|\widehat{d\nu_1}(x)|\le C_s(1+|x|)^{-s}.
$$
We have a similar estimate for $\widehat{d\nu_2}.$ Thus, the hypotheses (ii) in Lemma \ref{generaleremoval} are  satisfied. To check that condition \eqref{derivatives} is satisfied for $S_1$ and $S_2$ too,  we compute
$$
\bigg|\frac{\partial \psi}{\partial y_1}\bigg|=\frac1{\bar\kappa_1\bar d_1^2 \vee \bar\kappa_2\bar d_2^2}\big|-\bar d_1\partial_1\phi(r_0)+\bar d_1\psi_1'(\bar d_1 y_1)\big|.
$$
Writing $r_0=(\bar d_1 y_{1,0},\bar d_2 y_{2,0}),$ we see that
\begin{align*}
\bigg|\frac{\partial \psi}{\partial y_1}\bigg|&=\frac1{\bar\kappa_1\bar d_1^2 \vee \bar\kappa_2\bar d_2^2}|-\bar d_1\psi_1'(\bar d_1 y_{1,0})+\bar d_1\psi_1'(\bar d_1 y_1)|\\
&\sim \frac1{\bar\kappa_1\bar d_1^2 \vee \bar\kappa_2\bar d_2^2}|\bar d_1^2(y_1-y_{1,0})\phi_1''|\le \frac{\bar\kappa_1\bar d_1^2 }{\bar\kappa_1\bar d_1^2 \vee \bar\kappa_2\bar d_2^2}|y_1-y_{1,0}|\le C_{m_1,m_2},
\end{align*}
and in a  similar way we find that the derivative with respect  to $y_2$ is bounded.
Hence, hypothesis \eqref{derivatives} is satisfied for $\psi$ in place of $\phi.$
\bigskip

What remains to be checked is condition  (i) in  Lemma  \ref{generaleremoval}.
Observe  first that our local bilinear estimate  for $S$ and $\tilde S$  in Corollary \ref{scaledBL} is restricted to cuboids (compare \eqref{cuboidrescaled})
\begin{align}\label{cuboids2}
	Q^1(R)=Q^1_{S,\tilde S}(R)
	=\left\{x\in\R^3:~ |x_i+\partial_i\phi(r^0)x_3|\leq \frac{R}{\bar d_i},i=1,2,~|x_3|
	\leq\frac{R}{\bar\kappa_1\bar d_1^2 \wedge \bar\kappa_2\bar d_2^2}\right\},
\end{align}
where $r^0$ is either\footnote
{Recall that we  have some algorithm  how to choose $r^0$, but this  will not relevant here.} $r$ or $\tilde r$.
Obviously $T^{-1}(B(0,R))=\{x\in\R^3:|Tx|\leq R\}\subset Q^1(R)$.

\

Define
\begin{align}
	A=& (\bar\kappa_1\bar\kappa_2)^{-2}(\bar d_1\bar d_2)^{-3} 	\min_i (\bar\kappa_i d_i \tilde d_i)^{3} \nonumber\\	
	&		\times \left(\bar\kappa_1\bar d_1^2 \frac{\kappa_2}{\bar\kappa_2}
			\vee \bar\kappa_2\bar d_2^2 \frac{\kappa_1}{\bar\kappa_1}\right)^{\frac{1}{2}} \left(\bar\kappa_1\bar d_1^2 \frac{\tilde\kappa_2}{\bar\kappa_2} \vee \bar\kappa_2\bar d_2^2 \frac{\tilde\kappa_1}{\bar\kappa_1}\right)^{\frac{1}{2}}\big(1+ \log^{\gamma_\alpha}Q\big),\\
	B=&(\bar\kappa_1\bar\kappa_2)^{3}(\bar d_1\bar d_2)^{5} 	\min_i (\bar\kappa_i d_i \tilde d_i)^{-5} \nonumber\\	
	&		\times \left(\bar\kappa_1\bar d_1^2 \frac{\kappa_2}{\bar\kappa_2}
			\vee \bar\kappa_2\bar d_2^2 \frac{\kappa_1}{\bar\kappa_1}\right)^{-1} \left(\bar\kappa_1\bar d_1^2 \frac{\tilde\kappa_2}{\bar\kappa_2} \vee \bar\kappa_2\bar d_2^2 \frac{\tilde\kappa_1}{\bar\kappa_1}\right)^{-1},
\end{align}
where
$$Q=Q(S,\tilde S)=q(\bar\kappa_1\bar d_1^2,\bar\kappa_2\bar d_2^2)	\prod_{i=1,2} q\big(d_i,\tilde d_i)q(\kappa_i,\tilde\kappa_i)
$$
and  $q(a,b)=(a\vee b)/{a\wedge b}\geq1$ had been   defined to be the maximal quotient of  $a$ and $b.$ In some sense $Q$ is a "degeneracy quotient" that measures how much (for instance) quantities $d_i,\tilde d_i$ differ from their maximum $\bar d_i$.

Then the estimate \eqref{firstBL}  in Corollary  \ref{scaledBL}, valid for  $5/3\leq p\leq2,$ can be  re-written in terms of these quantities as
\begin{align}\label{localAB}Q^1(R)
	\| R^*_{S,\tilde S}\|_{L^2(S)\times L^2(\tilde S)\to L^p(Q^1(R))}
	\leq & C_{\alpha} R^\alpha AB^\frac{1}{p}.
\end{align}

Now, in order to check hypothesis (i) in  Lemma  \ref{generaleremoval}, let us choose for $\mu$ the measure on $\R^3$ given by
 $$
 d\mu=\tilde B^{-1}d\xi,\quad\mbox{where} \quad \tilde B= |\det T|\Big(\frac A{\bar d_1\bar d_2}\Big)^{p_0} B,
 $$
and   where $d\xi$ denotes the Lebesgue measure. Notice also that  \eqref{twooper} implies that, for any measurable set $E\subset \R^3$ and any exponent $p,$ we have
\begin{align}\label{restrescale}
\|R^*_{S_1,S_2}&(f_1,f_2)\|_{L^{p}(E,\mu)}
&=\frac{A^{-p_0/p}B^{-1/p}}{(\bar d_1\bar d_2)^{2-p_0/p}}\|R^*_{S,\tilde S}(f_1\circ(T^t)^{-1}),f_2\circ(T^t)^{-1})\|_{L^{p}(T^{-1}(E),d\xi)}.
\end{align}
In particular, we obtain
\begin{align*}
\|R^*_{S_1,S_2}&(f_1,f_2)\|_{L^{p_0}(B(0,R),\mu)}\\
&=\frac{A^{-1}B^{-1/p_0}}{\bar d_1\bar d_2}\|R^*_{S,\tilde S}(f_1\circ(T^t)^{-1}),f_2\circ(T^t)^{-1})\|_{L^{p_0}(T^{-1}(B(0,R)),d\xi)}\\
&\le \frac{A^{-1}B^{-1/p_0}}{\bar d_1\bar d_2}\|R^*_{S,\tilde S}(f_1\circ(T^t)^{-1}),f_2\circ(T^t)^{-1})\|_{L^{p_0}(Q^1(R),d\xi)}
\end{align*}
Invoking  \eqref{localAB},  we thus see that for $5/3\le p_0\le2$ and every $\alpha>0$
\begin{align*}
\|R^*_{S_1,S_2}&(f_1,f_2)\|_{L^{p_0}(B(0,R),\mu)}\\
&\le \frac1{\bar d_1\bar d_2}C_\alpha R^\alpha\|f_1\circ(T^t)^{-1}\|_{L^2(d\nu_S)}\|f_2\circ(T^t)^{-1}\|_{L^2(d\nu_{\tilde S})}\\
&=C_\alpha R^\alpha\|f_1\|_{L^2(d\nu_1)}\|f_2\|_{L^2(d\nu_2)},
\end{align*}
 which shows that hypothesis (i) in the Lemma \ref{generaleremoval} is satified. Applying this  lemma and using again  identity \eqref{restrescale} and  the definitions of $\mu,$ $\nu_1$  and $\nu_2,$  we find that for any  $g_1$ and $g_2$ supported in $S$ and $\tilde S,$ respectively,  and any $p$ satisfying the assumptions of Lemma \ref{generaleremoval}, we have
\begin{align}\label{Rglobal}
\|R^*_{S,\tilde S}(g_1,g_2)\|_{L^{p}(d\xi)}\le C
(\bar d_1\bar d_2)^{1-p_0/p}A^{p_0/p}B^{1/p}
\|g_1\|_{L^2(d\nu_1)}\|g_2\|_{L^2(d\nu_2)}.
\end{align}
Finally, putting $\e=1-p_0/p,$ and recalling that we may choose $\alpha$ in  Lemma \ref{generaleremoval} as small as we wish,  then by applying  H\"older's inequality  in order to replace the $L^2$-norms on the right-hand side of \eqref{Rglobal} by the $L^q$-norms, we arrive at the following global estimate:
\begin{thmnr}\label{globalBL}
	Let $5/3< p \leq 2$, $q\ge2,$ $\e>0$. Then there exist constants $C=C_{p,\e}$ and $\gamma=\gamma_{p,\e}>0$ such that
\begin{align}\label{global}
	\| R^*_{S,\tilde S}\|_{L^q(S)\times L^q(\tilde S)\to L^p(\R^n)}
	\leq& C (\bar\kappa_1\bar\kappa_2)^{\frac{3}{p}-2+2\e}(\bar d_1\bar d_2)^{\frac{5}{p}-3+4\e}
				(d_1 d_2\tilde d_1\tilde d_2)^{\frac{1}{2}-\frac{1}{q}} (1+\log^\gamma Q) \\	
	&\hskip-4cm	\times \min_i (\bar\kappa_i d_i \tilde d_i)^{3-3\e-\frac{5}{p}}	\left(\bar\kappa_1\bar d_1^2 \frac{\kappa_2}{\bar\kappa_2} \vee \bar\kappa_2\bar d_2^2 \frac{\kappa_1}{\bar\kappa_1}\right)^{\frac{1-\e}{2}-\frac{1}{p}}	\left(\bar\kappa_1\bar d_1^2 \frac{\tilde\kappa_2}{\bar\kappa_2} \vee \bar\kappa_2\bar d_2^2 \frac{\tilde\kappa_1}{\bar\kappa_1}\right)^{\frac{1-\e}{2}-\frac{1}{p}}, \nonumber
\end{align}
uniformly in $S$ and $\tilde S$,
where $Q=q(\bar\kappa_1\bar d_1^2,\bar\kappa_2\bar d_2^2)	\prod_{i=1,2} q\big(d_i,\tilde d_i)q(\kappa_i,\tilde\kappa_i)$  and $q(a,b)=(a\vee b)/{a\wedge b}$.
\end{thmnr}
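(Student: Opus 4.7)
The proof is essentially bookkeeping: estimate \eqref{Rglobal} already provides the $L^2 \times L^2 \to L^p$ version of the desired bound, and it remains only to apply Hölder's inequality on the surfaces to obtain the $L^q$ version, and then to expand the definitions of $A$ and $B$ to match the exponents stated in the theorem. Given $\epsilon>0$, the plan is to choose $p_0 \in [5/3, p)$ with $p_0/p = 1-\epsilon$, which is possible provided $\epsilon$ is small enough that $p_0 \geq 5/3$ (for larger $\epsilon$ the estimate follows a fortiori from a small-$\epsilon$ version, after absorbing any loss into $C_{p,\epsilon}$), and to choose $\alpha$ in Lemma \ref{generaleremoval} small enough that $(1 + 2\alpha/s)/p < 1/p_0$. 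With these choices, \eqref{Rglobal} is available.

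Next, since $d\nu_S$ has total mass $d_1 d_2$ and $d\nu_{\tilde S}$ has total mass $\tilde d_1 \tilde d_2$, Hölder's inequality yields
\begin{align*}
\|g_1\|_{L^2(d\nu_S)} \le (d_1 d_2)^{\frac12 - \frac1q}\|g_1\|_{L^q(d\nu_S)}, \qquad \|g_2\|_{L^2(d\nu_{\tilde S})} \le (\tilde d_1 \tilde d_2)^{\frac12 - \frac1q}\|g_2\|_{L^q(d\nu_{\tilde S})}
\end{align*}
for all $q \ge 2$. Substituting these into \eqref{Rglobal} introduces exactly the factor $(d_1 d_2 \tilde d_1 \tilde d_2)^{1/2 - 1/q}$ that appears in the statement.

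Finally, substituting $p_0/p = 1-\epsilon$ into the constant $(\bar d_1 \bar d_2)^{1-p_0/p} A^{p_0/p} B^{1/p}$ of \eqref{Rglobal} and unfolding the definitions of $A$ and $B$, a direct computation yields: the exponent of $\bar\kappa_1 \bar\kappa_2$ is $-2(1-\epsilon) + 3/p = 3/p - 2 + 2\epsilon$; the exponent of $\bar d_1 \bar d_2$ is $(1-p_0/p) + (-3)(1-\epsilon) + 5/p = 5/p - 3 + 4\epsilon$; the exponent of $\min_i(\bar\kappa_i d_i \tilde d_i)$ is $3(1-\epsilon) - 5/p$; each of the two $\kappa$-ratio factors receives the exponent $(1-\epsilon)/2 - 1/p$; and the logarithmic term $1 + \log^{\gamma_\alpha} Q$ inherited from $A$ yields the stated factor $1 + \log^\gamma Q$, with $\gamma = \gamma_{p,\epsilon}$ tracking $\gamma_\alpha$ for the chosen $\alpha$. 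These match exactly the exponents in the theorem.

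No real obstacle remains at this stage: the substantive work was already done in establishing Corollary \ref{scaledBL} (via induction on scales), Lemma \ref{FTdecay} (via van der Corput, carefully tracking the distortion $d_i/\bar d_i$ against $\kappa_i/\bar\kappa_i$), and Lemma \ref{generaleremoval} (the $\epsilon$-removal technique of Bourgain--Tao--Vargas). The only residual subtlety is ensuring $p_0 \ge 5/3$, which restricts $\epsilon$ to a range depending on $p$ and is precisely the reason an $\epsilon$-loss is unavoidable; as noted, values of $\epsilon$ outside this range can be handled by monotonicity arguments or simply by reducing to an admissible value and absorbing the constants.
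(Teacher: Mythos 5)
Your proposal is correct and reproduces the paper's own derivation: the paper likewise sets $\e=1-p_0/p$, chooses $\alpha$ small in Lemma \ref{generaleremoval}, passes from $L^2$ to $L^q$ via H\"older's inequality on the surfaces (the measures $\nu_S$, $\nu_{\tilde S}$ having total masses $d_1d_2$, $\tilde d_1\tilde d_2$), and unfolds $(\bar d_1\bar d_2)^{1-p_0/p}A^{p_0/p}B^{1/p}$ from \eqref{Rglobal} to obtain exactly the exponents stated. Your arithmetic for the exponents checks out, and your observation that the case of large $\e$ (where $p_0\geq 5/3$ would fail) reduces to a small admissible $\e_0$ is correct, since the ratio of the two bounds can be shown to be at least one using $\bar\kappa_i d_i\tilde d_i\leq\bar\kappa_i\bar d_i^2$ and the bracket factors being dominated by $\bar\kappa_1\bar d_1^2\vee\bar\kappa_2\bar d_2^2$ — though the paper does not stop to make this explicit.
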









\section{Dyadic Summation}\label{dyadicsum}
Recall that our hypersurface  of interest  is the graph of a smooth function $\phi(x_1,x_2)=\psi_1(x_1)+\psi_2(x_2)$  defined over the  square $]0,1[\times ]0,1[.$ We assume $\phi$ to be extended continuously to the closed square
$Q=[0,1]\times [0,1]$ (this extension will  in the end not   really play any role, but it will be more convenient to work with a closed square). By means of  a kind of Whitney decomposition of the direct product $Q\times Q$  near the ``diagonal'', following some standard procedure in the bilinear approach, we can decompose $Q\times Q$ into products of  congruent rectangles $U$ and $\tilde U$ of dyadic side lengths, which are ``well-separated neighbors''  in some sense. The next step will therefore consist in establishing bi-linear estimates for  pairs of sub-hypersurfaces supported over such pairs of neighboring rectangles. Notice that if one of these  rectangles  meets one of the coordinate axes, then  the principal curvature in at least one coordinate direction  will no longer be of a certain size, but will indeed go down to zero within this rectangle. We then perform an additional  dyadic decomposition of this rectangle in order to achieve that both principal curvatures wi
 ll be of a certain size on each of the dyadic sub-rectangles (compare Figure \ref{boxes1}).  To these  we can then apply our estimates from Theorem \ref{globalBL}.  Thus, in this section we shall work under the following
\medskip

\textsc{GENERAL ASSUMPTIONS:}
$U=[r_12^{-j_1},(r_1+1)2^{-j_1}]\times [r_22^{-j_2},(r_2+1)2^{-j_2}]$ and $\tilde U=[\tilde r_12^{- j_1},(\tilde r_1+1)2^{- j_1}]\times [\tilde r_22^{-\tilde j_2},(\tilde r_2+1)2^{-\tilde j_2}],$ with $r_i,\tilde r_i, j_i\in \N,$  are two congruent closed  bi-dyadic  rectangles in $ [0,1]\times[0,1]$    whose side length and  distance  between them in  the $x_i$ - direction is  equal to $\rho_i=2^{-j_i},$   both for $i=1$ and $i=2.$

 By $\varkappa_i$ we denote the maximum value of the  principal curvature in $x_i$ - direction of both $S=\graph(\phi|_U)$ and $\tilde{S}=\graph(\phi|_{\tilde{U}})$.

\begin{thmnr}\label{nextBLthm}
Let $5/3<p<2$, $q\geq2$, $\e>0,$ and assume that  $(m_1\vee m_2 +3)\left(1/p-1/2\right)<{1}/{q'}$. Then we have
\begin{align}\label{global2}
	\| R^*_{S,\tilde S}\|_{L^q(S)\times L^q(\tilde S)\to L^p(\R^3)}
	\leq & C_{p,q,\e} (\rho_1\rho_2)^{\frac{2}{q'}-\frac{1}{p}}
	(\varkappa_1\rho^2_1\vee \varkappa_2\rho^2_2)^{\frac{1}{p}-1+\e }
		(\varkappa_1\rho^2_1\wedge \varkappa_2\rho^2_2)^{1-\frac{2}{p}-\e}.	
\end{align}
\end{thmnr}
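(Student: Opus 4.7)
The main obstacle is that $U$ or $\tilde U$ may abut a coordinate axis $\{\xi_i=0\}$, where the principal curvature $\psi_i''(\xi_i)\sim \xi_i^{m_i-2}$ degenerates (for $m_i>2$). On such a rectangle one cannot meaningfully set $\kappa_i = r_i^{m_i-2}$, and the General Assumptions of Chapter \ref{Scaling} that feed Theorem \ref{globalBL} are violated. The plan is therefore to carry out a secondary, one-sided dyadic decomposition in each offending direction, apply Theorem \ref{globalBL} piece by piece, and then sum. Observe first that the Whitney-type separation hypothesis, $|r_i - \tilde r_i|\gtrsim 1$, forces at most one of $U,\tilde U$ to touch $\{\xi_i = 0\}$ in each direction $i$. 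Whenever, say, $r_i = 0$, split $U$ into strips $U^{k_i} = U\cap\{\xi_i\in [2^{-k_i-1}\rho_i,\,2^{-k_i}\rho_i]\}$, $k_i\ge 0$, and proceed symmetrically for $\tilde U$ when needed. On each pair of refined sub-rectangles $(U^k,\tilde U^{\tilde k})$ one has $r_i^{(U^k)}\sim d_i^{(U^k)}$, the maximum-product identity \eqref{maxproduct} holds (the undecomposed rectangle dominates in that direction), and the distance condition \eqref{separationcond} survives the refinement because the Whitney gap of size $\rho_i$ only grows in relative terms after the sub-split.

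Applying Theorem \ref{globalBL} to each pair $(S^k,\tilde S^{\tilde k})$ produces an estimate whose exponents depend on the dyadic parameters $\beta_i = 2^{-k_i}$, $\tilde\beta_i = 2^{-\tilde k_i}$ through the factors $d_i\tilde d_i$, $\min_i(\bar\kappa_i d_i\tilde d_i)$, and the two $\max_i$-terms involving $\kappa_i/\bar\kappa_i$ and $\tilde\kappa_i/\bar\kappa_i$. The degeneracy quotient $Q$ grows polynomially in $k,\tilde k$, so the $\log^\gamma Q$ factor is dominated by an additional arbitrarily small power of the dyadic parameter, which will be absorbed in the $\e$-slack in the final exponents.

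Since the sub-rectangles are disjoint, we decompose $f_j = \sum f_j^{\bullet}$ along $\{U^k\}, \{\tilde U^{\tilde k}\}$ and apply Minkowski's inequality
\begin{equation*}
\|R^*_{S,\tilde S}(f_1,f_2)\|_{L^p(\R^3)} \le \sum_{k,\tilde k}\|R^*_{S^k,\tilde S^{\tilde k}}(f_1^k,f_2^{\tilde k})\|_{L^p(\R^3)}.
\end{equation*}
After inserting the per-piece bound, H\"older's inequality in $\ell^q_k$ and $\ell^q_{\tilde k}$ extracts $\|f_1\|_{L^q(S)}\|f_2\|_{L^q(\tilde S)}$ provided the resulting deterministic sum $\sum_{k,\tilde k} C(k,\tilde k)^{q'}$ is finite. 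A direct computation of the exponents of $\beta_i,\tilde\beta_i$ in $C(k,\tilde k)$ shows that convergence of this dyadic sum is governed precisely by the strict inequality $(m_1\vee m_2 + 3)\bigl(1/p - 1/2\bigr) < 1/q'$, the worst case arising from the direction with the largest $m_i$, coupled through the $\min_i$ and $\max_i$-factors in \eqref{global}. Once convergent, the sum telescopes: the pieces independent of $k,\tilde k$ reassemble into $(\bar\kappa_1\bar\kappa_2)^{3/p-2+2\e}(\bar d_1\bar d_2)^{5/p-3+4\e}$ and the product of the two $\max_i$-factors evaluated at the parent rectangle, while the $(d_1 d_2 \tilde d_1 \tilde d_2)^{1/2-1/q}$ and $\min_i(\bar\kappa_i d_i \tilde d_i)^{3-3\e-5/p}$ factors, once raised to the right power and summed, combine with $\bar\kappa_i\sim\varkappa_i$ and $\bar d_i=\rho_i$ to yield exactly the right-hand side of \eqref{global2}. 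The hard part is the last bookkeeping step: the $\min_i/\max_i$ in Theorem \ref{globalBL} couple $k$ and $\tilde k$, so one must split the summation according to which coordinate attains the extremum before the sum factorizes; it is precisely this case analysis that identifies the numerical threshold $(m_1\vee m_2+3)(1/p-1/2)<1/q'$.
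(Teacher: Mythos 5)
Your outline reproduces the paper's strategy step for step: a secondary dyadic decomposition of whichever rectangle meets a coordinate axis, an application of Theorem \ref{globalBL} to each refined pair, and a dyadic summation. Two remarks are nevertheless in order. First, a minor slip: the separation between $U$ and $\tilde U$ in the $\xi_i$-direction is $\sim\rho_i$, not $\gtrsim 1$; your conclusion that at most one of $U$, $\tilde U$ can touch $\{\xi_i=0\}$ is still correct, but for the relative (Whitney-scale) reason, not an absolute one. Second, and more substantively, the step you yourself label ``the hard part'' --- the dyadic summation that yields the threshold $(\bar m+3)(1/p-1/2)<1/q'$ and produces precisely the right-hand side of \eqref{global2} --- is where essentially all of the paper's work in this proof resides, and your outline only asserts, without computation, that it comes out right. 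The paper's argument exploits that for each $i$ at least one of $k_i$, $\tilde k_i$ vanishes, which collapses every $\vee$-quantity to its undecomposed value (identities \eqref{18jul1}--\eqref{18jul5}, giving $\bar\kappa_i=\varkappa_i$ and $\bar d_i=\rho_i$), reduces the right-hand side of Theorem \ref{globalBL} to an explicit function of $2^{-k_i},2^{-\tilde k_i}$, and then establishes the nontrivial bound \eqref{doublesum1} by means of Lemma \ref{abstractsum}; the stated threshold appears there as the convergence condition $(c_1\vee c_2)\mu<\nu+\omega$ with $c_i=m_i-2$. That computation, including the case split according to which direction realizes the $\min/\vee$, is not a matter of pieces ``reassembling'' and needs to be carried out. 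Finally, your summation via $\ell^q$--H\"older, rather than the paper's cruder triangle inequality followed by $\|f_j^k\|_{L^q}\le\|f_j\|_{L^q}$, is a valid minor variant: it tightens the implied constant but does not alter the convergence threshold, since the summand decays geometrically in $(k,\tilde k)$.
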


\begin{proof}
If $U$ does not intersect with the $x_i$-axis, then the  principal curvature in $x_i$-direction on $U$ is indeed comparable to $\varkappa_i$. Otherwise we decompose $U$ further into  sets with (roughly) constant principal curvatures in order to apply the previous results. More precisely, to each dyadic interval $I=[r2^{-j},(r+1)2^{-j}]$, $r,j\in\N,$ we associate a family of subsets $\{I(k)\}_{k\in \NN_0}$ with $\mathop{\bigcup}\limits_{k\in\NN_0}I(k)=I,$ according to the following two alternatives:
\begin{enumerate}
	\item If $r>0$, then choose $\NN_0=\{0\}$ and $I(0)=I$.
	\item If $r=0$,  then choose $\NN_0=\N=\{1,2,3,\ldots\}$ and $I(k)=[2^{-k}(r+1)2^{-j},2^{1-k}(r+1)2^{-j}]$.
\end{enumerate}
If we write $U=I_1\times I_2$, then denote by $\{I_i(k_i)\}_{k_i\in\NN_i}$ their associated family, and let $U(k)=I(k_1)\times I(k_2)$, $k=(k_1,k_2)\in\NN=\NN_1\times\NN_2$, $S(k)=\graph(\phi|_{U(k)})$. Introduce $\tilde \NN$, $\tilde U(k)$ and $\tilde S(k)$, $k\in\tilde\NN,$ in an analogous manner.
Other  relevant quantities  are the principal curvatures on $U(k)$, i.e.,
\begin{align}\label{23sep1714}
	\kappa_i(k_i):=2^{-k_i(m_i-2)}\varkappa_i,
\end{align}
and the side lengths of $U(k)$
\begin{align}\label{23sep1715}
	d_i(k_i)=2^{-k_i}\rho_i.
\end{align}

\begin{figure}
  \includegraphics{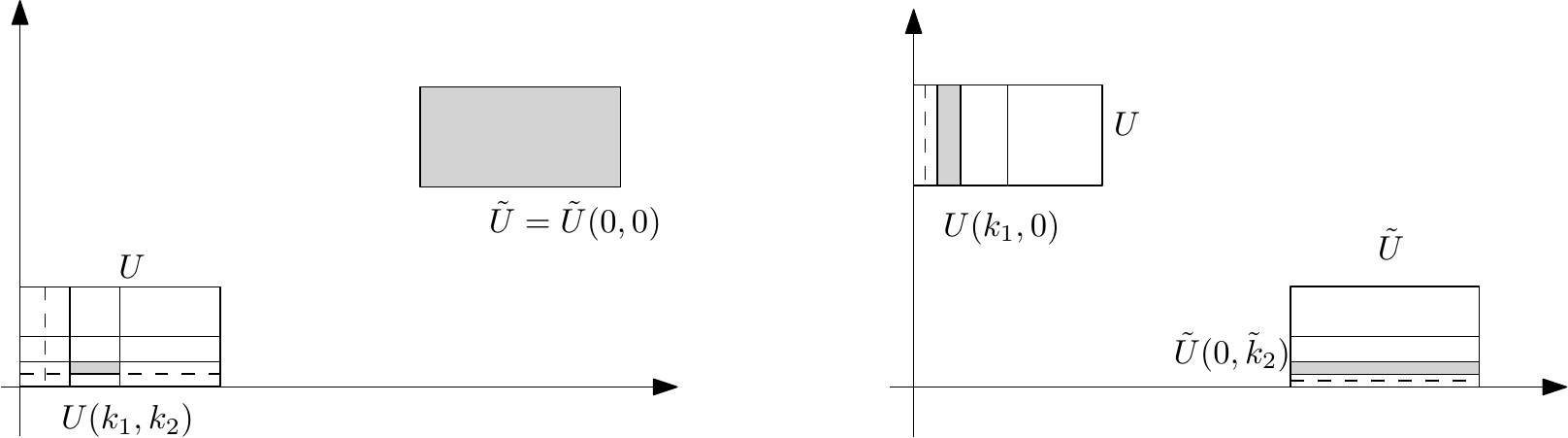}
\caption{Two possibilities for the decomposition into subboxes}
\label{boxes1}       
\end{figure}

A simple but crucial observation is that since $I_i$ and $\tilde I_i$ are separated for both $i=1$ and $i=2$, we have  $\NN_i=\{0\}$ or $\tilde\NN_i=\{0\}$ (cf. Figures \ref{boxes1}).
Hence for each pair $(k_i,\tilde k_i)\in\NN_i\times\tilde\NN_i$, $k_i=0$ or $\tilde k_i=0$, and thus
\begin{align}
	&\bar\kappa_i(k_i,\tilde k_i)
	:=\max\{\kappa_i(k_i),\tilde\kappa_i(\tilde k_i)\}
	=\max\{2^{-k_i(m_i-2)},2^{-\tilde k_i(m_i-2)}\}\varkappa_i
	=\varkappa_i \label{18jul1}\\
	& \bar d_i(k_i,\tilde k_i)
	:= \max\{d_i(k_i),\tilde d_i(\tilde k_i)\}
	= \max\{2^{-k_i},2^{-\tilde k_i}\}\rho_i
	= \rho_i.\label{18jul2}
\end{align}
We conclude that
\begin{align}
	&\frac{\kappa_i(k_i)}{\bar\kappa_i(k_i,\tilde k_i)} =2^{-k_i(m_i-2)}, \label{18jul3}\\
	&\frac{\tilde \kappa_i(k_i)}{\bar\kappa_i(k_i,\tilde k_i)} =2^{-\tilde k_i(m_i-2)}\label{18jul4}
	\frac{d_i(k_i)}{\bar d_i} = 2^{-k_i},		\\
	&d_i(k_i)\tilde d_i(\tilde k_i) =	2^{-k_i-\tilde k_i}	\rho_i^2	= 2^{-k_i\vee\tilde k_i}	\rho_i^2,								\label{18jul5}
\end{align}
hence
\begin{align}\label{logterm}
	Q =& q(\bar\kappa_1\bar d_1^2,\bar\kappa_2\bar d_2^2)
	\prod_{i=1,2}q(d_i(k_i),\tilde d_i(\tilde k_i))q(\kappa_i(k_i),\tilde\kappa_i(\tilde k_i))	\nonumber\\
	\leq& \frac{\bar\kappa_1\bar d_1^2\vee\bar\kappa_2\bar d_2^2}{\bar\kappa_1\bar d_1^2\wedge\bar\kappa_2\bar d_2^2}
	 2^{m_1(k_1+\tilde k_1) + m_2(k_2+\tilde k_2)}.
\end{align}

Thus, if we apply  inequality \eqref{global} from Theorem \ref{globalBL} to the pairs of hypersurfaces $S(k),\tilde S(\tilde k)$  and estimate by means of \eqref{18jul1}--\eqref{logterm},
 then we get
\begin{align*}
	&\| R^*_{S,\tilde S}\|_{L^q\times L^q\to L^p}
	\leq \sum_{k\in\NN,\tilde k\in\tilde \NN}
	\| R^*_{S(k),\tilde S(\tilde k)}\|_{L^q\times L^q\to L^p}\\
	&\lesssim (\varkappa_1\rho_1^2\varkappa_2\rho_2^2)^{\frac{3}{p}-2+2\e}(\rho_1\rho_2)^{\frac{2}{q'}-\frac{1}{p}} \log^{\gamma}\left(\frac{\varkappa_1\rho_1^2}{\varkappa_2\rho_2^2}	
					+\frac{\varkappa_2\rho_2^2}{\varkappa_1\rho_1^2}\right)
			\Big(\sum_{k\in\NN,\tilde k\in\tilde \NN}[1+k_1+\tilde k_1 + k_2+\tilde k_2]^{\gamma} \\
	&\times (\varkappa_1\rho_1^2 2^{-k_1-\tilde k_1}\wedge\varkappa_2\rho_2^2 2^{-k_2-\tilde k_2})^{3-3\e-\frac{5}{p}} \ 2^{-(k_1+\tilde k_1+k_2+\tilde k_2)\left(\frac{1}{2}-\frac{1}{q}\right)}\Big) \\		
	&\times \left(\varkappa_1\rho_1^2 2^{-k_2(m_2-2)} \vee \varkappa_2
	\rho_2^2 2^{-k_1(m_1-2)}\right)^{\frac{1-\e}{2}-\frac{1}{p}}
	\left(\varkappa_1\rho_1^2 2^{-\tilde k_2(m_2-2)} \vee \varkappa_2
	\rho_2^2 2^{-\tilde k_1(m_1-2)}\right)^{\frac{1-\e}{2}-\frac{1}{p}}.
\end{align*}
We claim that 
\begin{align}\nonumber
	&\sum_{k\in\NN,\tilde k\in\tilde \NN}[1+k_1+\tilde k_1 + k_2+\tilde k_2]^{\gamma}
	(\varkappa_1\rho_1^2 2^{-k_1-\tilde k_1}\wedge\varkappa_2\rho_2^2 2^{-k_2-\tilde k_2})^{3-3\e-\frac{5}{p}}\\
	&\hskip1cm\times  2^{-(k_1+\tilde k_1+k_2+\tilde k_2)\left(\frac{1}{2}-\frac{1}{q}\right)}  \left(\varkappa_1\rho_1^2 2^{-k_2(m_2-2)} \vee \varkappa_2
	\rho_2^2 2^{-k_1(m_1-2)}\right)^{\frac{1-\e}{2}-\frac{1}{p}}	\nonumber\\	
	&\hskip1cm \times \left(\varkappa_1\rho_1^2 2^{-\tilde k_2(m_2-2)} \vee \varkappa_2
	\rho_2^2 2^{-\tilde k_1(m_1-2)}\right)^{\frac{1-\e}{2}-\frac{1}{p}} \nonumber\\
	&\lesssim (\varkappa_1\rho_1^2 \wedge \varkappa_2\rho_2^2 )^{3-3\e-\frac{5}{p}}(\varkappa_1\rho_1^2 \vee \varkappa_2\rho_2^2)^{1-\e-\frac{2}{p}}. \label{doublesum1}		
\end{align}
Taking this for granted, we  arrived at estimate \eqref{global2}:
\begin{align*}
	&\| R^*_{S,\tilde S}\|_{L^2\times L^2\to L^p(Q_{S,\tilde S}(R))} \lesssim 	(\varkappa_1\rho_1^2\varkappa_2\rho_2^2)^{\frac{3}{p}-2+2\e}(\rho_1\rho_2)^{\frac{2}{q'}-\frac{1}{p}}	\\
			&\hskip1cm\times(\varkappa_1\rho_1^2 \vee \varkappa_2\rho_2^2)^{1-\e-\frac{2}{p}}(\varkappa_1\rho_1^2 \wedge \varkappa_2\rho_2^2 )^{3-3\e-\frac{5}{p}}\log^{\gamma}\left(\frac{\varkappa_1\rho_1^2}{\varkappa_2\rho_2^2}	+\frac{\varkappa_2\rho_2^2}{\varkappa_1\rho_1^2}\right)	\\
	&= (\rho_1\rho_2)^{\frac{2}{q'}-\frac{1}{p}}	
		(\varkappa_1\rho_1^2 \vee \varkappa_2\rho_2^2)^{\frac{1}{p}-1+\e}(\varkappa_1\rho_1^2 \wedge \varkappa_2\rho_2^2 )^{1-\frac{2}{p}-\e}\log^{\gamma}\left(\frac{\varkappa_1\rho_1^2}{\varkappa_2\rho_2^2}	+\frac{\varkappa_2\rho_2^2}{\varkappa_1\rho_1^2}\right).				
\end{align*}

We are thus left with the estimation of  the dyadic sum in \eqref{doublesum1}.
Let
$$\mu=\frac{1}{p}-\frac{1-\e}{2}>0, \quad \nu=3-3\e-\frac{5}{p}>0, \quad \omega=\frac{1}{2}-\frac{1}{q}>0, \quad c_i=m_i-2.
$$
Then $c_i\mu < \nu+\omega$ is equivalent to $m_i\left({1}/{p}-1/2\right) +\landau(\e) < {1}/{q'}$. This is satisfied since by  our assumptions in the theorem  we have  $m_i\left({1}/{p}-{1}/{2}\right)< {1}/{q'},$ and we can chose $\e$ arbitrarily small.	
\medskip

Estimate  \eqref{doublesum1} will then be an easy consequence  of the next lemma.
 Indeed,  recalling our earlier observation  that for each pair $(k_i,\tilde k_i)\in\NN_i\times\tilde\NN_i$ one of the entries $k_i$ or $\tilde k_i$ must be zero, we see that we  have to sum over at most two of the parameters $k_1,k_2,\tilde k_1,\tilde k_2$.

  Thus, there  are four possibilities:
 if exactly two of the parameters are nonzero, then  there are two distinct cases: either these parameters belong to the same surface (i.e., $k_1=k_2=0$ or $\tilde k_1=\tilde k_2=0$), which correspond to the left picture in Figure \ref{boxes1}, or the nonzero parameters belong to two different surfaces, as in the "over cross" situation shown in the picture on the right hand side of Figure \ref{boxes1}. The remaining two possibilities are firstly  that only one parameter $k_1,k_2,\tilde k_1,\tilde k_2$ is nonzero, which happens if only one of the rectangles $U,\tilde U$ touches only one of the axes, and secondly  the situation where both rectangles are located away from the axes. In this last situation, we have indeed no further decomposition and only one term to sum.
 \medskip

The first two of the afore-mentioned possibilities can be  dealt with directly by  the next lemma. But, notice that  the corresponding sums of course dominate the sums over fewer parameters (or even none), which allows to also handle the remaining two possibilities.
\end{proof}

\begin{lemnr}\label{abstractsum}
Let $\mu,\omega\geq0$, $\nu>0$, $n,c_1,c_2\geq0$ such that $(c_1\vee c_2)\mu<\nu+\omega$, and let $a,b\in\R_+$. Then
\begin{align*}
	  \qquad &\sum\limits_{k_1,k_2\in\N} (1+k_1+k_2)^n 2^{-(k_1+k_2)\omega} (a 2^{-k_2c_2}\vee b)^{-\mu}
		(a \vee b 2^{-k_1c_1})^{-\mu} (a2^{-k_1}\wedge b2^{-k_2})^\nu	\\
		\leq&  \sum\limits_{k_1,k_2\in\N} (1+k_1+k_2)^n 2^{-(k_1+k_2)\omega}(a\vee b)^{-\mu} (a 2^{-k_2c_2}\vee b 2^{-k_1c_1})^{-\mu} (a2^{-k_1}\wedge b2^{-k_2})^\nu \\		
		\lesssim& (a\vee b)^{-2\mu}(a\wedge b)^\nu .
\end{align*}		
In the last estimate, the constant hidden by the symbol $\lesssim$  will  depend only  on the exponent  $n$.
	\end{lemnr}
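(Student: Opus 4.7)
The first inequality is a pointwise algebraic identity: I would verify that
\[
(a 2^{-k_2 c_2}\vee b)(a\vee b 2^{-k_1 c_1}) \;\geq\; (a\vee b)(a 2^{-k_2 c_2}\vee b 2^{-k_1 c_1})
\]
for all $a, b > 0$ and $k_1, k_2 \geq 0$. Setting $x = a 2^{-k_2 c_2} \leq a = u$ and $v = b 2^{-k_1 c_1} \leq b = y$, this becomes the four-term comparison $(x\vee y)(u\vee v) \geq (u\vee y)(x\vee v)$, which one verifies by splitting on whether $u \leq y$ or $u > y$: in each case one factor of the left-hand side matches a factor on the right, and the remaining factor is larger by monotonicity of $\vee$. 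Raising to the power $-\mu$ and summing term-by-term then yields the first estimate.

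For the second (main) inequality, by symmetry I would assume $a \leq b$ without loss of generality and factor out $(a\vee b)^{-\mu} = b^{-\mu}$. Using the homogeneity $(a,b) \mapsto (1,b/a)$ of the remaining expression, the problem reduces to proving that, with $L := \log_2(b/a) \geq 0$,
\[
S(L) := \sum_{k_1, k_2 \geq 0} (1+k_1+k_2)^n 2^{-(k_1+k_2)\omega}\bigl(2^{-k_2 c_2}\vee 2^{L - k_1 c_1}\bigr)^{-\mu}\bigl(2^{-k_1}\wedge 2^{L-k_2}\bigr)^{\nu} \;\lesssim\; 2^{-L\mu}.
\]

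My plan is to partition $\N^2$ into four regions according to which of the two terms realizes the $\vee$ and which realizes the $\wedge$. In each region, both expressions collapse to a single monomial in $2$, reducing the summand to a polynomial-times-geometric form $(1+k_1+k_2)^{n}\,2^{-L\mu}\,2^{-\alpha k_1 - \beta k_2}$, where $\alpha,\beta$ depend on the region. The hypothesis $(c_1 \vee c_2)\mu < \nu + \omega$ is precisely what ensures that in each region the exponents $\alpha,\beta$ in the unconstrained directions are strictly positive, so that the geometric series converge; in the remaining directions the constraints defining the region already restrict the summation range to a finite interval.

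The main technical point is tracking the asymptotics in $L$. A naive implementation produces an additional factor of the form $(1+L)^{n+1}$ coming from summation of the polynomial weight over the constrained strip (most noticeably when $\omega = 0$). I would absorb this by exploiting the strictness of the hypothesis: choosing $\delta > 0$ so small that $(c_1 \vee c_2)\mu < \nu + \omega - 2\delta$ still holds, I rewrite $\mu = (\mu - \delta) + \delta$ and use the extra factor $2^{-\delta L}$ (arising from the $\delta$-part applied to the $\vee$-term) to dominate the polynomial loss, at the price of an implicit constant depending on $n$ and the gap $\nu + \omega - (c_1\vee c_2)\mu$. Summing the contributions of the four regions then produces $S(L) \lesssim 2^{-L\mu}$, which multiplied back by $b^{-\mu}$ yields the claimed bound $(a\vee b)^{-2\mu}(a\wedge b)^{\nu}$; in the application to Theorem \ref{nextBLthm} this $\delta$-slack is already built into the parameter $\e$, so it is harmless.
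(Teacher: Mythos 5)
Your argument for the first (pointwise) inequality is correct and matches the paper's: the paper notes that $(a2^{-k_2c_2}\vee b)\wedge(a\vee b2^{-k_1c_1})\geq a2^{-k_2c_2}\vee b2^{-k_1c_1}$ while the $\vee$ of those two expressions equals $a\vee b$, giving the product bound; your four-term comparison $(x\vee y)(u\vee v)\geq(u\vee y)(x\vee v)$ for $x\leq u$, $v\leq y$ is the same elementary observation in slightly different form.

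For the main estimate you take a genuinely different route. The paper uses the symmetry $(a,k_1,c_1)\leftrightarrow(b,k_2,c_2)$ to restrict to the half-sum where $a2^{-k_1}\leq b2^{-k_2}$, then bounds $(a2^{-k_2c_2}\vee b2^{-k_1c_1})^{-\mu}$ from above in two ways (using each argument of the $\vee$ in turn) and takes the minimum of the two resulting bounds $a^\nu b^{-\mu}$ and $a^{-\mu}b^\nu$. Your four-region decomposition of $\N^2$ is a viable alternative with a heavier case analysis, but your description of how the regions close is imprecise: in the two regions where $2^{-k_2c_2}\geq 2^{L-k_1c_1}$, the $\vee$-term contributes a growing factor $2^{\mu k_2 c_2}$ rather than the required decay $2^{-\mu L}$, and this decay must be extracted from the defining constraint $k_2c_2\leq k_1c_1-L$ together with the strict inequality $c_1\mu<\nu+\omega$ --- not from a supposedly positive exponent in an unconstrained direction, and not from the constraints merely "restricting the summation range to a finite interval."

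The logarithmic loss you flag is real, but the $\delta$-split of $\mu$ you propose cannot remove it: the pointwise bound $(\,\cdot\vee\cdot\,)^{-\delta}\leq 2^{-\delta L}2^{\delta k_1 c_1}$ only rearranges exponents and produces no extra decay in $L$ beyond what $(\,\cdot\vee\cdot\,)^{-\mu}$ already provides. In fact the estimate as stated fails for $\omega=0$: taking $n=0$, $c_1=c_2=0$, $\mu=\nu=1$ and $a\leq b$, the middle sum equals $(a\vee b)^{-2}\sum_{k_1,k_2\geq0}(a2^{-k_1}\wedge b2^{-k_2})\sim(a\vee b)^{-2}(a\wedge b)\log(b/a)$, which exceeds the asserted bound $(a\vee b)^{-2}(a\wedge b)$ by the full logarithm. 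The paper's own proof exhibits the same loss --- it ends at $S\lesssim\log^{n+1}\!\big(\tfrac{a}{b}+\tfrac{b}{a}\big)(a\vee b)^{-\mu}(a\wedge b)^\nu$ --- and leaves this unreconciled with the lemma, beyond a passing remark that for $\omega>0$ one can do without the $\log$. The correct reading is that the estimate requires $\omega>0$ (or must carry the logarithm); in the application $\omega=\frac{1}{2}-\frac{1}{q}$ with $q>2$, so the $k_2$-sum converges geometrically and no logarithm ever arises. Your $\delta$-perturbation is neither needed in that case nor capable of rescuing the $\omega=0$ edge case.
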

We remark that the bound  in this lemma is essentially sharp, as one  can immediately see by looking at the term with $k_1=0=k_2$. Notice that the proof is easier when  $\omega>0$.

\begin{proof}
To prove the first inequality, observe that
$a2^{-k_2c_2}\vee b2^{-k_1c_1}$ is bounded by $a2^{-k_2c_2}\vee b$ as well as  by $a\vee b2^{-k_1c_1},$  hence by the minimum of these expressions. Therefore we have
\begin{align*}
	(a2^{-k_2c_2}\vee b )\wedge(a\vee b2^{-k_1c_1})
	&\geq a 2^{-k_2c_2}\vee b 2^{-k_1c_1}\\
	(a2^{-k_2c_2}\vee b )\vee(a\vee b2^{-k_1c_1}) &= a \vee b  ,
\end{align*}
hence
\begin{align*}
	(a2^{-k_2c_2}\vee b )(a\vee b2^{-k_1c_1})
	\geq (a \vee b)(a 2^{-k_2c_2}\vee b 2^{-k_1c_1}).
\end{align*}
Using the symmetry in this estimate, it suffices to estimate
\begin{align*}
	S= a^\nu \sum_{\underset{a2^{-k_1}\leq b2^{-k_2}}{k_1,k_2\in\N}} k_1^nk_2^n 2^{-(k_1+k_2)\omega}
						(a 2^{-k_2c_2}\vee b 2^{-k_1c_1})^{-\mu}2^{-k_1\nu}.
\end{align*}
On the one hand, we have
\begin{align*}
	S\leq& a^\nu b^{-\mu} \sum_{k_1} k_1^n 2^{k_1(c_1\mu-\nu-\omega)}
						\sum\limits_{k_2:a2^{-k_1}\leq b2^{-k_2}} k_2^n 2^{-k_2\omega} \\
	\leq& a^\nu b^{-\mu} \log^{n+1}\left(\frac{a}{b}+\frac{b}{a}\right) \sum_{k_1} k_1^{2n+1} 2^{k_1(c_1\mu-\nu-\omega)}\\
	\lesssim& a^\nu b^{-\mu} \log^{n+1}\left(\frac{a}{b}+\frac{b}{a}\right).
\end{align*}
In the case $\omega>0$, we might get along even without the $\log$-term.
On the other hand,
\begin{align*}
	S&\leq a^{\nu-\mu} \sum_{k_2} k_2^n 2^{k_2(c_2\mu-\omega)} \sum_{k_1:a2^{-k_1}\leq b2^{-k_2}} k_1^n 2^{-k_1(\nu+\omega)} \\
	&\leq a^{\nu-\mu} \sum_{k_2} k_2^n 2^{k_2(c_2\mu-\omega)} \sum_{k_1:a2^{-k_1}\leq b2^{-k_2}} k_1^n 2^{-k_1\nu} \\
	&\sim  a^{\nu-\mu} \log^n\left(\frac{a}{b}+\frac{b}{a}\right)
				 \left(\frac{b}{a}\right)^\nu \sum_{k_2} k_2^{2n} 2^{k_2(c_2\mu-\nu-\omega)}	\\
	&\sim  a^{-\mu} b^\nu \log^n\left(\frac{a}{b}+\frac{b}{a}\right).	
\end{align*}
Combining these two estimates, we obtain
\begin{align*}
	\frac{S}{\log^{n+1}\left(\frac{a}{b}+\frac{b}{a}\right)}
	\lesssim& a^{-\mu} b^\nu \wedge a^\nu b^{-\mu} 		
	=(a\vee b)^{-\mu} (a\wedge b)^\nu.
\end{align*}
\end{proof}


\section{Passage from  bilinear to  linear estimates}

Recall that $\bar m=m_1\vee m_2, \, m=m_1\wedge m_2$ and ${1}/{h}={1}/{m_1}+{1}/{m_2}$.
The first step to prove our main Theorem \ref{mainthm} is the following Lorentz space estimate for the adjoint restriction operator $R^*$ associated to $\Gamma=\graph(\phi).$

\begin{thmnr}\label{withouttriangle}
Let $p_0=1+{\bar m}/(\bar m+m)$, $2p>\max\{{10}/{3},2p_0,h+1\}$ and ${1}/{s'}\geq (h+1)/{2p}$.
Then $R^*$ is bounded from $L^{s,t}(\Gamma,d\nu)$ to $L^{2p,t}(\R^3)$ for any $1\leq t\leq\infty$.
\end{thmnr}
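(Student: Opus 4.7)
The plan is to combine the scale-localised bilinear estimate of Theorem \ref{nextBLthm} with a Whitney decomposition of $\Gamma\times\Gamma$ away from the diagonal, and with the Bourgain--Tao--Vargas--Vega summation trick that converts bilinear off-diagonal bounds into linear Lorentz-space bounds on the critical line ${1}/{s'}=(h+1)/(2p)$.

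First, exploiting the product structure $\phi=\psi_1+\psi_2$, I would perform a Whitney-type splitting of the off-diagonal part of $[0,1]^2\times[0,1]^2$ into a family of pairs $(U,\tilde U)$ of congruent bi-dyadic rectangles with common sidelengths $\rho_i=2^{-j_i}$ and with separations $\mathrm{dist}_{\xi_i}(U,\tilde U)\sim\rho_i$ in both coordinate directions $i=1,2$. Writing $f=\sum_U f_U$ with $f_U=f\chi_U$, the bilinear identity
\begin{equation*}
|R^*f|^2=\sum_{U,\tilde U}R^*f_U\,\overline{R^*f_{\tilde U}}
\end{equation*}
then reduces, up to diagonal contributions controlled by a standard iteration/induction-on-scales, the estimation of $\|R^*f\|_{L^{2p,t}}^2$ to an $L^{p}$-type bound for the Whitney sum $\sum_{(U,\tilde U)}R^*f_U\,R^*f_{\tilde U}$.

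Next I would apply Theorem \ref{nextBLthm} with $q=s$ to each Whitney pair: the hypotheses $2p>\max\{10/3,h+1\}$ and ${1}/{s'}\ge (h+1)/(2p)$ translate into the conditions $5/3<p<2$ and $(\bar m+3)(\tfrac{1}{p}-\tfrac12)<\tfrac{1}{s'}$ required there, and produce an estimate
\begin{equation*}
\|R^*f_U\,R^*f_{\tilde U}\|_{L^p(\R^3)}\le C\,M(U,\tilde U)\,\|f_U\|_{L^s}\|f_{\tilde U}\|_{L^s}
\end{equation*}
with an explicit constant $M(U,\tilde U)$ that is a product of dyadic powers of $\rho_1,\rho_2$ and of the principal curvatures $\varkappa_i\sim (r_i\vee\tilde r_i)^{m_i-2}$. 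The bilinear-to-linear passage is then carried out by the summation trick of Bourgain/Tao--Vargas--Vega (the one used in the ``summation trick'' subsection referenced in the introduction of this article): each $f_U$ is first decomposed into dyadic atomic pieces of constant height, the bilinear inequality is applied at each level, and the resulting double/triple dyadic sum over the heights, the scales $(j_1,j_2)$ and the lattice positions is organised so that the critical-line identity ${1}/{s'}=(h+1)/(2p)$ cancels the homogeneous scaling factors exactly, leaving a geometric series that converges by virtue of the strict inequality $2p>2p_0$. This first yields a restricted weak-type bound $L^{s,1}\to L^{2p,\infty}$, which real interpolation upgrades to $L^{s,t}\to L^{2p,t}$ for every $1\le t\le\infty$.

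The main obstacle I expect is the fine bookkeeping of the constants $M(U,\tilde U)$ through the final summation, especially for Whitney pairs whose rectangles touch a coordinate axis, where a principal curvature degenerates to zero and the inner sub-dyadic decomposition already performed in Section \ref{dyadicsum} must be meshed carefully with the outer Whitney dyadic sum. The specific algebraic form of $p_0=1+\bar m/(\bar m+m)$ should arise precisely as the sharp threshold that makes the worst case of this doubly-indexed dyadic sum convergent; identifying this threshold and verifying that every factor is accounted for is the technical heart of the argument.
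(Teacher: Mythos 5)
The overall skeleton you propose (Whitney decomposition, local bilinear estimate from Theorem~\ref{nextBLthm}, dyadic summation, real interpolation in Lorentz spaces) is the one used in the paper, but one of your concrete choices would break the argument at the critical line and masks where the real work sits.

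The gap is the choice $q=s$. The paper does \emph{not} apply Theorem~\ref{nextBLthm} with $q=s$; it chooses an auxiliary exponent $q$ in the open window
$(\bar m+3)\bigl(\tfrac1p-\tfrac12\bigr)<\tfrac1{q'}<\tfrac{h+1}{2p}$,
and then exploits the two bounds $|\Omega\cap\tau_{jk}|\le|\Omega|$ and $|\Omega\cap\tau_{jk}|\le|\tau_{jk}|=2^{-j_1-j_2}$ (after a further Hölder splitting parametrised by an exponent $r^*$) to produce the missing factor of $|\Omega|^{1/s}$ in a restricted weak-type estimate. If you instead set $q=s$ with $1/s'=(h+1)/(2p)$ on the critical line, the crucial scale-sum exponent $\mu=\tfrac{h+1}{p}-\tfrac2{q'}$ in \eqref{suminl} becomes \emph{zero}: the geometric factor cancels exactly, as you observe, and the $l$-sum then diverges. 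That ``exact cancellation'' is precisely the obstruction, not the mechanism; Bourgain's summation trick circumvents it by comparing two exponents strictly off the critical line (as done in Subsection~\ref{summationtrick} for the simpler $p>4$ setting), whereas the paper's proof circumvents it by keeping $q$ strictly above the line so $\mu>0$ and letting the $\min\{|\Omega|,2^{-l}\}$ factor close the sum.

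Beyond that, the dyadic bookkeeping is considerably more involved than ``a geometric series''. After the change of variables $l=j_1+j_2$, $l'=j_1m_1-j_2m_2$, one has a triple sum over $l$, $l'$ and the lattice positions $k$, and the summability requires the auxiliary Hölder exponent $r$ to lie in a nontrivial window (conditions \eqref{one'}--\eqref{four'}), with Lemma~\ref{choosingr} verifying this window is non-empty exactly when $2p>2p_0$ (and $2p>h+1$). The case $m=2$ needs separate treatment ($r=\infty$), and the sub-dyadic decomposition near the axes from Section~\ref{dyadicsum} has to mesh with all of this. You correctly flag the threshold $p_0$ as arising from the summation, but the mechanism is neither a direct geometric-series cancellation nor a straightforward application of the bilinear theorem at $q=s$; without the strict gap $1/q'<(h+1)/(2p)$ and the Hölder split in $k$, the sum does not close.
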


\begin{proof}
We begin by observing that we may assume that
\begin{align}\label{steintomas}
\frac {h+1}p>1.
	\end{align}
Indeed,  if $2p\ge 2(h+1),$ then we have the Stein-Tomas type result that $R^*$ is bounded from $L^2(\Gamma,d\nu)$ to $L^{2p}(\R^3)$ (see \cite{ikm}, \cite{IM-uniform}). Interpolating this with the trivial estimate from $L^1(\Gamma,d\nu)$ to $L^{\infty}(\R^3)$ and  applying H\"older's inequality on $\Gamma,$ we see that the situation where $(h+1)/p\le 1$ is settled in Theorem \ref{withouttriangle}.
\medskip

In the remaining cases,
interpolation  theory for Lorentz spaces  (see, e.g., \cite{Gra})  shows that it suffices to prove the restricted  weak-type estimate
\begin{align}\label{restweaktype}
	\|\widehat{\chi_\Omega d\nu}\|_{2p} \lesssim |\Omega|^{\frac{1}{s}}
\end{align}
for any measurable set $\Omega\subset Q=[0,1]\times[0,1]$.

To this end we perform the kind of Whitney decomposition mentioned in Section \ref{dyadicsum} of  $Q\times Q = \bigcup\limits_j \bigcup\limits_{k\approx\tilde k} \tau_{jk}\times\tau_{j\tilde k}$  into ``well-separated neighboring rectangles''
$\tau_{jk}$ and  $\tau_{j\tilde k},$ where
$$\tau_{jk}=[(k_1-1)2^{-j_1},k_12^{-j_1}]\times[(k_2-1)2^{-j_2},k_22^{-j_2}],
$$
and  where $k\approx\tilde k$ means  that $2\leq|k_i- \tilde k_i|\leq C$, $i=1,2$ (compare \cite{lee05},\cite {v05}).  Then we may estimate
\begin{align*}
	\|\widehat{\chi_\Omega d\nu}\|_{2p}^2
	= \|\widehat{\chi_\Omega d\nu}\widehat{\chi_\Omega d\nu}\|_p	
	\leq \sum_j\big[\sum_{k\sim\tilde k} \|\FT(\chi_{\Omega\cap\tau_{jk}}d\nu)
	\FT(\chi_{\Omega\cap\tau_{j\tilde k}}d\nu)\|_p^{p^*} \big]^\frac{1}{p^*},
\end{align*}
where
\begin{align}\label{pstardef}
	p^*=\min\{p,p'\},
\end{align}
with ${1}/{p}+{1}/{p'}=1$.
The last step can be obtained  by interpolation between  the cases $p=2$, where one may apply Plancherel's theorem, and  the cases $p=1$ and  $p=\infty,$ which are simply treated by means of the  triangle inequality (compare Lemma 6.3. in \cite {TV1}).
We claim that
\begin{align}\label{24mar1721}
	(\bar m+3)\left(\frac{1}{p}-\frac{1}{2}\right) < \frac{h+1}{2p}.
\end{align}
{\bf Case 1: $\bar m\leq 2m.$}
Then $\bar m \leq 3h$ and
\begin{align*}
	(\bar m+3)\left(\frac{1}{p}-\frac{1}{2}\right)-\frac{h+1}{2p}
	\leq 3(h+1)\left(\frac{1}{p}-\frac{1}{2}\right)-\frac{h+1}{2p}
	= (h+1)\left(\frac{5}{2p}-\frac{3}{2}\right) < 0
\end{align*}
according to our assumptions.
\medskip

\noindent {\bf Case 2: $\bar m> 2m.$}
Here,
\begin{align*}
	h+1 = \frac{\bar mm+\bar m+m}{\bar m+m}>\frac{\bar mm+3m}{\bar m+m} =(\bar m+3)\frac{m}{\bar m+m},
\end{align*}
and thus
\begin{align*}
  &(\bar m+3)\left(\frac{1}{p}-\frac{1}{2}\right)-\frac{h+1}{2p}		
  < (\bar m+3)\left(\frac{1}{p}-\frac{1}{2}-\frac{m}{\bar m+m}\frac{1}{2p}\right)		\\
  &= (\bar m+3)\left(\frac{1}{2p}\left(1+\frac{\bar m}{\bar m+m}\right)-\frac{1}{2}\right) < 0,
\end{align*}
because of  our assumption $2p>2p_0$.

\medskip
In both cases, these estimates show that we may  choose $q\ge 2 $ such that
\begin{align}\label{defiofq}
	(\bar m+3)\left(\frac{1}{p}-\frac{1}{2}\right) < \frac{1}{q'} < \frac{h+1}{2p}
\end{align}
(recall  here \eqref{steintomas}, which allows to choose $q\ge 2$).

The  first inequality  allows to apply Theorem \ref{nextBLthm} to the pair of hypersurfaces  $S_{jk}=\{(\xi,\phi(\xi)):\xi\in \tau_{jk}\}$ and $ S_{j\tilde k}=\{(\xi,\phi(\xi)):\xi\in \tau_{j\tilde k}\},$ with
\begin{align}\nonumber
	\rho_i=&2^{-j_i} 	\\
	\varkappa_i\sim & (k_i2^{-j_i})^{m_i-2}\sim  (\tilde k_i2^{-j_i})^{m_i-2} \label{16apr1456} \\
	\varkappa_i \rho_i^2\sim & k_i^{m_i-2}2^{-j_im_i}. \nonumber
\end{align}
Without loss of generality, we may assume that
\begin{align}\label{setofks}
	k\in I:=\{k|k_1^{m_1-2}2^{-j_1m_1}\geq k_2^{m_2-2}2^{-j_2m_2}\},
\end{align}
i.e., $\varkappa_1 \rho_1^2 \geq \varkappa_2 \rho_2^2$.
Thus, by  Theorem \ref{nextBLthm},
\begin{align*}
& \| R^*_{S_{jk}, S_{j\tilde k}}\|_{L^q(S_{jk})\times L^q(S_{j\tilde k})\to L^p(\R^3)} \\
	\lesssim& (\rho_1\rho_2)^{\frac{2}{q'}-\frac{1}{p}} (\varkappa_1\rho^2_1\vee\varkappa_2\rho^2_2)^{-\frac{1}{p}}
	\left(\frac{\varkappa_1\rho^2_1\vee\varkappa_2\rho^2_2}
	 		 	{\varkappa_1\rho^2_1\wedge\varkappa_2\rho^2_2}\right)^{\frac{2}{p}-1+\e}	\\
	=&	2^{-(j_1+j_2)\left(\frac{2}{q'}-\frac{1}{p}\right)} k_1^{-(m_1-2)\frac{1}{p}} 2^{j_1m_1\frac{1}{p}}
		\left(\frac{k_1^{m_1-2}}{k_2^{m_2-2}} 2^{-(j_1m_1-j_2m_2)}\right)^{\frac{2}{p}-1+\e}\\
	= &A_j\cdot B_{k,j}^{\frac{1}{p}} ,
\end{align*}
if we define
\begin{align*}
	&A_j = 2^{-(j_1+j_2)\left(\frac{2}{q'}-\frac{1}{p}\right)} 2^{j_1m_1\frac{1}{p}}	2^{-(j_1m_1-j_2m_2)\frac{2}{p}-1+\e}, \\
	&B_{j,\tilde k}\sim  B_{k,j}=
					k_1^{-(m_1-2)} \left(\frac{k_1^{m_1-2}}{k_2^{m_2-2}} \right)^{2-p+\e p}.
\end{align*}
Since $|\{\tilde k: k\sim\tilde k\}|\lesssim 1$ for fixed $k$,
we conclude that
\begin{eqnarray*}
	\|\widehat{\chi_\Omega d\nu}\|_{2p}^2
	&\lesssim& \sum_j A_j\left[\sum_{k\sim\tilde k} \left(B_{k,j}^{\frac{1}{p}}
	 |\Omega\cap\tau_{jk}|^\frac{1}{q}|\Omega\cap\tau_{j\tilde k}|^\frac{1}{q}\right)^{p^*}
	 							 \right]^\frac{1}{p^*}\\
	&\lesssim& \sum_j A_j\left[\sum_{k} B_{k,j}^{\frac{p^*}{p}} |\Omega\cap\tau_{jk}|^{\frac{2p^*}{q}} \right]^\frac{1}{p^*}.
\end{eqnarray*}

Therefore we are reduced  to showing  that
\begin{align}\label{keyest1}
	\sum_j A_j\left[\sum_{k} B_{k,j}^{\frac{p^*}{p}} |\Omega\cap\tau_{jk}|^{\frac{2p^*}{q}}\right]^\frac{1}{p^*}	
	\lesssim& |\Omega|^{\frac{2}{s}}.
\end{align}

\subsection{ Further reduction}

We decompose
$$\frac{2p^*}{q}=\frac{\alpha}{r^*}+\frac{1}{{r^*}'},
$$
where $r*\in[1,\infty]$ will be determined later,
 and introduce $r={r^*p^*}/{p}$. Applying H\"older's inequality to the summation in $k,$ with Hölder exponent $r^*\geq1,$  we get
\begin{eqnarray}\label{hoelder}
	 \left(\sum_{k\in I} B_{k,j}^{\frac{p^*}{p}} |\Omega\cap\tau_{jk}|^{\frac{2p^*}{q}} \right)^\frac{1}{p^*}	
	&\leq& \left(\sum_{k\in I} B_{k,j}^{\frac{p^*r^*}{p}}|\Omega\cap\tau_{jk}|^\alpha\right)^\frac{1}{p^*r^*}
		~ \left(\sum_{k\in I} |\Omega\cap\tau_{jk}| \right)^\frac{1}{p^*{r^*}'} \nonumber\\
	&\leq& \left(\sum_{k\in I} B_{k,j}^{r}\right)^\frac{1}{pr}	~ \min\{|\Omega|,2^{-j_1-j_2}\}^\frac{\alpha}{p^*r^*} ~
		 |\Omega|^{\frac{1}{p^*}\left(1-\frac{1}{r^*}\right)}	\nonumber\\
	&=& \left(\sum_{k\in I} B_{k,j}^{r} \right)^\frac{1}{pr}
		~ \min\{|\Omega|,2^{-j_1-j_2}\}^{\frac{2}{q}-\frac{1}{p^*}+\frac{1}{pr}} ~ |\Omega|^{\frac{1}{p^*}-\frac{1}{pr}}.\nonumber	
\end{eqnarray}

Moreover we have $|\Omega|\leq|Q|=1,$ as well as $1/{s'}\geq {h+1}/{2p}$, i.e., $2-(h+1)/{p}\geq {2}/{s}$. Therefore $|\Omega|^{2-\frac{h+1}{p}}\leq |\Omega|^{\frac{2}{s}}$, and thus in order to prove \eqref{keyest1}, it suffices to show that
$$
	|\Omega|^{2-\frac{h+1}{p}-\frac{1}{p^*}+\frac{1}{pr}}
	\gtrsim \sum_j A_j	\min\{|\Omega|,2^{-j_1-j_2}\}^{\frac{2}{q}-\frac{1}{p^*}+\frac{1}{pr}}
						\left[\sum_{k\in I} B_{k,j}^{r} \right]^\frac{1}{rp}, 	
$$
i.e., that
\begin{align*}
	&|\Omega|^{2-\frac{h+1}{p}-\frac{1}{p^*}+\frac{1}{pr}} \\
	&\gtrsim  \sum_j 2^{-(j_1m_1-j_2m_2)\frac{2}{p}-1+\e}  2^{-(j_1+j_2)\left(\frac{2}{q'}-\frac{1}{p}\right)}  2^{j_1m_1\frac{1}{p}}
						\min\{|\Omega|,2^{-j_1-j_2}\}^{\frac{2}{q}-\frac{1}{p^*}+\frac{1}{pr}}	&\\
	&  \hskip2cm \times	\left[\sum_{k\in I} k_1^{(m_1-2)(1-p+\e p)r} k_2^{(m_2-2)(p-2-\e p)r} \right]^\frac{1}{pr}.
\end{align*}
We apply the change of variables
$l=j_1+j_2\in\N$, $l'=j_1m_1-j_2m_2\in\Z$, such that
$$j_1=\frac{m_2l+l'}{m_1+m_2}.$$
 Then the exponent in $j_1,j_2$ becomes
\begin{eqnarray*}
	& & (j_1m_1-j_2m_2)\left(1-\frac{2}{p}-\e\right) + (j_1+j_2)\left(\frac{1}{p}-\frac{2}{q'}\right)+j_1m_1\frac{1}{p}	\\
	&=& l'\left(1-\frac{2}{p}-\e\right) + l\left(\frac{1}{p}-\frac{2}{q'}\right)
				+\frac{m_1m_2l+m_1l'}{m_1+m_2}\frac{1}{p}	\\
	&=& \frac{l'}{p}\left(p-\e p-\frac{m_1+2m_2}{m_1+m_2}\right) + l\left(\frac{h+1}{p}-\frac{2}{q'}\right).\end{eqnarray*}
The summation over  $k\in I_{l'}=\{k_1^{m_1-2}\geq k_2^{m_2-2}2^{l'}\}$ is independent of $l$, and thus we have  finally reduced the proof of \eqref{keyest1} to   proving the following two  decoupled estimates:\footnote
{Technically, we only have to sum over the smaller set $l'\in m_1\N -m_2\N$.}
\begin{align}\label{suminl'}
	\sum_{l'=-\infty}^\infty 2^{\frac{l'}{p}(p-\e p-\frac{m_1+2m_2}{m_1+m_2})}
				\left[\sum_{k\in I_{l'}} k_1^{(m_1-2)(1-p+\e p)r} k_2^{(m_2-2)(p-2-\e p)r} \right]^\frac{1}{pr}
	<\infty
\end{align}
and
\begin{align}\label{suminl}
	\sum_{l=0}^\infty 2^{l(\frac{h+1}{p}-\frac{2}{q'})} \min\{|\Omega|,2^{-l}\}^{\frac{2}{q} -\frac{1}{p^*}+\frac{1}{pr}}
	\lesssim |\Omega|^{2-\frac{h+1}{p}-\frac{1}{p^*}+\frac{1}{pr}}.
\end{align}

\subsection{The case $m>2$}
In this case we have both $m_1>2$ and $m_2>2$. 

\subsubsection{Summation in $k$}
We compare the sum over $k$ in \eqref{suminl'} with an integral. We claim that
\begin{align}\label{sumink}
	&\lefteqn{\int\limits_{\underset{k_1^{m_1-2}\geq k_2^{m_2-2}2^{l'}}{k_1,k_2\geq1}}
			k_1^{(m_1-2)(1-p+\e p)r} k_2^{(m_2-2)(p-\e p-2)r}d k} \nonumber\\
  & \hskip1.5cm\lesssim \begin{cases}  2^{|l'|(\frac{1}{m_1-2}+(1-p+\e p)r)}, & l'\geq0\\
	|l'|2^{|l'|(\frac{1}{m_2-2}+(p-\e p-2)r)_+}, & l'<0, 	
				 \end{cases},
\end{align}
provided that
\begin{align}
		a:= \frac{1}{m_1-2}+(1-p+\e p)r <0						\label{one}
\end{align}
and
\begin{align}
		a+b =\frac{1}{m_1-2}+\frac{1}{m_2-2}-r<0,		\label{two}
\end{align}
where
\begin{align}
b&:=\frac{1}{m_2-2}+(p-\e p-2)r	\in \R.																		\label{six}
\end{align}

For the moment, we will simply assume that these conditions hold true. We shall collect several further  conditions on the exponent $r$ and verify at the end of this section  that we can indeed find an $r$ such that all  these conditions are satisfied.
\medskip

By means of the coordinate transformation $s=k_1^{m_1-2}$, $t=k_2^{m_2-2}$ (i.e., $d k \sim  s^{\frac{1}{m_1-2}-1}t^{\frac{1}{m_2-2}-1}d(s,t)$\,), \eqref{sumink} simplifies to  showing that
\begin{align}\label{keyest2}
	J(a,b)=\underset{\underset{\scriptstyle s\geq t2^{l'}}{s,t\geq1}}{\int\int}
			s^a t^b	\frac{d s}{s}\frac{d t}{t}
	\lesssim \begin{cases}  2^{|l'|a}  & l'\geq0	\\  2^{|l'|b_+} & l'<0,
					\end{cases},
\end{align}
provided that $a<0$, $a+b<0$. Here we have set $b_+=b\vee 0.$ Changing $t'=s2^{-l'}/t$, the set of integration for the $t$-variable $\{t: t\geq 1, s2^{-l'}/t\geq1\}$ transforms into $\{t':s2^{-l'}/t'\geq 1, t'\geq1\},$ and thus, since we assume that $a+b<0,$
\begin{eqnarray*}
	J(a,b) &=& \underset{\underset{\scriptstyle s\geq t'2^{l'}}{s,t'\geq1}}{\int\int}
			s^a \left(\frac{s}{t'}2^{-l'}\right)^b \frac{d s}{s} \frac{d t'}{t'}
	= 2^{-l'b} \int_{t'=1}^\infty t'^{-b}
						\int_{s=1\vee t'2^{l'}}^\infty s^{a+b}\frac{d s}{s}\frac{d t'}{t'}	\\
	&=& 2^{-l'b} \int_{1}^\infty (1\vee t'2^{l'})^{a+b}
												 t'^{-b} \frac{d t'}{t'}	. 												
\end{eqnarray*}
If $l'\geq0,$ then clearly $1\vee t'2^{l'}=t'2^{l'}$, and since $a<0,$ we get
\begin{align*}
	J(a,b) = 2^{l'a} \int_1^\infty t'^{a}\frac{d t' }{t'}\sim  2^{l'a}.
\end{align*}
And, if  $l'<0$, the we split into
\begin{align*}
	J(a,b) =& 2^{-l'b} \int_1^{2^{|l'|}} t'^{-b}  \frac{d t'}{t'}
						+ 2^{l'a} \int_{2^{|l'|}}^\infty t'^a \frac{d t'}{t'}		
	= 	2^{-l'b} \frac{1-2^{l'b}}{b}
						+ \int_{1}^\infty u^a \frac{d u}{u}\\
	\lesssim& |l'|(2^{|l'|b}+1)		
	\sim  |l'| 2^{|l'|b_+} 							
\end{align*}
(notice that the additional factor $|l'|$ arises  in fact  only  when $b=0$).
This proves \eqref{keyest2}.

\subsubsection{Summation in $l'$}

In order to apply \eqref{sumink} to  \eqref{suminl'}, we split the sum in \eqref{suminl'} into summation over $l'\geq0$ and summation over $l'<0$.
In the first case $l'\geq0$, we obtain
\begin{align}\label{suminl'>0}
	& \sum_{l'\geq0} 2^{\frac{l'}{p}(p-\e p-\frac{m_1+2m_2}{m_1+m_2})}
				\left[\sum_{k\in I} k_1^{(m_1-2)(1-p+\e p)r} k_2^{(m_2-2)(p-\e p-2)r}\right]^\frac{1}{rp}	\nonumber\\
	\lesssim& \sum_{l'\geq0} 2^{\frac{l'}{p}(p-\e p-1-\frac{m_2}{m_1+m_2})}
								2^{l'(\frac{1}{pr}\frac{1}{m_1-2} + \frac{1-p+\e p}{p})}	\\
	=&	\sum_{l'\geq0} 2^{\frac{l'}{p}(\frac{1}{r}\frac{1}{m_1-2}-\frac{m_2}{m_1+m_2})}.\nonumber
\end{align}
The sum is finite  provided
\begin{align}
 \frac{1}{r} < \frac{m_2(m_1-2)}{m_1+m_2},		\label{three}
\end{align}
which gives yet another condition for our collection.

In the second case $l'<0$, we have
\begin{align}\label{suminl'<0}
	& \sum_{l'<0} 2^{\frac{l'}{p}(p-\e p-\frac{m_1+2m_2}{m_1+m_2})}
				\left[\sum_{k\in I} k_1^{(m_1-2)(1-p+\e p)r} k_2^{(m_2-2)(p-\e p-2)r}\right]^\frac{1}{rp}	\nonumber\\
	\lesssim& \sum_{l'<0} 2^{\frac{l'}{p}(p-\e p-\frac{m_1+2m_2}{m_1+m_2})}
									|l'| 2^{|l'|(\frac{1}{pr}\frac{1}{m_2-2}+\frac{p-\e p-2}{p})_+}	\\
	=& \sum_{l'<0} |l'| 2^{\frac{l'}{p}(p-\e p-\frac{m_1+2m_2}{m_1+m_2}
													-(\frac{1}{r}\frac{1}{m_2-2}+p-\e p-2)_+)}.	\nonumber								
\end{align}
Notice that for sufficiently  small $\e>0$ we have
$p-\e p> p_0
= 1+{\bar m}/(\bar m+m)
\geq 1+{m_2}/(m_1+m_2)$,
and therefore
\begin{align}\label{6nov1353}
	p-\e p-\frac{m_1+2m_2}{m_1+m_2}>0.
\end{align}
Thus  the  last sum in \eqref{suminl'<0} converges in the case where
$$\frac{b}{r}=\frac{1}{r}\frac{1}{m_2-2}+p-\e p-2\leq0.$$
This shows that we only need to discuss the case  where $b>0$, in which we need that
\begin{align*}
0 <& p-\e p - \frac{m_1+2m_2}{m_1+m_2} -\frac{1}{r}\frac{1}{m_2-2}-p+\e p+2	\\
	=& \frac{m_1}{m_1+m_2} -\frac{1}{r}\frac{1}{m_2-2},
\end{align*}
which is equivalent to
\begin{align}
	 \frac{1}{r} < \frac{m_1(m_2-2)}{m_1+m_2}.		\label{threesym}
\end{align}
Notice that this is of the same form as  \eqref{three}, only with the roles of $m_1$ and $m_2$ interchanged.

\subsubsection{Summation in $l$}
Recall that we want to show estimate \eqref{suminl}, i.e.,
\begin{align*}
	\sum_{l=0}^\infty 2^{l(\frac{h+1}{p}-\frac{2}{q'})}
		 \min\{2^{-l},|\Omega|\}^{\frac{2}{q}-\frac{1}{p^*}+\frac{1}{pr}}
	\lesssim |\Omega|^{2-\frac{h+1}{p}-\frac{1}{p^*}+\frac{1}{pr}}.
\end{align*}
We claim that it is sufficient to show that for $\mu>0$ and $\nu-\mu>0$
\begin{align}\label{intinl}
	\int_0^\infty e^{x\mu} \min\{e^{-x},A\}^{\nu} d x \lesssim A^{\nu-\mu}.
\end{align}
Indeed, given  \eqref{intinl}, we apply it  with  $A=|\Omega|$, $\mu=(h+1)/p-2/{q'}$ and $\nu={2}/{q}-{1}/{p^*}
+1/{pr}$. Due to the choice of $q$ in \eqref{defiofq}, we have $\mu>0$. Moreover we want that
$$0<\nu-\mu=2-\frac{1}{p^*}+\frac{1}{pr}-\frac{h+1}{p} = \frac{1}{p}\left(2p-h-1-\frac{p}{p*}+\frac{1}{r}\right).
$$
Notice that if $p\leq2$, then ${p}/{p*}=1$, but if $p>2$, then
${p}/{p*}=p\left(1-{1}/{p}\right)=p-1$.
Thus ${p}/{p*}=1+(p-2)_+$ for all $1\leq p\leq\infty$, i.e., the condition which is required here is
\begin{align}
	 \frac{1}{r}>h+2-2p+(p-2)_+ \label{four}.
\end{align}

In order to verify  \eqref{intinl}, observe that
\begin{align*}
	\int_0^\infty e^{x\mu} \min\{e^{-x},A\}^{\nu} d x
	=& \int_{\ln A}^\infty e^{y\mu}A^{-\mu} \min\{e^{-y}A,A\}^{\nu} d y  \\
	=& A^{\nu-\mu} \int_{\ln A}^\infty e^{y\mu} \min\{e^{-y},1\}^{\nu} d y.
\end{align*}
The last  integral can be estimated by
\begin{align*}
	\int_{\ln A}^\infty e^{y\mu} \min\{e^{-y},1\}^{\nu} d y
	\leq \int_{-\infty}^0 e^{y\mu} d y + \int_0^\infty e^{-y(\nu-\mu)} d y,
\end{align*}
which is convergent since $\mu>0$ and $\nu-\mu>0$.

It still remains to  be checked whether there exists some  $1\leq r^*<\infty$  (for m>2) for which $r$ satisfies the conditions
\eqref{one}, \eqref{two}, \eqref{three}, \eqref{threesym} and \eqref{four}.

This task will be accomplished in Lemma \ref{choosingr}. First, we discuss the situation where $m=2$.


\subsection{The case $m=2$}
We will just give some hints how to modify the previous proof for this situation. In this case, $r=\infty$ turns out to be an appropriate choice, and the inequality that we need to start the argument with here reads
\begin{align*}
	 \left(\sum_{k\in I} B_{k,j}^{\frac{p^*}{p}} |\Omega\cap\tau_{jk}|^{\frac{2p^*}{q}} \right)^\frac{1}{p^*}	
	\leq \big(\sup_{k\in I} B_{k,j}\big)^{\frac 1p} \min\{|\Omega|,2^{-j_1-j_2}\}^{\frac{2}{q}-\frac{1}{p^*}} ~ |\Omega|^{\frac{1}{p^*}}.
\end{align*}
This is very easy to prove, provided ${2p^*}/{q}\ge1$ (notice that this condition corresponds to our previous decomposition of  ${2p^*}/{q}$  when $r=\infty)$.  To see that indeed ${2p^*}/{q}\ge1,$ recall from \eqref{defiofq}  that $1/{q'}<(h+1)/{2p}.$ Then, it is enough to check  that $2p^*\left(1-(h+1)/{2p}\right)>1,$ i.e.,
$h+1-2p+p/{p^*}<0.$ The last condition  is equivalent  to $h+2-2p+(p-2)_+<0.$ However, this is what we shall indeed verify in the proof of Lemma \ref{choosingr} (compare  estimate \eqref{goodest} when $m=2$). 	

\medskip
	
Observe next that we may re-write the integral in $\eqref{sumink}$ in terms of the $L^r$-norm as
\begin{align*}
	\left\| \left( k_1^{(m_1-2)(1-p+\e p)} k_2^{(m_2-2)(p-\e p-2)},	 \right)_{k\in I_{l'}} \right\|_{r}
	\lesssim \begin{cases}  2^{|l'|(\frac{1}{r}\frac{1}{m_1-2}+1-p+\e p)} & l'\geq0,	\\
						|2^{|l'|(\frac{1}{r}\frac{1}{m_2-2}+p-\e p-2)_+}, & l'<0, 	
					\end{cases}
\end{align*}
provided the conditions \eqref{one} and \eqref{two} hold true, i.e.,  that
\begin{align*}
	 \frac{1}{m_1-2}+(1-p+\e p)r <0 \quad\mbox{and}\quad  \frac{1}{m_1-2}+\frac{1}{m_2-2}-r<0.
\end{align*}
This gives rise to the conjecture that  (for $r=\infty$) we should have
\begin{align}\label{supink}
	  \sup_{k\in I}  k_1^{(m_1-2)(1-p+\e p)}	& k_2^{(m_2-2)(p-\e p-2)} \\
	\leq \sup_{s\geq t2^{l'}}  s^{1-p+\e p} t^{p-\e p-2}					
	 &\lesssim\ \begin{cases}  2^{|l'|(1-p+\e p)} & l'\geq0	\\
			  2^{|l'|(p-\e p-2)_+} & l'<0, \nonumber	
					\end{cases}
\end{align}
which  would suffice in this case.
But notice that the conditions \eqref{one} and \eqref{two} are formally fulfilled for $r=\infty,$ and
it  is then  easy to check that \eqref{supink} indeed holds true, even in the case $m=2.$

\subsubsection{Summation in $l'$}
The summation in $l'$ becomes simpler here. We split again into the sums over $l'\geq0$ and $l'<0$, and obtain for the first half of the sum in \eqref{suminl'>0}
\begin{align*}
	\sum_{l'\geq0} 2^{\frac{l'}{p}(p-\e p-\frac{m_1+2m_2}{m_1+m_2})}
								2^{l'\frac{1-p+\e p}{p}}	
	=	\sum_{l'\geq0} 2^{-\frac{l'}{p}\frac{m_2}{m_1+m_2}} <\infty.
\end{align*}
The second part of the sum becomes (compare \eqref{suminl'<0})
\begin{align*}
	\sum_{l'<0} 2^{\frac{l'}{p}(p-\e p-\frac{m_1+2m_2}{m_1+m_2}
													-(p-\e p-2)_+)}.							\end{align*}
We already know from \eqref{6nov1353} that
$p-\e p-(m_1+2m_2)/(m_1+m_2)>0$. Thus the sum converges if 
$p-\e p\leq2$. For $p-\e p>2$, notice that
\begin{align*}
	  p-\e p-\frac{m_1+2m_2}{m_1+m_2}-(p-\e p-2)_+	
	= \frac{m_1}{m_1+m_2}	>0,
\end{align*}
and thus the sum is finite.

\subsubsection{Summation in $l$}
It remains to show that
\begin{align*}
	\qquad \sum_{l=0}^\infty 2^{l(\frac{h+1}{p}-\frac{1}{q'})}
		 	\min\{|\Omega|,2^{-l}\}^{\frac{1}{q}-\frac{1}{p^*}}	\lesssim |\Omega|^{2-\frac{h+1}{p}-\frac{1}{p^*}},
\end{align*}
which is the special case $r=\infty$ of \eqref{suminl}. We saw that this holds true provided \eqref{four} is valid, i.e., if  $1/{r}>h+2-2p+(p-2)_+$.

However, if $m=2,$ the
$$2p > p_0
= \frac{2\bar m}{\bar m+2}+2= h+2.$$
Thus for the case $p\leq2$ we have $h+2-2p+(p-2)_+ = h+2-2p <0$. For the case $p>2$ notice that
$$h+2-2p+(p-2)_+ = h-p = \frac{2\bar m}{\bar m+2} -p < 2-p <0.$$
~\end{proof}
~

\subsection{Final considerations}

We finally verify that there is indeed always some $r$ for which  all necessary  conditions  \eqref{one}, \eqref{two}, \eqref{three}, \eqref{threesym} and \eqref{four} are satisfied in the   case $m>2$. Recall  that
\begin{align*}
	\frac{2p^*}{q}=\frac{\alpha}{r^*}+\frac{1}{{r^*}'},
\end{align*}
and notice that it will suffice to verify the following equivalent inequalities:
\begin{align}
	& \frac{1}{r} < (m_1-2)(p-1),	\label{one'}\\
	& \frac{1}{r} < \frac{(m_1-2)(m_2-2)}{m_1+m_2-4},\label{two'}\\
	& \frac{1}{r} < \frac{m_2(m_1-2)}{m_1+m_2},\label{three'}\\
	& \frac{1}{r} < \frac{m_1(m_2-2)}{m_1+m_2},\label{threesym'}\\
	&  \frac{1}{r}>h+2-2p	+(p-2)_+.	\label{four'}
\end{align}
\begin{lemnr}\label{choosingr}
Assume  that  $m>2$ and  $2p>\max\{2p_0,h+1\},$ where we recall that $p_0=1+{\bar m}/(\bar m+m).$ Define
\begin{align*}
	J=\Big]0,1+(p-2)_+\Big]\cap\Big]h+2-2p+(p-2)_+,\frac{\bar m(m-2)}{\bar m+m}\Big[.
\end{align*}
Then $J\neq\emptyset,$ and for every $1/{r}\in J$ we have
\begin{align}
	r^* =\frac{rp}{p^*}\geq 1, \quad \mbox{and} \quad
	\alpha = r^*\left(\frac{2p^*}{q}-\frac{1}{{r^*}'}\right) >0, \label{14apr1411}
\end{align}
and moreover the inequalities \eqref{one'}, \eqref{two'}, \eqref{three'}, \eqref{threesym'} and \eqref{four'}  are valid.
\end{lemnr}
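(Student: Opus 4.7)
The plan is to identify the quantity $\bar m(m-2)/(\bar m+m)$ appearing in the definition of $J$ as the effective minimum of all upper bounds in \eqref{one'}--\eqref{threesym'}, and then to check that the two parts of the hypothesis $2p>\max\{2p_0,h+1\}$ are precisely what is needed for $J$ to be nonempty and for $\alpha>0$.

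First I would observe the elementary identity $\bar m(m-2)/(\bar m+m)=\min\{m_1(m_2-2)/(m_1+m_2),\, m_2(m_1-2)/(m_1+m_2)\}$, which holds because the difference of the two candidates equals $2(m_2-m_1)/(m_1+m_2)$, so that the minimum is attained at the index corresponding to $m$. This immediately yields \eqref{three'} and \eqref{threesym'} for any $1/r\in J$. Condition \eqref{one'} then follows from $p>p_0=1+\bar m/(\bar m+m)$ via $(m_1-2)(p-1)\geq (m-2)(p-1) > (m-2)\bar m/(\bar m+m)$. For \eqref{two'}, cross-multiplying $\bar m(m-2)/(\bar m+m) \leq (m-2)(\bar m-2)/(m+\bar m-4)$ and cancelling the positive factor $m-2$ reduces the inequality to the trivial $\bar m\geq m$. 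Finally, \eqref{four'} is simply the lower endpoint of $J$.

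Next I would verify that $J$ is nonempty. From $h+1<2p$ one gets directly $h+2-2p+(p-2)_+<1+(p-2)_+$, so the only nontrivial comparison is the lower endpoint against $\bar m(m-2)/(\bar m+m)$. Using $h=m\bar m/(m+\bar m)$ one computes $h-\bar m(m-2)/(\bar m+m)=2\bar m/(m+\bar m)=2(p_0-1)$, converting the desired inequality into $2p-(p-2)_+>2+2\bar m/(m+\bar m)$. For $p\leq 2$ this is exactly $2p>2p_0$, while for $p>2$ it reads $p>2\bar m/(m+\bar m)$, which is strictly weaker than $p>p_0$; both are contained in the hypothesis.

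The bound $r^*\geq 1$ is immediate from the identity $p/p^*=\max\{1,p-1\}=1+(p-2)_+$ combined with the upper bound $1/r\leq 1+(p-2)_+$ defining $J$. For $\alpha>0$ I would rewrite $\alpha=1+r^*(2p^*/q-1)$, making the case $2p^*/q\geq 1$ trivial; in the complementary case the condition is equivalent to $1/r>p/p^*-2p/q$. A case split on $p\lessgtr 2$ reduces this in both cases to $(h+1)/(2p)\geq 1/q'$, which holds strictly by \eqref{defiofq}; combined with the strict bound $1/r>h+2-2p+(p-2)_+$ from $J$, this gives the required strict inequality. The main obstacle, if any, is keeping careful track of the strictness of all intermediate inequalities so that $\alpha>0$ (and not merely $\alpha\geq 0$) is obtained; every step is otherwise elementary algebra.
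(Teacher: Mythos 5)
Your proof is correct and follows essentially the same route as the paper. The minor presentational differences are: (a) you make explicit the identity $\bar m(m-2)/(\bar m+m)=\min\bigl\{m_1(m_2-2)/(m_1+m_2),\,m_2(m_1-2)/(m_1+m_2)\bigr\}$, which the paper uses implicitly via $\bar m(m-2)/(\bar m+m)=(m_1m_2-2\bar m)/(m_1+m_2)\le(m_1m_2-2m_i)/(m_1+m_2)$; (b) for $\alpha>0$ you rewrite $\alpha=1+r^*(2p^*/q-1)$ and split at $2p^*/q\gtrless1$, whereas the paper works directly with $2p^*/q>1/(r^*)'$ — note that in fact the case split is unnecessary, since the chain $p/p^*-2p/q<h+2-2p+(p-2)_+$ is equivalent to $1/q'<(h+1)/(2p)$ without any case distinction (the $(p-2)_+$ terms cancel); and (c) for \eqref{one'} you bound $(m-2)(p-1)>(m-2)\bar m/(\bar m+m)$ directly via $p>p_0$, while the paper passes through $(p_0-1)(m_1-2)$. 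All these are essentially the same computations in a slightly different order.
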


\begin{proof}
First of all, we will show that $J\neq\emptyset$. We need to see that
\begin{align}\label{goodest}
h+2-2p+(p-2)_+<\frac{\bar m(m-2)}{\bar m+m}=h-\frac{2\bar m}{\bar m+m},
\end{align}
 i.e., that $2p_0=2+2\bar m/{\bar m+m} < 2p-(p-2)_+$. For the case $p\leq 2$, this holds true since $2p> 2p_0$.
If $p>2$, observe that
\begin{align*}
			h+2-2p+(p-2)_+
	= h-p
	< h-2
	\leq h-\frac{2\bar m}{\bar m +m}
	= \frac{\bar m(m-2)}{\bar m+m}.
\end{align*}
Thus both intervals used for the definition of $J$ are not empty, but we still have to check that their intersection is not trivial.
Since we assume that  $2p>h+1$, we have
$$h+2-2p+(p-2)_+<1+(p-2)_+.
$$
And, for  $m>2$, we also have $0<\bar m(m-2)/( \bar m+m)$,  which shows that  $J\neq\emptyset$.

\medskip
Next, if $1/{r}\in J$, then in particular ${1}/{r}\leq1+(p-2)_+={p}/{p^*},$ and thus $r^*=r{p}/{p^*}\geq1$.
To prove \eqref{14apr1411}, observe that due to our choice of $q$ in \eqref{defiofq} we have $1/{q}>1-(h+1)/{2p}$, and thus it suffices to prove that
\begin{align*}
	2p^*\left(1-\frac{h+1}{2p}\right)>\frac{1}{{r^*}'}=1-\frac{p^*}{rp}.
\end{align*}
This  inequality is equivalent to
\begin{align*}
	\frac{1}{r}>\frac{p}{p*}+h+1-2p = h+2-2p+(p-2)_+,
\end{align*}
and thus satisfied.

Considering the remaining conditions listed before the statement of the lemma, notice that  \eqref{four'} is immediate  by the definition of $J$.
Furthermore we have
\begin{align*}
	\frac{1}{r} < \frac{\bar m(m-2)}{\bar m+m} = \frac{m_1m_2-2\bar m}{m_1+m_2} \leq \frac{m_1m_2-2m_i}{m_1+m_2}
\end{align*}
for both $i=1,2$, which gives \eqref{three'} and \eqref{threesym'}. To obtain \eqref{one'}, we estimate
\begin{align*}
	\frac{1}{r} < \frac{\bar m(m-2)}{\bar m+m}
	\leq \frac{\bar m(m_1-2)}{\bar m+m}
	=  (p_0-1)(m_1-2) < (p-\e p-1)(m_1-2).
\end{align*}
Finally, observe that the following inequalities  are all equivalent:
\begin{align*}
\frac{\bar m(m-2)}{\bar m+m} &\leq \frac{(m_1-2)(m_2-2)}{m_1+m_2-4},\\
\frac{\bar m}{\bar m+m} &\leq \frac{\bar m-2}{\bar m+m-4},\\
\bar m(\bar m+m)-4\bar m &\leq \bar m(\bar m+m)-2(\bar m+m),\\
 m &\leq \bar m.
\end{align*}
Hence \eqref{two'} holds true as well.
\end{proof}

\subsection{Finishing the proof}

We can now conclude  the prove of our main result, Theorem \ref{mainthm}:

\begin{kornr}\label{finish}
Let $2p>\max\{{10}/{3},h+1\}$, ${1}/{s'}\geq (h+1)/{2p}$ and $1/{s}+(2\bar m+1)/{2p}<(\bar m+2)/{2}$.
Then $R^*$ is bounded from $L^{s,t}(\Gamma)$ to $L^{2p,t}(\R^3)$ for every  $1\leq t\leq\infty$.
If moreover $s\leq 2p$ or $1/{s'}>(h+1)/{2p}$, then $R^*$ is bounded from $L^{s}(\Gamma)$ to $L^{2p}(\R^3)$.
\end{kornr}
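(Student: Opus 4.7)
The plan is to deduce Corollary \ref{finish} from Theorem \ref{withouttriangle} by combining real interpolation, standard Lorentz-space embeddings, and a Bourgain-style dyadic summation. A direct computation settles a large part of the range. On the critical line $1/s' = (h+1)/{2p}$, the new condition $1/s + (2\bar m+1)/{2p} < (\bar m+2)/2$ translates to
$$1 + \frac{2\bar m - h}{2p} < \frac{\bar m+2}{2}, \quad\text{i.e.}\quad \frac{1}{2p} < \frac{\bar m+m}{2(2\bar m+m)} = \frac{1}{2p_0},$$
so that the two boundary lines of the Corollary's hypothesis meet exactly at the corner $(1/s_0, 1/(2p_0))$. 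Consequently, whenever $2p \geq 2p_0$, Theorem \ref{withouttriangle} already supplies the claimed Lorentz--Lorentz bound (the Knapp condition then being automatic), and the only genuinely new range is $\max\{10/3, h+1\} < 2p < 2p_0$, which is non-empty precisely when $\bar m > 2m$.

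In this new range I would set up a real interpolation between two restricted weak-type endpoints. The first endpoint is the corner $(1/s_0, 1/(2p_0))$, inherited as a limit of estimates from Theorem \ref{withouttriangle}; it has the form $\|R^*\chi_\Omega\|_{L^{2p_0,\infty}} \lesssim |\Omega|^{1/s_0}$. The second endpoint must lie on the Knapp line $\ell\colon 1/s + (2\bar m+1)/{2p} = (\bar m+2)/2$. To obtain it I would exploit the product structure $\phi(\xi_1, \xi_2) = \psi_1(\xi_1) + \psi_2(\xi_2)$: on product-type functions the adjoint extension operator factorizes into two one-dimensional extension operators, so combining the sharp one-dimensional restriction theorem of Fefferman--Zygmund for curves of finite type $m_i$ (of which Lemma \ref{oscint} is essentially the core oscillatory estimate) with Minkowski's inequality in the transverse variable should yield the required bound along $\ell$. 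Marcinkiewicz interpolation between the two endpoints then fills out precisely the triangle of $(1/s, 1/(2p))$ pairs prescribed by the Corollary, uniformly in the Lorentz second index $t$.

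For the strong-type conclusion two separate arguments handle the two alternative hypotheses. If $s \leq 2p$, I set $t = s$ in the Lorentz--Lorentz estimate to get $L^s = L^{s,s} \to L^{2p,s}$, and then apply the Lorentz embedding $L^{2p,s} \hookrightarrow L^{2p,2p} = L^{2p}$, which is valid exactly under the hypothesis $s \leq 2p$. Under the alternative hypothesis $1/{s'} > (h+1)/{2p}$ (strictly below the critical line), I would invoke Bourgain's summation trick, as previewed in the discussion preceding Section \ref{Scaling}: decompose $\Gamma$ into non-isotropic dyadic annuli $\Gamma_r$ under the dilations $(\xi_1,\xi_2) \mapsto (r^{1/m_1}\xi_1, r^{1/m_2}\xi_2)$, apply the already-established Lorentz estimate to each (normalized) annulus, and sum. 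The strict inequality below the critical line provides the geometric gain that upgrades the bound from $L^{2p,\infty}$ to $L^{2p}$.

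The principal obstacle will be the endpoint bound on $\ell$. Although the splitting of $\phi$ reduces matters to one-dimensional restriction plus mixed-norm manipulations, the delicate point is to produce the bound with the correct Lorentz second index so that it interpolates with Theorem \ref{withouttriangle}'s output to yield Lorentz--Lorentz estimates with a common index $t$ (rather than only strong-type bounds in smaller regions). A secondary but nontrivial check is that the interpolation triangle geometrically covers the entire region defined by the three hypotheses of the Corollary; this requires routine but careful bookkeeping of the slopes of the critical line and of $\ell$ at the corner $(1/s_0, 1/(2p_0))$.
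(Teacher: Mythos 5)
Your identification of the geometry is right: the corner $(1/s_0,1/(2p_0))$ of the admissible region is the intersection of the critical line with the line $\ell$, and the genuinely new range is $\max\{10/3,h+1\}<2p<2p_0$, nonempty precisely when $\bar m>2m$. However, your proposed route to fill that range is not the paper's and, as sketched, has a genuine gap at the key step.

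The paper does \emph{not} interpolate against a new endpoint on $\ell$. Instead it runs the proof of Theorem~\ref{withouttriangle} again, but with $m$ replaced by an auxiliary exponent $m^\sharp\in]m,\bar m/2]$ chosen so that the threshold $2p_0^\sharp=2+2\bar m/(\bar m+m^\sharp)$ drops below $2p$. The only new input needed is a monotonicity statement: the bound $C_{\bar m,m}$ produced by Theorem~\ref{nextBLthm} is nondecreasing in $m$, because the curvatures obey $\varkappa_i^\sharp\leq\varkappa_i$ while the side lengths $\rho_i$ are unchanged, and in the range $2<2p<4$ both exponents $\frac1p-1+\e$ and $1-\frac2p-\e$ are negative. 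Thus $C_{\bar m,m}\leq C_{\bar m,m^\sharp}$, and the whole dyadic summation machinery of Section~6 runs with $m^\sharp$ in place of $m$ without further changes. No separate estimate on $\ell$ is ever proved; it only enters as a necessary-condition boundary.

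The gap in your argument is the claimed second endpoint on $\ell$. You propose to get it from the tensor structure $\phi(\xi_1,\xi_2)=\psi_1(\xi_1)+\psi_2(\xi_2)$ plus one--dimensional Fefferman--Zygmund restriction. That factorization is only literal for product-type data $f(\xi)=f_1(\xi_1)f_2(\xi_2)$, and even then the two one-dimensional extension operators are coupled through the common $x_3$ variable, so a Minkowski/H\"older step produces mixed-norm quantities such as $\|E_if_i\|_{L^p_{x_i}L^{2p}_{x_3}}$ rather than the standard $L^p(\R^2)$ norms covered by the one-dimensional restriction theorem. More seriously, the restricted weak-type bound one needs must hold for $\chi_\Omega$ with $\Omega$ an arbitrary measurable set, which is not of product type; passing from product data to general data is exactly what requires the Whitney decomposition and the bilinear machinery, and one cannot sidestep it by tensorization. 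This is also why Ferreyra--Urciuolo's dilation-plus-one-dimensional approach (which you are implicitly re-inventing) only reaches $p>4$, well short of the range of the Corollary. Finally, even granting both endpoints, real interpolation of two restricted weak-type estimates would not automatically deliver the full Lorentz scale $L^{s,t}\to L^{2p,t}$ for all $1\leq t\leq\infty$ claimed in the statement. Your handling of the strong-type conclusion in the two alternative cases is fine (the case $s\leq 2p$ by $L^{s,s}\hookrightarrow L^{2p,2p}$, the strict-inequality case by nudging to a smaller $s_0$ on the critical line and using $L^{s}\hookrightarrow L^{s_0,2p}$ on a finite measure space), but it rests on the Lorentz estimate whose proof is missing.
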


\begin{proof}
The  crucial observation is that the intersection point of the  two lines
$$\frac{1}{s'}=\frac{h+1}{2p} \quad \mbox{and} \quad \frac{2\bar m+1}{2p}+\frac{1}{s}=\frac{\bar m+2}{2}
$$
has the $p$-coordinate  $p=\widehat{p_0}=1+{\bar m}/(\bar m+m)$ (comparing  with \eqref{p0},  notice that  $\widehat{p_0}=p_0/2$).
So, what remains is to establish  estimates for $R^*$ for the  missing points $(1/s,1/p)$ lying  within the sectorial region defined by the conditions $(2\bar m+1)/{2p}+1/{s}<(\bar m+2)/{2}$ and  $1/p>1/\widehat{p_0}$ (the region above  the horizontal threshold line  $1/p=1/p_0$ from Theorem \ref{withouttriangle} (cf. Figure \ref{trick1}).

\medskip
Notice also  that if $m\ge \bar m/2,$ then $\widehat{p_0}\le 5/3,$ i.e., $p_0\le 10/3,$  and hence the condition $1/{s}+(2\bar m+1)/{2p}<(\bar m+2)/{2}$ becomes redundant.

Moreover, the  condition $1/s+(2\bar m+1)/{2p}<(\bar m+2)/{2}$ does only depend on $\bar m$, but not on $m$, whereas the condition $1/{s'}=(h+1)/{2p}$ depends on the height $h$, i.e., on both $m_1$ and $m_2.$

\begin{figure}[ht]\begin{center}
  \includegraphics{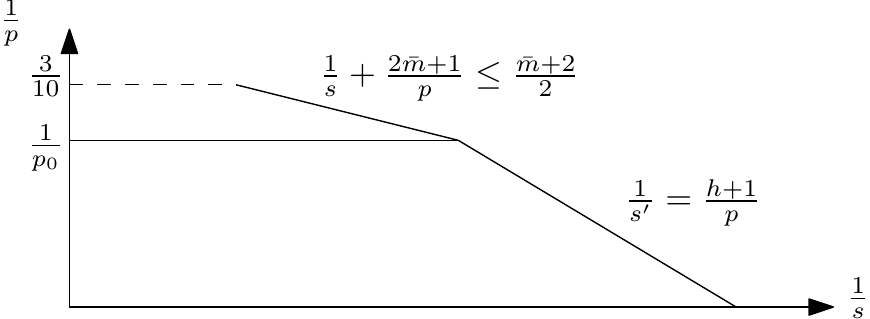}
\caption{Range of $p$ and $q$ in Theorem \ref{mainthm}}
\label{trick1}
\end{center}\end{figure}
\medskip

This leads to the following {\it heuristic idea}:
Assume we fix $\bar m$ and  consider a  family of  surfaces $\Gamma_{\bar m,m^\sharp}$  corresponding to exponents   $m_1=\bar m$ and $m_2=m^\sharp$ for different exponents
$m<m^\sharp$ such that  $\Gamma_{\bar m,m}=\Gamma$  (think for instance of the graph of $x_1^{m^\sharp}+x_2^{\bar m}$ for $m^\sharp\ne m$).
Let us then  compare the restriction estimates that we got so far  for the surface $\Gamma=\Gamma_{\bar m,m}$     with the ones for the hypersurfaces  $\Gamma_{\bar m,m^\sharp}.$  Denote by $h$ and $h^\sharp$ the heights of these hypersurfaces.  Then $h<h^\sharp,$  so that the critical line $1/{s'}=(h^\sharp+1)/{2p}$ lies  below the critical line $1/{s'}=(h+1)/{2p}$ for $\Gamma,$ but its intersection point with the   corresponding  horizontal threshold line  $1/p=1/p_0^\sharp,$ where $p_0^\sharp=2+{2\bar m}/{(\bar m+m^\sharp)}<p_0,$ lies above the previous intersection point (cf. Figure \ref{trick2}).

\begin{figure}[ht]\begin{center}
  \includegraphics{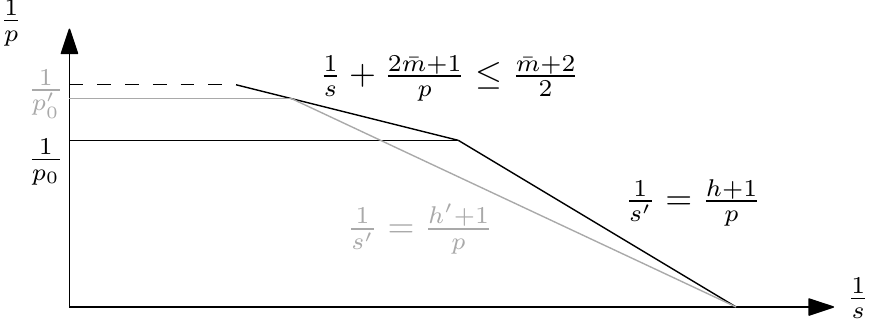}
\caption{Variation of the minimal exponent}
\label{trick2}
\end{center}\end{figure}

This suggests that for our theorem, it should morally be  sufficient to ``increase'' $m^\sharp$ until $\bar m= 2m^\sharp,$ because then we would have $p_0^\sharp=2+{2\bar m}/(\bar m+m^\sharp)={10}/{3}$. In other words, for any point $({1}/{s},1/{2p})$ fulfilling the assumptions of Theorem \ref{mainthm}, we  would find an $m^\sharp\in ]m,\bar m/2]$ such that $({1}/{s},1/{2p})$ satisfies the requirements of Theorem \ref{withouttriangle} corresponding  to the surface $\Gamma_{\bar m,m'}.$  Thus we  would obtain the restriction estimate for the surface $\Gamma_{\bar m,m^\sharp}$ at the point $({1}/{s},1/{2p})$. However, since this surface has ``less curvature''  than $\Gamma_{\bar m,m},$  as $m^\sharp>m,$ the  corresponding restriction inequality should hold true for $\Gamma_{\bar m,m}=\Gamma$ as well.

\medskip
To turn  these heuristics  into a  solid proof, we just need to check that the bound for the bilinear operator that we obtained in Theorem \ref{nextBLthm} is increasing in $m$. Recall that for subsurfaces $S,\tilde S\subset S_{\bar m,m}$ under the assumptions of the afore mentioned theorem we obtained the bound
\begin{align*}
	&\| R^*_{S,\tilde S}\|_{L^s(S)\times L^s(\tilde S)\to L^p(\R^3)}\\
	&\quad \lesssim C_{\bar m,m} := (\rho_1\rho_2)^{\frac{2}{s'}-\frac{1}{p}}
	(\varkappa_1\rho^2_1\vee \varkappa_2\rho^2_2)^{\frac{1}{p}-1+\e }
		(\varkappa_1\rho^2_1\wedge \varkappa_2\rho^2_2)^{1-\frac{2}{p}-\e},
\end{align*}
which we apply to $\rho_i=2^{-j_i}$ and $\varkappa_i= (k_i2^{-j_i})^{m_i-2}$
(cf. \eqref{16apr1456}). If we denote by $\rho_i^\sharp,\varkappa_i^\sharp$ the corresponding quantities  associated to the exponents  $\bar m$ and $m^\sharp,$  then clearly $\rho^\sharp_i=\rho_i$ and $\varkappa_i^\sharp\leq \varkappa_i.$ Since we seek to extend the range of validity of  Theorem \ref{withouttriangle}, we may assume
 that  $2p\leq p_0 < 4$,  and moreover  that $2p\geq p_0^\sharp>2.$ Then we have $1/{p}-1+\e <0$ and
 ${1-2/{p}-\e}<0$ for sufficient small $\e>0$, and hence
\begin{align*}
	C_{\bar m,m} \leq
	(\rho^\sharp_1\rho^\sharp_2)^{\frac{2}{s'}-\frac{1}{p}}
	(\varkappa_1^\sharp(\rho^\sharp_1)^2\vee \varkappa^\sharp_2(\rho^\sharp_2)^2)^{\frac{1}{p}-1+\e }
		(\varkappa^\sharp_1(\rho^\sharp_1)^2\wedge \varkappa^\sharp_2(\rho^\sharp_2)^2)^{1-\frac{2}{p}-\e}
	=C_{\bar m,m^\sharp}.
\end{align*}
Proceeding with the latter estimate from here on as before in our proof of Theorem \ref{withouttriangle},  but working now with $m^\sharp$ in place of $m,$  we arrive at the statement of Corollary \ref{finish}.
\end{proof}

\color{black}

\setcounter{equation}{0}
\section{Appendix}\label{appendix}

\subsection{A short argument to improve [FU09] to the critical  line}\label{summationtrick}

We consider the set $A_0=\{x\in\mathbb R^2\,:\;1/2<|x|\le1\}$ and define $H={2\bar m}/{(2+\bar m)}.$ Note that $H<h.$ Ferreyra and Urciuolo proved that for  every $p$ for which  $p>4$  and $1/{s'}> {(H+1)}/p,$ there is a constant $C_{p,s}>0$ such that, for every function $f_0$ with supp\,$f_0\subset A,$ we have
$$
\|R^*_{\R^2}f_0\|_p\le C_{p,s}\|f_0\|_s.
$$
Re-scaling this, we obtain
\begin{align}\label{fu-resc}
\|R^*_{\R^2}f_j\|_p\le C_{p,s}2^{\frac jh\big(-\frac1{s'}+\frac{h+1}p\big)}\|f_j\|_s
\end{align}
for every function $f_j$ such that supp$\,f_j\subset \{(x_1,x_2)\,:\; 2^{-\frac{j+1}{m_1}}\le x_1\le 2^{-\frac{j}{m_1}},\;
2^{-\frac{j+1}{m_2}}\le x_2\le 2^{-\frac{j}{m_2}}\}$ and the same range of $p,s.$

\medskip

Given a function $f$ supported in the unit ball of $\R^2,$ we decompose $f=\sum_{j=0}^\infty f_j,$ where the functions $f_j$ have supports as above. Then,
$$
|\{x\,:\;|R^*_{\R^2} f(x)|>\lambda\}|\le |\{x\,:\;|\sum_{j=J}^\infty R^*_{\R^2} f_j(x)|>\lambda/2\}|+|\{x\,:\;|\sum_1^{j=J}R^*_{\R^2} f_j(x)|>\lambda/2\}|,
$$
for some $J$ to be chosen appropriately. Using Chebyshev's inequality, the last expression can be bounded by
$$
\le \bigg(\frac2{\lambda}\bigg)^{p_1}\|\sum_{j=J}^\infty R^*_{\R^2} f_j\|_{L^{p_1}}^{p_1}+
\bigg(\frac2{\lambda}\bigg)^{p_2}\|\sum_{j=1}^J R^*_{\R^2} f_j\|_{L^{p_2}}^{p_2}.
$$
Let us choose exponents $p_1>p>p_2$ such that $1/{s'}={(h+1)}/p$ and
${(h+1)}/{p_2}>1/{s'}>{(h+1)}/{p_1}>{(H+1)}/p.$
We use the triangular inequality and  \eqref{fu-resc} and sum the resulting geometric series, obtaining the inequality
$$
|\{x\,:\;|R^*_{\R^2} f(x)|>\lambda\}|\lesssim \bigg(\frac2{\lambda}\bigg)^{p_1} 2^{\frac{J}h\big(-\frac{p_1}{s'}+{h+1}\big)}\|f\|_{L^s}^{p_1}+\bigg(\frac2{\lambda}\bigg)^{p_2} 2^{\frac{J}h\big(-\frac{p_2}{s'}+{h+1}\big)}\|f\|_{L^s}^{p_2}.
$$
By  choosing $J$ such that $2^{J}=\big({\|f\|_{L^s}}/\lambda\big)^{hs'},$
we then arrive at  the  weak type estimate
$$
|\{x\,:\;|R^*_{\R^2} f(x)|>\lambda\}|\lesssim\bigg(\frac{\|f\|_{L^s}}{\lambda}\bigg)^{(h+1)s'}=
\bigg(\frac{\|f\|_{L^s}}{\lambda}\bigg)^p.
$$
From this, by interpolation with the trivial bound $\|R^*_{\R^2}\|_{L^1\to L^\infty}\le1,$ we obtain the desired strong type estimate.

\setcounter{equation}{0}
\subsection{Fa\`a di Bruno's theorem and completion of the proof of Lemma \ref{reparametrize}}\label{faa}

The formula of Fa\`a di Bruno is a chain rule for higher order derivatives of the composition of two functions. This is well-known for functions in one real variable. However, we need a version for several variables.


\begin{lemnr}[Formula of Faa di Bruno]\label{bruno}
Let $U\subset\R^n$ and  $V\subset\R^m,$ and  let $g=(g^1,\ldots,g^m)\in C^\infty(U,V)$ and $f\in C^\infty(V,\R^l).$ For
$\alpha\in\N^n,$ we put  $A_\alpha=\{\gamma\in\N^n:1\leq|\gamma|\leq|\alpha|\}.$  Then $f\circ g$ is smooth, and for every $\alpha\in\N^n$ we have
\begin{align*}
	\partial^\alpha(f\circ g) = \alpha! \sum\limits_{1\leq|\beta|\leq|\alpha|} (\partial^\beta f)\circ g \sum\limits_k \prod_{j=1}^m\prod_{\gamma\in A_\alpha} \left[\frac{\partial^\gamma g^j}{\gamma!}\right]^{k_\gamma^j},
\end{align*}
where the sum in $k$ is over all mappings $k:\{1,\ldots,m\}\times A_\alpha\to \N, (j,\gamma)\mapsto k_\gamma^j,$ such that
\begin{align}\label{brunocond1}
	\sum_{\gamma\in A_\alpha} k_\gamma^j =\beta_j
\end{align}
for all $j=1,\ldots,m$ and
\begin{align}\label{brunocond2}
	\sum_{j=1}^m\sum_{\gamma\in A_\alpha} k_\gamma^j \gamma = \alpha.
\end{align}
\end{lemnr}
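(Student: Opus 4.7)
My plan is to prove Lemma \ref{bruno} by induction on the total order $|\alpha|$.

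For the base case $|\alpha|=1$, say $\alpha=e_i$, the set $A_\alpha=\{e_1,\ldots,e_n\}$ and the constraints \eqref{brunocond1}--\eqref{brunocond2} force $|\beta|=1$, so $\beta=e_s$ for some $s\in\{1,\ldots,m\}$, together with $k_{e_i}^s=1$ and all other $k_\gamma^j=0$. Since $\alpha!=1$, the claimed formula collapses to $\partial_i(f\circ g)=\sum_{s=1}^m(\partial_s f)\circ g\cdot\partial_i g^s$, which is the ordinary chain rule.

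For the inductive step, assume the formula holds for a multi-index $\alpha$, and apply $\partial_i$ to both sides to obtain the formula for $\alpha+e_i$. Using the chain rule and the product rule on the right-hand side, $\partial_i$ acts on each term in two possible ways: either it hits the outer factor $(\partial^\beta f)\circ g$, producing $\sum_{s=1}^m(\partial^{\beta+e_s}f)\circ g\cdot\partial_i g^s$; or it hits one of the factors $[\partial^\gamma g^j/\gamma!]^{k_\gamma^j}$, producing $k_\gamma^j[\partial^\gamma g^j/\gamma!]^{k_\gamma^j-1}\cdot\partial^{\gamma+e_i}g^j/\gamma!$. The first operation corresponds to passing from $k$ to a new multiplicity map $k'$ with $|\beta'|=|\beta|+1$ and $k'^s_{e_i}=k^s_{e_i}+1$; the second operation keeps $|\beta|$ fixed but shifts one unit of multiplicity from $\gamma$ to $\gamma+e_i$, i.e.\ $k'^j_\gamma=k^j_\gamma-1$, $k'^j_{\gamma+e_i}=k^j_{\gamma+e_i}+1$. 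Both operations manifestly preserve the constraint \eqref{brunocond1} and turn \eqref{brunocond2} for $\alpha$ into \eqref{brunocond2} for $\alpha+e_i$.

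The main obstacle — and really the entire content of the lemma — is the combinatorial bookkeeping needed to verify that when one collects all these contributions, each admissible $(\beta',k')$ associated with $\alpha+e_i$ arises with precisely the right weight. A given $(\beta',k')$ can in general be obtained from several predecessors $(\beta,k)$: for each $s$ with $k'^s_{e_i}\ge 1$ one has a predecessor of the first type, and for each pair $(j,\gamma)$ with $k'^j_{\gamma+e_i}\ge 1$ a predecessor of the second type. The plan is to write out these contributions, cancelling the factorials $\gamma!$ and collecting the resulting factors of $k^j_\gamma$ and $k'^j_{\gamma+e_i}$ against the change in $(\alpha+e_i)!/\alpha!=\alpha_i+1$, and to verify the key combinatorial identity
\[
(\alpha_i+1)\sum_{s:\,k'^s_{e_i}\ge 1}1\cdot[\ldots]+\sum_{j,\gamma:\,k'^j_{\gamma+e_i}\ge 1}k'^j_{\gamma+e_i}\,(\gamma_i+1)\cdot[\ldots]=(\alpha_i+1)\,[\ldots'],
\]
which after division by the common product of factors becomes a pure identity among the $k'^j_\gamma$'s that follows from \eqref{brunocond2} for $\alpha+e_i$. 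Once this identity is verified, the inductive step closes.

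With Lemma \ref{bruno} in hand, the deferred estimates on $\phi_2$ in Lemma \ref{reparametrize} then follow by applying the formula to the representation \eqref{phi2}: since $F=G^{-1}$ has derivatives controlled inductively via the identity $F\circ G=\mathrm{id}$ and the lower bound $|\det G'|\gtrsim 1$, Faà di Bruno yields bounds on $\partial^\alpha\phi_2$ in terms of the bounds $A_l B r^l$ assumed on $\phi_1$, with admissible constants depending only on the $A_l$ and $a$.
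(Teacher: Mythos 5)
The paper offers no proof of its own here: it simply cites Spindler's one-dimensional argument and asserts that it adapts to several variables. So you are actually supplying a proof where the authors give only a reference; an induction on $|\alpha|$ is very much in the spirit of that reference, and your base case and the two-type bookkeeping for the inductive step (differentiating the outer factor shifts $\beta\to\beta+e_s$ and bumps $k^s_{e_i}$; differentiating an inner factor moves one unit of multiplicity from $\gamma$ to $\gamma+e_i$) are set up correctly, including the observation that both operations preserve \eqref{brunocond1} and carry \eqref{brunocond2} for $\alpha$ to \eqref{brunocond2} for $\alpha+e_i$.

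The gap is precisely the step you yourself flag as ``really the entire content of the lemma'' and then defer: you never verify that the collected coefficient of a fixed target $(\beta',k')$ equals $(\alpha+e_i)!=\alpha!(\alpha_i+1)$. The displayed ``key combinatorial identity'' is garbled (the factor $(\alpha_i+1)$ appears on the left where it has no business), and if one actually does the bookkeeping the induction does \emph{not} close with the formula as stated in the lemma. Already in one variable with $\alpha=2$, $\beta=2$, $k_1=2$, the stated formula gives $2!\,(f''\!\circ g)\,(g')^2=2f''(g')^2$, whereas the correct term is $f''(g')^2$. The statement is missing the standard factor $\prod_{j,\gamma}1/(k^j_\gamma!)$. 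Once that factor is restored, the type-$1$ contributions to $(\beta',k')$ come in weighted by $k'^s_{e_i}$ (not merely counted), and the type-$2$ contributions by $\gamma'_i\,k'^j_{\gamma'}$, so the total weight is
\begin{align*}
\sum_{s}k'^s_{e_i}\;+\;\sum_{j}\sum_{\gamma'\in A_{\alpha+e_i},\,|\gamma'|\ge 2}\gamma'_i\,k'^j_{\gamma'}\;=\;\sum_{j}\sum_{\gamma\in A_{\alpha+e_i}}\gamma_i\,k'^j_{\gamma}\;=\;\alpha_i+1,
\end{align*}
the last equality being exactly the $i$-th component of \eqref{brunocond2} for $\alpha+e_i$. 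This is the clean way the induction closes, and you should both correct the displayed identity and point out that the $1/k^j_\gamma!$ factors are needed (as in Constantine--Savits); for the application in Lemma \ref{reparametrize} the extra $1/k!$ only helps, so nothing downstream is affected. Incidentally, for that application the paper does not apply Faà di Bruno to the explicit formula \eqref{phi2} as your last paragraph suggests, but to the implicit relation $H\circ\psi=0$, which is what makes the recursion for $\partial^\alpha\phi_2$ tractable.
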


\begin{proof}
The elegant  short proof in \cite{Sp05} for the one-dimensional case can easily be adapted to the higher-dimensional  situation.
\end{proof}

We now come back to the proof of Lemma \ref{reparametrize} and establish the still missing estimates for the derivatives of the function $\phi_2$ (given explicitly by \eqref{phi2}).  Notice that these estimates can not simply  be obtained by means of a scaling argument,  since the first order derivatives are assumed to exhibit a different behavior than the higher order derivatives.

 We shall not really make use of   formula  \eqref{phi2}, but rather proceed as follows:
denoting by  $e_1,\dots,e_{d+1}$ the canonical basis of $\R^{d+1},$
after applying a suitable orthogonal transformation to $\R^{d+1}$ we may and shall assume that $n_1=(0,\dots,0,1)=e_{d+1},$  and that $E_1=e_1, \dots, E_{d-1}=e_{d-1}$ and $e_d=h_1$  (recall here from the first part of the proof of  Lemma \ref{reparametrize} that $E_1,\dots,E_{d-1}$ is an orthonormal basis of $K=H_1\cap H_2$).  Then we may regard $U_1$ as a subset of $\R^d,$ and we  consider the function
$$H(\eta,\tau)=\tau-\phi_1(\eta),\qquad \eta\in U_1,\tau\in\R,$$
whose set of zeros agrees  exactly with $S.$  Observe first that the derivatives of $H$
 satisfy almost the same kind of  estimates as $\phi_1:$
\begin{align}\label{boundsonF}
	\|H'\|_\infty \leq& \sqrt{A^2+1},	\quad	\|H^{(l)}\|_\infty \leq A_l B r^l \quad \mbox{for every } l\geq2.
\end{align}
Let $\psi(\xi)=\xi+\phi_2(\xi)n_2,\  \xi\in U_2,$ be the parametrization of $S$ induced by $\phi_2.$ Moreover, we introduce coordinates on $U_2$ by writing  $\xi= \xi_1E_1+\cdots +\xi_{d-1} E_{d-1} +\xi_d h_2.$ Then obviously \begin{align}\label{25sep1659}
	H(\psi(\xi))=0 \quad \mbox{for all}\quad  \xi\in U_2.
\end{align}
Furthermore,
\begin{align}\label{djpsi}
	\frac{\partial \psi}{\partial \xi_j} =E_j +\frac{\partial \phi_2}{\partial \xi_j} n_2,\quad j=1,\ldots,d-1, \quad \frac{\partial \psi}{\partial \xi_d} =h_2 +\frac{\partial \phi_2}{\partial \xi_d} n_2,
\end{align}
and
\begin{align}\label{25sep1658}
	\partial_\xi^\alpha \psi=\partial_\xi^\alpha\phi_2\,  n_2  \quad \mbox{for all}\quad \alpha\in\N^d, |\alpha|\geq 2.
\end{align}
From \eqref{25sep1659} and \eqref{djpsi} we obtain that for $j=1,\dots,d,$
\begin{align}\label{paphi2}
	\frac{\partial \phi_2}{\partial \xi_j}(\xi) =	- \frac{\langle(\nabla H)(\psi(\xi)), \tilde e_j\rangle} {\langle (\nabla H)(\psi(\xi)),n_2\rangle},
	\end{align}
	if we put $\tilde e_j=E_j=e_j,$ if $j=1,\dots, d-1$ and $\tilde e_d=h_2.$
Notice also that our transversality condition  $|\langle n_2, N(v)\rangle|\ge a>0$ for all $v\in S$ implies that $|\langle (\nabla H)(\psi(\xi)),n_2\rangle|\ge a.$   Thus  \eqref{paphi2} implies  that
\begin{align}\label{25sep1653}
	\big|\frac{\partial \phi_2}{\partial \xi_j}(\xi)\big| \leq \frac{A+1}{a}.			
\end{align}
It remains to show that
\begin{align}\label{boundsong}
	\|\partial^\alpha\phi_2\|_\infty = \|\partial^\alpha \psi\|_\infty \leq& \tilde A_l B r^{|\alpha|}
\end{align}
for every $|\alpha|\geq 2,$  where we have used the abbreviation $\partial=\partial_\xi.$  By induction, we may assume that for every $\gamma\in\N^d$ with $2\leq|\gamma|<|\alpha|$ inequality \eqref{boundsong} holds true\footnote{At the start of the induction with $|\alpha|=2$, the range of such $\gamma$'s is empty.}.
Applying the partial derivative of order $\alpha$ to \eqref{25sep1659} yields
\begin{align*}
	\partial^\alpha(H\circ \psi)=0.
\end{align*}
We apply the formula of Fa\`a di Bruno (Lemma \ref{bruno}).
First, we discuss the summands in Fa\`a di Bruno formula with $|\beta|=1$, say $\beta=e_{j_0}$ for some $j_0=1,\ldots,m$. How many $k$'s are there for which  $\sum_{\gamma\in A_\alpha}k_\gamma^j=\beta_j=\delta_{jj_0}$ and $\sum_{j=1}^m\sum_{\gamma\in A_\alpha}k_\gamma^j\gamma=\alpha$? By the first condition, there exists a $\gamma_0$ such that $k^{j_0}_{\gamma_0}=1$ and $k^{j}_{\gamma}=0$ for $j\neq j_0$ or $\gamma\neq\gamma_0$. But then the second condition implies that
$\gamma_0=\alpha$. Thus we obtain
\begin{align*}
	&\sum\limits_{|\beta|=1} (\partial^\beta H)\circ \psi \sum\limits_k \prod_{j=1}^m\prod_{\gamma\in A_\alpha} \left[\frac{\partial^\gamma \psi^j}{\gamma!}\right]^{k_\gamma^j}
	= \sum_{j_0=1}^m (\partial_{j_0} H)\circ \psi \left[\frac{\partial^\alpha \psi^{j_0}}{\alpha!}\right]^{k_\alpha^{j_0}}\\
	&= \frac{1}{\alpha!} \langle (\nabla H)\circ \psi, \partial^\alpha \psi\rangle
	= \frac{\partial^\alpha \phi_2 }{\alpha!} \langle (\nabla H)\circ \psi, n_2\rangle,
\end{align*}
where we have used \eqref{25sep1658} once more. This implies that
\begin{align*}
	|\partial^\alpha \phi_2|\leq& \frac{\alpha!}{a}
	\left|\sum\limits_{|\beta|=2}^{|\alpha|} (\partial^\beta H)\circ \psi \sum\limits_k \prod_{j=1}^m\prod_{\gamma\in A_\alpha} \left[\frac{\partial^\gamma \psi^j}{\gamma!}\right]^{k_\gamma^j}\right|,
\end{align*}
where the sum in $k$ is over all mappings $k:\{1,\ldots,m\}\times A_\alpha\to \N, (j,\gamma)\mapsto k_\gamma^j$ such that
$	\sum_{\gamma\in A_\alpha} k_\gamma^j =\beta_j$
for all $j=1,\ldots,m$ and $\sum_{j=1}^m\sum_{\gamma\in A_\alpha} k_\gamma^j \gamma = \alpha$.
Observe that for all $k$ appearing in the above sum, we have $k^j_\alpha=0$ for all $j=1,\ldots,m$:
\smallskip

For, otherwise there would be some $j_0$ such that $k_\alpha^{j_0}=1$ and $k_\gamma^j=0$ if $\gamma\neq\alpha$ or $j\neq j_0$, a contradiction to $2\leq|\beta|=\sum_{j,\gamma} k_\gamma^j$.\\
Thus, if $k_\gamma^j\neq0$ for an exponent in the above sum, then  we have $|\gamma|<|\alpha|,$ and therefore  our induction hypothesis applies the following:

If $|\gamma|\geq2$, then we  may estimate $|\partial^\gamma \psi^j|\leq A_{|\gamma|} B r^{|\gamma|}.$ And, if $|\gamma|=1$, then  in view of \eqref{djpsi} and \eqref{25sep1653}, we may estimate $|\partial^\gamma \psi^j|\leq 1+(A+1)/a \lesssim 1.$    Making also use of \eqref{boundsonF}, we  then arrive at the estimation
\begin{align*}
	|\partial^\alpha \phi_2|\lesssim&
	\sum\limits_{|\beta|=2}^{|\alpha|} B r^{|\beta|} \sum\limits_k
	\prod_{j=1}^m\prod_{|\gamma|=2}^{|\alpha|} \left[B r^{|\gamma|}\right]^{k_\gamma^j}\\
	\leq& \sum\limits_{|\beta|=2}^{|\alpha|} \sum\limits_k
	 B^{1+\sum_j\sum_{|\gamma|=2}^{|\alpha|}k_\gamma^j} \  r^{|\beta|+\sum_j\sum_{|\gamma|=2}^{|\alpha|}k_\gamma^j|\gamma|}.
\end{align*}
Notice that we have
\begin{align*}
	|\beta|=\sum_j\beta_j =& \sum_j\sum_{|\gamma|=1}^{|\alpha|} k_\gamma^j
	= \sum_j\sum_{|\gamma|=2}^{|\alpha|} k_\gamma^j + \sum_j\sum_{|\gamma|=1} k_\gamma^j |\gamma|,
\end{align*}
and thus
\begin{align*}
	B^{1+\sum_j\sum_{|\gamma|=2}^{|\alpha|}k_\gamma^j}  r^{|\beta|+\sum_j\sum_{|\gamma|=2}^{|\alpha|}k_\gamma^j|\gamma|}
	= B r^{\sum_j\sum_{|\gamma|=1}^{|\alpha|} k_\gamma^j |\gamma|}
					(Br)^{\sum_j\sum_{|\gamma|=2}^{|\alpha|}k_\gamma^j}
	\leq B r^{|\alpha|},
\end{align*}
where we  have made use of our assumption $Br\leq1$. This proves also \eqref{boundsong}.

~
\thispagestyle{empty}
\newpage

\renewcommand{\refname}{References}


\begin{thebibliography}{----------}


\bibitem [Ba85]{Ba1} B. Barcelo Taberner, On the restriction of the Fourier transform to a conical surface, Transactions of the American Mathematical Society, 292 No1 (1985).
\bibitem [Ba86] {Ba2}B. Barcelo, The restriction of the Fourier transform to some curves and surfaces, Studia Mathematica 84 No1 (1986), 39–69.
\bibitem [BCT06] {BCT}J. Bennet, A. Carbery, T. Tao, On the multilinear restriction and Kakeya conjectures. Acta Math.  196  (2006),  no. 2, 261–302.
\bibitem[Bou85]{Bo85} Bourgain, J.,
  \newblock Estimations de certaines fonctions maximales.
 \newblock {\em C. R. Acad. Sci. Paris S\'er. I Math.}, 301 (1985) no. 10, 499--502.
\bibitem [Bo91] {Bo1}J. Bourgain, Besicovitch-type maximal operators and applications to Fourier analysis, Geometric and Functional Analysis 22 (1991), 147-187.
\bibitem [Bo95a] {Bo2} J. Bourgain, Some new estimates on oscillatory integrals. Essays in Fourier Analysis in honor of E. M. Stein. Princeton Math. Ser. 42, Princeton University Press, Princeton, NJ 1995, 83–112.
\bibitem [Bo95b]  {Bo3}  J. Bourgain, Estimates for cone multipliers, Operator Theory: Advances and Applications 77 (1995), 1-16.
\bibitem [BoG11] {BoG} J. Bourgain, L. Guth, Bounds on oscillatory integral operators based on multilinear estimates, Geometric and Functional Analysis, Vol.21 (2011) 1239-1295.
\bibitem [Bu15] {Bu} S. Buschenhenke, A sharp $L^p$-$L^q$ Fourier restriction theorem for a conical surface of finite type, Mathematische Zeitschrift, Vol. 280, Issue 1 (2015), 367-399.
\bibitem [F70] {F1} C. Fefferman, Inequalities for strongly singular convolution operators. Acta Math., (1970), 9–36.
\bibitem [F71] {F}C. Fefferman, The multiplier problem for the ball, Annals of Mathematics 94 (1971) 330-336.
\bibitem [FU09] {FU}E. Ferreyra, M. Urciuolo, Fourier restriction estimates to mixed homogeneous surfaces, Journal of inequalities in pure and applied mathematics, Vol. 10 (2009) Issue 2, Article 35, pp. 11.
\bibitem [Gra08] {Gra} L. Grafakos, Classical Fourier Analysis, Graduate Texts in Mathematics, Springer, 2008.
\bibitem [Gr81] {Gr}A. Greenleaf, Principal Curvature and Harmonic Analysis, Indiana University Mathematics Journal, Vol. 30, No. 4 (1981).

\bibitem [IKM10] {ikm} I. A. Ikromov, M. Kempe, D. Müller, Estimates for maximal functions associated with hypersurfaces in $\R^3$ and related problems in harmonic analysis, Acta Mathematica 204 (2010), 151-271.

\bibitem[IM11]{IM-uniform}
 I. A. Ikromov, D. M\"uller,
\newblock Uniform estimates for the Fourier transform of surface carried measures in  $\R^3$ and an application to Fourier restriction.
\newblock {\em    J. Fourier Anal. Appl.,} 17 (2011), no. 6, 1292--1332.


\bibitem [IM15] {IM} Ikromov, M\"uller, Fourier Restriction for Hypersurfaces in Three Dimensions and Newton Polyhedra, Part I and II; \, http://arxiv.org/abs/1208.6090 and  http://arxiv.org/abs/1410.3298; to appear Annals of Mathematics Studies.
\bibitem [L03]{lee03} S. Lee, Endpoint estimates for the circular maximal function, Proc. Amer. Math. Soc. 131 (5) (2003) 1433–1442.
\bibitem [L05] {lee05} S. Lee, Bilinear restriction estimates for surfaces with curvatures of different signs, Transactions of the American Mathematical Society, Vol. 358, No. 8, 3511-2533, 2005.
\bibitem [LV10] {LV} S. Lee, A. Vargas, Restriction estimates for some surfaces with vanishing curvatures, Journal of Functional Analysis 258 (2010), 2884-2909.
\bibitem [MVV96] {MVV1} Moyua, A., Vargas, A. and L. Vega, L., Schr\"odinger maximal function and restriction
properties of the Fourier transform. Internat. Math. Res. Notices 16 (1996), 793–815.
\bibitem [MVV99] {MVV2} Moyua, A., Vargas, A. and L. Vega, L., Restriction theorems and maximal operators related to oscillatory integrals in $\R^3,$ Duke Math. J., 96 (3), (1999), 547-574.
\bibitem [S87] {So87} C. D. Sogge, A sharp restriction theorem for degenerate curves in $\R^2$, American Journal of Mathematics, Vol. 109, No. 2 (1987), pp. 223-228.

\bibitem [Sp05] {Sp05} K. Spindler, A short proof of the formula of Fa\a di Bruno, Elemente der Mathematik, Swiss Mathematical Society (2005), 33-35.

\bibitem [St86] {St1} E. Stein, Oscillatory Integrals in Fourier Analysis. Beijing Lectures in Harmonic Analysis. Princeton Univ. Press 1986.
\bibitem [Sto15] {Sto} B. Stovall, Linear and bilinear restriction to certain rotationally symmetric hypersurfaces. Preprint 2015.
\bibitem [Str77] {Str} Strichartz, R. S.,Restrictions of Fourier transforms to quadratic surfaces and decay of solutions of wave equations. Duke Math. J.  44  (1977), no. 3, 705–714.
\bibitem [T01a] {T4}T. Tao Endpoint bilinear restriction theorems for the cone, and some sharp null-form
estimates, Math. Z. 238 (2001), 215-268.solutions of wave equations. Duke Math. J., 44 (1977), 705–714.
\bibitem [T01b] {T1}T. Tao, From Rotating Needles to stability of waves: emerging connections between combinatorics, analysis, and PDE, Notices of the American Mathematical Society, 48 (2001).
\bibitem [T03a] {T2} T. Tao, A Sharp bilinear restriction estimate for paraboloids, Geometric and Functional Analysis 13, 1359-1384, 2003.
\bibitem [T03b] {T3}T. Tao, Recent progress on the Restriction conjecture, arXiv:math/0311181v1 [math.CA] 12 Nov 2003.
\bibitem [To75] {To} P. A. Tomas, A restriction theorem for the Fourier transform. Bull. Amer. Math. Soc. 81 (1975), 477–478.
\bibitem [TVI00] {TV1} T. Tao, A. Vargas, A bilinear approach to cone multipliers I. Restriction estimates, Geometric and Functional Analysis 10, 185-215, 2000.
\bibitem [TVII00] {TV2} T. Tao, A. Vargas, A bilinear approach to cone multipliers II. Applications, Geometric and Functional Analysis 10, 216-258, 2000.
\bibitem [TVV98] {TVV} Tao, T., Vargas, A. and Vega, L., A bilinear approach to the restriction and Kakeya
conjectures. J. Amer. Math. Soc. 11 (1998) no. 4 , 967–1000.
\bibitem [V05]{v05} A. Vargas, Restriction theorems for a surface with negative curvature, Mathematische Zeitschrift 249, 97-111 (2005).
\bibitem [W95] {W1} T. Wolff, An improved bound for Kakeya type maximal functions, Revista Mat.
Iberoamericana 11 (1995), 651-674.
\bibitem [W01] {W2} WT. Wolff, A Sharp Bilinear Cone Restriction Estimate, The Annals of Mathematics, Second Series, Vol. 153, No. 3, 661-698, 2001.
\bibitem [Z74] {Z} A. Zygmund, On Fourier coefficients and transforms of functions of two variables, Studia Mathematica 50 (1974), 189-201.




\thispagestyle{plain}
\newpage





\end{thebibliography}
\end{document}